\newtheoremstyle{component}{}{}{}{}{\itshape}{.}{.5em}{\thmnote{#3}{#1}}
\theoremstyle{plain}
\newtheorem{thm}{Theorem}[section]
\newtheorem{pro}[thm]{Proposition}
\newtheorem{lem}[thm]{Lemma}
\theoremstyle{definition}
\newtheorem{defn}[thm]{Definition}
\newtheorem{conj}[thm]{Conjecture}
\theoremstyle{remark}
\newtheorem{rem}[thm]{Remark}
\newtheorem{thmN}{Theorem}
\newtheorem{thmA}{Theorem}
\newtheorem*{imcF}{{\bf Iwasawa Main Conjecture for $\psi$}}
\theoremstyle{component}
\newtheorem*{component}{}
\newenvironment{claim}[1]{\par\addvspace{\baselineskip}\noindent\underline{\bfseries
Claim}:\space#1}{\par\addvspace{\baselineskip}}
\def\ker{\mathrm{Ker}}
\DeclareFontFamily{OT2}{cmr}{\hyphenchar\font45}
\DeclareFontShape{OT2}{cmr}{m}{n}{%
<5><6><7><8><9>gen*wncyr%
<10><10.95><12><14.4><17.28><20.74><24.88>wncyr10}{}
\DeclareFontShape{OT2}{cmr}{b}{n}{%
<5><6><7><8><9>gen*wncyb%
<10><10.95><12><14.4><17.28><20.74><24.88>wncyb10}{}
\DeclareMathAlphabet{\mathcyr}{OT2}{cmr}{m}{n}
\DeclareMathAlphabet{\mathcyb}{OT2}{cmr}{b}{n}
\SetMathAlphabet{\mathcyr}{bold}{OT2}{cmr}{b}{n}
\def\theequation{\thesection.\@arabic\c@equation}
\title[On the $p$-adic Artin $L$-functions over CM fields]{On the $p$-adic $L$-function and \linebreak Iwasawa Main Conjecture for an Artin motive \linebreak over a CM field}
\author{Takashi Hara}
\author{Tadashi Ochiai}
\address[T.~Hara]{College of Liberal Arts \\
Tsuda University \\
2-1-1 Tsuda-machi, Kodaira, Tokyo 187-8577, Japan}
\address[T.~Ochiai]{
Department of Mathematics \\ 
Institute of Science Tokyo \\ 
2-12-1 Ookayama,
Meguro-ku, Tokyo 152-8551, Japan
}
\email[T.~Hara]{t-hara@tsuda.ac.jp}
\email[T.~Ochiai]{ochiai.t.1998@m.isct.ac.jp}
\subjclass[2020]{11R23 (Primary), 11G15, 11L05, 11R42 (Secondaries)}
\keywords{CM fields, $p$-adic Artin $L$-functions, Iwasawa main conjecture, $p$-adic Artin conjecture, Davenport--Hasse relation, Gauss sums}
\begin{document}
\begin{abstract}
For an algebraic Hecke character defined on a CM field $F$ of degree $2d$, Katz constructed a $p$-adic $L$-function of $d+1+\delta_{F,p}$ variables in his innovative paper published in 1978, where $\delta_{F,p}$ denotes the Leopoldt defect for $F$ and $p$. We shall generalise the result of Katz under several technical conditions (containing the absolute unramifiedness of $F$ at~$p$), and construct a $p$-adic Artin $L$-function of $d+1+\delta_{F,p}$ variables, which interpolates critical values of the Artin $L$-function associated to a $p$-unramified Artin representation of the absolute Galois group $G_F$ of $F$. 
Our construction is an analogue over a CM field of Greenberg's construction over a totally real field, but there appear new difficulties which do not matter in Greenberg's case.  
\end{abstract}

\maketitle
\tableofcontents

\section{Introduction} \label{sc:Introduction}
The purpose of the present article is to construct {\em $p$-adic Artin $L$-functions} 
for (non-commutative) Artin representations defined over CM fields and verify their integrality admitting the (abelian) Iwasawa main conjecture for CM fields. As a byproduct, we obtain equality of the Iwasawa main conjecture for such Artin representations. Before giving a detailed account of our results, we introduce several basic notation. The absolute Galois group of a number field $K$ is denoted as $G_K$ throughout this article. 
Let $F$ be a CM field of degree $2{d}$ and $F^+$
its maximal totally real subfield. We write $\mathfrak{r}_F$ and $D_{F^+}$ for the ring of integers of $F$ and the absolute discriminant of $F^+$, respectively. We fix an odd prime number $p$, an algebraic closure $\overline{\mathbb{Q}}$ of the rational number field $\mathbb{Q}$, an embedding $\iota_\infty \colon \overline{\mathbb{Q}} \hookrightarrow \mathbb{C}$ of $\overline{\mathbb{Q}}$ into the complex number field $\mathbb{C}$, and 
an isomorphism $\boldsymbol{\iota}\colon \mathbb{C}\xrightarrow{\, \sim \,}\mathbb{C}_p$ between $\mathbb{C}$ and the completion $\mathbb{C}_p$ of a fixed algebraic closure $\overline{\mathbb{Q}}_p$ of the $p$-adic number field $\mathbb{Q}_p$. 
Suppose that every prime ideal of $F^+$ lying above $(p)$ splits completely in the quadratic extension $F /F^+$. We fix a {\em $p$-ordinary CM type} $\Sigma_{F}$ of $F$, whose choice amounts to choosing one of $\mathfrak{P}$ or $\mathfrak{P}^c$ for each prime ideal $\mathfrak{p}$ of $F^+$ lying above $(p)$ if $\mathfrak{p}$ is decomposed as  $\mathfrak{p}\mathfrak{r}_F=\mathfrak{P}\mathfrak{P}^c$; 
 see Section \ref{ssc:setting_Hecke} for its precise definition. 
Let $F_{\max}$ be the composition of all $\mathbb{Z}_p$-extensions of $F$ (in $\overline{\mathbb{Q}}$). Due to global class field theory, the Galois group $\Gamma_{F,\, \max}$ of $F_{\max}/F$ is known to be a free $\mathbb{Z}_p$-module of rank
equal to or greater than $d+1$; the equality holds if and only if the Leopoldt conjecture for $F$ and $p$ holds true.

\medskip
Let us review historical background on Iwasawa theory for CM fields. We first take a look at construction of $p$-adic Hecke $L$-functions for ($p$-ordinary) CM fields. Under certain assumptions, Katz \cite[Theorem (5.3.0)]{katz} has constructed a $p$-adic Hecke $L$-function over $F$ as an element of the Iwasawa algebra of $\Gamma_{F,\, \max}$. His $p$-adic $L$-function interpolates the values at $0$ of the Hecke $L$-functions for various algebraic Hecke characters with appropriate infinity type and conductor dividing a power of $p$. 
Later Hida and Tilouine \cite[Theorem II.]{HT-katz} generalised Katz's construction, and relaxed constraints on the conductors of algebraic Hecke characters appearing in the interpolation property. 

Now let $\psi^\mathrm{gal} \colon G_F \rightarrow \overline{\mathbb{C}}_p^\times$ be a finite character which is at most tamely ramified at every finite place of $F$ lying above $(p)$, and  suppose that the field $F_\psi$ corresponding to the kernel of $\psi^\mathrm{gal}$ is linearly disjoint from $F_{\max}$ over $F$. The $p$-adic Hecke $L$-function over $F$ is constructed for such a {\em branch character}. We take a finite flat extension $\mathcal{O}$ of $\mathbb{Z}_p$ containing the image of $\psi^\mathrm{gal}$, and define  $\widehat{\mathcal{O}}^\mathrm{ur}$ as the composition of $\mathcal{O}$ and $\widehat{\mathbb{Z}}_p=W(\overline{\mathbb{F}}_p)\; (\subset \mathcal{O}_{\mathbb{C}_p})$, the Witt ring of the algebraic closure $\overline{\mathbb{F}}_p$ of $\mathbb{F}_p$. Let $\psi \colon \mathbb{A}_F^\times/F^\times \rightarrow \overline{\mathbb{Q}}^\times$ be the algebraic Hecke character of $F$ corresponding to $\psi^\mathrm{gal}$ via global class field theory. Recall that, for any algebraic Hecke character $\chi\colon \mathbb{A}_F^\times/F^\times \rightarrow \overline{\mathbb{Q}}^\times$ of conductor $\mathfrak{f}_\chi$, the {\em Hecke $L$-function $L(\chi,s)$ associated to $\chi$} is defined as the meromorphic continuation of the Euler product $\prod_{\mathfrak{l}\nmid \mathfrak{f}_\chi} (1-\chi_{\mathfrak{l}}(\varpi_\mathfrak{l})\mathcal{N}\mathfrak{l}^{-s})^{-1}$, where the product is taken over all prime ideals of $F$ relatively prime to $\mathfrak{f}_\chi$. Here, for a prime ideal $\mathfrak{l}$ of $F$, we write  $\varpi_\mathfrak{l}$ for a uniformiser of the $\mathfrak{l}$-adic completion $F_\mathfrak{l}$ of $F$, and $\mathcal{N} \mathfrak{l}$ for the absolute norm of $\mathfrak{l}$. Throughout the present article, we are especially concentrated on the case where $F$ is {\em absolutely unramified at $(p)$}. In the case, we may slightly improve the results of Katz and Hida--Tilouine as follows. 

\begin{thmN}[=Theorem~\ref{thm:KHT}, Katz, Hida--Tilouine] \label{theorem:p-adic_Hecke_L}
Assume that $F$ is absolutely unramified at $(p)$ and take a branch character $\psi^\mathrm{gal}$ as above. 
Then there exists a unique element $L_{p, \Sigma_F}(\psi )$ of $\widehat{\mathcal{O}}^\mathrm{ur} [[\Gamma_{F,\, \max}]]$ satisfying
\begin{equation} \label{eq:interpolation_katz}
 \frac{\eta^{\mathrm{gal}}(L_{p, \Sigma_{F}}(\psi))}
{\Omega_{\mathrm{CM},p,F}^{w_\eta \mathsf{t}+2\mathsf{r}_\eta}} = \dfrac{(\mathfrak{r}_F^\times : \mathfrak{r}_{F^+}^\times)}{2^d\sqrt{\lvert D_{F^+}\rvert}}
 i^{\lvert -w_\eta \mathsf{t}-\mathsf{r}_\eta\rvert }
\prod_{v \in \Sigma_{F,p}} \mathrm{Eul}_v(\psi\eta,0)
\frac{\Lambda (\psi\eta ,0 )}{\Omega_{\mathrm{CM},\infty,F}^{w_\eta \mathsf{t}+2\mathsf{r}_\eta}}
\end{equation}
with
\begin{align*}
& \mathrm{Eul}_v(\psi\eta,0)=
\begin{cases}
 L_{v^c}(\psi \eta,0)^{-1}L_v((\psi\eta)^\vee,1)^{-1} & \text{if $\psi\eta$ is unramified at $v$}, \\
\epsilon((\psi \eta)_v,\mathbf{e}_{F,v},dx_v)^{-1} & \text{if $\psi\eta$ is ramified at $v$},
\end{cases} \\ 
& 
\Lambda (\psi\eta ,s )  = L_\infty (\psi\eta, s) L(\psi\eta,s)
\end{align*}
for each algebraic Hecke character $\eta\colon \mathbb{A}_F^\times/F^\times \rightarrow \overline{\mathbb{Q}}^\times$ satisfying the following two conditions:
\begin{enumerate}[label={\upshape (\roman*)}]
\item  
the Galois character $\eta^{\mathrm{gal}}\colon G_F\longrightarrow \mathbb{C}_p^\times$ 
corresponding to $\eta$ factors through $\Gamma_{F,\, \max}$; 
\item the infinity type of $\eta$ satisfies both $-w_\eta-r_{\eta,\sigma}\leq -1$ and $r_{\eta,\sigma}\geq 0$ for every $\sigma\in \Sigma_F$. 
\end{enumerate}
\end{thmN}

There are several notational remarks on the statement of the theorem above. We set $\mathsf{t}=(1,1,\ldots,1)\in \mathbb{Z}^{\Sigma_F}$, and let $(w_\eta,\mathsf{r}_\eta)\in \mathbb{Z}\times \mathbb{Z}^{\Sigma_F}$ denote the infinity type of  $\eta$ defined as $\eta\bigl((x_\sigma)_{\sigma\in \Sigma_F}\bigr)=\prod_{\sigma\in \Sigma_F}x_\sigma^{w_\eta+r_{\eta,\sigma}}\overline{x}_\sigma^{-r_{\eta,\sigma}}$. We use multi-index notation in \eqref{eq:interpolation_katz}; refer to Remark~\ref{rem:multi_index} for details.
The symbols $\epsilon((\psi\eta)_v,\mathbf{e}_{F,v},dx_v)$ and $L_\infty(\psi\eta,s)$ respectively denote Deligne's local constant at the $p$-adic place $v$ and the archimedean local factor (or the gamma factor) of $L(\psi\eta,s)$, whose explanations are given in Section~\ref{ssc:p-adic_Hecke}.   The product $\Lambda(\psi\eta,s)$ of $L_\infty(\psi\eta,s)$ and $L(\psi\eta,s)$ is called the {\em completed Hecke $L$-function}, which satisfies the {\em functional equation} $\Lambda(\psi \eta,s)=\epsilon(\psi\eta,s)\Lambda((\psi\eta)^{-1},1-s)$ for an exponential function $\epsilon(\psi \eta,s)$  (the global epsilon factor).
The modified complex CM period $\Omega_{\mathrm{CM},\infty,F} \in \mathbb{C}^{\times,\Sigma_F}$ and the normalised $p$-adic CM period $\Omega_{\mathrm{CM},p,F} \in 
(\widehat{\mathcal{O}}^\mathrm{ur})^{\times, \Sigma_F}$ 
will be defined in Definition \ref{definition:complex_CM_period&p-adic_CM_period}. 

\medskip
Next we shall review the (abelian) Iwasawa main conjecture for CM fields.
Let  $ M_{\Sigma_F}$  denote the maximal abelian pro-$p$ extension of $F_{\max}$ which is unramified outside all the finite places lying above $\Sigma_{F,p}$. 
Then the $\psi$-isotypic quotient $\mathrm{Gal}( M_{\Sigma_F}F_\psi/ F_{\max} F_\psi)_\psi$ 
of the $\Sigma_{F,p}$-ramified Iwasawa module $\mathrm{Gal}( M_{\Sigma_F}F_\psi/ F_{\max} F_\psi)\otimes_{\mathbb{Z}_p}\mathcal{O}$ is a finitely generated torsion 
${\mathcal{O}}[[\Gamma_{F,\, \max}]]$-module as is explained in  \cite[Theorem 1.2.2 (iii)]{HT-aIMC}, and thus its characteristic ideal $\mathrm{char}_{\mathcal{O}[[\Gamma_{F,\, \max}]]}(\mathrm{Gal}( M_{\Sigma_F}F_\psi / F_{\max} F_\psi)_\psi)$ is defined as a nontrivial ideal of $\mathcal{O}[[\Gamma_{F,\, \max}]]$. Under these settings, the multi-variable Iwasawa main conjecture for CM fields is formulated as follows.
 
\begin{imcF}
Let $\psi^\mathrm{gal}$ be a branch character chosen as above.
 Then 
\begin{align} \label{eq:imc_CM_abelian}
\mathrm{char}_{\widehat{\mathcal{O}}^\mathrm{ur}[[\Gamma_{F,\, \max}]]}(\mathrm{Gal}( M_{\Sigma_F}F_\psi / F_{\max} F_\psi)_\psi \widehat{\otimes}_{\mathcal{O}} \widehat{\mathcal{O}}^\mathrm{ur} ) = (L_{p, \Sigma_F}(\psi )) 
\end{align}
holds as an equality of ideals of $\widehat{\mathcal{O}}^\mathrm{ur}[[\Gamma_{F,\, \max}]]$.
\end{imcF}

Under several assumptions, Ming-Lun Hsieh has recently verified in \cite[Theorem 8.16]{Hsieh14} that there exists an inclusion 
\begin{align*}
 \mathrm{char}_{\widehat{\mathcal{O}}^\mathrm{ur} [[\Gamma_{F,\, \max}]]}(\mathrm{Gal}( M_{\Sigma_F}/F_{\max} F_\psi )_\psi \widehat{\otimes}_{\mathcal{O}} \widehat{\mathcal{O}}^\mathrm{ur} ) \subset (L_{p, \Sigma_F}(\psi ))
\end{align*}
predicted by the Iwasawa main conjecture for $\psi$  \eqref{eq:imc_CM_abelian} when the Leopoldt conjecture for $F$ and $p$  holds true, adopting the method of Eisenstein congruences on the unitary group $\mathrm{GU}(2,1)$. 
Note that the Iwasawa algebra $\widehat{\mathcal{O}}^\mathrm{ur} [[\Gamma_{F,\, \max}]]$ is isomorphic to the ring of formal power series in $d+1$ variables under the Leopoldt conjecture for $F$ and $p$. 
Meanwhile, Hida obtains, under other technical assumptions, an equality between the algebraic characteristic ideal and the analytic ideal \cite[{\sc Theorem}]{hida2006} in the anticyclotomic Iwasawa algebra, which is a $d$-variable quotient of the Iwasawa algebra $\widehat{\mathcal{O}}^\mathrm{ur} [[\Gamma_{F,\, \max}]]$. A standard specialisation argument combined with these two results implies the desired equality \eqref{eq:imc_CM_abelian} in $\widehat{\mathcal{O}}^\mathrm{ur} [[\Gamma_{F,\, \max}]]$ if the branch character $\psi^{\mathrm{gal}}$ is anticyclotomic. 
When the CM field $F$ under consideration is a composite of a totally real field and an imaginary quadratic field, 
Hsieh has another result \cite[Theorem 8.18]{Hsieh14} on the Iwasawa main conjecture for a certain branch character $\psi^\mathrm{gal}$ by using Rubin's equality of the  
two-variable Iwasawa main conjecture for an imaginary quadratic field \cite[Theorem 4.1 (i)]{Rubin91} in place of the anticyclotomic main conjecture. 

\bigskip
In the present article, we shall construct a $p$-adic Artin $L$-function $L_{p, \Sigma_F}(M(\rho ))$ associated to an Artin representation $\rho\colon G_F\rightarrow \mathrm{Aut}_{E}\, V_\rho$ when $F$ is absolutely unramified at $(p)$, the field $F_\rho$ corresponding to the kernel of $\rho$ is also a CM field, and $\rho$ is unramified at every finite place of $F$ lying above $(p)$. 
Since the Galois character $\psi^{\mathrm{gal}}$ of finite order is an Artin representation of degree $1$, Theorem A below can be regarded as an extension of Katz, Hida and Tilouine's theorem stated above under such a restricted situation.
 
\begin{thmA}[a part of Theorem \ref{theorem:integrality_of_CMp-adicL_Artin}] \label{thm:MainA}
Suppose that $F$ is absolutely unramified at $(p)$. Let $\rho$ be an Artin representation of $G_F$ unramified at every finite place of $F$ lying above $(p)$, and suppose that 
the field $F_\rho $ corresponding to the kernel of $\rho$ is also a CM field. 
Assume that the Iwasawa main conjecture is true for any intermediate field $K$ of the extension $F_\rho /F$ and any branch character $\psi^\mathrm{gal}$ of $G_K$ factoring through $\mathrm{Gal}(F_\rho/K)$. 
Then there exists a unique element $L_{p, \Sigma_F }(M(\rho) )$ of $\widehat{\mathcal{O}}^\mathrm{ur} [[\Gamma_{F,\, \max}]]$ satisfying
\begin{equation} \label{equation:interpolation_property_TheoremA}
\!\!\!\!\!\! \frac{\eta^\mathrm{gal} (L_{p,\Sigma_F} (M(\rho )))}{\bigl(\Omega^{w_\eta \mathsf{t}+2\mathsf{r}_\eta}_{\mathrm{CM}, p,F} \bigr)^{r(\rho )}} = 
 i^{r(\rho)\lvert -w_\eta \mathsf{t}-\mathsf{r}_\eta\rvert} \prod_{v\in \Sigma_{F,p}} \! \!\!\mathrm{Eul}_v (\rho\otimes \eta,0) 
 \dfrac{\Lambda (\rho\otimes \eta ,0)}{\bigl(\Omega^{w_\eta\mathsf{t}+2\mathsf{r}_\eta}_{\mathrm{CM}, \infty, F} \bigr)^{r(\rho )}}
\end{equation}
with
\begin{align*}
&  \mathrm{Eul}_v(\rho\otimes \eta,0) 
=\begin{cases}
 L_{v^c}(\rho\otimes \eta,0)^{-1}L_v((\rho\otimes \eta)^\vee, 1)^{-1} & \text{if $\eta$ is unramified at $v$}, \\
\epsilon((\rho \otimes \eta)_v,\mathbf{e}_{F,v},dx_v)^{-1} & \text{if $\eta$ is ramified at $v$},  
\end{cases} \\
& \Lambda (\rho\otimes \eta ,s) = L_\infty(\rho\otimes \eta,s)L(\rho \otimes \eta, s)
\end{align*}
for each algebraic Hecke character $\eta$ satisfying the conditions (i) and (ii) appearing in {\em Theorem}. 
\end{thmA}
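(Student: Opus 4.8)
The plan is to follow Greenberg's strategy for totally real fields: assemble $L_{p,\Sigma_F}(\rho)$ out of the abelian $p$-adic Hecke $L$-functions provided by Theorem~\ref{thm:KHT} by means of Brauer induction, and then extract integrality from the Iwasawa main conjecture. First, applying Brauer's induction theorem to the finite group $G=\mathrm{Gal}(F_\rho/F)$, through which $\rho$ factors, I would write the virtual character of $\rho$ as $\rho=\sum_j n_j\,\mathrm{Ind}_{G_{K_j}}^{G_F}\chi_j^{\mathrm{gal}}$ with $n_j\in\mathbb{Z}$, where each $K_j$ is an intermediate field of $F_\rho/F$ and each $\chi_j^{\mathrm{gal}}$ is a finite-order character of $G_{K_j}$ factoring through $\mathrm{Gal}(F_\rho/K_j)$. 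A key point here is that every such $K_j$ is automatically a CM field: a subfield of the CM field $F_\rho$ is CM or totally real, and $K_j$ cannot be totally real because it contains the totally imaginary field $F$. Moreover the splitting of $(p)$ in $F/F^+$, the $p$-ordinarity of the CM type (take $\Sigma_{K_j}$ to be the set of all extensions of the embeddings in $\Sigma_F$), and unramifiedness above $p$ are all inherited by $(K_j,\Sigma_{K_j},\chi_j^{\mathrm{gal}})$, so under the linear-disjointness hypothesis ($F_\rho\cap F_{\max}=F$) each $\chi_j^{\mathrm{gal}}$ is a branch character to which Theorem~\ref{thm:KHT} applies, and the Iwasawa main conjecture holds for $(K_j,\chi_j^{\mathrm{gal}})$ by assumption.

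Since $F\subseteq K_j$ one has $F_{\max}K_j\subseteq (K_j)_{\max}$, and together with $F_{\max}\cap K_j=F$ this furnishes a surjection $\pi_j\colon\Gamma_{K_j,\, \max}\twoheadrightarrow\Gamma_{F,\, \max}$ and hence a ring homomorphism $\pi_{j,\ast}\colon\widehat{\mathcal{O}}^\mathrm{ur}[[\Gamma_{K_j,\, \max}]]\to\widehat{\mathcal{O}}^\mathrm{ur}[[\Gamma_{F,\, \max}]]$. I would then define
\[ L_{p,\Sigma_F}(\rho):=\prod_j\bigl(\pi_{j,\ast}L_{p,\Sigma_{K_j}}(\chi_j)\bigr)^{n_j}, \]
which a priori lies only in the total ring of fractions $\mathcal{Q}$ of $\widehat{\mathcal{O}}^\mathrm{ur}[[\Gamma_{F,\, \max}]]$, because some $n_j$ may be negative. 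For an admissible $\eta$ one has $\eta^{\mathrm{gal}}\bigl(\pi_{j,\ast}L_{p,\Sigma_{K_j}}(\chi_j)\bigr)=(\eta\circ N_{K_j/F})^{\mathrm{gal}}\bigl(L_{p,\Sigma_{K_j}}(\chi_j)\bigr)$, and the base-change character $\eta\circ N_{K_j/F}$ is again admissible over $K_j$ (its infinity type and the disjointness condition transport in the obvious way), so Theorem~\ref{thm:KHT} evaluates this explicitly. Multiplying over $j$ and using (i) Artin formalism $L(\rho\otimes\eta,s)=\prod_j L\bigl(\chi_j\cdot(\eta\circ N_{K_j/F}),s\bigr)^{n_j}$ and the analogous identity for the archimedean $L$-factors; (ii) the inductivity of Deligne's local constants and of the local $L$-factors, so that the Euler-type factors $\mathrm{Eul}$ at the places in $\Sigma_{F,p}$ combine correctly up to Langlands' $\lambda$-factors; (iii) the period relation of Section~\ref{ssc:period_relation} expressing $\Omega_{\mathrm{CM},\infty,K_j}$ and $C_{\mathrm{CM},p,K_j}$ in terms of $\Omega_{\mathrm{CM},\infty,F}$ and $C_{\mathrm{CM},p,F}$ up to algebraic factors — so that, thanks to $\sum_j n_j[K_j:F]=r(\rho)$, the periods over the $K_j$ assemble into the $r(\rho)$-th powers occurring in \eqref{equation:interpolation_property_TheoremA}; and (iv) the conductor--discriminant formula together with the bookkeeping of the unit indices $(\mathfrak{r}_{K_j}^\times:\mathfrak{r}_{K_j^+}^\times)$ and the factors $\sqrt{|D_{K_j^+}|}$, one verifies that $\eta^{\mathrm{gal}}(L_{p,\Sigma_F}(\rho))$ is precisely the right-hand side of \eqref{equation:interpolation_property_TheoremA}. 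Because that expression depends only on $\rho$, and because the admissible $\eta^{\mathrm{gal}}$ are Zariski-dense in $\operatorname{Spec}\widehat{\mathcal{O}}^\mathrm{ur}[[\Gamma_{F,\, \max}]]$, the element $L_{p,\Sigma_F}(\rho)$ is independent of the chosen Brauer decomposition and is the unique element of $\mathcal{Q}$ with the stated interpolation property.

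It remains to show that $L_{p,\Sigma_F}(\rho)$ in fact lies in $\widehat{\mathcal{O}}^\mathrm{ur}[[\Gamma_{F,\, \max}]]$, and here the Iwasawa main conjecture is indispensable. I would introduce the $\rho$-isotypic component
\[ \mathfrak{X}_{F,\rho}:=\mathrm{Hom}_{\mathcal{O}[G]}\bigl(M_\rho,\ \mathrm{Gal}(M_{\Sigma_F}F_\rho/F_{\max}F_\rho)\widehat{\otimes}_{\mathbb{Z}_p}\widehat{\mathcal{O}}^\mathrm{ur}\bigr), \]
where $M_\rho$ is an $\mathcal{O}$-lattice in $V_\rho$; it is a finitely generated torsion $\widehat{\mathcal{O}}^\mathrm{ur}[[\Gamma_{F,\, \max}]]$-module, since $\mathrm{Gal}(M_{\Sigma_F}F_\rho/F_{\max}F_\rho)$ is (compare the $\psi$-isotypic module appearing in the Iwasawa main conjecture), so $\mathrm{char}_{\widehat{\mathcal{O}}^\mathrm{ur}[[\Gamma_{F,\, \max}]]}(\mathfrak{X}_{F,\rho})$ is a genuine nonzero ideal of the Iwasawa algebra. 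The assignment $\rho\mapsto\mathfrak{X}_{F,\rho}$ is additive on direct sums; Shapiro's lemma identifies $\mathfrak{X}_{F,\,\mathrm{Ind}_{G_{K_j}}^{G_F}\chi_j}$ with the $\chi_j$-isotypic component of the corresponding Iwasawa module over $F_{\max}K_j$; and a descent along the $\mathbb{Z}_p$-extension $(K_j)_{\max}/F_{\max}K_j$ relates the latter, up to the control homomorphisms, to the module $\mathfrak{X}_{K_j,\chi_j}:=\mathrm{Gal}(M_{\Sigma_{K_j}}F_{\chi_j}/(K_j)_{\max}F_{\chi_j})_{\chi_j}\widehat{\otimes}_{\mathcal{O}}\widehat{\mathcal{O}}^\mathrm{ur}$ occurring in the Iwasawa main conjecture for $(K_j,\chi_j)$, whose characteristic ideal is $(L_{p,\Sigma_{K_j}}(\chi_j))$ by hypothesis. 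Assembling these ingredients one obtains
\[ \mathrm{char}_{\widehat{\mathcal{O}}^\mathrm{ur}[[\Gamma_{F,\, \max}]]}(\mathfrak{X}_{F,\rho})=\prod_j\pi_{j,\ast}\Bigl(\mathrm{char}_{\widehat{\mathcal{O}}^\mathrm{ur}[[\Gamma_{K_j,\, \max}]]}(\mathfrak{X}_{K_j,\chi_j})\Bigr)^{n_j}=\bigl(L_{p,\Sigma_F}(\rho)\bigr) \]
as fractional ideals of $\mathcal{Q}$. The left-hand side being an honest ideal of $\widehat{\mathcal{O}}^\mathrm{ur}[[\Gamma_{F,\, \max}]]$, and this ring being a Noetherian regular (hence factorial) domain, its generator $L_{p,\Sigma_F}(\rho)$ must belong to $\widehat{\mathcal{O}}^\mathrm{ur}[[\Gamma_{F,\, \max}]]$; uniqueness was already established above.

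The two genuinely new features, absent from Greenberg's totally real case, are the period relation used in step (iii) and the descent step just above. Over a totally real field the interpolation factors are essentially algebraic, whereas here one must prove that the transcendental complex and $p$-adic CM periods attached to $K_j$ factor, up to algebraic numbers, through those of $F$ compatibly with the tower $F\subseteq K_j\subseteq F_\rho$ and with the auxiliary totally imaginary element $\delta$; this is the content of Section~\ref{ssc:period_relation}, and reconciling it with the inductive behaviour of the local $\epsilon$- and $\lambda$-factors is the most delicate point of the interpolation. On the algebraic side, because $(K_j)_{\max}$ is in general strictly larger than $F_{\max}K_j$, the passage from $\mathfrak{X}_{K_j,\chi_j}$ to $\mathfrak{X}_{F,\,\mathrm{Ind}\,\chi_j}$ is not a bare application of Shapiro's lemma but a genuine control/descent argument, and one must verify that the kernels and cokernels of the relevant control maps are pseudo-null (or trivial), so that the characteristic ideals transform exactly by $\pi_{j,\ast}$. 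The remaining normalisation bookkeeping is delicate but routine; I expect (iii) and the descent step to be the main obstacles.
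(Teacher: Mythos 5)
Your overall architecture coincides with the paper's: Brauer induction, gluing the Katz--Hida--Tilouine $p$-adic Hecke $L$-functions via the maps $\mathrm{pr}_j$, matching the interpolation factors, and then forcing integrality from the abelian main conjectures via a decomposition of characteristic ideals plus a descent from $\Gamma_{F_j,\,\max}$ to $\mathrm{Gal}(F_jF_{\max}/F_j)$. Two points, however, are left genuinely open. First, at a ramified $v\in\Sigma_{F,p}$ you invoke inductivity of Deligne's local constants ``up to Langlands' $\lambda$-factors,'' but the target formula \eqref{equation:interpolation_property_TheoremA} has no room for any residual $\lambda$-factor: you must either prove that every $\lambda_{F_{j,\tilde v}/F_v}(\mathbf{e}_{F,v}^{(2\delta)},dx_v)$ is trivial (plausible here, since the extensions are unramified and the additive character has level zero, but it has to be checked against the precise normalisations), or do what the paper does, namely compute the product of local constants directly as generalised Gauss sums and invoke an extension of the Davenport--Hasse relation to characters of conductor $\mathfrak{P}_v^{e(\eta_v)}$ with $e(\eta_v)>1$ (Theorem~\ref{theorem:D-H_relation_overW_n}); the signs $(-1)^{e(\eta_v)(f_{\tilde v\mid v}-1)}$ produced there cancel against the determinant of Frobenius on $\mathrm{Ind}_{G_{F_j}^g\cap G_{F_v}}^{G_{F_v}}(\psi_j^{\mathrm{gal}})^g$, and this cancellation is exactly the content your ``up to $\lambda$-factors'' hides. (A minor related point: the factors $\sqrt{|D_{F_j^+}|}$ and $(\mathfrak{r}_{F_j}^\times:\mathfrak{r}_{F_j^+}^\times)$ are simply normalised away in the definition \eqref{equation:katz2} of $L_{p,\Sigma_{F_j}}(\psi_j)$, so no conductor--discriminant bookkeeping is needed.)

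The more serious gap is in the integrality step. Brauer's decomposition is only a \emph{virtual} identity of representations over $E$; it does not respect integral lattices. To pass from additivity of $\rho\mapsto\mathfrak{X}_{F,\rho}$ on direct sums to the product formula $\mathrm{char}(\mathfrak{X}_{F,\rho})=\prod_j\pi_{j,\ast}(\mathrm{char}(\mathfrak{X}_{K_j,\chi_j}))^{n_j}$ you must compare the Iwasawa modules attached to the two lattices $T\oplus\bigoplus_{n_j<0}\mathrm{Ind}\,T_{\chi_j}^{\oplus(-n_j)}$ and $\bigoplus_{n_j>0}\mathrm{Ind}\,T_{\chi_j}^{\oplus n_j}$ inside the \emph{same} representation. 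Their characteristic ideals agree automatically only up to $\mu$-invariants, and the equality of $\mu$-invariants is precisely the content of Lemma~\ref{lemma:mu_lattice}, proved by combining the global and local Euler--Poincar\'e characteristic formulas with the surjectivity of the localisation map (Lemma~\ref{lemma:surjectivity_localization_map}); your proposal omits this entirely. Likewise, in the descent along $(K_j)_{\max}/F_{\max}K_j$ it is not enough that the control maps have pseudo-null kernels and cokernels: one also needs the duals of the intermediate (strict) Selmer groups to have no nontrivial pseudo-null submodules, which forces a careful inductive choice of the regular sequence $\gamma_1-1,\dots,\gamma_{r_j}-1$ generating $\ker\mathrm{pr}_j$ (the conditions [I] and [II] in Section~\ref{ssc:integrality}). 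Without these two inputs the identity of characteristic ideals, and hence the integrality of $L_{p,\Sigma_F}(M(\rho))$, is only established modulo $\mu$-invariants and modulo pseudo-null errors.
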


Note that, since both $F$ and $F_\rho$ are CM fields, each intermediate field $K$ of $F_\rho/F$ is also a CM field. Here $L(\rho\otimes \eta,s)$ is the Hasse--Weil $L$-function of the pure motive $M(\rho)\otimes M(\eta)$, and $\epsilon((\rho\otimes \eta)_v,\mathbf{e}_{F,v}, dx_v)$ is Deligne's local constant at $v\in \Sigma_{F,p}$; see \eqref{equation:L_factor} and \eqref{equation:local_constant} for their precise definitions. The completed Artin $L$-function $\Lambda(\rho\otimes \eta,s)$ is defined as the product of $L(\rho\otimes \eta,s)$ and the archimedean $L$-factor $L_\infty(\rho\otimes \eta,s)$, quite similarly to $\Lambda(\psi \eta,s)$. According to our construction of the $p$-adic Artin $L$-function $L_{p,\Sigma_F}(M(\rho))$, the Iwasawa main conjecture for $\rho$ shall be simultaneously verified.

\begin{thmA}[a part of Theorem \ref{theorem:integrality_of_CMp-adicL_Artin}] \label{thm:MainB}
Retain the notation and assumptions from Theorem A. Then 
\begin{align*}
 \mathrm{char}_{\widehat{\mathcal{O}}^\mathrm{ur}[[\Gamma_{F,\, \max}]]}(\mathrm{Gal}( M_{\Sigma_F}/F_\rho F_{\max} )_\rho \widehat{\otimes}_{\mathcal{O}}\widehat{\mathcal{O}}^\mathrm{ur}  )= (L_{p, \Sigma_F }(M(\rho) ))
\end{align*}
holds as an equality of ideals of $\widehat{\mathcal{O}}^\mathrm{ur}[[\Gamma_{F,\, \max}]]$, where the subscript $\rho$ denotes the isotypic quotient of type $\rho$.

\end{thmA}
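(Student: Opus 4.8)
The plan is to prove Theorem~\ref{thm:MainB} simultaneously with the construction asserted in Theorem~\ref{thm:MainA} (both being parts of Theorem~\ref{theorem:integrality_of_CMp-adicL_Artin}) by a Brauer induction that reduces everything to the assumed abelian Iwasawa Main Conjecture. Set $G=\mathrm{Gal}(F_\rho/F)$. By Brauer's induction theorem we may write $\rho=\sum_i n_i\,\mathrm{Ind}_{H_i}^{G}\psi_i$ in the representation ring of $G$, with $n_i\in\mathbb{Z}$, $H_i\le G$ and each $\psi_i$ one-dimensional; let $K_i\subseteq F_\rho$ be the subfield fixed by $H_i$. Since $F_\rho/F$, and hence each $\psi_i$, is unramified above $(p)$, the Galois character $\psi_i^{\mathrm{gal}}$ is at worst tamely ramified above $(p)$ and, after verifying (or reducing to) the linear-disjointness clause, is a branch character of $G_{K_i}$ factoring through $\mathrm{Gal}(F_\rho/K_i)$; thus $L_{p,\Sigma_{K_i}}(\psi_i)$ exists by Theorem~\ref{thm:KHT} and the abelian Main Conjecture holds for it by the hypothesis of Theorem~\ref{thm:MainA}.

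First I would set up the analytic Artin formalism. The natural map $\Gamma_{K_i,\max}\to\Gamma_{F,\max}$ yields a corestriction operation $\mathrm{cor}_{K_i/F}$, and using the inductivity of Artin $L$-functions one defines $L_{p,\Sigma_F}(M(\rho))$ by the Brauer formula $\prod_i\mathrm{cor}_{K_i/F}\bigl(L_{p,\Sigma_{K_i}}(\psi_i)\bigr)^{n_i}$, a priori an element of the fraction field of $\widehat{\mathcal{O}}^{\mathrm{ur}}[[\Gamma_{F,\max}]]$. Plugging the interpolation formula of Theorem~\ref{thm:KHT} into this product and combining it with the additivity and inductivity of the complex Hasse--Weil $L$-values and $\Gamma$-factors, the inductivity of Deligne's local $\varepsilon$-factors at the primes in $\Sigma_{F,p}$ (up to Langlands $\lambda$-factors, which are $p$-adic units since $F_\rho/F$ is unramified above $(p)$), and the period relations attached to the fixed $\delta$ (relating the CM periods of $F$ and the $K_i$), one checks that $\eta^{\mathrm{gal}}(L_{p,\Sigma_F}(M(\rho)))$ equals the right-hand side of \eqref{equation:interpolation_property_TheoremA} for every $\eta$ in the interpolation range. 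Since these $\eta$ are Zariski dense, $L_{p,\Sigma_F}(M(\rho))$ is independent of the Brauer decomposition and is the unique element with the interpolation property of Theorem~\ref{thm:MainA}.

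Next I would set up the algebraic Artin formalism. Writing $X_\rho=\mathrm{Gal}(M_{\Sigma_F}/F_\rho F_{\max})_\rho$ and letting $X_{\psi_i}$ denote the analogous Iwasawa module over $K_i$, one shows, via Frobenius reciprocity and Shapiro's lemma for the $\mathrm{Gal}(F_\rho F_{\max}/F)$-module $\mathrm{Gal}(M_{\Sigma_F}/F_\rho F_{\max})$ together with the compatibility of the ramification condition ``unramified outside $\Sigma_{F,p}$'' under the base changes $F\rightsquigarrow K_i$, that $\mathrm{char}(X_\rho\,\widehat{\otimes}_{\mathcal{O}}\,\widehat{\mathcal{O}}^{\mathrm{ur}})=\prod_i\mathrm{cor}_{K_i/F}\bigl(\mathrm{char}(X_{\psi_i}\,\widehat{\otimes}_{\mathcal{O}}\,\widehat{\mathcal{O}}^{\mathrm{ur}})\bigr)^{n_i}$ as fractional ideals of the fraction field of $\widehat{\mathcal{O}}^{\mathrm{ur}}[[\Gamma_{F,\max}]]$; this already requires controlling how these Iwasawa modules behave under the change of $\mathbb{Z}_p$-extension tower $K_{i,\max}\rightsquigarrow F_{\max}$ and the associated specialisations. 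Combining this with the assumed abelian Main Conjecture $\bigl(L_{p,\Sigma_{K_i}}(\psi_i)\bigr)=\mathrm{char}(X_{\psi_i}\,\widehat{\otimes}_{\mathcal{O}}\,\widehat{\mathcal{O}}^{\mathrm{ur}})$ for every $i$ and with the previous paragraph, we obtain $\bigl(L_{p,\Sigma_F}(M(\rho))\bigr)=\mathrm{char}(X_\rho\,\widehat{\otimes}_{\mathcal{O}}\,\widehat{\mathcal{O}}^{\mathrm{ur}})$ as an identity of fractional ideals.

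The hard part, and the step where the CM situation genuinely departs from Greenberg's totally real one, is to upgrade this to an equality of honest ideals of $\widehat{\mathcal{O}}^{\mathrm{ur}}[[\Gamma_{F,\max}]]$; equivalently, to prove $L_{p,\Sigma_F}(M(\rho))\in\widehat{\mathcal{O}}^{\mathrm{ur}}[[\Gamma_{F,\max}]]$, which is the integrality assertion of Theorem~\ref{thm:MainA}. Because the Brauer decomposition carries negative coefficients $n_i$, the cancellation of potential denominators is not formal. I would argue one height-one prime $\mathfrak{P}$ at a time: a pole of the fractional ideal could occur only along primes lying in the exceptional (trivial-zero) locus of the abelian $p$-adic $L$-functions $\mathrm{cor}_{K_i/F}(L_{p,\Sigma_{K_i}}(\psi_i))$, and ruling it out requires finer structural input on the modules $\mathrm{Gal}(M_{\Sigma_F}/\cdots)$---in particular sufficient control of their pseudo-null submodules and of their local behaviour at such primes, reducing at each prime to the abelian Main Conjecture. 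Over a CM field this is subtler than over a totally real field: the vanishing of pseudo-null submodules available there is not automatic, and the several $\mathbb{Z}_p$-extensions, the Leopoldt defect, and the CM periods all intervene---these are the ``new difficulties'' alluded to in the abstract. Once integrality is established both sides are genuine principal ideals of $\widehat{\mathcal{O}}^{\mathrm{ur}}[[\Gamma_{F,\max}]]$ and the identity of fractional ideals becomes the asserted equality, which is Theorem~\ref{thm:MainB}.
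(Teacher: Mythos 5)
Your overall plan correctly identifies the Brauer-induction skeleton of the argument, but your account of how integrality is obtained is not the paper's logic, and it contains a confusion that, left as stated, would fail. Once you have established, as you outline in your third paragraph, the equality of fractional ideals
\[
\bigl(L_{p,\Sigma_F}(M(\rho))\bigr)=\mathrm{char}_{\widehat{\mathcal{O}}^{\mathrm{ur}}[[\Gamma_{F,\max}]]}\bigl(X_\rho\widehat\otimes_{\mathcal{O}}\widehat{\mathcal{O}}^{\mathrm{ur}}\bigr),
\]
there is nothing left to ``upgrade'': the right-hand side is the characteristic ideal of a finitely generated torsion module and is therefore an integral ideal by definition, so the equality already forces $L_{p,\Sigma_F}(M(\rho))\in\widehat{\mathcal{O}}^{\mathrm{ur}}[[\Gamma_{F,\max}]]$ and proves Theorem~B outright. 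Your penultimate paragraph, which treats integrality as a separate problem to be attacked one height-one prime at a time via exceptional zeros of the abelian $p$-adic $L$-functions, is therefore superfluous; worse, it conflates two different places where pseudo-null submodules enter. Pseudo-nullity control is not used in the paper to rule out poles of a fractional ideal. It enters in establishing the algebraic decomposition itself, namely the descent equality comparing $\mathrm{char}_{\mathcal{O}[[\Gamma_{F_j,\max}]]}$ of a Selmer group over the full $\Gamma_{F_j,\max}$ with its specialisation modulo $\ker(\mathrm{pr}_j)$: the paper proves this via an inductive control-theorem argument (conditions [I] and [II] in the proof of Theorem~4.7), where [II] is precisely the ``no nontrivial pseudonull submodule'' statement borrowed from their earlier work and from Greenberg.

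There is a second, more substantive gap in your algebraic paragraph. Brauer's theorem gives a virtual decomposition of $\rho$ over the coefficient field, but it does not produce compatible $\mathcal{O}$-lattices: the lattice $T$ underlying $A$ and the lattices $\bigoplus\mathrm{Ind}\,T_{\psi_j}^{\oplus a_j}$ need not be commensurable, so a naive ``Frobenius reciprocity and Shapiro's lemma'' argument only gives the characteristic-ideal identity up to $\mu$-invariant ambiguity. The paper's Lemma~4.6 (lattice-invariance of the $\mu$-invariant, proved via Euler--Poincar\'e characteristic and Poitou--Tate duality computations) is exactly what removes this ambiguity, and it is a genuine technical ingredient you have omitted. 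Finally, and bearing on the analytic side you sketch for Theorem~A: your appeal to inductivity of $\epsilon$-factors ``up to Langlands $\lambda$-factors, which are $p$-adic units'' is not enough, because the interpolation formula demands an exact identity, not one up to units; the paper instead proves a Davenport--Hasse relation over truncated Witt vectors (Theorem~A.1) and tracks the resulting signs explicitly so that they cancel against the $\mathrm{Frob}$-determinant signs coming from the induced characters.
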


Unfortunately we can hardly apply Hsieh's results \cite[Theorem 8.17, Theorem 8.18]{Hsieh14} to fulfill the assumption of Theorems~\ref{thm:MainA} and \ref{thm:MainB} on the (abelian) Iwasawa main conjecture for intermediate fields of $F_\rho/F$; see Lemma~\ref{lem:CM_ext} and Remark~\ref{rem:failure_ab_IMC} for details.

\medskip

\subsubsection*{{\bf Strategy of the proof}}
Let us briefly explain our strategy to prove Theorems A and B. 
One of the key ingredients is {\em Brauer's induction theorem} \cite[Theorem (15.9)]{CR81} appearing in representation theory of finite groups. 
Indeed, it provides us with a virtual decomposition  
\begin{align} \label{eq:Brauer_rho}
\rho= \sum^s_{j=1} a_j \, \mathrm{Ind}^{G_F}_{G_{F_j}}\,  \psi_j^\mathrm{gal}  
\end{align} 
of the Artin representation $\rho$ under consideration, where $a_j$ is an integer, $F_j$ is an intermediate field of $F_\rho /F$, and $\psi_j^\mathrm{gal}\colon G_{F_j} \rightarrow \mathbb{C}^\times$ is a character of finite order factoring through $\mathrm{Gal}(F_\rho/F_j)$ for each $j$.   
Using \eqref{eq:Brauer_rho}, we formally define our $p$-adic Artin $L$-function 
$ L_{p,\Sigma_F}(\rho )$ 
as the product 
$\prod_{j=1}^s 
\mathrm{pr}_j (L_{p,\Sigma_{F_j}}(\psi_j))^{a_j}$ 
up to multiples of minor constants,
 where $\psi_j$ denotes the algebraic Hecke character on $F_j$ corresponding to $\psi^\mathrm{gal}_j$ via global class field theory, and  $\mathrm{pr}_j$ is the composition
\begin{align*}
 \widehat{\mathcal{O}}^\mathrm{ur}[[\Gamma_{F_j,\,\max}]]\twoheadrightarrow  \widehat{\mathcal{O}}^\mathrm{ur}[[\mathrm{Gal}(F_jF_{\max}/F_j)]] \hookrightarrow \widehat{\mathcal{O}}^\mathrm{ur}[[\Gamma_{F,\, \max}]].
\end{align*}
 Note that  \eqref{eq:Brauer_rho} is just a {\em virtual} decomposition, and the integers $a_j$ can be {\em negative}.
Hence $L_{p,\Sigma_F}(\rho )$ is constructed as an element of the {\em field of fractions} of $\widehat{\mathcal{O}}^\mathrm{ur}[[\Gamma_{F,\, \max}]]$, and we do not know a priori whether it is contained in $\widehat{\mathcal{O}}^\mathrm{ur}[[\Gamma_{F,\, \max}]]$ or not; that is, the {\em integrality} of the $p$-adic Artin $L$-function is not clear at this stage. Meanwhile, we can verify that, on the algebraic side, the characteristic ideal has a quite similar decomposition
\begin{align}
 \mathrm{char}_{\widehat{\mathcal{O}}^\mathrm{ur}[[\Gamma_{F,\, \max}]]} & (\mathrm{Gal}( M_{\Sigma_F}/F_\rho F_{\max} )_\rho  \widehat{\otimes}_\mathcal{O}\widehat{\mathcal{O}}^\mathrm{ur} ) \label{equation:descent_argument_algebraic_side_introduction}\\
&= \displaystyle \prod_{j=1}^s 
\mathrm{pr}_j \bigl( \mathrm{char}_{\widehat{\mathcal{O}}^\mathrm{ur}[[\Gamma_{F_j,\,\max}]]}(\mathrm{Gal}( M_{\Sigma_F}/F_j F_{\max} )_{\psi_j}  \widehat{\otimes}_{\mathcal{O}}\widehat{\mathcal{O}}^\mathrm{ur}) \bigr)^{a_j}.  \nonumber
\end{align}
Although the product in the right-hand side of \eqref{equation:descent_argument_algebraic_side_introduction} is just a fractional ideal, 
 the left-hand side of \eqref{equation:descent_argument_algebraic_side_introduction} is indeed an {\em integral} ideal of $\widehat{\mathcal{O}}^\mathrm{ur}[[\Gamma_{F,\, \max}]]$ by definition. Therefore, admitting the Iwasawa main conjecture for {\em every} branch character $\psi_j^\mathrm{gal}$ appearing in \eqref{eq:Brauer_rho}, 
we may conclude that the fractional principal ideal 
$( L_{p,\Sigma_F}(\rho ))$ must be integral in $\widehat{\mathcal{O}}^\mathrm{ur}[[\Gamma_{F,\, \max}]]$. We also emphasise that, in order to check that our naively constructed $p$-adic Artin $L$-function $L_{p,\Sigma_F}(M(\rho) )$ satisfies the interpolation property \eqref{equation:interpolation_property_TheoremA}, we need to put together the interpolation formula of each $L_{p,\Sigma_{F_j}}(\psi_j)$ into the desired form 
\eqref{equation:interpolation_property_TheoremA}. This tedious task will be achieved in Section~\ref{section:matching}. 

\subsubsection*{{\bf Remarks in comparison with preceding results}}
The prototype of the present work is Ralph Greenberg's remarkable study of $p$-adic Artin $L$-functions over totally real number fields $F^+$ \cite{Greenberg83, Greenberg14}. 
He has constructed a one-variable $p$-adic $L$-function associated to a totally even Artin representation $\rho$ of $G_{F^+}$. The construction based upon the Brauer induction principle has already appeared in \cite{Greenberg83}, and we owe the main strategy of the proof of Theorems A and B to \cite{Greenberg83}. There appear, however, several noteworthy difficulties which did not matter in the case over a totally real field. 
We shall explain technical difficulties appearing in our generalisation of Greenberg's work to the case over a CM field. 

\par 
Firstly, the Brauer induction argument in Greenberg's case is covered by one-variable objects, but in the present work, 
the number of variables changes drastically throughout the Brauer induction argument. Here let us admit Leopoldt's conjecture for simplicity. 
Then $p$-adic Hecke $L$-functions $L_{p,\Sigma_{F^+_j}}(\psi_j)$ appearing in the construction of the $p$-adic Artin $L$-function are of one variable when $F^+_j$ is totally real. Contrastingly in our CM situation, 
the $p$-adic Hecke $L$-functions $L_{p,\Sigma_{F_j}}(\psi_j)$ are of $d[F_j:F]+1$ variables for each $j$. Thus, in order to take the product of $p$-adic Hecke $L$-functions, we need to specialise each $L_{p,\Sigma_{F_j}}(\psi_j)$ into the $(d+1)$-variable one via the projection $\mathrm{pr}_j$, but the characteristic ideals appearing in the right-hand side of \eqref{equation:descent_argument_algebraic_side_introduction} do not behave compatibly in general with respect to such specialisation procedures.
To overcome this difficulty, we adopt techniques developed in our previous work \cite{haraochiai} on the cyclotomic Iwasawa main conjecture for Hilbert modular forms with complex multiplication. In particular, inductive descent arguments developed there play crucial roles in the proof of the key decomposition formula \eqref{equation:descent_argument_algebraic_side_introduction} on the algebraic side; see Section~\ref{ssc:integrality} for details.
 
\par 
Secondly, verification of the desired interpolation property \eqref{equation:interpolation_property_TheoremA} becomes much harder than that dealt with in \cite{Greenberg83}. Indeed, only $L$-values concern the interpolation properties of the $p$-adic Artin $L$-functions for totally real fields, and thus one readily verifies it by just using inductivity of the Artin $L$-functions. 
However, there are several other factors appearing in the interpolation properties of the $p$-adic Artin $L$-functions for CM fields; for example, the gamma factor $L_\infty(\rho\otimes \eta,0)$, the modified complex CM periods $\Omega_{\mathrm{CM},\infty,F}$, the normalised $p$-adic CM periods $\Omega_{\mathrm{CM},p,F}$, the modified $p$-Euler factor $\mathrm{Eul}_v(\rho\otimes \eta,0)$ and so on. All of these factors do not appear in \cite{Greenberg83}, and the matching of these factors requires intricate computations.
Amongst them, the matching of Deligne's local constants (or generalised Gauss sums) requires quite delicate arguments and is not straightforward at all. To establish its matching, we settle in Appendix \ref{app:DH} an extension of classical Davenport--Hasse relation \cite[(0.8)]{DH} for Gauss sums to the case where the conductor of the multiplicative character is a {\em power} of a prime element (Theorem~\ref{theorem:D-H_relation_overW_n}), which seems to be of independent interest and will play important roles in the construction of $p$-adic Artin $L$-functions for more general motives in future works.

\subsubsection*{{\bf Notation}}
%
We mainly use the fraktur $\mathfrak{r}$ for the ring of integers 
of {\em an algebraic number field} (which is often regarded 
as the base field of a certain motive); 
the calligraphic $\mathcal{O}$ 
is kept to denote the ring of integers for {\em a $p$-adic field} 
(which is often regarded as the coefficient field of the $p$-adic 
realisation of a certain motive).
The absolute norm of a fractional ideal $\mathfrak{a}$ 
of an algebraic number field is denoted by $\mathcal{N}\mathfrak{a}$.
We fix an algebraic closure $\overline{\mathbb{Q}}$ of the rational number field $\mathbb{Q}$ and 
regard all algebraic number fields (that is, all finite extensions of $\mathbb{Q}$) as subfields of $\overline{\mathbb{Q}}$. 
For an algebraic number field $\mathsf{K}$, let  $\mathbb{A}_\mathsf{K}$ 
(resp.\ $\mathbb{A}_\mathsf{K}^\times$) denote the ring of ad\`eles 
(resp.\ the group of id\`eles) of $\mathsf{K}$. The finite part 
(resp.\ the archimedean part) of the ring of ad\`eles 
$\mathbb{A}_\mathsf{K}$ is denoted by $\mathbb{A}_{\mathsf{K},\mathrm{fin}}$ 
(resp.\ $\mathbb{A}_\mathsf{K}^\infty$). 

We shall fix notion on {\em the standard additive character} throughout this article. 
For each finite prime $v$ of $K$, we define $\mathbf{e}_{\mathsf{K},v}\colon  \mathsf{K}_{v} \longrightarrow \mathbb{C}^{\times}$ to be 
\begin{align} \label{eq:standard_additive}
\mathbf{e}_{\mathsf{K},v} (x )=\exp(-2\pi \sqrt{-1} \mathrm{Tr}_{\mathsf{K}/\mathbb{Q}}(\tilde{x}))
\end{align}
where $\tilde{x}$ is an arbitrary element of $\bigcup_{n=1}^{\infty} \mathfrak{P}_v^{-n}$ 
(regarded as a $\mathfrak{r}_\mathsf{K}$-submodule of $\mathsf{K}$)
such that $\tilde{x}-x$ is contained in the completion of the ring of integers of $\mathsf{K}$ at $v$. Here $\mathfrak{P}_v$ denotes the prime ideal of $\mathsf{K}$ corresponding to $v$.
 
Let $\mathbb{C}_p$ be the $p$-adic completion of the fixed algebraic closure $\overline{\mathbb{Q}}_p$ of $\mathbb{Q}_p$ and $\mathcal{O}_{\mathbb{C}_p}$ its 
ring of integers. For a finite flat extension $\mathcal{O}$ of $\mathbb{Z}_p$, 
we use the symbol ${\widehat{\mathcal{O}}}^\mathrm{ur}$  for the composite ring ${\mathcal{O}} \widehat{\mathbb{Z}}_p^{\mathrm{ur}}$, where $\widehat{\mathbb{Z}}_p^{\mathrm{ur}}=W(\overline{\mathbb{F}}_p)$ denotes the ring of Witt vectors with coefficients in $\overline{\mathbb{F}}_p$. Throughout the present article, we fix an isomorphism $\boldsymbol{\iota}\colon \mathbb{C}\xrightarrow{\, \sim \,}\mathbb{C}_p$.
 
We here adopt {\em geometric} normalisation 
of global class field theory. Specifically, for a finite abelian extension $\mathsf{L/K}$ of algebraic number fields, the Artin 
 map $(-,\mathsf{L/K}) \colon \mathbb{A}^\times_\mathsf{K} \rightarrow \mathrm{Gal}(\mathsf{L/K})$ 
is normalised so that it sends a uniformiser $\varpi_v$ at a finite place $v$ of $\mathsf{K}$ which does not ramify in  $\mathsf{L/K}$ 
to the {\em geometric} Frobenius element
$\mathrm{Frob}_v$ in $\mathrm{Gal}(\mathsf{L/K})$. In other words, 
the congruence 
$a^{(\varpi_{v}, \mathsf{L/K})^{-1}}\equiv a^{q_{v}} \, (\text{mod}\, \mathfrak{P}_v)$ 
holds for each $a$ in $\mathfrak{r}_\mathsf{K}$ where $q_{v}$ denotes 
the cardinality of the residue field at $v$.

%
The absolute Galois group 
$\mathrm{Gal}(\overline{\mathbb{Q}}/\mathsf{K})$ of an algebraic number field $\mathsf{K}$ is denoted by
$G_\mathsf{K}$. For a place $v$ of $\mathsf{K}$, the decomposition group 
and the inertia group at $v$ are denoted by $D_v$ and $I_v$, respectively. 
For a (possibly infinite) abelian Galois extension $\mathsf{L/K}$ 
of $\mathsf{K}$ and the ring of integer $\mathcal{O}$ of 
a finite extension of $\mathbb{Q}_p$, we define 
$\mathcal{O}[[\mathrm{Gal}(\mathsf{L/K})]]^\sharp$ as the
free $\mathcal{O}[[\mathrm{Gal}(\mathsf{L/K})]]$-module of rank $1$
on which $G_\mathsf{K}$ acts via the universal tautological character
\begin{align*}
G_\mathsf{K} \rightarrow
 \mathcal{O}[[\mathrm{Gal}(\mathsf{L/K})]]^\times ; \ g \mapsto
 g\vert_\mathsf{L}.
\end{align*}
We also define $\widehat{\mathcal{O}}^\mathrm{ur}[[\mathrm{Gal}(\mathsf{L/K})]]^\sharp$ in the same manner. Finally, we let $M^\vee$ denote the  Pontrjagin dual  $\mathrm{Hom}_{\mathbb{Z}_p}(M,\mathbb{Q}_p/\mathbb{Z}_p)$ of a $\mathbb{Z}_p$-module $M$.

\section{The $p$-adic Hecke $L$-functions for CM fields} \label{sc:Review_Hecke_L}

In this section, we construct the $p$-adic Hecke $L$-function $L_{p,\Sigma_F}(\psi)$ following Katz \cite{katz}, Hida and Tilouinen \cite{HT-katz}. 

\subsection{General settings} \label{ssc:setting_Hecke}

Let $p$ be an odd prime number and $F$ a CM field of degree $2d$ with the maximal totally real subfield $F^+$. We use the symbol $c$ for a unique generator of $\mathrm{Gal}(F/F^+)$, namely the complex conjugation on $F$. 
The composition of all $\mathbb{Z}_p$-extensions of $F$ in $\overline{\mathbb{Q}}$ is denoted as $F_{\max}$, and we write $\Gamma_{F,\, \max}$ for the Galois group of $F_{\max}/F$. Then, by using global class field theory, one readily observes  that $\Gamma_{F,\, \max}$ is a free $\mathbb{Z}_p$-module of rank equal to or greater than $d+1$; as is well known, the equality holds  if and only if the Leopoldt conjecture for $F$ and $p$ is true. 

Hereafter we impose the following two assumptions on $F$ and $p$;
\begin{itemize}[leftmargin=5em]
 \item[(unr$_{F,p}$)] the field $F$ is absolutely unramified
              at $(p)$;
 \item[(ord$_{F,p}$)] all places of $F^+$ lying above
              $(p)$ split completely in $F$.
\end{itemize}

By virtue of the ordinarity condition (ord$_{F,p}$), there
exists a {\em $p$-ordinary CM type} $\Sigma_{F}$ of
$F$ (also called a {\em $p$-adic CM type} in several literature),
 which is defined as a subset of the set $I_{F}$ of all embeddings of
 $F$ into the complex number field $\mathbb{C}$ satisfying the following two conditions:
\begin{itemize}
\item[--] we have $I_{F} =\Sigma_{F} \sqcup \Sigma_F^c$ (disjoint union) where 
$\Sigma_{F}^c$ is defined as $\{ \sigma\circ c \in I_F \mid \sigma \in \Sigma _{F}\}$; 
\item[--] we have $\{ \text{places of $F$ lying
above $p$} \} = \Sigma_{F,p} \sqcup \Sigma^c_{F,p}$
         (disjoint union) where $\Sigma_{F,p}$ denotes the set of prime ideals of $F$ induced by $p$-adic embeddings $\boldsymbol{\iota} \circ \sigma \colon F\hookrightarrow \mathbb{C}_p$ for 
all $\sigma$ in $\Sigma_{F}$, and $\Sigma^c_{F,p}$
is defined as $\Sigma^c_{F,p}:=\{\mathfrak{P}^c \mid \mathfrak{P}\in \Sigma_{F,p}\}$.
\end{itemize}
We take a $p$-ordinary CM type $\Sigma_{F}$ of $F$ and fix it once and for all.  

\subsection{Period relations of CM periods} \label{ssc:period_relation}

In this subsection, we recall the definition of
the complex and $p$-adic CM periods, which
appear in the interpolation formulae of the $p$-adic Hecke $L$-functions
for CM fields. For this purpose, we first construct a Hilbert--Blumenthal
abelian variety equipped with complex multiplication by
$\mathfrak{r}_{F}$ as follows. 

Consider a diagonal embedding $\mathfrak{r}_F\hookrightarrow \mathbb{C}^{\Sigma_{F}}\, ; x\mapsto (\sigma(x))_{\sigma \in \Sigma_F}$ of $\mathfrak{r}_F$ into $\mathbb{C}^{\Sigma_F}$ with respect to the fixed $p$-ordinary CM type $\Sigma_{F}$. 
The image $\Sigma_{F}(\mathfrak{r}_{F})$  of this embedding then forms a 
$\mathbb{Z}$-lattice of $\mathbb{C}^{\Sigma_{F}}$, and thus we can define a complex torus $\mathbb{C}^{\Sigma_{F}}/\Sigma_F (\mathfrak{r}_F)$. 
In order to regard
this complex torus as an abelian variety, we equip it with a {\em polarisation} by choosing any element $\delta$ of
$F$ satisfying the following three conditions:
\begin{enumerate}[label=(\arabic*${}_\delta$)]
 \item $\delta$ is relatively prime to $p$;
 \item $\delta$ is purely imaginary, that is,
       it satisfies $\delta^c=-\delta$;
\item the imaginary part $-i\sigma(\delta)$ of
      $\sigma(\delta)\in \mathbb{C}$ is positive for all $\sigma$ in
      $\Sigma_{F}$. Here $i$ denotes the imaginary unit satisfying $i^2=-1$.
\end{enumerate}

Using such $\delta$, we introduce an alternating pairing $\langle \  , \
\rangle_\delta:=(uv^c-u^cv)/2\delta$ on $\mathfrak{r}_{F}$.  
The existence of such an alternating pairing implies that there is an abelian variety $X_{\Sigma_F}(\mathfrak{r}_{F})_{/\mathbb{C}}$ over $\mathbb{C}$ 
whose $\mathbb{C}$-valued points are identified with the complex torus $\mathbb{C}^{\Sigma_{F}}/\Sigma_F (\mathfrak{r}_F)$. The details are as follows.
Let $\mathfrak{c}$ be a fractional ideal of $F^+$ defined as $\mathfrak{c}=\mathrm{Tr}_{F/F^+}(\mathfrak{d}_{F^+}/2\delta)^{-1}$. Here $\mathfrak{d}_{{F}^+}$ denotes the absolute different of $F^+$. Then $\mathfrak{c}$ is relatively prime to $p$ due to the assumptions (unr$_{F,p}$) and $(1_\delta)$, and thus there exists an abelian variety denoted by 
$X_{\Sigma_{F}}(\mathfrak{r}_{F})_{/\mathbb{C}}\otimes_{\mathfrak{r}_{F^+}} \mathfrak{c}$ over $\mathbb{C}$ 
whose $\mathbb{C}$-valued points are identified with the complex torus $\mathbb{C}^{\Sigma_{F}}/\Sigma_F (\mathfrak{c} \mathfrak{r}_F)$,
and the alternating pairing $\langle \  , \
\rangle_\delta$ induces a $\mathfrak{c}$-polarisation $\lambda_\delta \colon
X_{\Sigma_{F}}(\mathfrak{r}_{F})^t_{/\mathbb{C}} \xrightarrow{\sim}
X_{\Sigma_{F}}(\mathfrak{r}_{F})_{/\mathbb{C}}\otimes_{\mathfrak{r}_{F^+}} \mathfrak{c}$ where 
$X_{\Sigma_{F}}(\mathfrak{r}_{F})^t_{/\mathbb{C}}$ denotes the dual abelian variety of $X_{\Sigma_{F}}(\mathfrak{r}_{F})_{/\mathbb{C}}$. 
Note that $2\delta$ is chosen so that its image $2\delta_{\mathfrak{p}}$ under  $F \hookrightarrow F_\mathfrak{P}\xrightarrow{\, \sim \,} F_\mathfrak{p}^+$ generates the absolute different $\mathfrak{d}_{F^+_\mathfrak{p}}$ of $F^+_{\mathfrak{p}}$ for each place $\mathfrak{p}$ of $F^+$ lying above $p$ and a unique $\mathfrak{P}\in \Sigma_{F,p}$ satisfying $\mathfrak{P}\mid \mathfrak{p}$  (see \cite[(5.3.3) and Lemma~(5.7.35)]{katz} for details). 

Next recall that Katz and Hida--Tilouine have endowed the $\mathfrak{c}$-polarised abelian variety
$(X_{\Sigma_{F}}(\mathfrak{r}_F)_{/\mathbb{C}}, \lambda_\delta)$
with a {\em  $\Gamma_{00}(p^\infty)$-level structure} $i_\delta \colon (\mathfrak{d}_{F^+}^{-1}\otimes_\mathbb{Z} \mathbb{G}_m)[p^\infty]\hookrightarrow X(\mathfrak{r}_F)_{/\mathbb{C}}$ {\em with respect to $\delta$} induced by a composite map
\begin{align*}
&(\mathfrak{d}_{F^+}^{-1} \otimes_\mathbb{Z} \mathbb{G}_m)[p^\infty](\mathbb{C}) 
\cong 
\prod_{\mathfrak{p} \mid p\mathfrak{r}_{F^+} } \mathfrak{d}_{F^+_\mathfrak{p}}^{-1}\mathfrak{p}^{-\infty}  / \mathfrak{r}_{F^+_{\mathfrak{p}}} \xrightarrow{\times (2\delta_\mathfrak{p})_{\mathfrak{p}}} 
 \prod_{\mathfrak{p}\mid p\mathfrak{r}_{F^+}} \mathfrak{p}^{-\infty} / \mathfrak{r}_{F^+_{\mathfrak{p}}} 
\\
&\;   \overset{\sim\, }{\longrightarrow} 
\prod_{\mathfrak{P} \in \Sigma_{F,p}}
 \mathfrak{P}^{-\infty}/\mathfrak{r}_{F_{\mathfrak{P}}} 
 \overset{\sim\,}{\longrightarrow} \prod_{\mathfrak{P} \in \Sigma_{F,p}}
 \mathfrak{P}^{-\infty}/\mathfrak{r}_{F}
  \hookrightarrow \mathbb{C}^{\Sigma_F}/\Sigma_{F} (\mathfrak{r}_F )=X_{\Sigma_F}(\mathfrak{r}_F)(\mathbb{C})
\end{align*}
(see \cite[(5.1.11)--(5.1.18)]{katz}). Meanwhile we may canonically identify $\mathfrak{d}_{F^+_\mathfrak{p}}^{-1}$ with $\mathfrak{r}_{F^+_\mathfrak{p}}$ for every $\mathfrak{p}\mid p\mathfrak{r}_{F^+}$ under the assumption (unr${}_{F,p}$). Hence there is another $\Gamma_{00}(p^\infty)$-level structure $i_- \colon (\mathfrak{d}_{F^+}^{-1}\otimes_\mathbb{Z} \mathbb{G}_m)[p^\infty]\hookrightarrow X(\mathfrak{r}_F)_{/\mathbb{C}}$ defined as the composite
\begin{align*}
&(\mathfrak{d}_{F^+}^{-1} \otimes_\mathbb{Z} \mathbb{G}_m)[p^\infty](\mathbb{C}) 
\cong 
\prod_{\mathfrak{p} \mid p\mathfrak{r}_{F^+} } \mathfrak{d}_{F^+_\mathfrak{p}}^{-1}\mathfrak{p}^{-\infty}  / \mathfrak{r}_{F^+_{\mathfrak{p}}} \xrightarrow{\; \times(-1) \;}
 \prod_{\mathfrak{p}\mid p\mathfrak{r}_{F^+}} \mathfrak{p}^{-\infty} / \mathfrak{r}_{F^+_{\mathfrak{p}}} 
\\
&\;   \overset{\sim\, }{\longrightarrow} 
\prod_{\mathfrak{P} \in \Sigma_{F,p}}
 \mathfrak{P}^{-\infty}/\mathfrak{r}_{F_{\mathfrak{P}}} 
 \overset{\sim\,}{\longrightarrow} \prod_{\mathfrak{P} \in \Sigma_{F,p}}
 \mathfrak{P}^{-\infty}/\mathfrak{r}_{F}
  \hookrightarrow \mathbb{C}^{\Sigma_F}/\Sigma_{F} (\mathfrak{r}_F )=X_{\Sigma_F}(\mathfrak{r}_F)(\mathbb{C}).
\end{align*}
Both triplets
 $X_{\Sigma_{F}}(\mathfrak{r}_{F})_\delta=(X_{\Sigma_{F}}(\mathfrak{r}_F)_{/\mathbb{C}},
 \lambda_\delta, i_\delta)$ and $X_{\Sigma_F}(\mathfrak{r}_F)_-=(X_{\Sigma_{F}}(\mathfrak{r}_F)_{/\mathbb{C}},
 \lambda_\delta, i_-)$ then have  models
 $\mathcal{X}_{\Sigma_F}(\mathfrak{r}_{F})_\delta $ and  $\mathcal{X}_{\Sigma_F}(\mathfrak{r}_{F})_-$ over the valuation
 ring $\mathcal{W}=\overline{\mathbb{Q}}\cap \widehat{\mathbb{Z}}_p^{\mathrm{ur}}$
 due to the theory of complex multiplication combined with Serre and Tate's criterion for good reduction and the assumption (unr${}_{F,p}$). 

 The complex uniformisation $\Pi \colon \mathbb{C}^{\Sigma_F} \twoheadrightarrow X(\mathfrak{r}_F)(\mathbb{C})$ of $X_{\Sigma_F}(\mathfrak{r}_F)$, namely the natural quotient map $\mathbb{C}^{\Sigma_F}\twoheadrightarrow \mathbb{C}^{\Sigma_F}/\Sigma_F(\mathfrak{r}_F)$, induces an isomorphism
 \begin{align*}
 \Pi^\ast \colon \mathrm{Fil}^1H^1_{\mathrm{dR}}(X_{\Sigma_{F}}(\mathfrak{r}_{F})_{/\mathbb{C}})
  \overset{\sim\,}{\longrightarrow} \bigoplus_{\sigma \in \Sigma_F} \mathbb{C}\,  {\rm d}u_\sigma, 
 \end{align*}
 where $(u_\sigma)_{\sigma \in \Sigma_F}$ is 
the coordinate of $\mathbb{C}^{\Sigma_F}$. Now let us define  $\omega_{\mathrm{trans}}(\mathfrak{r}_F)$ as
$(\Pi^{\ast})^{-1} \left( \sum_{\sigma \in \Sigma_F} {\rm d}u_\sigma \right)$.

On the other hand, the $p$-part of the $\Gamma_{00}(p^\infty)$-level structure $i_?$ (for $?\in \{\delta,-\}$) induces an isomorphism 
 $\hat{i}_? \colon ((\mathfrak{d}_{F^+}^{-1}\otimes_\mathbb{Z} \mathbb{G}_m)_{/\widehat{\mathcal{O}}^{\mathrm{ur}}})^{\wedge} \xrightarrow{\sim} (\mathcal{X}_{\Sigma_F}(\mathfrak{r}_{F})
 _{/\widehat{\mathcal{O}}^{\mathrm{ur}}})^{\wedge}$ 
 between the formal completions along the identity sections over $\widehat{\mathcal{O}}^{\mathrm{ur}}$ where 
 $\mathcal{X}_{\Sigma_F}(\mathfrak{r}_{F})
 _{/\widehat{\mathcal{O}}^{\mathrm{ur}}}$ denotes the base extension of $ \mathcal{X}_{\Sigma_F}(\mathfrak{r}_{F})$  to $\widehat{\mathcal{O}}^{\mathrm{ur}}$. Note that, under the assumption (unr${}_{F,p}$), the formal scheme $((\mathfrak{d}_{F^+}^{-1}\otimes_\mathbb{Z} \mathbb{G}_m)_{/\widehat{\mathcal{O}}^{\mathrm{ur}}})^{\wedge}$ is decomposed into the direct product 
$\prod_{\sigma\in \Sigma_F}{\mathbb{G}_m^\wedge}_{/\widehat{\mathcal{O}}^{\mathrm{ur}}}$ corresponding to a natural isomorphism $\mathfrak{d}_{F^+}^{-1}\otimes_\mathbb{Z}\widehat{\mathcal{O}}^\mathrm{ur} \xrightarrow{\, \sim \,} \prod_{\sigma\in \Sigma_F} \widehat{\mathcal{O}}^\mathrm{ur} \, ; x\otimes 1 \mapsto (\boldsymbol{\iota}\circ \sigma(x))_{\sigma\in \Sigma_F}$. In summary, the isomorphism $\hat{i}_?$ above induces
 \begin{align*}
 \hat{i}_?^\ast \colon \mathrm{Fil}^1H^1_{\mathrm{dR}}(\mathcal{X}_{\Sigma_{F}}(\mathfrak{r}_{F})_{/\widehat{\mathcal{O}}^{\mathrm{ur}}})
  \xrightarrow{\, \sim \, } 
  \bigoplus_{\sigma \in \Sigma_{F}} \widehat{\mathcal{O}}^{\mathrm{ur}}\dfrac{{\rm d}T_\sigma}{T_\sigma}
 \end{align*}
 where $T_\sigma$ denotes the formal parameter of the component ${\mathbb{G}_m^\wedge}_{/\widehat{\mathcal{O}}^\mathrm{ur}}$ corresponding to $\sigma\in \Sigma_F$. Define  $\omega^?_{\mathrm{can}}(\mathfrak{r}_F)$ to be $\omega^?_{\mathrm{can}}(\mathfrak{r}_F)=
 (\hat{i}_?^{\ast})^{-1} \left(
 \sum_{\sigma \in \Sigma_F} \dfrac{{\rm d}T_\sigma}{T_\sigma}\right)$.

\begin{defn}\label{definition:complex_CM_period&p-adic_CM_period}
Let us choose and fix a basis $\omega$ of $\mathrm{Fil}^1 H^1_{\mathrm{dR}}(\mathcal{X}_{\Sigma_{F}}(\mathfrak{r}_{F})_{/\mathcal{W}} )$, which is 
a free $\mathfrak{r}_{F^+} \otimes_{\mathbb{Z}}\mathcal{W}$-module of rank one. 
\begin{enumerate}[label=(\arabic*)]
\item 
We define a {\em complex CM period} 
\begin{align*}
C_{\mathrm{CM}, \infty ,F}
      =\bigl(C_{\mathrm{CM},\infty,F,\sigma}\bigr)_{\sigma\in
      \Sigma_{F}}\in \bigl(
      \mathfrak{r}_{F^+}\otimes_{\mathbb{Z}}\mathbb{C}\bigr)^\times 
\end{align*}
to be a constant satisfying 
\begin{align*}
 \omega = C_{\mathrm{CM}, \infty ,F} \, \omega_{\mathrm{trans}}(\mathfrak{r}_F)
\end{align*}
as an equation in 
      $\mathrm{Fil}^1H^1_{\mathrm{dR}}(X_{\Sigma_{F}}(\mathfrak{r}_{F})_{/\mathbb{C}})$. We also define a {\em modified complex CM period} $\Omega_{\mathrm{CM},\infty,F}=(\Omega_{\mathrm{CM},\infty,F,\sigma})_{\sigma\in \Sigma_F}$ by setting $\Omega_{\mathrm{CM},\infty, F,\sigma}:=(2\pi i)^{-1}C_{\mathrm{CM},\infty,F,\sigma}$ for 
$\sigma\in \Sigma_F$.

\item 
We define a {\em $\delta$-modified $p$-adic CM period} 
\begin{align*}
 C_{\mathrm{CM},p,F}=C^\delta_{\mathrm{CM}, p ,F} =\bigl(C^\delta_{\mathrm{CM},p,F,\sigma}\bigr)_{\sigma\in
      \Sigma_{F}}\in \bigl(
       \mathfrak{r}_{F^+}\otimes_{\mathbb{Z}}\widehat{\mathcal{O}}^{\mathrm{ur}}\bigr)^\times
\end{align*} 
 to be a constant satisfying
\begin{align*}
 \omega = C^\delta_{\mathrm{CM}, p, F} \,  \omega^\delta_{\mathrm{can}}(\mathfrak{r}_F)
\end{align*}
as an equation in $\mathrm{Fil}^1H^1_{\mathrm{dR}}(\mathcal{X}_{\Sigma_{F}}(\mathfrak{r}_{F})_{/\widehat{\mathcal{O}}^{\mathrm{ur}}})$. We also define a {\em normalised $p$-adic CM period} $\Omega_{\mathrm{CM},p,F}=(\Omega_{\mathrm{CM},p,F,\sigma})_{\sigma\in \Sigma_F}$ as a constant satisfying $\omega=\Omega_{\mathrm{CM},p,F}\omega^-_\mathrm{can}(\mathfrak{r}_F)$.
\end{enumerate}

Note that both of the complex and  $p$-adic periods {\em do depend} on the choice of an
 $(\mathfrak{r}_{F^+}  \otimes_{\mathbb{Z}}\mathcal{W}) $-basis 
$\omega$ of $\mathrm{Fil}^1 H^1_{\mathrm{dR}}(\mathcal{X}_{\Sigma_{F}}(\mathfrak{r}_F)_{/\mathcal{W}})$,
 but the ``ratio of them'' is independent of $\omega$; 
 namely, when $\omega$ is replaced by $a\omega$ for $a\in
 \bigl(\mathfrak{r}_{F^+} \otimes_\mathbb{Z} \mathcal{W}\bigr)^\times$,
 the resulting (complex and $p$-adic) periods are both multiplied by
 the {\em same} constant $a$. 
\end{defn}

\begin{rem}[On CM periods]
Many authors including Katz \cite{katz} and Hida--Tilouine \cite{HT-aIMC} adopt the pair of CM periods $(C_{\mathrm{CM},\infty,F}, C_{\mathrm{CM},p,F})$, whereas we 
use $(\Omega_{\mathrm{CM},\infty,F}, \Omega_{\mathrm{CM},p,F})$ to simplify the interpolation formula of the $p$-adic $L$-function under the assumption (unr${}_{F,p}$). As we shall mention in Remark~\ref{rem:CPC}, the normalisation of $\Omega_{\mathrm{CM},\infty,F}$ is initiated by Coates and Perrin-Riou's conjecture \cite{cp89,coa89}. The normalised $p$-adic CM period $\Omega_{\mathrm{CM},p,F}$ essentially appears in \cite[1.5]{deShalit} where the assumption (unr${}_{F,p}$) is also admitted; indeed $\Omega_{\mathrm{CM},p,F}$ coincides with de Shalit's $-\Omega_p(\Phi)$. We should remark that Chida and Hsieh also consider in \cite[Proposition~3.4]{CH23} a similar modification of the CM periods when $F$ is an imaginary quadratic field. We also emphasise here that, although we implicitly have to choose an auxiliary element $\delta$ to endow the complex torus $X_{\Sigma_F}(\mathfrak{r}_F)$ with a polarisation $\lambda_\delta$, the normalised $p$-adic CM period $\Omega_{\mathrm{CM},p,F}$ {\em does not depend} on the choice of $\delta$, while $C_{\mathrm{CM},\infty,F}=C^\delta_{\mathrm{CM},p,F}$ {\em does depend} on it; by construction, they are related via the equality 
\begin{align} \label{eq:Cp_vs_Omegap}
 C^\delta_{\mathrm{CM},p,F,\sigma}=\boldsymbol{\iota}\circ \sigma(-2\delta)\Omega_{\mathrm{CM},p,F,\sigma} \quad \text{ for every } \sigma\in \Sigma_F.
\end{align} 
\end{rem}

Later we need to compare (complex and $p$-adic) CM periods among various CM fields. Let $F'$ be a CM field which is absolutely unramified at $p$ and contains $F$. Then one readily checks that $\Sigma_{F'}=\{\sigma'\in I_{F'} \mid \sigma'\vert_{F}\in \Sigma_F\}$ is indeed a $p$-ordinary CM type of $F'$. We say that $\Sigma_{F'}$ is {\em induced from $\Sigma_F$}. Now let us choose and fix an $(\mathfrak{r}_{F^+}  \otimes_{\mathbb{Z}}\mathcal{W}) $-basis $\omega$ of $\mathrm{Fil}^1 H^1_{\mathrm{dR}}(\mathcal{X}_{\Sigma}(\mathfrak{r}_{F})_{/\mathcal{W}})$.
Since $\mathrm{Fil}^1 H^1_{\mathrm{dR}}(\mathcal{X}_{\Sigma}(\mathfrak{r}_{F'})_{/\mathcal{W}})$ is isomorphic to
$\mathrm{Fil}^1 H^1_{\mathrm{dR}}(\mathcal{X}_{\Sigma}(\mathfrak{r}_{F})_{/\mathcal{W}} ) \otimes_{\mathfrak{r}_{F}}\mathfrak{r}_{F'}$, 
we can choose $\omega \otimes_{\mathfrak{r}_{F}}\mathfrak{r}_{F'}$ as an $(\mathfrak{r}_{F^{\prime, +}}  \otimes_{\mathbb{Z}}\mathcal{W}) $-basis of $\mathrm{Fil}^1 H^1_{\mathrm{dR}}(\mathcal{X}_{\Sigma}(\mathfrak{r}_{F'})_{/\mathcal{W}} )$.  This observation implies the following period relation, which plays an important role in the proof of Theorem \ref{theorem:CMp-adicL_Artin}.   

\begin{lem}[Period relation, see also {\cite[1.3 (ii)]{deShalit}}]\label{lemma:deShalit}
Let $F$, $F'$, $\omega$ and $\omega\otimes_{\mathfrak{r}_F}\mathfrak{r}_{F'}$ be as above. For each $?\in \{\infty, p\}$, consider the $($complex or $p$-adic$)$ CM periods $\Omega_{\mathrm{CM}, ? ,F}=\bigl(\Omega_{\mathrm{CM}, ? ,F, \sigma}\bigr)_{\sigma \in \Sigma_F} $ for $F$ and $\Omega_{\mathrm{CM}, ? ,F'}=\bigl(\Omega_{\mathrm{CM}, ? ,F', \sigma'}\bigr)_{\sigma'\in \Sigma_{F'}}$ for $F'$ defined with respect to $\omega$ and $\omega\otimes_{\mathfrak{r}_F}\mathfrak{r}_{F'}$, as in Definition $\ref{definition:complex_CM_period&p-adic_CM_period}$. Then we have an equality $\Omega_{\mathrm{CM}, ? ,F', \sigma'}  = \Omega_{\mathrm{CM}, ? ,F, \sigma' \vert_F}$ for every $\sigma'\in \Sigma_{F'}$. 
\end{lem}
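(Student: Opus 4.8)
The plan is to reduce the statement to the functoriality of the whole construction of Section~\ref{ssc:period_relation} under extension of the CM order from $\mathfrak{r}_F$ to $\mathfrak{r}_{F'}$ via the Serre tensor operation. Concretely I would first record two compatibility facts: (a) $\mathcal{X}_\Sigma(\mathfrak{r}_{F'})$ is the Serre tensor $\mathcal{X}_\Sigma(\mathfrak{r}_F)\otimes_{\mathfrak{r}_F}\mathfrak{r}_{F'}$, with complex uniformisation $\Pi_{F'}=\Pi_F\otimes_{\mathfrak{r}_F}\mathfrak{r}_{F'}$ and $\Gamma_{00}(p^\infty)$-level structure $\hat{i}_{\delta,F'}=\hat{i}_{\delta,F}\otimes_{\mathfrak{r}_F}\mathfrak{r}_{F'}$ for the \emph{same} element $\delta\in F$; and (b) under the induced identification $\mathrm{Fil}^1H^1_{\mathrm{dR}}(\mathcal{X}_\Sigma(\mathfrak{r}_{F'}))\cong\mathrm{Fil}^1H^1_{\mathrm{dR}}(\mathcal{X}_\Sigma(\mathfrak{r}_F))\otimes_{\mathfrak{r}_F}\mathfrak{r}_{F'}$ (the one already used in the text to send $\omega$ to $\omega\otimes_{\mathfrak{r}_F}\mathfrak{r}_{F'}$) one has $\omega_{\mathrm{trans}}(\mathfrak{r}_{F'})=\omega_{\mathrm{trans}}(\mathfrak{r}_F)\otimes 1$ and $\omega_{\mathrm{can}}(\mathfrak{r}_{F'})=\omega_{\mathrm{can}}(\mathfrak{r}_F)\otimes 1$. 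Granting (a) and (b), for $?\in\{\infty,p\}$ I apply $-\otimes_{\mathfrak{r}_F}\mathfrak{r}_{F'}$ to the defining relation $\omega=C_{\mathrm{CM},?,F}\,\omega_{?}(\mathfrak{r}_F)$ (writing $\omega_\infty:=\omega_{\mathrm{trans}}$, $\omega_p:=\omega_{\mathrm{can}}$), obtaining $\omega\otimes 1=(C_{\mathrm{CM},?,F}\otimes 1)\,\omega_{?}(\mathfrak{r}_{F'})$; comparing with the definition of $C_{\mathrm{CM},?,F'}$ attached to the basis $\omega\otimes 1$ and using the uniqueness built into Definition~$\ref{definition:complex_CM_period&p-adic_CM_period}$, this forces $C_{\mathrm{CM},?,F'}=C_{\mathrm{CM},?,F}\otimes 1$ inside $\mathfrak{r}_{F'^+}\otimes_{\mathbb{Z}}R$, with $R=\mathbb{C}$ or $\widehat{\mathcal{O}}^{\mathrm{ur}}$ accordingly. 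Reading off components through $\mathfrak{r}_{F'^+}\otimes_{\mathbb{Z}}R\xrightarrow{\sim}\prod_{\sigma'\in\Sigma_{F'}}R$, under which the structural inclusion $\mathfrak{r}_{F^+}\otimes_{\mathbb{Z}}R\hookrightarrow\mathfrak{r}_{F'^+}\otimes_{\mathbb{Z}}R$ becomes $(c_\sigma)_{\sigma\in\Sigma_F}\mapsto(c_{\sigma'|_F})_{\sigma'\in\Sigma_{F'}}$, gives $C_{\mathrm{CM},?,F',\sigma'}=C_{\mathrm{CM},?,F,\sigma'|_F}$ for every $\sigma'\in\Sigma_{F'}$. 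The claim for the modified periods follows at once, since $\Omega_{\mathrm{CM},\infty,\bullet,\tau}=(2\pi i)^{-1}C_{\mathrm{CM},\infty,\bullet,\tau}$ and the factor $2\pi i$ is the same for $F$ and $F'$.

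It then remains to justify (a) and (b). On the complex side, because $\Sigma_{F'}$ is induced from $\Sigma_F$ one has a canonical identification $\mathbb{C}^{\Sigma_{F'}}=\mathbb{C}^{\Sigma_F}\otimes_{\mathfrak{r}_F}\mathfrak{r}_{F'}$, coming from $\mathbb{C}_\sigma\otimes_{\mathfrak{r}_F}\mathfrak{r}_{F'}=\mathbb{C}\otimes_{F,\sigma}F'\cong\prod_{\sigma'|_F=\sigma}\mathbb{C}_{\sigma'}$, which carries the lattice $\Sigma_{F'}(\mathfrak{r}_{F'})$ to $\Sigma_F(\mathfrak{r}_F)\otimes_{\mathfrak{r}_F}\mathfrak{r}_{F'}$; hence $\Pi_{F'}=\Pi_F\otimes_{\mathfrak{r}_F}\mathfrak{r}_{F'}$ and $\Pi_{F'}^\ast=\Pi_F^\ast\otimes\mathrm{id}$ on $\mathrm{Fil}^1H^1_{\mathrm{dR}}$. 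For $\hat{i}_\delta$ one tensors the chain of identifications defining it up along $\mathfrak{r}_F\hookrightarrow\mathfrak{r}_{F'}$, using that the same $\delta\in F$ still satisfies $(1_\delta),(2_\delta),(3_\delta)$ for $F'$ — immediate since $\delta$ and the complex conjugation lie in $F$ and $p$ is unramified in $F'^+$, so that $2\delta$ is again a local unit at the primes above $p$. The remaining point is the behaviour of the distinguished differentials: with $e_\sigma$ the idempotent of $\mathfrak{r}_{F^+}\otimes_{\mathbb{Z}}\mathbb{C}\cong\prod_{\sigma\in\Sigma_F}\mathbb{C}$ cutting out the $\sigma$-factor, one checks $\mathrm{d}u_\sigma\otimes 1=\sum_{\sigma'|_F=\sigma}\mathrm{d}u_{\sigma'}$ inside $\bigl(\bigoplus_{\sigma\in\Sigma_F}\mathbb{C}\,\mathrm{d}u_\sigma\bigr)\otimes_{\mathfrak{r}_F}\mathfrak{r}_{F'}=\bigoplus_{\sigma'\in\Sigma_{F'}}\mathbb{C}\,\mathrm{d}u_{\sigma'}$, so summing over $\sigma\in\Sigma_F$ shows $\sum_{\sigma}\mathrm{d}u_\sigma$ corresponds to $\sum_{\sigma'}\mathrm{d}u_{\sigma'}$; applying $(\Pi_F^\ast\otimes\mathrm{id})^{-1}$ gives $\omega_{\mathrm{trans}}(\mathfrak{r}_{F'})=\omega_{\mathrm{trans}}(\mathfrak{r}_F)\otimes 1$. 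The identical idempotent bookkeeping, with $\mathrm{d}T_\sigma/T_\sigma$ in place of $\mathrm{d}u_\sigma$ and $\hat{i}_\delta^\ast$ in place of $\Pi^\ast$, yields $\omega_{\mathrm{can}}(\mathfrak{r}_{F'})=\omega_{\mathrm{can}}(\mathfrak{r}_F)\otimes 1$.

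The main obstacle is the integral ($p$-adic) half of claim (a): one must know that the Serre tensor construction is compatible with reduction modulo $p$ and with the Serre--Tate canonical lift, so that $\mathcal{X}_\Sigma(\mathfrak{r}_F)\otimes_{\mathfrak{r}_F}\mathfrak{r}_{F'}$ is genuinely the canonical $\mathcal{W}$-model $\mathcal{X}_\Sigma(\mathfrak{r}_{F'})$ and that the canonical isomorphism $\hat{i}_\delta$ of formal completions along the identity section is preserved under it. These are the only genuinely delicate inputs; the complex counterpart is essentially the computation of de~Shalit cited in the statement, and once these functoriality facts are in hand the rest of the argument is the purely formal tensoring described above.
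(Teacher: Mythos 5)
Your argument is correct and follows the same route the paper itself gestures at: the authors do not give a detailed proof of Lemma~\ref{lemma:deShalit}, they simply record the isomorphism $\mathrm{Fil}^1 H^1_{\mathrm{dR}}(\mathcal{X}_{\Sigma}(\mathfrak{r}_{F'})_{/\mathcal{W}})\cong\mathrm{Fil}^1 H^1_{\mathrm{dR}}(\mathcal{X}_{\Sigma}(\mathfrak{r}_{F})_{/\mathcal{W}})\otimes_{\mathfrak{r}_{F}}\mathfrak{r}_{F'}$ in the paragraph preceding the statement and say ``this observation implies the lemma,'' deferring the rest to de~Shalit~\cite[1.3~(ii)]{deshalit}. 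Your proposal spells out precisely the content of that deferral — Serre tensor functoriality of the triple $(\mathcal{X}_\Sigma(\mathfrak{r}_F),\lambda_\delta,i_\delta)$, compatibility of $\omega_{\mathrm{trans}}$ and $\omega_{\mathrm{can}}$ under base change, and the formal tensoring of the defining relation — and you correctly isolate the compatibility of the Serre tensor with the canonical $\mathcal{W}$-model and Serre--Tate lifting as the genuinely delicate $p$-adic input.
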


\subsection{Construction of the $p$-adic Hecke $L$-functions for CM fields} \label{ssc:p-adic_Hecke}

Before stating the interpolation formulae of $p$-adic Hecke $L$-functions for CM fields, we here make a remark concerning {\em purity} of infinity types of algebraic Hecke characters. Let $\eta =(\eta_v)_v \colon \mathbb{A}_F^\times/F^\times \rightarrow \overline{\mathbb{Q}}^\times$ be an algebraic Hecke character of a CM field $F$, and $\Sigma_F$ a $p$-ordinary CM type of $F$. Then there exist an integer $w_\eta\in \mathbb{Z}$ and an integer-valued vector $\mathsf{r}_\eta=(r_{\eta,\sigma})_{\sigma\in \Sigma_F}\in \mathbb{Z}^{\Sigma_F}$ satisfying 
\begin{align} \label{eq:eta_infinity_type}
 \eta_\infty(x_\infty) &=\prod_{\sigma \in \Sigma_F} x_\sigma^{w_\eta+r_{\eta,\sigma}} \overline{x}_\sigma^{-r_{\eta,\sigma}} \qquad \text{for } x_\infty=(x_\sigma)_{\sigma\in \Sigma_F} \in F\otimes_{\mathbb{Q}}\mathbb{R}\cong \mathbb{C}^{\Sigma_{F}}
\end{align}
where $\bar{\cdot}$ is the complex conjugation in $\mathbb{C}$. We refer to  $(w_\eta, \mathsf{r}_\eta) \in \mathbb{Z}\times \mathbb{Z}^{\Sigma_F}$ as the {\em infinity type} of $\eta$. We write for $\eta^\mathrm{gal}\colon G_F\rightarrow \mathbb{C}_p^\times$ the continuous character of $G_F$ corresponding to $\eta$ via global field theory.

\begin{rem}[multi-index notation] \label{rem:multi_index}
 To lighten the notation, we use the following {\em multi-index notation} in Theorem~\ref{thm:KHT}. Let $\mathsf{a}=(a_\sigma)_{\sigma\in \Sigma_F}\in \mathbb{Z}^{\Sigma_F}$ be an integer-valued vector. Then we put $\lvert \mathsf{a}\rvert:=\sum_{\sigma \in \Sigma_F} a_\sigma$, and for a $\mathbb{C}$-valued or $\widehat{\mathcal{O}}^\mathrm{ur}$-valued vector $\mathsf{w}=(w_\sigma)_{\sigma\in \Sigma_F}$, we write  $\mathsf{w}^{\mathsf{a}}$ for the product $\prod_{\sigma\in \Sigma_F} w_\sigma^{a_\sigma}$. 
Finally let $\mathsf{t}\in \Sigma_F$ denote a particular vector whose components are all equal to $1$.  
\end{rem}

Recall from Section~\ref{sc:Introduction} that we call  $\psi^\mathrm{gal}\colon G_F\rightarrow \mathbb{C}^\times$ a {\em branch character} if it is a finite character at most tamely ramified at every prime ideal of $F$ lying above $(p)$, and the field $F_\psi$ corresponding to the kernel of $\psi^\mathrm{gal}$ is linearly disjoint from $F_{\max}$ over $F$. Let $\mathcal{O}$ be a finite flat extension of $\mathbb{Z}_p$ containing the image of $\psi^\mathrm{gal}$ and $\psi\colon \mathbb{A}_F^\times/F^\times \rightarrow \overline{\mathbb{Q}}^\times$ the algebraic Hecke character corresponding to $\psi^\mathrm{gal}$ via global class field theory. Let $L(\psi\eta,s)$ be  the complex Hecke $L$-function of $\psi\eta$ defined by $\prod_{\mathfrak{l}\nmid \mathfrak{f}_\eta} (1-(\psi\eta)_{\mathfrak{l}}(\varpi_\mathfrak{l})\mathcal{N}\mathfrak{l}^{-s})^{-1}$ for $\mathrm{Re}\, s>1$, where we write $\varpi_\mathfrak{l}$ for a uniformiser of $F_\mathfrak{l}$. As is well known, it is meromorphically continued to the whole complex plane $\mathbb{C}$  with a possible simple pole at $s=1$.  
To consider the completed $L$-function, let us introduce the archimedean $L$-factors (or the gamma factors) $L_\infty(\psi\eta,s):=L_\infty(\mathrm{Res}_{F/\mathbb{Q}}\, M(\psi\eta))$ of $L(\psi\eta,s)$. Suppose that the algebraic Hecke character $\eta$ has the infinity type $(w_\eta, \mathsf{r}_\eta)$; namely suppose that $\eta_\infty$ satisfies \eqref{eq:eta_infinity_type}. On the Hodge realisation  $H_{\mathrm{B},\sigma}(M(\psi\eta)_{/F})_\mathbb{C}$ of the pure motive associated to $\psi\eta$ with respect to $\sigma\in \Sigma_F$, the real-valued points of Deligne's torus $\mathbb{S}_m(\mathbb{R})=\mathbb{C}^\times$ acts via $z\mapsto z^{w_\eta+r_{\eta,\sigma}}\bar{z}^{-r_{\eta,\sigma}}$; this implies that the Hodge type of $H_{\mathrm{B},\sigma}(M(\eta)_{/F})_\mathbb{C}$ is $(-w_\eta-r_{\eta,\sigma}, r_{\eta,\sigma})$. Here we adopt the convention in \cite[Remark 3.3]{Deligne_Hodge} concerning the Hodge types. The archimedean local $L$-factor $L_\infty(\psi\eta, s):=\prod_{\sigma\in \Sigma_F}L_\sigma(\psi\eta,s)$ of $L(\eta,s)$ is thus described as 
\begin{align} \label{eq:arch_L_eta}
L_\infty(\psi\eta,s)&= \prod_{\sigma \in \Sigma_F} \Gamma_\mathbb{C}(s+w_\eta+r_{\eta,\sigma}), \qquad \Gamma_\mathbb{C}(s)=2\cdot (2\pi)^{-s}\Gamma(s) 
\end{align}
where $\Gamma(s)$ denotes the usual gamma function; see \cite[Section 5.3]{Deligne} for details. The completed Hecke $L$-function is then defined as $\Lambda(\psi\eta,s):=L_\infty(\psi\eta,s)L(\psi\eta,s)$.

We are now ready to state the existence theorem of the $p$-adic Hecke $L$-function for CM fields, which is deduced from the result of Katz \cite{katz} and Hida--Tilouine \cite{HT-katz}. For a $p$-adic place $v\in \Sigma_{F,p}$, let $\epsilon ((\psi\eta)_v,\mathbf{e}_{F,v},dx_v)$ denote Tate's local constant
 with respect to the standard additive character $\mathbf{e}_{F,v}$ defined as \eqref{eq:standard_additive} and a unique Haar measure $dx_v$  on $F_v$ normalised so that the volume of the ring of integers of $F_v$ equals $1$.

\begin{thm}\label{thm:KHT}

Let $p$ be an odd prime number, ${F}$ a CM field of degree $2d$ and $F^+$ the maximal totally real subfield of $F$. Assume that 
${F}$, ${F}^+$ and $p$ satisfy both  {\upshape (unr$_{F,p}$)} and {\upshape
 (ord$_{F,p}$)}. 
Let $\psi^\mathrm{gal} \colon \mathbb{A}^\times_F/F^\times \rightarrow \mathcal{O}^\times$ be a branch character at most tamely ramified at every prime ideal of $F$ lying above $(p)$. 
Then there exists a unique element $L_{p, \Sigma_F}(\psi )$ of $\widehat{\mathcal{O}}^\mathrm{ur} [[\Gamma_{F,\, \max}]]$ satisfying
\begin{equation} \label{eq:p-adic_Hecke}
 \frac{\eta^{\mathrm{gal}}(L_{p, \Sigma_{F}}(\psi))}
{\Omega_{\mathrm{CM},p,F}^{w_\eta \mathsf{t}+2\mathsf{r}_\eta}} = \dfrac{(\mathfrak{r}_F^\times :\mathfrak{r}_{F^+}^\times)}{2^d\sqrt{\lvert D_{F^+}\rvert}}
i^{\lvert -w_\eta \mathsf{t}-\mathsf{r}_\eta\rvert }
\prod_{v \in \Sigma_{F,p}} \mathrm{Eul}_v(\psi\eta,0)
\frac{\Lambda (\psi\eta ,0 )}{\Omega_{\mathrm{CM},\infty,F}^{w_\eta \mathsf{t}+2\mathsf{r}_\eta}}
\end{equation}
with
\begin{align*}
& \mathrm{Eul}_v(\psi\eta,0)=
\begin{cases}
 L_{v^c}(\psi \eta,0)^{-1}L_v((\psi\eta)^\vee,1)^{-1} & \text{if $\psi\eta$ is unramified at $v$}, \\
\epsilon((\psi \eta)_v,\mathbf{e}_{F,v},dx_v)^{-1} & \text{if $\psi\eta$ is ramified at $v$},
\end{cases} 
\end{align*}
for each algebraic Hecke character $\eta\colon \mathbb{A}_F^\times/F^\times \rightarrow \overline{\mathbb{Q}}^\times$ satisfying the following two conditions$:$
\begin{enumerate}[label={\upshape (\roman*)}]
\item  
the Galois character $\eta^{\mathrm{gal}}\colon G_F\longrightarrow \mathbb{C}_p^\times$ 
corresponding to $\eta$ factors through $\Gamma_{F,\, \max};$ 
\item the infinity type $(w_\eta,\mathsf{r}_\eta)\in \mathbb{Z}\times \mathbb{Z}^{\Sigma_F}$ of $\eta$ satisfies both $-w_\eta-r_{\eta,\sigma}\leq -1$ and $r_{\eta,\sigma}\geq 0$ for every $\sigma\in \Sigma_F$. 
\end{enumerate}
\end{thm}

\begin{proof}
 Let $\mathfrak{f}^{(p)}$ denote the prime-to-$p$ part of the conductor of $\psi^\mathrm{gal}$, and decompose $\mathfrak{f}^{(p)}$ into the product $\mathfrak{FF}_c\mathfrak{I}$ of integral ideals satisfying the following three conditions; 
\begin{itemize}
 \item both $\mathfrak{F}$ and $\mathfrak{F}_c$ are the products of prime ideals which split completely over $F^+$;
\item $\mathfrak{I}$ is the product of prime ideals which inert or ramify over $F^+$;
\item $\mathfrak{F}$ and $\mathfrak{F}_c$ are relatively prime and satisfy $\mathfrak{F}_c^c\supset \mathfrak{F}$. 
\end{itemize}
Furthermore choose a purely imaginary element $\delta$ of ${F}$ so that it is relatively prime to the conductor of $\psi^\mathrm{gal}$ and satisfies all the conditions $(1_\delta)$, $(2_\delta)$ and $(3_\delta)$ at the beginning of Section~$\ref{ssc:period_relation}$. 
Fixing such a decomposition of $\mathfrak{f}^{(p)}$ and $\delta$, Hida and Tilouine have constructed in \cite[Theorem~II]{HT-katz} the $p$-adic $L$-function $L_{p,\Sigma_F,\delta}^{\mathrm{KHT}}(F)$ as a unique element of $\widehat{\mathcal{O}}^{\mathrm{ur}}[[\mathrm{Gal}(F_{\mathfrak{f}^{(p)}p^\infty}/F)]]$, where $F_{\mathfrak{f}^{(p)}p^\infty}$ denotes the ray class field modulo $\mathfrak{f}^{(p)}p^\infty$ of $F$. Meanwhile, for each place $v\mid p\mathfrak{r}_F$ of $F$, let $\delta_v$ denote the image of $\delta \in F$ into the $v$-adic completion $F_v$ of $F$, which we identify with the $v\vert_{F^+}$-adic completion $F^+_{v\vert_{F^+}}$ of $F^+$. Then, since $2\delta_v$ generates the absolute different of $F^+_{v\vert_{F^+}}$ due to \cite[Lemma (5.7.35)]{katz}, it is a $v$-adic unit under the assumption (unr${}_{F,p}$). Let $\mathfrak{f}^{(p)}=\prod_{v \nmid p\infty} \mathfrak{l}_v^{e_v(\mathfrak{f}^{(p)})}$ be the prime ideal decomposition of $\mathfrak{f}^{(p)}$ and set $U_{\mathfrak{f}^{(p)}p^k}:=\prod_{v\nmid p\infty}U_{F_v}^{(e_v(\mathfrak{f}^{(p)}))}\times \prod_{v\mid p\mathfrak{r}_F}U_{F_v}^{(k)}\times \prod_{v\mid \infty} \mathbb{C}^\times$ for each $k\geq 1$, where $U_{F_v}^{(n)}$ denotes the $n$-th higher unit group of $F_v$ for every natural number $n$. We then define $\gamma_{2\delta}^{\Sigma_F}$ as the image of $(2\delta_v)_{v\in \Sigma_{F,p}}$ under the composite map
\begin{align} \label{eq:composite}
 \prod_{v\in \Sigma_{F,p}}\mathfrak{r}_{F_v}^\times \hookrightarrow \mathbb{A}_F^\times \longrightarrow \varprojlim_{k\rightarrow\infty} \mathbb{A}_F^\times/ F^\times U_{\mathfrak{f}^{(p)}p^k} &\xrightarrow{\; \sim \;} \mathrm{Gal}(F_{\mathfrak{f}^{(p)}p^\infty}/F), 
\end{align} 
where the third isomorphism is induced by the global Artin reciprocity map due to global class field theory. 
Now let us consider the $\psi^\mathrm{gal}$-twisting map  
\begin{align*}
 \mathrm{Tw}_{\psi^\mathrm{gal}} \colon \widehat{\mathcal{O}}^{\mathrm{ur}}[[\mathrm{Gal}(F_{\mathfrak{f}^{(p)} p^\infty}/F)]]\rightarrow \widehat{\mathcal{O}}^{\mathrm{ur}}[[\Gamma_{F,\max}]] ; \, g\mapsto \psi^\mathrm{gal}(g) \overline{g}
\end{align*}
where $\overline{g}$ denotes the image of $g$ under the natural surjection $\mathrm{Gal}(F_{\mathfrak{f}^{(p)}p^\infty}/F)\twoheadrightarrow \Gamma_{F,\max}$, and define $L_{p,\Sigma_{F}}(\psi) $ to be $\mathrm{Tw}_{\psi^{\mathrm{gal}}}\bigl(\bigl(\gamma_{2\delta}^{\Sigma_F}\bigr)^{-1}L^{\mathrm{KHT}}_{p,\Sigma_F,\delta}\bigr) \in \widehat{\mathcal{O}}^{\mathrm{ur}}[[\Gamma_{F,\,\max}]]$. We shall check in the rest of the proof that  $L_{p,\Sigma_F}(\psi)$ satisfies the desired interpolation formula \eqref{eq:p-adic_Hecke}, 
from which the uniqueness of $L_{p,\Sigma_F}(\psi)$ readily follows; see \cite[Proposition (4.1.2)]{katz}.

Let $\eta \colon \mathbb{A}_F^\times/F^\times \rightarrow \overline{\mathbb{Q}}^\times$ be an algebraic Hecke character satisfying (i) and (ii) of the statement; we then have  
\begin{align*}
 \eta^\mathrm{gal}\bigl(\mathrm{Tw}_{\psi^\mathrm{gal}}\bigl(\gamma_{2\delta}^{\Sigma_F}\bigr)^{-1}\bigr)=\prod_{v\in \Sigma_{F,p}}(\psi\eta)_v(2\delta_v)^{-1} \prod_{\sigma\in \Sigma_F} (\boldsymbol{\iota}\circ \sigma (2\delta))^{-w_\eta-r_{\eta,\sigma}}
\end{align*}
by the construction of $\gamma^{\Sigma_F}_{2\delta}$ and local-global compatibility of the Artin reciprocity maps; note that the right-hand side is none other than evaluation of the $p$-adic avatar of $\psi\eta$ at $\bigl((2\delta_v)^{-1}\bigr)_{v\in \Sigma_{F,p}}$ (see \cite[Section~2.1.1]{haraochiai} for details). 
Combining this with \eqref{eq:Cp_vs_Omegap}, we can calculate as
\begin{align*}
 \dfrac{\eta^{\mathrm{gal}}(L_{p,\Sigma_F}(\psi))}{\Omega_{\mathrm{CM},p,F}^{w_\eta \mathsf{t}+2\mathsf{r}_\eta}} &= (-1)^{w_\eta d}\prod_{v\in \Sigma_{F,p}}(\psi\eta)_v(2\delta_v)^{-1} \prod_{\sigma\in \Sigma_F}\bigl(\boldsymbol{\iota}\circ \sigma(2\delta)\bigr)^{r_{\eta,\sigma}} \dfrac{(\psi\eta)^{\mathrm{gal}}(L^{\mathrm{KHT}}_{p,\Sigma_F,\delta}(F))}{C^{w_\eta\mathsf{t}+2\mathsf{r}_\eta}_{\mathrm{CM},p,F}}. 
\end{align*}
Applying the interpolation formula of $L^{\mathrm{KHT}}_{p,\Sigma_F}(F)$ proposed at 
\cite[Theorem II]{HT-katz} at the right-hand side, we obtain 
\begin{align*}
 \dfrac{\eta^{\mathrm{gal}}(L_{p,\Sigma_F}(\psi))}{\Omega_{\mathrm{CM},p,F}^{w_\eta \mathsf{t}+2\mathsf{r}_\eta}} &=(\mathfrak{r}_F^\times : \mathfrak{r}_{F^+}^\times) \dfrac{(-1)^{w_\eta d}}{\sqrt{\lvert D_{F^+}\rvert} } (-1)^{w_\eta d} \prod_{v\in \Sigma_{F,p}}(\psi\eta)_v(2\delta_v)^{-1} \prod_{\sigma\in \Sigma_F} \sigma(2\delta)^{r_{\eta,\sigma}} \\
& \qquad \cdot \left\{\prod_{v\in \Sigma_{F,p}} \mathrm{Eul}_v^\delta(\psi\eta,0) \prod_{\sigma\in \Sigma_F} \dfrac{\pi^{r_{\eta,\sigma}}\Gamma(w_\eta+r_{\eta,\sigma})}{\bigl(-i\sigma(\delta)\bigr)^{r_{\eta,\sigma}}}\right\}  \dfrac{L(\psi\eta,0)}{C^{w_\eta\mathsf{t}+2\mathsf{r}_{\eta}}_{\mathrm{CM},\infty,F}}   \\
&= \dfrac{(\mathfrak{r}_F^\times : \mathfrak{r}_{F^+}^\times)}{\sqrt{\lvert D_{F^+}\rvert} } L(\psi\eta,0) \left\{\Omega^{w_\eta\mathsf{t}+2\mathsf{r}_{\eta}}_{\mathrm{CM},\infty,F}\prod_{\sigma\in \Sigma_F} (2\pi i)^{w_\eta+2r_{\eta,\sigma}}\right\}^{-1}   \\
& \qquad \cdot \prod_{v\in \Sigma_{F,p}} (\psi\eta)_v(2\delta_v)^{-1}\mathrm{Eul}_v^\delta(\psi\eta,0)  \prod_{\sigma\in \Sigma_F}(2\pi i)^{r_{\eta,\sigma}}\Gamma(w_\eta+r_{\eta,\sigma}),
\end{align*}
where $\mathrm{Eul}^\delta_v(\psi\eta,0)$ is defined as 
\begin{align*}
 & \mathrm{Eul}_v^\delta(\psi\eta,0)=
\begin{cases}
 L_{v^c}(\psi \eta,0)^{-1}L_v((\psi\eta)^\vee,1)^{-1} & \text{if $\psi\eta$ is unramified at $v$}, \\
\epsilon((\psi \eta)_v,\mathbf{e}_{F,v}((2\delta_v)^{-1}-),dx_v)^{-1} & \text{if $\psi\eta$ is ramified at $v$}.
\end{cases}  
\end{align*}
See also Remark~\ref{rem:Hecke_epsilon} on comparison between $\mathrm{Eul}_v^\delta(\psi\eta,0)$ and the local term appearing in \cite[(5.7.28)]{katz} and \cite[(0.10)]{HT-katz}. One then readily obtains the desired interpolation formula taking the equalities
\begin{align*}
 \prod_{\sigma\in \Sigma_F}(2\pi)^{-w_\eta-r_{\eta,\sigma}}\Gamma(w_\eta+r_{\eta,\sigma})&=2^{-d}L_\infty(\psi\eta,0), \\
\bigl\{(\psi\eta)_v(2\delta_v)\epsilon\bigl((\psi\eta)_v,\mathbf{e}_{F,v}((2\delta_v)^{-1}-), dx_v\bigr)\bigr\}^{-1} &=\epsilon\bigl((\psi\eta)_v,\mathbf{e}_{F,v},dx_v\bigr)^{-1} \\
& \quad \qquad (\text{see \cite[(3.3.3)]{Deligne_constant} for example})
\end{align*}
into accounts.
\end{proof} 

\begin{rem}[Independency of the polarisation parameter $\delta$]
Under the assumption (unr${}_{F,p}$), we have succeeded in getting rid of the dependency of the $p$-adic Hecke $L$-function $L_{p,\Sigma_F}(\psi)$ on the auxiliary element $\delta$ satisfying the conditions $(1_\delta)$, $(2_\delta)$ and $(3_\delta)$ at the beginning of Section~\ref{ssc:period_relation}; note that Katz' and Hida--Tilouine's original $p$-adic Hecke $L$-function $L^{\mathrm{KHT}}_{p,\Sigma_F,\delta}(F)$ {\em does depend} on the choice of $\delta$. It seems hard to remove $\delta$-dependency of the $p$-adic Hecke $L$-function when $F^+$ is ramified at $(p)$. 
\end{rem}

\begin{rem}[Relation with Coates and Perrin-Riou's conjecture]  \label{rem:CPC}
We clarify that our $p$-adic Hecke $L$-function $L_{p,\Sigma_F}(\psi)$ is compatible with Coates and Perrin-Riou's conjecture \cite{cp89,coa89}.
The term $i^{\lvert -w_\eta \mathsf{t}-\mathsf{r}_\eta\rvert}L_\infty(\psi\eta,0)$ is 
the modified $\infty$-Euler factor $\mathcal{L}^{(i)}_\infty(\mathrm{Res}_{F/\mathbb{Q}}\,M(\psi\eta))$ introduced in \cite[p.\,103]{coa89}, and $\Omega_{\mathrm{CM},\infty,F}^{w_\eta \mathsf{t}+2\mathsf{r}_\eta}$ amounts to the modified period 
\begin{align*}
 \Omega^{(i)}(\mathrm{Res}_{F/\mathbb{Q}}\, M(\psi\eta))=C^+(\mathrm{Res}_{F/\mathbb{Q}}M(\psi\eta))(2\pi i)^{\tau(\mathrm{Res}_{F/\mathbb{Q}}M(\psi\eta))}
\end{align*}
defined in \cite[p.\,107]{coa89}; indeed $\tau(\mathrm{Res}_{F/\mathbb{Q}}\, M(\psi\eta))$ is calculated as $\lvert -w_\eta \mathsf{t}-\mathsf{r}_\eta\rvert$, and algebraicity of the right-hand side of \eqref{eq:p-adic_Hecke} verified in \cite[(5.3.5)]{katz} implies that Deligne's period $C^+(\mathrm{Res}_{F/\mathbb{Q}}\, M(\psi\eta))$ should coincide up to non-zero algebraic multiples  with $(2\pi i)^{\lvert -\mathsf{r}_\eta\rvert} C_{\mathrm{CM}, \infty, F}^{w_\eta \mathsf{t}+2\mathsf{r}_\eta}$. Furthermore $p$ is ordinary for the motive  $\mathrm{Res}_{F/\mathbb{Q}}\, M(\psi\eta)$ in the sense of \cite[Definition 4.1]{cp89} or \cite[Section~3]{coa89}, and $\prod_{v\in \Sigma_{F,p}}\mathrm{Eul}_v(\psi\eta, s)$ is just the modified $p$-Euler factor $\mathcal{L}^{(i)}_p(\mathrm{Res}_{F/\mathbb{Q}}\, M(\psi\eta))$ introduced in \cite[Section~2]{coa89} (divided by the usual $p$-Euler factor $L_p(\mathrm{Res}_{F/\mathbb{Q}}\, M(\psi\eta))$); compare with \cite[Lemma~3]{coa89}. 
\end{rem}

\begin{rem}[On local epsilon factors] \label{rem:Hecke_epsilon} 
For completeness, we here verify that the product $\prod_{v\in \Sigma_{F,v}}\epsilon\bigl((\psi\eta)_v,\mathbf{e}_{F,v}\bigl((2\delta_v)^{-1}-\bigr),dx_v)^{-1}$ of Tate's local constants coincides with the local term $\mathrm{Local}(\psi\eta,v\text{'s})$ introduced by Katz \cite[(5.7.28)]{katz} and $W_p(\psi\eta)$ introduced by Hida and Tilouine \cite[(0.10)]{HT-katz}. Fix a $p$-adic place $v\in \Sigma_{F,p}$. 
The functional equation of the local constant \cite[(5.7.1)]{Deligne_constant} and the behaviour under unramified twists \cite[(5.5.3)]{Deligne_constant} imply the equality 
\begin{align*}
 \epsilon\bigl((\psi\eta)_v,\mathbf{e}_{F,v}\bigl((2\delta_v)^{-1}-\bigr),dx_v\bigr)^{-1}&=\epsilon((\psi \eta)_v^{-1} \lvert -\rvert_{F_v}, \mathbf{e}_{F,v}\bigl((-2\delta_v)^{-1}-\bigr),dx_v) \\
&=\epsilon\bigl((\psi\eta)_v^{-1},\mathbf{e}_{F,v}\bigl((-2\delta_v)^{-1}-\bigr), dx_v\bigr)  \mathcal{N}v^{-e_v(\psi\eta)},
\end{align*}
where $ \lvert -\rvert_{F_v}$ denotes the normalised valuation on $F_v$ 
and $e_v(\psi\eta)$ is the exponent of the conductor of $(\psi \eta)_v$. Note that $dx_v$ is a self-dual measure since $F_v$ is unramified over $\mathbb{Q}_p$. Applying \cite[(3.4.3.2)]{Deligne_constant} to the right-hand side, we obtain 
\begin{align*}
 \epsilon&\bigl((\psi\eta)_v, \mathbf{e}_{F,v}\bigl((2\delta_v)^{-1}-\bigr), dx_{v}\bigr)^{-1} =\mathcal{N}v^{-e_v(\psi\eta)}   \epsilon\bigl((\psi \eta)_v^{-1}, \mathbf{e}_{F,v}\bigl((-2\delta_v)^{-1}-\bigr), dx_{v}\bigr)   \\
&=\mathcal{N}v^{-e_v(\psi\eta)} \int_{\varpi _v^{-e_v(\psi\eta)}\mathfrak{r}_{F_v}}  (\psi \eta)_v(x) \mathbf{e}_{F,v}\bigl((-2\delta_v)^{-1}x\bigr) \, dx_v \\
&=\mathcal{N}v^{-e_v(\psi\eta)} \int_{\mathfrak{r}_{F_v}} (\psi\eta)_v(\varpi_v^{-e_v(\psi\eta)}x) \mathbf{e}_{F,v}\bigl(-\varpi_v^{-e_v(\psi\eta)}(2\delta_v)^{-1}x\bigr) \, \bigl( \mathcal{N}v^{e_v(\psi\eta)}dx_{v}\bigr) \\
&=\sum_{x\in (\mathfrak{r}_{F,v}/\varpi_v^{e_v(\psi\eta)})^\times}  (\psi\eta)_v(\varpi_v^{-e_v(\psi\eta)}x)\mathbf{e}_{F,v}(-\varpi_v^{-e_v(\psi\eta)}(2\delta_v)^{-1}x) \int_{1+\varpi_v^{e_v(\psi\eta)}\mathfrak{r}_{F_v}} dx_v \\ 
&=\bigl(\mathcal{N}v\;  (\psi\eta)_v(\varpi_v)\bigr)^{-e_v(\psi\eta)} \!\!\!  \sum_{x\in (\mathfrak{r}_{F_v}/\varpi_v^{e_v(\psi\eta)})^\times} \!\!\! (\psi \eta)_v(x) \exp(2\pi i\, \mathrm{Tr}_{F_v/\mathbb{Q}_p} (\varpi_v^{-e_v(\psi\eta)}(2\delta_v)^{-1}x)),
\end{align*}
 which completely concides with the generalised Gauss sum $\rho_v((\psi\eta)_v)$ appearing in \cite[(5.7.15)]{katz} (the local factor $\mathcal{N}v^{-e_v(\psi\eta)}G((2\delta_v),(\psi\eta)_v)$ introduced in \cite[(0.10)]{HT-katz} seems to contain a typo on the sign of the additive character).
\end{rem}

\section[Constructing the $p$-adic Artin $L$-function in the field of fractions of the Iwasawa algebra]{Constructing the $p$-adic Artin $L$-function   in the field of fractions of the Iwasawa algebra}\label{section:matching}
We construct a $p$-adic Artin $L$-function as a unique element of the {\em field of fractions} of the Iwasawa algebra $\widehat{\mathcal{O}}^{\mathrm{ur}}[[\Gamma_{F,\, \max}]]$. The integrality of the $p$-adic Artin $L$-function thus constructed will be discussed in the next section.

\subsection{Local $L$-factors and local $\epsilon$-factors}\label{subsection: localepsilonfactor} 

Before presenting the construction of $p$-adic Artin $L$-functions, we here summarise basic notion on local $L$-factors and local $\epsilon$-factors. For a CM field $F$, let $\eta=(\eta_v)_v \colon \mathbb{A}_F^\times/F^\times\rightarrow \overline{\mathbb{Q}}^\times$ be an algebraic Hecke character,  and consider an Artin representation $\rho\colon G_F \rightarrow {\rm Aut}_{\overline{\mathbb{Q}}}\, V_\rho$  of degree $r(\rho)$. 
As is well known, both $\eta$ and $\rho$ are defined on a number field $E$ of sufficiently large degree over $\mathbb{Q}$. Furthermore $\eta$ corresponds to a unique continuous character $\eta^\mathrm{gal}\colon G_F\rightarrow \mathcal{E}^\times$ via global class field theory, where $\mathcal{E}$ is an appropriate finite extension of $\mathbb{Q}_p$ containing $\boldsymbol{\iota}\circ\iota_\infty(E)$.  For each finite place $v$ of $F$, let $W_{F_v}\subset G_{F_v}$ (resp.\ ${}^\prime W_{F_v}$) denote the Weil group (resp.\ the Weil--Deligne group) of $F_v$. Following Fontaine's recipe proposed in \cite[Section 2.3.7]{Fontaine94}, we associate with $V_\rho\otimes_{\mathcal{E}} \eta^\mathrm{gal}$ a Weil--Deligne representation $W\widehat{D}_{\mathrm{pst},v}(V_\rho\otimes_{\mathcal{E}} \eta^\mathrm{gal})$ of ${}^\prime W_{F_v}$. It is defined over $\widehat{\mathcal{E}}^\mathrm{ur}$, the completion of the maximal unramified extension of $\mathcal{E}$ if $v$ lies above $p$, and over $\mathcal{E}$ otherwise. 
 Define $W\widehat{D}_{\mathrm{pst},v}(V_\rho\otimes_{\mathcal{E}} \eta^\mathrm{gal})_\mathbb{C}$ as the scalar extension of $W\widehat{D}_{\mathrm{pst},v}(V_\rho\otimes_{\mathcal{E}} \eta^\mathrm{gal})$ to $\mathbb{C}$ via $\boldsymbol{\iota}^{-1}$.
One readily observes by construction that $W\widehat{D}_{\mathrm{pst},v}(V_\rho\otimes_{\mathcal{E}} \eta^\mathrm{gal})_\mathbb{C}\cong V_\rho \otimes_{\mathbb{C}} W\widehat{D}_{\mathrm{pst},v}(\eta^\mathrm{gal})_\mathbb{C}$ is a complex vector space of dimension $r(\rho)$ on which $\gamma\in W_{F_v}$ acts as $\iota_\infty \bigl(\rho(\gamma) \eta_v(\mathrm{rec}_{F,v}^{-1}(\gamma^{\mathrm{ab}}))\bigr)$, where $\eta_v$ is the $v$-component of the Hecke character $\eta=(\eta_v)_v$, $\gamma^\mathrm{ab}$ is the image of $\gamma$ in the abelianisation $W_{F_v}^\mathrm{ab}$ of $W_{F_v}$, and  $\mathrm{rec}_{F,v}\colon F_v^\times \xrightarrow{\, \sim \,} W_{F_v}^\mathrm{ab}$ denotes the local Artin reciprocity map at $v$.

Under these settings, the local $L$-factor and the local $\epsilon$-factor of the 
motive $M(\rho)\otimes_E M(\eta)$ at a finite place $v$ of $F$ are defined as follows: 
\begin{align}
& L_v(\rho\otimes \eta,s) 
:=
 \det\bigl(1-\mathrm{Frob}_v \,  \mathcal{N}v^{-s} \mid \bigl(W\widehat{D}_{\mathrm{pst},v}(V_\rho\otimes_{\mathcal{E}} \eta^\mathrm{gal})_\mathbb{C}\bigr)^{I_v,N=0}\bigr)^{-1},
 \label{equation:L_factor} \\
& \epsilon((\rho\otimes \eta)_v,\mathbf{e}_{F,v},dx_v):=\epsilon(W\widehat{D}_{\mathrm{pst},v}(V_\rho\otimes_{\mathcal{E}} \eta^\mathrm{gal})_\mathbb{C},\mathbf{e}_{F,v},dx_{v}). \label{equation:local_constant}
\end{align}
The right-hand side of \eqref{equation:local_constant} denotes Deligne's local constant defined as in \cite[Th\'eor\`em 4.1]{Deligne_constant} with respect to the standard additive character $\mathbf{e}_{F,v}$ and the normalised Haar measure $dx_v$ of $F_v$. In particular, if both $\eta$ and $\rho$ are unramified at $v$, we have 
\begin{align*}
 L_v(\rho\otimes \eta, s)=\det(1-\eta_v(\varpi_v)\rho(\mathrm{Frob}_v)\mathcal{N}v^{-s})^{-1}
\end{align*}
where $\varpi_v$ denotes a uniformiser of $F_v$ and $\epsilon((\rho\otimes \eta)_v,\mathbf{e}_{F,v},dx_v)$ is equal to $1$. 
We define the Hasse--Weil $L$-function of the motive $M(\rho)\otimes_E M(\eta)$ by 
\begin{align*}
 L(\rho\otimes \eta,s):=\prod_{v \colon \text{finite places of $F$}}L_v(\rho\otimes \eta,s).
\end{align*}

Concerning the archimedean local $L$-factor, let $(w_\eta, \mathsf{r}_\eta)\in \mathbb{Z}\times \mathbb{Z}^{\Sigma_F}$ be the infinity type of $\eta$ as in Section~\ref{ssc:p-adic_Hecke}. Then the Hodge realisation 
of $M(\rho) \otimes_E M(\eta)$ with respect to $\sigma\in \Sigma_F$ is an $r(\rho)$-dimensional $\mathbb{C}$-vector space of Hodge type $\{(-w_\eta-r_{\eta,\sigma},r_{\eta,\sigma})\}_{\sigma\in\Sigma_F}$, and thus the archimedean local $L$-factor $L_\infty(\rho\otimes \eta,s)$ of $M(\rho)\otimes_E M(\eta)$ is described as 
\begin{align} \label{eq:arch_L_rho}
 L_\infty(\rho\otimes \eta,s)=\prod_{\sigma\in \Sigma_F}L_\sigma(\rho\otimes \eta,s)=\prod_{\sigma\in \Sigma_F} \Gamma_\mathbb{C}(s+w_\eta+r_{\eta,\sigma})^{r(\rho)}.
\end{align}
The completed $L$-function is then defined as $\Lambda(\rho\otimes \eta,s)=L_\infty(\rho\otimes\eta,s)L(\rho\otimes \eta,s)$.

\subsection{Artin representations cutting out CM fields}  \label{ssc:artin_CM}

Here after we consider an Artin representation $\rho \colon G_F\rightarrow \mathrm{Aut}_E(V_\rho)$ such that {\em the field $F_\rho$ corresponding to the kernel of $\rho$ is also a CM field}. This condition imposes a rather strict Galois-theoretic constraint, as the following lemma implies. 

\begin{lem} \label{lem:CM_ext}
 Let $K'/K$ be a finite Galois extension of a CM field $K$. Suppose that $K'$ is also a CM field, and let $c$ denote the complex conjugation on $K'$. Then $K'/K^+$ is a Galois extension with Galois group isomorphic to $\mathrm{Gal}(K'/K)\times \langle c\rangle$. In particular, $K^{\prime, +}/K^+$ is also a Galois extension and $\mathrm{Gal}(K'/K)$ is isomorphic to $\mathrm{Gal}(K^{\prime,+}/K^+)$. 
\end{lem}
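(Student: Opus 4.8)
The plan is to exploit the interaction between the Galois group $\mathrm{Gal}(K'/K)$ and the complex conjugations on $K$ and on $K'$. The key observation is that the complex conjugation $c$ on the CM field $K'$ is a canonical, central element of $\mathrm{Aut}(K')$: it is the unique nontrivial element of $\mathrm{Gal}(K'/K^{\prime,+})$, and since $K^{\prime,+}$ is the maximal totally real subfield it is preserved by every automorphism of $K'$. Concretely, for any $\tau\in \mathrm{Aut}(K')$, the field $\tau(K^{\prime,+})$ is again a totally real subfield of $K'$ of the same degree, hence equals $K^{\prime,+}$; therefore $\tau$ normalizes $\mathrm{Gal}(K'/K^{\prime,+})=\langle c\rangle$, and because that group has order $2$ the conjugation is trivial, i.e.\ $\tau c\tau^{-1}=c$. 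In particular $c$ commutes with every element of $\mathrm{Gal}(K'/K)$.

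First I would check that $K'/K^+$ is Galois. We know $K'/K$ is Galois and $K/K^+$ is Galois (degree $2$), but a tower of Galois extensions need not be Galois; here, however, $K'/\mathbb{Q}$-conjugates of $K'$ are controlled because $K'$ is the kernel field of a representation of $G_K$ — actually the cleanest route is: $K'/K^+$ is normal iff $K'$ is stable under $\mathrm{Gal}(\overline{\mathbb{Q}}/K^+)$, and any $\sigma\in G_{K^+}$ either fixes $K$ or acts on $K$ through $c$; in the first case $\sigma$ permutes the set of $K$-conjugates of $K'$, which is just $\{K'\}$ since $K'/K$ is Galois; in the second case $\sigma(K')$ is the image of $K'$ under (a lift of) $c\in\mathrm{Gal}(K/K^+)$, and one must see this image is again $K'$. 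Here is where I would use that $K'$ is CM: complex conjugation $c_{K'}$ on $K'$ restricts to a lift of $c$ on $K$, and $c_{K'}$ is an automorphism of $K'$, so $K'$ is stable under at least one lift of $c$; combined with normality of $K'/K$ (which handles the coset $\mathrm{Gal}(\overline{\mathbb{Q}}/K)$) this gives stability of $K'$ under all of $G_{K^+}$, hence $K'/K^+$ is Galois.

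Next, with $G:=\mathrm{Gal}(K'/K^+)$ established, I would produce the direct product decomposition. Inside $G$ we have the normal subgroup $H:=\mathrm{Gal}(K'/K)$ of index $2$ and the central subgroup $Z:=\langle c\rangle=\mathrm{Gal}(K'/K^{\prime,+})$ of order $2$ (centrality was the observation above). Since $K$ is not totally real while $K^{\prime,+}$ is, we have $c\notin H$, so $H\cap Z=\{1\}$ and $HZ=G$ by an order count ($|G|=2|H|$). A subgroup that is normal, complemented by a central subgroup intersecting it trivially yields an internal direct product $G\cong H\times Z\cong \mathrm{Gal}(K'/K)\times\langle c\rangle$. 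Finally, taking fixed fields: $K^{\prime,+}=(K')^{Z}$ corresponds to $Z$, so $\mathrm{Gal}(K^{\prime,+}/K^+)\cong G/Z\cong H=\mathrm{Gal}(K'/K)$; and $K^{\prime,+}/K^+$ is Galois because $Z$ is normal in $G$.

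I expect the main obstacle to be the very first point — proving $K'/K^+$ is actually Galois, i.e.\ that $K'$ is stable under \emph{every} lift of $c$, not merely under the specific lift given by complex conjugation on $K'$. The resolution is precisely that complex conjugation on $K'$ \emph{is} such a lift (since $K'$ is CM and $K\subset K'$ forces $c_{K'}|_K=c$), so any other lift differs from $c_{K'}$ by an element of $G_K$, which preserves $K'$ by normality of $K'/K$. Once Galois-ness is in hand, the centrality of $c$ and the direct-product decomposition are formal. I would present the argument in the order: (1) $c_{K'}|_K=c$ and $K'/K^+$ Galois; (2) $c$ is central in $G=\mathrm{Gal}(K'/K^+)$ via stability of the maximal totally real subfield; (3) $H\cap Z=1$, $HZ=G$, conclude $G\cong H\times Z$; (4) pass to fixed field $K^{\prime,+}$ to get the last assertions.
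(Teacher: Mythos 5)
Your proposal is correct, and its core observation matches the paper's: the complex conjugation $c$ on $K'$ is central in $\mathrm{Aut}(K')$ and restricts to the complex conjugation on $K$. Where you diverge is in the justifications. For centrality, the paper invokes the characterization $\overline{\,\cdot\,}\circ\sigma=\sigma\circ c$ for every complex embedding $\sigma$ of $K'$ (citing Shimura), from which commutation with any $\tau\in\mathrm{Aut}(K'/\mathbb{Q})$ follows by applying the relation to $\sigma\circ\tau$; you instead argue that $K^{\prime,+}$ is a \emph{characteristic} subfield (any automorphism sends it to a totally real subfield of the same degree, hence to itself), so every automorphism normalizes the order-$2$ group $\langle c\rangle$ and therefore centralizes it. Both are clean; yours is slightly more elementary in not needing the embedding characterization. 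For Galois-ness of $K'/K^+$, the paper counts $2[K':K]$ automorphisms in $\mathrm{Aut}(K'/K^+)$ (namely $g$ and $c\circ g$ for $g\in\mathrm{Gal}(K'/K)$) and compares with $[K':K^+]$; you instead show $K'$ is $G_{K^+}$-stable by splitting into the two cosets of $G_K$ in $G_{K^+}$ and using $c_{K'}|_K=c_K$ together with normality of $K'/K$. These are essentially dual presentations of the same fact. You are also more explicit than the paper about the internal direct product $G=H\times Z$ (the paper compresses this into ``the other assertions immediately follow''), which is a useful unpacking. One tiny imprecision: when arguing $c\notin H$, the cleanest statement is that $c|_K$ equals the nontrivial complex conjugation of $K$ (since $K$ is CM, not totally real), so $c$ does not fix $K$ pointwise; you gesture at this but the phrase ``$K$ is not totally real while $K^{\prime,+}$ is'' doesn't quite say it. The structure and conclusions are otherwise sound.
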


Roughly speaking, every Galois extension of CM fields is  derived from the corresponding Galois extension of maximal totally real subfields. 

\begin{proof}
The complex conjugation $c$ of the CM field $K'$ is characterised as an automorphism of $K'$ of order $2$ satisfying $\bar{\cdot}\circ \sigma=\sigma\circ c$ for any complex embedding $\sigma\colon K'\hookrightarrow \mathbb{C}$, where $\bar{\cdot}$ denotes the complex conjugation on $\mathbb{C}$; see \cite[Lemma 18.2 (i)]{shimura61} for example. Using this fact, one readily observes that $c$ commutes with any element of $\mathrm{Aut}(K'/\mathbb{Q})$, and that $c\vert_{K}$ coincides with the complex conjugation of the CM field $K$. Then one finds $2[K':K]$ distinct elements of $\mathrm{Aut}(K'/K^+)$, namely $g\in \mathrm{Gal}(K'/K)$ and $c\circ g$ for $g\in \mathrm{Gal}(K'/K)$; note that $g\in \mathrm{Gal}(K'/K)$ acts trivially on $K$ whereas $c$ acts on $K$ nontrivially as the complex conjugation. This implies that $K'/K^+$ is a normal extension, as desired. The other assertions immediately follow from this fact.
\end{proof}

\subsection{Gluing $p$-adic Hecke $L$-functions}

Now let us state the main result of this section. Refer to Remark~\ref{rem:multi_index} for multi-index notation used in the statement.

\begin{thm}\label{theorem:CMp-adicL_Artin}
Let $p$ be an odd prime number, ${F}$ a CM field of degree $2d$ and $F^+$ the maximal totally real subfield of $F$. Assume that 
${F}$, ${F}^+$ and $p$ satisfy both  {\upshape (unr$_{F,p}$)} and {\upshape
 (ord$_{F,p}$)}. 
In addition, let $\rho\colon G_F\rightarrow {\rm Aut}_E(V_\rho)$ be an Artin representation of degree $r(\rho)$ unramified at any prime ideals lying above $(p)$, and assume that the field $F_\rho$ corresponding to the kernel of $\rho$ is a CM field, as in Section $\ref{ssc:artin_CM}$. 
Then, for each $p$-ordinary CM type $\Sigma_F$ of $F$, there exists a unique element $L_{p,\Sigma_F} (M(\rho ))$ of $\mathrm{Frac} (\widehat{\mathcal{O}}^{\mathrm{ur}}[[
\Gamma_{F,\, \max}]])$ satisfying
\begin{equation} \label{eq:interpolation_Lp_Artin}
\frac{\eta^\mathrm{gal} (L_{p,\Sigma_F} (M(\rho )))}{\bigl(\Omega^{w_\eta \mathsf{t}+2\mathsf{r}_\eta}_{\mathrm{CM}, p,F} \bigr)^{r(\rho )}} = 
i^{r(\rho)\lvert -w_\eta \mathsf{t}-\mathsf{r}_\eta\rvert}
\prod_{v\in \Sigma_{F,p}}\mathrm{Eul}_v (\rho\otimes \eta,0) 
 \dfrac{\Lambda(\rho\otimes \eta,0)}{\bigl(\Omega^{w_\eta\mathsf{t}+2\mathsf{r}_\eta}_{\mathrm{CM}, \infty, F} \bigr)^{r(\rho )}}
\end{equation}
with
\begin{align*}
& \mathrm{Eul}_v(\rho\otimes \eta,0)=
\begin{cases}
 L_{v^c}(\rho\otimes \eta,0)^{-1}L_v((\rho\otimes \eta)^\vee, 1)^{-1} & \text{if $\eta$ is unramified at $v$}, \\
\epsilon((\rho \otimes \eta)_v,\mathbf{e}_{F,v},dx_v)^{-1} & \text{ if $\eta$ is ramified at $v$}
\end{cases} 
\end{align*}
for any algebraic Hecke character $\eta \colon \mathbb{A}_F^\times/F^\times\rightarrow \overline{\mathbb{Q}}^\times$ such that 
\begin{enumerate}[label={\protect\upshape (\alph*)}]
 \item the associated Galois character $\eta^{\mathrm{gal}}$ factors through $\Gamma_{F,\, \max};$
 \item the infinity type $(w_\eta,\mathsf{r}_\eta)\in \mathbb{Z}\times \mathbb{Z}^{\Sigma_F}$ of $\eta$ satisfies both $-w_\eta-r_{\eta,\sigma}\leq -1$ and $r_{\eta,\sigma}\geq 0$ for every $\sigma\in \Sigma_F$. 
\end{enumerate}
\end{thm}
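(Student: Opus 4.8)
The plan is to construct $L_{p,\Sigma_F}(M(\rho))$ via Brauer induction and then verify the interpolation formula by gluing together the Katz--Hida--Tilouine interpolation formulae. First I would invoke Brauer's induction theorem to write $\rho = \sum_{j=1}^s a_j\,\mathrm{Ind}^{G_F}_{G_{F_j}}\psi_j^\mathrm{gal}$ as a virtual sum, where each $F_j$ is an intermediate field of $F_\rho/F$ and $\psi_j^\mathrm{gal}$ is a finite-order character of $G_{F_j}$. Since $F_j$ lies between $F$ and the CM field $F_\rho$, Lemma~\ref{lem:CM_ext} guarantees $F_j$ is again a CM field; moreover $F_j$ inherits {\upshape (unr$_{F_j,p}$)} and {\upshape (ord$_{F_j,p}$)} from $F$, and the induced $p$-ordinary CM type $\Sigma_{F_j}=\{\sigma'\in I_{F_j}\mid \sigma'\vert_F\in\Sigma_F\}$ makes sense. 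Because $\rho$ is unramified above $(p)$ and $F_\rho$ is linearly disjoint from $F_{\max}$ over $F$ (as one checks from $\rho$ being $p$-unramified together with the structure of $F_{\max}$), each $\psi_j^\mathrm{gal}$ is a legitimate branch character on $F_j$; one must also arrange, by the dependency-on-conductor remarks, that $(2\delta_j)$ is prime to the conductor of $\psi_j$ for a suitable polarisation element $\delta_j$ on $F_j$ compatible with $\delta$. Then I would \emph{define}
\[
L_{p,\Sigma_F}(M(\rho)) := \prod_{j=1}^s \mathrm{pr}_j\bigl(L^{\mathrm{KHT}}_{p,\Sigma_{F_j}}(F_j;\psi_j)\bigr)^{a_j}\cdot(\text{explicit constant}),
\]
an element of $\mathrm{Frac}(\widehat{\mathcal{O}}^\mathrm{ur}[[\Gamma_{F,\max}]])$, where $\mathrm{pr}_j$ is the composite $\widehat{\mathcal{O}}^\mathrm{ur}[[\Gamma_{F_j,\max}]]\twoheadrightarrow \widehat{\mathcal{O}}^\mathrm{ur}[[\mathrm{Gal}(F_jF_{\max}/F_j)]]\hookrightarrow\widehat{\mathcal{O}}^\mathrm{ur}[[\Gamma_{F,\max}]]$.

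\emph{Uniqueness} is the easy half: any element of $\mathrm{Frac}(\widehat{\mathcal{O}}^\mathrm{ur}[[\Gamma_{F,\max}]])$ whose specialisations $\eta^\mathrm{gal}$ agree for all $\eta$ satisfying (a) and (b) must be the same, because such Hecke characters $\eta$ (equivalently, the arithmetic specialisations factoring through $\Gamma_{F,\max}$ with infinity type in the stated range) are Zariski-dense in $\mathrm{Spec}\,\widehat{\mathcal{O}}^\mathrm{ur}[[\Gamma_{F,\max}]]$; a standard argument shows two elements of the fraction field agreeing on a dense set of height-one primes (after clearing a common denominator) coincide. For \emph{existence of the interpolation formula}, the key point is that for a Hecke character $\eta$ on $F$ satisfying (a) and (b), its restriction-induction compatibilities give, for each $j$, a Hecke character $\eta\circ N_{F_j/F}$ on $F_j$ — more precisely the pullback $\eta_j$ whose Galois avatar is $\eta^\mathrm{gal}\vert_{G_{F_j}}$ — which again satisfies the analogues of conditions (i), (ii) of Theorem~\ref{thm:KHT} relative to $\Sigma_{F_j}$ (here one uses that the infinity type inequalities are preserved under the induced CM type, since each $\sigma'\in\Sigma_{F_j}$ restricts into $\Sigma_F$). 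Applying $\eta^\mathrm{gal}$ to the defining product and using $\mathrm{pr}_j$-compatibility of specialisation,
\[
\eta^\mathrm{gal}\bigl(L_{p,\Sigma_F}(M(\rho))\bigr) = \prod_{j=1}^s \eta_j^\mathrm{gal}\bigl(L^{\mathrm{KHT}}_{p,\Sigma_{F_j}}(F_j;\psi_j)\bigr)^{a_j}\cdot(\cdots),
\]
and each factor on the right is given by the interpolation formula \eqref{eq:p-adic_Hecke} for the CM field $F_j$ and the character $\psi_j\eta_j$.

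The remaining work — and \textbf{the main obstacle} — is to reassemble the product over $j$ of the right-hand sides of \eqref{eq:p-adic_Hecke} into the single right-hand side of \eqref{eq:interpolation_Lp_Artin}. This requires matching, term by term, five distinct kinds of factors across a virtual sum with possibly negative multiplicities: (1) the $L$-values, where inductivity of Artin--Hecke $L$-functions $L(\mathrm{Ind}^{G_F}_{G_{F_j}}(\psi_j\eta_j),s)=L(\psi_j\eta_j,s)$ together with the Brauer decomposition gives $\prod_j L(\psi_j\eta_j,0)^{a_j}=L(\rho\otimes\eta,0)$, and similarly for the depleted and archimedean factors; (2) the gamma factors, using \eqref{eq:arch_L_eta}, \eqref{eq:arch_L_rho} and the fact that $\sum_j a_j[F_j:F]=r(\rho)$ controls the exponent $r(\rho)$; (3) the CM periods, where the period relation Lemma~\ref{lemma:deShalit} is crucial — it identifies $C_{\mathrm{CM},?,F_j,\sigma'}$ with $C_{\mathrm{CM},?,F,\sigma'\vert_F}$, so the product over $\sigma'\in\Sigma_{F_j}$ collapses appropriately and the $r(\rho)$-th powers in \eqref{eq:interpolation_Lp_Artin} emerge; (4) the unit-index and discriminant prefactors $(\mathfrak{r}_{F_j}^\times:\mathfrak{r}_{F_j^+}^\times)$, $\sqrt{|D_{F_j^+}|}$, $2^{d_j}$, $(2\delta_j)_{\Sigma_{F_j}}^{\mathsf{r}_{\eta_j}}$, the annoying bookkeeping terms that do not appear in Greenberg's totally-real setting and must be absorbed into the ``explicit constant'' in the definition of $L_{p,\Sigma_F}(M(\rho))$, using conductor-relative comparisons and the compatibility of $\delta_j$ with $\delta$; and (5) the modified $p$-Euler factors $\mathrm{Eul}_v$, where the delicate issue is that when $\eta$ is ramified at $v\in\Sigma_{F,p}$ the local constant $\epsilon((\rho\otimes\eta)_v,\mathbf{e}^{(2\delta)}_{F,v},dx_v)$ must be shown to equal $\prod_j \epsilon((\psi_j\eta_j)_{v_j},\mathbf{e}^{(2\delta_j)}_{F_j,v_j},dx_{v_j})^{a_j}$, which is \emph{not} a formal consequence of inductivity of $\epsilon$-factors because those factors carry a global $\lambda$-factor discrepancy under induction; resolving this is exactly where the generalised Davenport--Hasse relation (Theorem~\ref{theorem:D-H_relation_overW_n} in the appendix) is needed, since $\eta$ may be ramified of conductor a higher power of a prime above $p$. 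I expect steps (4) and (5) to consume the bulk of the proof, with (5) being genuinely subtle; all of this is carried out in Section~\ref{section:matching}.
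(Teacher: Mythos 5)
Your proposal follows essentially the same route as the paper's own proof: Brauer induction to reduce to characters $\psi_j$ on CM subfields $F_j$ of $F_\rho$, defining $L_{p,\Sigma_F}(M(\rho))$ as $\prod_j\mathrm{pr}_j(L_{p,\Sigma_{F_j}}(\psi_j))^{a_j}$ (the paper absorbs the normalising constant $\sqrt{|D_{F_j^+}|}/(\mathfrak{r}^\times_{F_j}:\mathfrak{r}^\times_{F_j^+})$ and the twist by $\psi_j^\mathrm{gal}$ into the definition of each $L_{p,\Sigma_{F_j}}(\psi_j)$ rather than leaving an explicit constant at the end, and uses the \emph{same} $\delta$ for every $F_j$), uniqueness by density of the interpolation points, and then a term-by-term matching of $L$-values, gamma factors, CM periods via the period relation, bookkeeping constants, and modified $p$-Euler factors via the generalised Davenport--Hasse relation. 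You correctly identified all the structural obstacles; the only detail your sketch leaves implicit is that the ramified-$v$ $\epsilon$-factor matching is carried out in the paper not by invoking the abstract $\lambda$-constant inductivity formula, but by an explicit split of each $\epsilon((\psi_j\eta_j)_{\tilde v})$ into an $\eta_j$-Gauss-sum part (handled by Theorem~\ref{theorem:D-H_relation_overW_n}) and a $\psi_j$-determinant part (handled by a direct Mackey/Frobenius-determinant computation), with a crucial cancellation of the two $(-1)$-sign contributions between them.
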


Here we use the symbol $L_{p,\Sigma_F}(M(\rho))$ to emphasise that it is a right object which should be called the $p$-adic $L$-function of the Artin motive $M(\rho)$. Note that the interpolation formula \eqref{eq:interpolation_Lp_Artin} complies the general formulation of Coates and Perrin-Riou \cite{cp89,coa89}; see also Remark \ref{rem:CPC}.

\begin{proof}
Set $G=\mathrm{Gal}(F_\rho/F)=G_F/\ker(\rho)$, which is a finite group by definition. Then, by Brauer's induction theorem \cite[Theorem (15.9)]{CR81}, the Artin representation $\rho$ is decomposed (as a virtual representation of $G$) into
\begin{equation}\label{equation:brauer_induction}
\rho= \sum^s_{j=1} a_j \, \mathrm{Ind}^G_{G_j} \psi_j^{\mathrm{gal}}, 
\end{equation}
where $G_j=\mathrm{Gal}(F_\rho/F_j)$ is a subgroup of $G$, $\psi_j^{\mathrm{gal}}$ is an abelian character of $G_j$ and  $a_j$ is an integer for each $j$. Each $F_j$ is then a CM field because it is an intermediate field of the extension $F_\rho/F$ of CM fields; see \cite[Lemma 18.2 (iv)]{shimura61} for details. Furthermore since the dimensions of the both sides of \eqref{equation:brauer_induction} obviously coincide, we obtain a basic equality
\begin{align}\label{eq:brauer_induction_dim}
 r(\rho)=\sum_{j=1}^s a_j (G:G_j) \dim \psi_j^\mathrm{gal} =\sum_{j=1}^s a_j [F_j:F].
\end{align}
For each $1\leq j\leq s$, the  $p$-ordinary CM type $\Sigma_F$ induces a unique $p$-ordinary CM type $\Sigma_{F_j}:=\{\tau\in I_{F_j} \mid \tau\rvert_F \in \Sigma_F\}$ of $F_j$. Due to Theorem~\ref{thm:KHT}, the $p$-adic $L$-function $L_{p,\Sigma_{F_j}}(\psi_j)$ with respect to the $p$-ordinary CM type $\Sigma_{F_j}$ uniquely exists as an element of $\widehat{\mathcal{O}}^\mathrm{ur}[[\Gamma_{F_j,\max}]]$. It is characterised by the interpolation property 
\begin{align}\label{equation:katz1}  
\frac{\xi^{\mathrm{gal}}(L_{p,\Sigma_{F_j}}(\psi_j))}
{\Omega_{\mathrm{CM},p,F_j }^{w_\xi \mathsf{t}_j+2\mathsf{r}_\xi}} = 
\dfrac{(\mathfrak{r}_{F_j}^\times : \mathfrak{r}_{F_j^+}^\times )}{2^{d[F_j:F]}\sqrt{\lvert D_{F_j^+}\rvert} } i^{\lvert -w_\xi \mathsf{t}_j-\mathsf{r}_\xi\rvert}
 \prod_{\tilde{v}\in \Sigma_{F_j,p}}\mathrm{Eul}_{\tilde{v}}(\psi_j\xi,0)\frac{\Lambda(\psi_j\xi,0)}{\Omega_{\mathrm{CM},\infty ,F_j }^{w_\xi\mathsf{t}_j+2\mathsf{r}_\xi}}
\end{align}
for any algebraic Hecke character $\xi \colon \mathbb{A}_{F_j}^\times/F_j^\times\rightarrow \overline{\mathbb{Q}}^\times$ satisfying both (i) and (ii) in Theorem~\ref{thm:KHT}; note that $[F_j:\mathbb{Q}]=2d[F_j:F]$. Here $\mathsf{t}_j$ denotes  $(1,1,\ldots,1)\in \mathbb{Z}^{\Sigma_{F_j}}$. 
Now let us consider the composite map
\begin{align*}
\mathrm{pr}_j \colon \widehat{\mathcal{O}}^{\mathrm{ur}}[[\Gamma_{F_j ,\,\max}]] \twoheadrightarrow \widehat{\mathcal{O}}^{\mathrm{ur}}[[\mathrm{Gal}(F_j F_{\max}/F_j)]] \hookrightarrow  \widehat{\mathcal{O}}^{\mathrm{ur}}[[\Gamma_{F,\, \max}]]
\end{align*}
where the former map is the ring homomorphism induced by the quotient map of the Galois group 
$\Gamma_{F_j ,\,\max} =\mathrm{Gal}(F_{j,\,\max}/F_j) \twoheadrightarrow  \mathrm{Gal}(F_j F_{\max}/F_j)$, and 
the latter one is the ring homomorphism induced by the inclusion
$  \mathrm{Gal}(F_j F_{\max}/F_j) \subset \mathrm{Gal}(F_{\max}/F)=\Gamma_{F, \max}$.  Now we define $L_{p,\Sigma_F}(M(\rho))$ as
\begin{equation}\label{equation:katz3}
L_{p,\Sigma_F}(M(\rho ))=
\displaystyle \prod_{j=1}^s  \mathrm{pr}_j\left( \dfrac{2^{d[F_j:F]}\sqrt{\lvert D_{F_j^+}\rvert}}{(\mathfrak{r}_{F_j}^\times : \mathfrak{r}_{F_j^+}^\times)}L_{p,\Sigma_{F_j}}(\psi_j)\right)^{a_j}, 
\end{equation} 
which is a priori an element of the field of fractions of $\widehat{\mathcal{O}}^{\mathrm{ur}}[[\Gamma_{F,\, \max}]]$; note that both the absolute discriminant $D_{F_j^+}$ and the unit index $(\mathfrak{r}_{F_j}^\times :\mathfrak{r}_{F_j^+}^\times)$ are $p$-adic units due to the assumption (unr${}_{F,p}$) and unramifiedness of $\rho$ at $p$. In the rest of the proof, we assemble the interpolation formulae of the $p$-adic Hecke $L$-functions appearing in the right hand side of \eqref{equation:katz3}, and deduce the desired interpolation property of $L_{p,\Sigma_F}(M(\rho))$. 

\begin{component}[$\blacktriangleright$ Infinity types of $\psi_j\eta_j$] 
 Let $\eta\colon \mathbb{A}_F^\times/F^\times \rightarrow \mathbb{C}^\times$ be an algebraic Hecke character satisfying the conditions (a) and (b) in Theorem~\ref{theorem:CMp-adicL_Artin}. The restriction $\eta^{\mathrm{gal}}\vert_{G_{F_j}}$ of $\eta^{\mathrm{gal}}$ then corresponds to $\eta_j:=\eta\circ \mathrm{Norm}_{F_j/F}$ via global class field theory. Let $\psi_j$ denote the algebraic Hecke character corresponding to $\psi_j^\mathrm{gal}$ via global class field theory. Since the Norm map $\mathrm{Norm}_{F_j/F}$ induces 
\begin{align*}
 \mathbb{A}_{F_j}^{\infty,\times}=(\mathbb{C}^\times)^{\Sigma_{F_j}}\rightarrow  \mathbb{A}_{F}^{\infty,\times}=(\mathbb{C}^\times)^{\Sigma_{F}}\,; \, (x_\tau)_{\tau\in \Sigma_{F_j}} \mapsto \left(\prod_{\tau\vert_F=\sigma} x_\tau\right)_{\sigma\in \Sigma_F},
\end{align*}
one readily observes that the infinity type $(w_{\eta_j}, \mathsf{r}_{\eta_j})\in \mathbb{Z}\times \mathbb{Z}^{\Sigma_{F_j}}$ of $\psi_j\eta_j$ satisfies equalities $w_{\eta_j}=w_\eta$ and  $r_{\eta_j,\tau}=r_{\eta,\tau\vert_F}$ for each $\tau\in \Sigma_{F_j}$. From these formulae, we see that $\eta_j$ satisfies the condition (ii) of Theorem~\ref{thm:KHT} for $F_j$.  
 The condition (i) of Theorem~\ref{thm:KHT} is obviously fulfilled for $\eta_j$ by assumption, and hence we have 
\begin{align}
 \dfrac{\eta^{\mathrm{gal}} (L_{p,\Sigma_F}(M(\rho )))}
{\prod_{j=1}^s \bigl( \Omega_{\mathrm{CM},p,F_j }^{w_{\eta}\mathsf{t}_j+2\mathsf{r}_{\eta_j}}\bigr)^{a_j}} &= \prod_{j=1}^s \left( \dfrac{2^{d[F_j:F]}\sqrt{\lvert D_{F_j^+}\rvert}}{(\mathfrak{r}_{F_j}^\times: \mathfrak{r}_{F_j^+}^\times)}\dfrac{\eta^{\mathrm{gal}}\vert_{\mathrm{Gal}(\overline{F}/F_j)}(L_{p,\Sigma_{F_j}}(\psi_j))}{\Omega_{\mathrm{CM},p,F_j}^{w_{\eta}\mathsf{t}_j+2\mathsf{r}_{\eta_j}}}\right)^{a_j}
 \label{equation:specialization_p-adicL}\\ 
& =  \prod_{j=1}^s 
\left(  i^{\lvert -w_\eta \mathsf{t}_j-\mathsf{r}_{\eta_j}\rvert}
 \prod_{\tilde{v}\in \Sigma_{F_j,p}}  \mathrm{Eul}_{\tilde{v}}(\psi_j\eta_j,0)\frac{L_\infty(\psi_j\eta_j,0)L(\psi_j\eta_j,0)}{\Omega_{\mathrm{CM},\infty ,F_j }^{w_\eta\mathsf{t}_j+2\mathsf{r}_{\eta_j}}} \right)^{a_j} \nonumber
\end{align}
 by \eqref{equation:katz1} and \eqref{equation:katz3}. Here we have already substituted $w_{\eta_j}=w_\eta$. 
We shall modify each term of \eqref{equation:specialization_p-adicL} and show that \eqref{equation:specialization_p-adicL} is equivalent to the desired interpolation formula \eqref{eq:interpolation_Lp_Artin}.
\end{component}

\begin{component}[$\blacktriangleright$ $L$-values]
For each $j=1,2,\ldots,s$,  Mackey's decomposition theorem \cite[Theorem (10.13)]{CR81} provides a decomposition of $\mathrm{Ind}^G_{G_j}\, \psi_j^\mathrm{gal}$ appearing in  \eqref{equation:brauer_induction} as $G_{F_v}$-representations
\begin{align} \label{equation:Mackey}
 \mathrm{Ind}^G_{G_j}\, \psi_j^\mathrm{gal} \vert_{G_{F_v}} =\bigoplus_{[g]\in G_{F_j}\backslash G_F/G_{F_v}} \mathrm{Ind}^{G_{F_v}}_{G_{F_j}^g\cap G_{F_v}} \, (\psi_j^\mathrm{gal})^g 
\end{align}
where $G_{F_j}^g$ denotes the conjugate $g^{-1}G_{F_j}g$ of $G_{F_j}$ and the character $(\psi_j^\mathrm{gal})^g$ is defined as $(\psi^\mathrm{gal}_j)^g(x)=\psi_j^\mathrm{gal}(g xg^{-1})$ for $x\in G_{F_j}^g\cap G_{F_v}$. 
Combining \eqref{equation:Mackey} with the Tensor Product Theorem \cite[Corollary (10.20)]{CR81}, we obtain an eqality of virtual representations:
\begin{align} 
 & W\widehat{D}_{\mathrm{pst},v}(\rho\otimes_{\mathcal{E}} \eta^\mathrm{gal})_\mathbb{C}  \label{eq:Brauer_L}\\
&\;= \sum_{j=1}^s a_j \!\! \sum_{[g]\in G_{F_j}\backslash G_F/G_{F_v}}  \!\! \bigl(\mathrm{Ind}^{W_{F_v}}_{G_{F_j}^g\cap W_{F_v}} \, (\psi_j^\mathrm{gal})^g \bigr)\otimes_{\mathbb{C}} W\widehat{D}_{\mathrm{pst},v}(\eta^\mathrm{gal})_\mathbb{C}   \nonumber \\
&\;= \sum_{j=1}^s a_j \! \sum_{[g]\in G_{F_j}\backslash G_F/G_{F_v}} \! \mathrm{Ind}^{W_{F_v}}_{G_{F_j}^g \cap W_{F_v}}  \left( (\psi_j^\mathrm{gal})^g \otimes_{\mathbb{C}} W\widehat{D}_{\mathrm{pst},v}(\eta^\mathrm{gal})_\mathbb{C}\vert_{G_{F_j}^g\cap W_{F_v}} \right).  \nonumber
\end{align}
Now let $\tilde{v}_0$ denote a unique place of $F_j$ fixed by $G_{F_v}$ (we supress $j$ by abuse of notation). Set $W_{F_{j,\tilde{v}_0^g}}:=G_{F_j}\cap W_{F_v}^{g^{-1}}$, which is indeed regarded as the Weil group of $F_{j,\tilde{v}_0^g}$. If we regard $(\psi_j^\mathrm{gal})^g\otimes_{\mathbb{C}} W\widehat{D}_{\mathrm{pst},v}(\eta^\mathrm{gal})_{\mathbb{C}}\vert_{G_{F_j}^g\cap W_{F_v}}$ as a representation of $W_{F_{j,\tilde{v}_0^g}}$ via the isomorphism $W_{F_{j,\tilde{v}_0^g}} \xrightarrow{\, \sim \,} G_{F_j}^g\cap W_{F_v}\, ; \gamma \mapsto g^{-1}\gamma g$, we see that it is isomorphic to $\psi_j^\mathrm{gal}\otimes_{\mathbb{C}} W\widehat{D}_{\mathrm{pst},v}(\eta^\mathrm{gal})_\mathbb{C}\vert_{W_{F_{j,\tilde{v}_0^g}}}$. We thus obtain 
\begin{align} 
 \prod_{j=1}^s &\prod_{\tilde{v}\mid v} L((\psi_j\eta_j)_{\tilde{v}},s)^{a_j} =\prod_{j=1}^s \prod_{\tilde{v}\mid v} L(\psi_j^\mathrm{gal}\otimes_{\mathbb{C}} W\widehat{D}_{\mathrm{pst},v}(\eta^\mathrm{gal})_\mathbb{C}\vert_{W_{F_{j,\tilde{v}}}},s)^{a_j} \label{eq:Brauer_L_factor} \\
&=\prod_{j=1}^s\prod_{[g]} L( (\psi_j^\mathrm{gal})^g \otimes_{\mathbb{C}} W\widehat{D}_{\mathrm{pst},v}(\eta^\mathrm{gal})_\mathbb{C}\vert_{G_{F_j}^g\cap W_{F_v}},s)^{a_j}  \nonumber\\
&=\prod_{j=1}^s  \prod_{[g]}  L\left(  \mathrm{Ind}^{W_{F_v}}_{G_{F_j}^g \cap W_{F_v}}  \left((\psi_j^\mathrm{gal})^g \otimes_{\mathbb{C}} W\widehat{D}_{\mathrm{pst},v}(\eta^\mathrm{gal})_\mathbb{C}\vert_{G_{F_j}^g\cap W_{F_v}}\right),s\right)^{a_j}  \nonumber\\
&=L\left(\sum_{j=1}^s a_j  \sum_{[g]}   \mathrm{Ind}^{W_{F_v}}_{G_{F_j}^g \cap W_{F_v}} \left( (\psi_j^\mathrm{gal})^g \otimes_{\mathbb{C}} W\widehat{D}_{\mathrm{pst},v}(\eta^\mathrm{gal})_\mathbb{C}\vert_{G_{F_j}^g\cap W_{F_v}} \right) ,s\right)  \nonumber \\
&=L_v(\rho\otimes \eta,0)  \nonumber
\end{align}
where $\prod_{\tilde{v}\mid v}$ means the product over all the finite places $\tilde{v}$ of $F_j$ lying above $v$ and $\prod_{[g]}$ (resp.\ $\sum_{[g]}$) means the product (resp.\  the summation) over all double cosets $[g]$ of $G_{F_j}\backslash G_F/G_{F_v}$. The third equality follows due to inductivity of local $L$-fuctors (see \cite[Proposition 3.8 (ii)]{Deligne_constant}) and the last equality follows from \eqref{eq:Brauer_L}. By taking the product over all the finite places of $F$ and substituting $s=0$, we obtain $\prod_{j=1}^s L(\psi_j\eta_j,0)^{a_j}=L(\rho\otimes \eta,0)$ as desired. 
\end{component}

\begin{component}[$\blacktriangleright$ Modified $\infty$-Euler factors]
 By \eqref{eq:arch_L_eta}, the product of the modified $\infty$-Euler factors in the left-hand side of \eqref{equation:specialization_p-adicL} is rewritten as
\begin{align*}
 \prod_{j=1}^s i^{a_j\lvert -w_\eta\mathsf{t}_j-\mathsf{r}_{\eta_j}\rvert} L_\infty(\psi_j\eta_j,0)^{a_j} &=\prod_{j=1}^s \prod_{\tau\in \Sigma_{F_j}} i^{a_j(-w_\eta-r_{\eta,\tau})}\Gamma_\mathbb{C}(w_\eta+r_{\eta_j,\tau})^{a_j} \\
&=\prod_{j=1}^s \prod_{\sigma\in \Sigma_F}\prod_{\substack{\tau\in \Sigma_{F_j} \\ \tau\vert_F=\sigma}} \{i^{-w_\eta-r_{\eta,\tau}}\Gamma_\mathbb{C}(w_\eta+r_{\eta_j,\tau})\}^{a_j}. 
\end{align*}
Since the number of $\tau\in\Sigma_{F_j}$ satisfying $\tau\vert_F=\sigma$ equals $[F_j:F]$ for any $\sigma \in \Sigma_F$, we have 
\begin{align*}
 \prod_{j=1}^s i^{a_j\lvert -w_\eta\mathsf{t}_j-\mathsf{r}_{\eta_j}\rvert} L_\infty(\psi_j\eta_j,0)^{a_j} &=\prod_{j=1}^s \prod_{\sigma\in \Sigma_F}\prod_{\substack{\tau\in \Sigma_{F_j} \\ \tau\vert_F=\sigma}} \{i^{-w_\eta-r_{\eta,\tau\vert_F}}\Gamma_\mathbb{C}(w_\eta+r_{\eta,\tau\vert_F})\}^{a_j} \\
&=\prod_{\sigma\in \Sigma_F}\{i^{-w_\eta-r_{\eta,\sigma}}\Gamma_\mathbb{C}(w_\eta+r_{\eta,\sigma})\}^{\sum_{j=1}^s a_j[F_j:F]},
\end{align*}
which coincides with $i^{r(\rho) \lvert -w_\eta\mathsf{t}-\mathsf{r}_\eta\rvert}L_\infty(\rho\otimes \eta,0)$ by \eqref{eq:arch_L_rho} and \eqref{eq:brauer_induction_dim}.
\end{component}

\begin{component}[$\blacktriangleright$ Periods]
 Similarly to the computation of the modified $\infty$-Euler factors, we have
\begin{align*}
 \prod_{j=1}^s \bigl( \Omega_{\mathrm{CM},?,F_j }^{w_{\eta}\mathsf{t}_j+2\mathsf{r}_{\eta_j}}\bigr)^{a_j} 
&= \prod_{j=1}^s \prod_{\tau \in \Sigma_{F_j}} \bigl(\Omega_{\mathrm{CM},?,F_j ,\tau }^{w_{\eta} +2{r}_{\eta_j,\tau}}  \bigr)^{a_j}= 
\prod_{j=1}^s \prod_{\sigma \in \Sigma_F} \prod_{\substack{ \tau \in \Sigma_{F_j}\\   \tau \vert_F =\sigma }} \bigl(\Omega_{\mathrm{CM},?,F_j ,\tau }^{w_{\eta} +2{r}_{\eta_j,\tau}}  \bigr)^{a_j} \\
&= \prod_{j=1}^s \prod_{\sigma \in \Sigma_F} \prod_{\substack{ \tau \in \Sigma_{F_j}\\  \tau \vert_F =\sigma }} \bigl(\Omega_{\mathrm{CM},?,F ,\tau\vert_F }^{w_{\eta} +2{r}_{\eta,\tau\vert_F}}  \bigr)^{a_j}=\prod_{\sigma\in \Sigma_F} \bigl(\Omega_{\mathrm{CM},?,F,\sigma}^{w_\eta+2r_{\eta,\sigma}}\bigr)^{\sum_{j=1}^s a_j[F_j:F]}
\end{align*}
for each $?\in\{\infty,p\}$, which is equal to $\bigl(\Omega_{\mathrm{CM},?,F}^{w_\eta\mathsf{t}+2\mathsf{r}_\eta}\bigr)^{r(\rho)}$ due to \eqref{eq:brauer_induction_dim}. The third equality follows from Lemma \ref{lemma:deShalit}. 
\end{component}

\begin{component}[$\blacktriangleright$ Modified $p$-Euler factors at unramified places]

Let $v\in \Sigma_{F,p}$ be a place of $F$ lying above $(p)$ and suppose that $\eta$ is unramified at $v$. Then, since $\mathrm{Eul}_{\tilde{v}}(\psi_j\eta_j,0)$ is defined as the product 
$L_{\tilde{v}^c}(\psi_j\eta_j,0)^{-1}L_{\tilde{v}}((\psi_j\eta_j)^{-1},1)^{-1}$ for each $\tilde{v}\in \Sigma_{F_j,p}$ lying above $v$, we have 
\begin{align*}
 \prod_{j=1}^s \prod_{\substack{\tilde{v}\in \Sigma_{F_j,p} \\ \tilde{v}\mid v}} \mathrm{Eul}_{\tilde{v}}(\psi_j\eta_j,0)^{a_j}&= \prod_{j=1}^s \prod_{\substack{\tilde{v}\in \Sigma_{F_j,p} \\ \tilde{v}\mid v}} L_{\tilde{v}^c}(\psi_j\eta_j,0)^{-a_j}L_{\tilde{v}}((\psi_j\eta_j)^{-1},1)^{-a_j}\\
&=L ((\rho\otimes \eta)_{v^c},0)^{-1} L ((\rho \otimes \eta)_v^\vee ,1)^{-1} =\mathrm{Eul}_v(\rho\otimes \eta, 0)
\end{align*}
due to \eqref{eq:Brauer_L_factor}; note that \eqref{equation:brauer_induction} implies  $(\rho\otimes \eta)^{\vee}=\sum_{j=1}^s a_j \, \mathrm{Ind}^{G}_{G_j} (\psi_j^\mathrm{gal}\eta_j^\mathrm{gal})^{-1}$. 
\end{component}

\begin{component}[$\blacktriangleright$ Modified $p$-Euler factors at ramified places] 
Let $v\in \Sigma_{F,p}$ be a place of $F$ lying above $(p)$ and suppose that  $\eta$ is ramified at $v$. In this situation, $\mathrm{Eul}_v(\rho\otimes \eta,0)$  and $\mathrm{Eul}_{\tilde{v}}(\rho\otimes \eta,0)$ for $\tilde{v}\in \Sigma_{F_j,p}$ lying above $v$ are defined as $\epsilon((\rho\otimes \eta)_v,\mathbf{e}_{F,v},dx_v)$ and  $\epsilon((\psi_j\eta_j)_{\tilde{v}},\mathbf{e}_{F_j,\tilde{v}},dx_{\tilde{v}})$ respectively. In the following, we shall verify the equality among the local constants
\begin{align} \label{eq:Brauer_induction_epsilon}
\prod_{j=1}^s\prod_{\substack{\tilde{v}\in \Sigma_{F_j,p} \\ \tilde{v}\mid v}}  \epsilon((\psi_j\eta_j)_{\tilde{v}},\mathbf{e}_{F_j,\tilde{v}},dx_{\tilde{v}})^{a_j}=\epsilon((\rho\otimes \eta)_v,\mathbf{e}_{F,v},dx_v),
\end{align}
which implies the desired equality
\begin{align} \label{eq:Brauer_induction_A}
 \prod_{j=1}^s\prod_{\substack{\tilde{v}\in \Sigma_{F_j,p} \\ \tilde{v}\mid v}} \mathrm{Eul}_{\tilde{v}}(\psi_j\eta_j,0) =\mathrm{Eul}_v(\rho\otimes \eta,0).
\end{align}
Now let us prove \eqref{eq:Brauer_induction_epsilon}. Note that $\psi_j^\mathrm{gal}$  is unramified at each $\tilde{v}\in \Sigma_{F_j,p}$ since $F/F_j$ is unramified at $\tilde{v}$ by construction.  Hence, if we let $\mathfrak{P}_{\tilde{v}}^{e(\eta_{j,\tilde{v}})}$ denotes the conductor of the $\tilde{v}$ component of $\eta_j=\eta\circ \mathrm{Norm}_{F_j/F}$, the left-hand side of \eqref{eq:Brauer_induction_epsilon} is calculated as 
\begin{align} 
\prod_{j=1}^s &\prod_{\substack{\tilde{v}\in \Sigma_{F_j,p} \\ \tilde{v}\mid v}} \epsilon ((\psi_j\eta_j)_{\tilde{v}}, \mathbf{e}_{F_j,\tilde{v}},dx_{\tilde{v}})^{a_j}  \label{equation:epsilonfactor_into_two_terms}  \\
&= 
\prod_{j=1}^s \prod_{\substack{\tilde{v}\in \Sigma_{F_j,p} \\ \tilde{v}\mid v}}\epsilon ((\eta_j)_{\tilde{v}}, \mathbf{e}_{F_j,\tilde{v}},dx_{\tilde{v}})^{a_j} \psi_{j,\tilde{v}}(\varpi_v^{e(\eta_{j,\tilde{v}})})^{a_j} \nonumber \\ 
&=\prod_{j=1}^s\prod_{\substack{\tilde{v} \in \Sigma_{F_j , p} \\ \tilde{v} \mid v}}  
\psi_{j,\tilde{v}}(\varpi_v^{e(\eta_{j,\tilde{v}})})^{a_j} 
\left(\int_{\mathfrak{P}_{\tilde{v}}^{-e(\eta_{j,\tilde{v}})}} \eta_{j,\tilde{v}}^{-1}(x)\mathbf{e}_{F,v}(x)dx_{\tilde{v}}\right)^{a_j} 
\nonumber \\ 
&=\prod_{j=1}^s\prod_{\substack{\tilde{v} \in \Sigma_{F_j , p} \\ \tilde{v} \mid v}}  \psi_{j,\tilde{v}}(\varpi_v^{e(\eta_{j,\tilde{v}})})^{a_j} \nonumber \\
&\hspace*{5em}\cdot \left(\int_{\mathfrak{r}_{F_{j,\tilde{v}}}} \eta_{j,\tilde{v}}^{-1}(\varpi_v^{-e(\eta_{j,\tilde{v}})}x)\mathbf{e}_{F,v}(\varpi_v^{-e(\eta_{j,\tilde{v}})}x)d(\varpi_v^{-e(\eta_{j,\tilde{v}})}x_{\tilde{v}})\right)^{a_j} \nonumber \\
&=\prod_{j=1}^s \prod_{\substack{\tilde{v} \in \Sigma_{F_j , p} \\ \tilde{v} \mid v}} \psi_{j,\tilde{v}}(\varpi_v^{e(\eta_{j,\tilde{v}})})^{a_j} \eta_{j,\tilde{v}}(\varpi_v^{e(\eta_{j,\tilde{v}})})^{a_j} \nonumber \\
&\hspace*{10em}\cdot\left(\sum_{x\in (\mathfrak{r}_{F_j}/\mathfrak{P}_{\tilde{v}}^{e(\eta_{j,\tilde{v}})})^\times}
\eta_{j,\tilde{v}}^{-1}(x)\mathbf{e}_{F_j,\tilde{v}}(\varpi_v^{-e(\eta_{j,\tilde{v}})}x)\right)^{a_j}.  \nonumber
\end{align}
 Here $\varpi_v$ is a uniformiser of $F_v$ and we also regard it as a uniformiser of $F_{j,\tilde{v}}$  using unramifiedness of $F_{j,\tilde{v}}/F_v$. The first equality follows from \cite[(5.5.3)]{Deligne_constant}, and the second equality follows from \cite[(3.4.3.2)]{Deligne_constant}.
Note that $e(\eta_{j,\tilde{v}})$ coincides with $e(\eta_v)$, the exponent of the conductor  of $\eta_v$, since the norm map $\mathrm{Norm}_{F_{j,\tilde{v}}/F_v}$ induces a surjection $1+\mathfrak{P}_{\tilde{v}}^k\twoheadrightarrow 1+\mathfrak{P}_v^k$
 for every natural number $k$. See \cite[Chapitre V, Proposition 3]{serre04}; recall again that $F_{j,\tilde{v}}/F_v$ is unramified. 

We may calculate the product concerning $\eta_{j,\tilde{v}}$'s further by using a generalisation of Hasse and Davenport's relation for characters with prime power conductor, whose proof shall be given in Appendix~\ref{app:DH}. Indeed, we may apply Theorem~\ref{theorem:D-H_relation_overW_n} by setting 
\begin{align*}
 s&=f_{\tilde{v}\mid v}, &  \mathbb{F}_q &= \mathfrak{r}_{F} /\mathfrak{P}_v, & \mathbb{F}_{q^s}&=\mathfrak{r}_{F_j}/\mathfrak{P}_{\tilde{v}}, \\ \chi &= \eta_v,  &
n&=e (\eta_v), & W_n (\mathbb{F}_q ) &= \mathfrak{r}_{F}/\mathfrak{P}_v^{e(\eta_v)}, \\  W_n(\mathbb{F}_{q^s}) &=\mathfrak{r}_{F_j}/\mathfrak{P}_{\tilde{v}}^{e(\eta_v)}, & \psi_{n,\mathbb{F}_q}(x) &=\mathbf{e}_{F,v}(\varpi_v^{-e(\eta_v)} x) 
\end{align*}
where $f_{\tilde{v}\vert v}=[F_{j,\tilde{v}}:F_v]$ denotes the inertia degree of $\tilde{v}\vert v$. Note that $\mathbf{e}_{F_j,\tilde{v}}(\varpi_v^{-e(\eta_v)}x)$ coincides with $\mathbf{e}_{F,v}(\varpi_v^{-e(\eta_v)}\mathrm{Tr}_{F_{j,\tilde{v}}/F_v}(x))$ since $\varpi_v$ is contained in  $F_v$, and thus one may regard $\mathbf{e}_{F_j,\tilde{v}}(\varpi_v^{-e(\eta_v)}x)$ as $\psi_{n,\mathbb{F}_{q^s}}(x)$ appearing in Appendix~\ref{app:DH}. Applying Theorem~\ref{theorem:D-H_relation_overW_n}, we have 
\begin{align}
& \prod_{j=1}^s \prod_{\substack{\tilde{v}\in \Sigma_{F_j,p} \\ \tilde{v}\mid v}} \eta_{v}\circ \mathrm{Norm}_{F_{j,\tilde{v}}}(\varpi_v^{e(\eta_v)})^{a_j}
\left( 
\sum_{x\in  (\mathfrak{r}_{F_j}/\mathfrak{P}_{\tilde{v}}^{e(\eta_v)})^\times}
\eta_{j,\tilde{v}}^{-1}(x)\mathbf{e}_{F_j,\tilde{v}} 
(\varpi_v^{-e(\eta_v)}x) 
\right) ^{ a_j} \label{equation:localepsilon1} \\ 
& =  \prod_{j=1}^s \prod_{\substack{\tilde{v} \in \Sigma_{F_j , p}
\\ \tilde{v}\mid v}} \eta_v(\varpi_v^{e(\eta_v)})^{a_jf_{\tilde{v}\vert v}} \nonumber
\\
&\hspace*{4em} \cdot   
\left( (-1)^{e (\eta_v)(f_{\tilde{v}\vert v}-1)}  \left(\sum_{x\in (\mathfrak{r}_{F}/\mathfrak{P}_v^{e(\eta_v)})^\times}\eta_v^{-1}(x)\mathbf{e}_{F,v}(\varpi_v^{-e(\eta_v)}x) \right)^{f_{\tilde{v}\vert v}}\right)^{ a_j} \nonumber \\
&\hspace{25em}  (\text{by Theorem~\ref{theorem:D-H_relation_overW_n}}) \nonumber \\ 
&=  
(-1)^{\sum_{j=1}^s a_j \sum_{\tilde{v}\mid v} e(\eta_v) (f_{\tilde{v}\mid v}-1)}
\nonumber \\
&\hspace*{4em} \cdot   
\left(  \eta_v(\varpi_v^{e(\eta_v)})
 \sum_{x\in (\mathfrak{r}_{F}/\mathfrak{P}_v^{e(\eta_v)})^\times}
\eta_v^{-1} (x)\mathbf{e}_{F,v}(\varpi_v^{-e(\eta_v)}x)\right)^{\sum_{j=1}^s a_j\sum_{\tilde{v}\mid v} f_{\tilde{v}\mid v} } \nonumber \\
&=(-1)^{\sum_{j=1}^sa_j \sum_{\tilde{v}\mid v}e(\eta_v) (f_{\tilde{v}\mid v}-1)}\epsilon(\eta_v,\mathbf{e}_{F,v}a,dx_v)^{r(\rho)}, \nonumber 
\end{align}
where $\sum_{\tilde{v}\mid v}$ denotes the summation over $\tilde{v}\in \Sigma_{F_i,p}$ lying above $v$. The last equality follows from \cite[(5.5.3)]{Deligne_constant} and the basic equality on extension degrees 
\begin{align*}
 r(\rho) = \sum_{j=1}^s a_j[F_j:F]=\sum_{j=1}^s \sum_{\tilde{v}\mid v} a_j f_{\tilde{v}\mid v},
\end{align*}
which follows from unramifiedness of $F_{j,\tilde{v}}/F_v$. Although there appears an unexpected signature $(-1)^{\sum_{j=1}^s a_j\sum_{\tilde{v}\mid v}e(\eta_v)f_{\tilde{v}\mid v}}$, it will be finally cancelled out with the signature appearing in the computation of the product concerning $\psi_j$'s.

Next, in order to calculate the product concerning $\psi_j$'s in \eqref{equation:epsilonfactor_into_two_terms}, we study the action of $\mathrm{Frob}_v$ on each component 
\begin{align*}
 \mathrm{Ind}^{G_{F_v}}_{G^g_{F_j}\cap G_{F_v}}(\psi_j^\mathrm{gal})^g:=\mathrm{Hom}_{\overline{\mathbb{Q}}[G_{F_j}^g\cap G_{F_v}]} (\overline{\mathbb{Q}}[G_{F_v}], \overline{\mathbb{Q}}((\psi_j^\mathrm{gal})^g))
\end{align*}
appearing in the right-hand side of \eqref{equation:Mackey}.  Let $\tilde{v}_0$ be a unique place of $F_j$ fixed by $G_{F_v}$, as before. Then $G_{F_j}^g\cap G_{F_v}$ is naturally regarded as the conjugate $G_{F_{j,\tilde{v}_o^g}}^g$ of the decomposition subgroup  $G_{F_{j,\tilde{v}_0^g}}$ of $G_{F_j}$ at $\tilde{v}_0^g$; in particular, the degree of $\mathrm{Ind}_{G_{F_j}^g \cap G_{F_v}}^{G_{F_v}} (\psi_j^\mathrm{gal})^g$ coincides with the inertia degree $f_{\tilde{v}_0^g\mid v}$, and $g\bigl(\mathrm{Frob}_v^{f_{\tilde{v}_0^g\mid v}}\bigr)g^{-1}$ is identified with $\mathrm{Frob}_{\tilde{v}_0^g}$. Now, for $k=0,1,\ldots,f_{\tilde{v}_0^g\mid v}$, define $\phi_k\colon \overline{\mathbb{Q}}[G_{F_v}]\rightarrow \overline{\mathbb{Q}}((\psi_j^\mathrm{gal})^g)$ as a $\overline{\mathbb{Q}}[G_{F_j}^g\cap G_{F_v}]$-equivariant homomorphism sending $\mathrm{Frob}_v^{-k}$ to $1$ and $\mathrm{Frob}_v^{-k'}$ to $0$ for $k'=0,1,\ldots,f_{\tilde{v}_0^g\mid v}-1$ with $k'\neq k$. Then $\phi_0,\phi_1,\ldots,\phi_{f_{\tilde{v}_0^g\mid v}-1}$ forms a basis of $\mathrm{Ind}^{G_{F_v}}_{G^g_{F_j}\cap G_{F_v}}(\psi_j^\mathrm{gal})^g$, and the matrix presentation of the action of $\mathrm{Frob}_v$ on it is given by 
\begin{align*}	
\begin{bmatrix}
  & (\psi_j^\mathrm{gal})^g(\mathrm{Frob}_v^{f_{\tilde{v}_0^g\mid v}}) \\ 
I_{f_{\tilde{v}_0^g\mid v}-1} &  \\  
\end{bmatrix}=
\begin{bmatrix}
  &  \psi_j^\mathrm{gal}(\mathrm{Frob}_{\tilde{v}_0^g})\\ 
I_{f_{\tilde{v}_0^g\mid v}-1} &  \\  
\end{bmatrix}
\end{align*}
where $I_{f_{\tilde{v}_0^g\mid v}-1}$ denotes the identity matrix of degree $f_{\tilde{v}_0^g\mid v}-1$. 
Taking the determinant, we obtain an equality
\begin{align} \label{equation:Frob_action_induced}
 \det \bigl(\mathrm{Frob}_v;\, \mathrm{Ind}_{G_{F_j}^g\cap G_{F_v}}^{G_{F_v}}\, (\psi_j^\mathrm{gal})^g\bigr) &=(-1)^{f_{\tilde{v}_0^g\mid v}-1} \psi_{j,\tilde{v}_0^g}(\varpi_v).
\end{align}
Combining \eqref{equation:Frob_action_induced} with Mackey decomposition \eqref{equation:Mackey}, we may calculate as
\begin{align}
 \prod_{j=1}^s &\prod_{\substack{\tilde{v} \in \Sigma_{F_i , p} \\ \tilde{v}\mid v}}  \psi_{j,\tilde{v}} (\varpi_v^{e(\eta_v)})^{ a_i} 
=\prod_{j=1}^s \prod_{[g]\in G_{F_j}\backslash G_F/G_{F_v}}  \psi_{j,\tilde{v}_0^g} (\varpi_v^{e(\eta_v)})^{ a_j}  \label{equation:localepsilon2} \\
&=\prod_{j=1}^s \prod_{[g]\in G_{F_j}\backslash G_F/G_{F_v}} \!\! \left\{(-1)^{e(\eta_v)(f_{\tilde{v}_0^g\mid v}-1)} \! \det (\mathrm{Frob}_v^{e(\eta_v)}; \mathrm{Ind}_{G_{F_j}^g\cap G_{F_v}}^{G_{F_v}}(\psi_j^\mathrm{gal})^g) \!\right\}^{a_j} \nonumber\\
&\hspace*{27em} (\text{by \eqref{equation:Frob_action_induced}}) \nonumber \\
&= (-1)^{\sum_{j=1}^sa_j \sum_{\tilde{v}\mid v} e(\eta_v)(f_{\tilde{v}\mid v}-1)} \nonumber \\
&\hspace*{6.5em} \cdot  \det \left(\mathrm{Frob}_v^{e(\eta_v)};\, \sum_{j=1}^s a_j\sum_{[g]\in G_{F_j}\backslash G_F/G_{F_v}}\mathrm{Ind}_{G_{F_j}^g\cap G_{F_v}}^{G_{F_v}}(\psi_j^\mathrm{gal})^g\right) \nonumber\\
&= (-1)^{\sum_{j=1}^sa_j \sum_{\tilde{v}\mid v} e(\eta_v)(f_{\tilde{v}\mid v}-1)} \det\left(\mathrm{Frob}_v^{e(\eta_v)} \,;\, V_\rho\vert_{G_{F_v}}\right). \nonumber
\end{align} 
The last equality follows from \eqref{equation:brauer_induction} and \eqref{equation:Mackey}. We here emphasise that the signature appearing in \eqref{equation:localepsilon2} completely coincides with that in \eqref{equation:localepsilon1}.
Therefore, by putting \eqref{equation:epsilonfactor_into_two_terms}, \eqref{equation:localepsilon1} and \eqref{equation:localepsilon2} together, we finally obtain 
\begin{align*}
 \prod_{j=1}^s \prod_{\substack{\tilde{v}\in \Sigma_{F_j,p} \\ \tilde{v}\mid v}} \epsilon((\psi_j\eta_j)_{\tilde{v}},\mathbf{e}^{-1}_{F_j,\tilde{v}},dx_{\tilde{v}})^{a_j}&=\epsilon(\eta_v,\mathbf{e}_{F,v}^{-1},dx_v)^{r(\rho)}\det\bigl(\mathrm{Frob}_v^{e(\eta_v)}\, ; \, V_\rho\bigr) \\
&=\epsilon((\rho\otimes \eta)_v,\mathbf{e}_{F,v}^{-1},dx_v),
\end{align*}
due to \cite[(5.5.3)]{Deligne_constant}.
\end{component}
We thus complete the proof of Theorem \ref{theorem:CMp-adicL_Artin}. 
\end{proof}

\begin{rem}[Uniqueness of the $p$-adic $L$-function]
 The interpolation formula \eqref{eq:interpolation_Lp_Artin} uniquely characterises the $p$-adic Artin $L$-function $L_{p,\Sigma_F}(M(\rho))$ due to the $p$-adic identity theorem \cite[Theorem (4.1.2)]{katz}; indeed we may readily extend \cite[Theorem (4.1.2)]{katz} to elements of the field of fractions of the Iwasawa algebra $\widehat{\mathcal{O}}^\mathrm{ur}[[\Gamma_{F,\, \max}]]$.  In particular $L_{p,\Sigma_F}(M(\rho))$ does not depend on any Brauer decomposition \eqref{equation:brauer_induction} of $\rho$ which we choose in the proof of Theorem~\ref{theorem:CMp-adicL_Artin}, because the interpolation formula \eqref{eq:interpolation_Lp_Artin} does not contain any data concerning a particular choice of the decomposition \eqref{equation:brauer_induction}. 
\end{rem}

The integrality conjecture below is regarded as a $p$-adic counterpart of Artin's conjecture for (complex) Artin $L$-functions.

\begin{conj}[$p$-adic Artin conjecture] \label{conjecture:integrality}

The $p$-adic Artin  $L$-function $L_{p,\Sigma_F} (M(\rho ))$, which is constructed  as an element of $\mathrm{Frac} (\widehat{\mathcal{O}}^{\mathrm{ur}}[[\Gamma_{F,\, \max}]])$ in Theorem \ref{theorem:CMp-adicL_Artin}, 
is indeed an element of $\widehat{\mathcal{O}}^{\mathrm{ur}}[[\Gamma_{F,\, \max}]]$. 

\end{conj}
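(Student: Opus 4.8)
The plan is to reduce the integrality of $L_{p,\Sigma_F}(M(\rho))$ to the (abelian) Iwasawa main conjecture for CM fields, in the spirit of Greenberg's treatment of the totally real case. Fix a Brauer decomposition $\rho=\sum_{j=1}^s a_j\,\mathrm{Ind}_{G_{F_j}}^{G_F}\psi_j^{\mathrm{gal}}$ as in \eqref{equation:brauer_induction}; then by the very definition \eqref{equation:katz3} one has $L_{p,\Sigma_F}(M(\rho))=\prod_{j=1}^s\mathrm{pr}_j\bigl(L_{p,\Sigma_{F_j}}(\psi_j)\bigr)^{a_j}$, where each factor $L_{p,\Sigma_{F_j}}(\psi_j)$ is an honest element of $\widehat{\mathcal{O}}^{\mathrm{ur}}[[\Gamma_{F_j,\max}]]$ by Theorem~\ref{thm:KHT}. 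The only obstruction to $L_{p,\Sigma_F}(M(\rho))\in\widehat{\mathcal{O}}^{\mathrm{ur}}[[\Gamma_{F,\max}]]$ is therefore that some of the $a_j$ are negative, so that the naive product is a priori only a fractional quantity.

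The first step is to prove the algebraic analogue \eqref{equation:descent_argument_algebraic_side_introduction}: the characteristic ideal of $\mathrm{Gal}(M_{\Sigma_F}/F_\rho F_{\max})_\rho\,\widehat{\otimes}_{\mathcal{O}}\widehat{\mathcal{O}}^{\mathrm{ur}}$ decomposes as $\prod_{j=1}^s\mathrm{pr}_j\bigl(\mathrm{char}(\mathrm{Gal}(M_{\Sigma_F}/F_j F_{\max})_{\psi_j}\widehat{\otimes}\widehat{\mathcal{O}}^{\mathrm{ur}})\bigr)^{a_j}$ as fractional ideals. This is the technically demanding point, since the Iwasawa modules over the various $F_j$ naturally sit in Iwasawa algebras of differing numbers of variables and their characteristic ideals are not compatible with the projections $\mathrm{pr}_j$ in general. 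I would handle this by the inductive descent arguments of \cite{haraochiai}: one filters the Brauer induction through a tower of subfields of $F_\rho/F$, and at each stage controls the behaviour of characteristic ideals under (co)restriction and under the relevant specialisations, using that the modules in play are torsion and have no nonzero pseudo-null submodule, so that Nakayama- and snake-type arguments apply. Crucially, the left-hand side of \eqref{equation:descent_argument_algebraic_side_introduction} is by construction a genuine \emph{integral} ideal of $\widehat{\mathcal{O}}^{\mathrm{ur}}[[\Gamma_{F,\max}]]$.

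The second step is to feed in the Iwasawa main conjecture for each branch character $\psi_j^{\mathrm{gal}}$ --- using the twist-and-project compatibilities that relate $L^{\mathrm{KHT}}_{p,\Sigma_{F_j}}(F_j)$ to $L_{p,\Sigma_{F_j}}(\psi_j)$ --- to identify $\bigl(\mathrm{pr}_j(L_{p,\Sigma_{F_j}}(\psi_j))\bigr)$ with $\mathrm{pr}_j\bigl(\mathrm{char}(\mathrm{Gal}(M_{\Sigma_F}/F_j F_{\max})_{\psi_j}\widehat{\otimes}\widehat{\mathcal{O}}^{\mathrm{ur}})\bigr)$. Substituting into \eqref{equation:descent_argument_algebraic_side_introduction} gives $\bigl(L_{p,\Sigma_F}(M(\rho))\bigr)=\mathrm{char}_{\widehat{\mathcal{O}}^{\mathrm{ur}}[[\Gamma_{F,\max}]]}\bigl(\mathrm{Gal}(M_{\Sigma_F}/F_\rho F_{\max})_\rho\,\widehat{\otimes}_{\mathcal{O}}\widehat{\mathcal{O}}^{\mathrm{ur}}\bigr)$ as fractional ideals; since the right-hand side is an integral ideal, so is the left, which is precisely Conjecture~\ref{conjecture:integrality} (and this simultaneously yields the main-conjecture equality for $\rho$, i.e.\ Theorems~\ref{thm:MainA} and~\ref{thm:MainB}).

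The main obstacle is that the Iwasawa main conjecture for the branch characters $\psi_j^{\mathrm{gal}}$ occurring in a general Brauer decomposition is not presently available: Hsieh's divisibility \cite[Theorem 8.16]{Hsieh14} requires the Leopoldt conjecture and gives only one inclusion, while the equalities of Hida \cite{hida2006} and Rubin \cite{Rubin91} hold only in the anticyclotomic setting (resp.\ when $F$ is biquadratic over $\mathbb{Q}$), conditions the $\psi_j$ need not satisfy; see Remark~\ref{rem:failure_ab_IMC}. Thus the unconditional conjecture reduces cleanly to --- but is no easier than --- the full abelian main conjecture for CM fields; any unconditional proof would have to establish the latter in the required multi-variable generality, for instance by an Eisenstein-congruence construction on a suitable unitary group, or else prove integrality directly by exhibiting congruences among the Hecke characters $\psi_j$ that force the fractional product \eqref{equation:katz3} to land in $\widehat{\mathcal{O}}^{\mathrm{ur}}[[\Gamma_{F,\max}]]$ without passing through the algebraic side at all.
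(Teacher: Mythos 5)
Your proposal follows essentially the same route as the paper's conditional proof (Theorem \ref{theorem:integrality_of_CMp-adicL_Artin}): define $L_{p,\Sigma_F}(M(\rho))$ via Brauer induction, establish the algebraic decomposition \eqref{equation:descent_argument_algebraic_side_introduction} by the descent and control-theorem machinery of \cite{haraochiai}, invoke the abelian main conjecture for each $\psi_j$, and conclude integrality from the integrality of the characteristic ideal; you also correctly identify that the unconditional statement remains out of reach for want of the abelian main conjecture (cf.\ Remark \ref{rem:failure_ab_IMC}). The only ingredient you gloss over is that the Brauer decomposition is merely virtual and does not respect lattices, so one must first prove that the $\mu$-invariant (hence the characteristic ideal) of the Selmer group is independent of the chosen $G_F$-stable lattice --- this is Lemma \ref{lemma:mu_lattice} in the paper and is where a genuine Euler--Poincar\'e computation enters.
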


\section[Integrality of the $p$-adic Artin $L$-function   and the Iwasawa main conjecture]{Integrality of the $p$-adic Artin $L$-function  and the Iwasawa main conjecture}

In Section~\ref{ssc:IMC_MAIN}, we formulate the Iwasawa main conjecture for Artin representations on CM fields and state the main result of the present article (Theorem~\ref{theorem:integrality_of_CMp-adicL_Artin}). After studying several algebraic properties of Selmer groups in Section~\ref{ssc:alg_pre}, we shall give the proof of Theorem~\ref{theorem:integrality_of_CMp-adicL_Artin} in Section~\ref{ssc:integrality}.

\subsection{Iwasawa main conjectures and main results} \label{ssc:IMC_MAIN}
First of all, let us introduce the notion of several Selmer groups concerning Artin motives. 
Let $\rho\colon G_F\rightarrow {\rm Aut}_{E}(V_\rho)$ be an Artin representation, and take a finite extension $\mathcal{E}$ of $\mathbb{Q}_p$ containing $\boldsymbol{\iota}\circ \iota_\infty(E)$. In the following, we use the same symbol $V_\rho$ for its scalar extension $V_\rho\otimes_E \mathcal{E}$ and regard it as a $p$-adic representation for simplicity. 
Let $\mathcal{O}$ be the ring of integers of $\mathcal{E}$. 
Choose a $G_{F}$-stable $\mathcal{O}$-lattice $T$ of $V_\rho$ and set $A =T \otimes_{\mathbb{Z}_p} \mathbb{Q}_p /\mathbb{Z}_p$.  
Let $S_F$ be the set of places of $F$ consisting of all the archimedean places, the places dividing $(p)$ and those at which $\rho$ ramifies. For any algebraic extension $F'$ of $F$, let $S_{F'}$ be the set of places of $F'$ lying above those contained in $S_F$. 
When $[F':F]$ is finite, we define the {\em $(\Sigma_{F,p}\text{-ramified})$ Selmer group} $\mathrm{Sel}_{A} (F') $ of $A$ as 
\begin{equation}\label{equation:definitionSelmergp_rho}
\mathrm{Sel}_{A,\Sigma_F} (F') = \mathrm{Ker} 
\! \left[ \mathrm{loc}_A\colon
H^1 (F'_{S_{F'}} /F' ,A ) \rightarrow 
 \underset{w \in S_{F'} \setminus \Sigma_{F',p},\, w\nmid \infty}{\prod} 
 H^1 (F^{\prime\, \mathrm{ur}}_w ,A)
 \right],
\end{equation}
where $F'_{S_{F'}}$ is the maximal Galois extension of $F'$ unramified outside the places of $S_{F'}$, $F_w^{\prime\, \mathrm{ur}}$ is the maximal unramified extension of $F_w'$ and $\Sigma_{F',p}$ is the set of places of $F'$ lying above those contained in $\Sigma_{F,p}$. 
When $F'$ is an algebraic extension of $F$ of infinite degree, we define $\mathrm{Sel}_{A,\Sigma_F} (F') $ by taking the inductive limit of the Selmer groups 
defined over intermediate extensions of finite degree over $F$. 
Similarly, when $F'$ is a finite extension of $F$, we define the {\em $(\Sigma_{F,p}\text{-ramified})$ strict Selmer group} of $A$ as 
\begin{equation} \label{eq:str_Sel}
\mathrm{Sel}^{\mathrm{str}}_{A,\Sigma_F} (F') = \mathrm{Ker} \!
\left[  \mathrm{loc}_A^{\mathrm{str}}\colon
H^1 (F'_{S_{F'}}/F' ,A )  \rightarrow \!
 \underset{w \in S_{F'} \setminus \Sigma_{F',p},\, w\nmid \infty}{\prod} 
 H^1 (F^{\prime}_w ,A)
 \right].
\end{equation}
By taking the inductive limit, we may also define the strict Selmer group $\mathrm{Sel}^{\mathrm{str}}_{A,\Sigma_F} (F') $ even when $F'$ is an algebraic extension of $F$ of infinite degree. Using Shapiro's lemma \cite[Proposition (1.6.4)]{NSW}, we readily observe that $\mathrm{Sel}_{A,\Sigma_F}(F_{\max})$ and $\mathrm{Sel}^{\mathrm{str}}_{A,\Sigma_F}(F_{\max})$ are respectively isomorphic to the corresponding Selmer groups
\begin{align}
 \mathrm{Sel}_{\mathcal{A},\Sigma_F} (F) &= \mathrm{Ker} 
\! \left[ 
H^1 (F_{S_{F}} /F ,\mathcal{A} ) \rightarrow 
 \underset{v \in S_{F} \setminus \Sigma_{F,p},\, v\nmid \infty}{\prod} 
 H^1 (F_v^{\mathrm{ur}} ,\mathcal{A})
 \right],  \label{eq:Sel_def} \\
 \mathrm{Sel}^{\mathrm{str}}_{\mathcal{A},\Sigma_F} (F) &= \mathrm{Ker} 
\! \left[ 
H^1 (F_{S_{F}} /F ,\mathcal{A} ) \rightarrow 
 \underset{v \in S_{F} \setminus \Sigma_{F,p},\, v\nmid \infty}{\prod} 
 H^1 (F_v ,\mathcal{A})
 \right]  \label{eq:str_Sel_def}
\end{align}
of $\mathcal{A}:=\mathcal{T}\otimes_{\mathcal{O}[[\Gamma_{F,\,\max}]]}\mathcal{O}[[\Gamma_{F,\,\max}]]^\vee$, where $\mathcal{T}$ is defined as $T\otimes_{\mathcal{O}} \mathcal{O}[[\Gamma_{F,\,\max}]]^\sharp$ on which $G_F$ acts diagonally; refer to Section~\ref{sc:Introduction} for the definition $\mathcal{O}[[\Gamma_{F,\,\max}]]^\sharp$.

\begin{rem}
 The strict Selmer groups defined as \eqref{eq:str_Sel} (or \eqref{eq:str_Sel_def}) is the same as those proposed at \cite[Definition 3.19]{haraochiai}, although their definition looks a bit different. See also the proof of Lemma~\ref{lemma:Sel=selstr}.
\end{rem}

The {\em cotorsionness} of Selmer groups is one of the fundamental problem in Iwasawa theory, but in our situation this is always fulfilled.

\begin{pro}[Cotorsionness of Selmer groups] \label{prop:Sel_cotorsion}
  Both the Selmer groups $\mathrm{Sel}_{A,\Sigma_F}(F_{\max})$ and $\mathrm{Sel}_{A,\Sigma_F}^\mathrm{str}(F_{\max})$ are cofinitely generated cotorsion $\mathcal{O}[[\Gamma_{F,\, \max}]]$-modules for any Artin representation $\rho \colon G_F\rightarrow \mathrm{Aut}_{E}\, V_\rho$. 
\end{pro}

\begin{proof}
Since $\mathrm{Sel}_{A,\Sigma_F}^{\mathrm{str}}(F_{\max})$ is a submodule of $\mathrm{Sel}_{A,\Sigma_F}(F_{\max})$, it suffices to verify the cotorsionness of $\mathrm{Sel}_{A,\Sigma_F}(F_{\max})$. Let $F_\rho$ be 
the field corresponding to the kernel of $\rho$, as before. Since $F_{\max}F_{\rho}/F_{\max}$ is a finite extension, the restriction 
$
\mathrm{Sel}_{A,\Sigma_F } (F_{\max}) \rightarrow \mathrm{Sel}_{A,\Sigma_F } (F_{\max}F_{\rho})
$
has an $\mathcal{O}$-cofinitely generated kernel. Hence $\mathrm{Sel}_{A,\Sigma_F } (F_{\max})^\vee$ is $\mathcal{O}[[\Gamma_{F,\, \max}]]$-torsion if $\mathrm{Sel}_{A,\Sigma_F} (F_{\max}F_{\rho})^\vee$ is $\mathcal{O}[[\mathrm{Gal}(F_{\max}F_{\rho} /F_{\rho})]]$-torsion.
Since $G_{F_{\max}F_{\rho}}$ trivially acts on $A$  by definition, we observe by taking the Pontrjagin dual of \eqref{equation:definitionSelmergp_rho} that $\mathrm{Sel}_{A,\Sigma_F}(F_{\max}F_\rho)^\vee$ is isomorphic to  $\mathrm{Gal}(M^\rho_{\Sigma_F}/F_{\max}F_\rho)$, the Galois group of the maximal abelian pro-$p$ extension of $F_{\max}F_\rho$ unramified outside the places lying above those contained in $\Sigma_{F,p}$. The Galois group $\mathrm{Gal}(M^\rho_{\Sigma_F}/F_{\max}F_{\rho})$ is then a torsion module over 
$\mathcal{O}[[\mathrm{Gal}(F_{\max}F_{\rho}  /F_{\rho})]]$ by \cite[Theorem 1.2.2 (iii)]{HT-aIMC}, as desired.
\end{proof}

Now let us fomulate the Iwasawa main conjecture for CM fields. We first consider the case where the Artin representation $\rho$ is abelian.

\begin{conj}[Iwasawa main conjecture for $\psi$]\label{conjecture:iwasawamainconjecture}
Let $F$ be a CM field and consider a finite Hecke character$\psi^\mathrm{gal}\colon G_F\rightarrow \mathrm{Aut}_{\mathcal{E}}\, V_\psi$ 
such that the field $F_\psi$ corresponding to the kernel of $\psi^\mathrm{gal}$ is linearly disjoint from $F_{\max}$. 
Choosing a $\mathcal{E}$-basis $e_\psi$ of $V_\psi\cong \mathcal{E}$, define the standard $G_F$-stable $\mathcal{O}$-lattice of $V_\psi$ by $T_\psi:=\mathcal{O} e_\psi$, and set $A_\psi =T_\psi  \otimes_{\mathbb{Z}_p} \mathbb{Q}_p /\mathbb{Z}_p$.  Let $\mathrm{Sel}_{A_{\psi },\Sigma_F}(F_{\max})$ be the Selmer group defined as above. 
Then the equality 
\begin{align*}
 (L_{p,\Sigma_F}(\psi )) = \mathrm{char}_{\widehat{\mathcal{\mathcal{O}}}^{\mathrm{ur}}[[\Gamma_{F,\, \max}]]} 
 \bigl(\mathrm{Sel}_{A_{\psi },\Sigma_F}(F_{\max})^\vee \widehat{\otimes}_{\mathcal{O}}\widehat{\mathcal{O}}^{\mathrm{ur}} \bigr)
\end{align*}
of principal ideals of $\widehat{\mathcal{O}}^{\mathrm{ur}}[[\Gamma_{F,\, \max}]]$ should hold where the $p$-adic Hecke $L$-function $L_{p,\Sigma_F}(\psi )$ is defined as in Theorem~\ref{thm:KHT}.
\end{conj}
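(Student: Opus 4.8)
The plan is to prove the asserted equality of principal ideals in $\Lambda:=\widehat{\mathcal{O}}^{\mathrm{ur}}[[\Gamma_{F,\max}]]$ — a regular local, hence factorial, domain, since $\Gamma_{F,\max}$ is a free $\mathbb{Z}_p$-module — by establishing the two divisibilities between $(L_{p,\Sigma_F}(\psi))$ and $\mathfrak{X}:=\mathrm{char}_\Lambda(\mathrm{Sel}_{A_\psi,\Sigma_F}(F_{\max})^\vee\widehat{\otimes}_{\mathcal{O}}\widehat{\mathcal{O}}^{\mathrm{ur}})$, the latter being nonzero by Proposition~\ref{prop:Sel_cotorsion}. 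First I would reduce to the formulation of the introduction, namely \eqref{eq:imc_CM_abelian}: for an abelian $\psi^{\mathrm{gal}}$, inflation--restriction together with Shapiro's lemma — exactly as used for $\mathrm{Sel}_{A,\Sigma_F}(F_{\max})$ above and in the proof of Proposition~\ref{prop:Sel_cotorsion} — identify, after restricting to the finite layer $F_{\max}F_\psi$ on which $G_{F_{\max}F_\psi}$ acts trivially on $A_\psi$, the module $\mathrm{Sel}_{A_\psi,\Sigma_F}(F_{\max})^\vee$, up to an $\mathcal{O}$-finitely generated discrepancy, with the $\psi$-isotypic quotient $\mathrm{Gal}(M_{\Sigma_F}F_\psi/F_{\max}F_\psi)_\psi$; that discrepancy is pseudo-null over $\Lambda$ because $\mathrm{rank}_{\mathbb{Z}_p}\Gamma_{F,\max}\ge d+1\ge 2$, so it does not affect $\mathfrak{X}$, and the hypothesis that $F_\psi$ is linearly disjoint from $F_{\max}$ keeps the isotypic decomposition clean. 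Since $-\widehat{\otimes}_{\mathcal{O}}\widehat{\mathcal{O}}^{\mathrm{ur}}$ is flat, it then remains to match, once and for all, the $\psi^{\mathrm{gal}}$-twist $\mathrm{Tw}_{\psi^{\mathrm{gal}}}$ and the normalising factor $\sqrt{|D_{F^+}|}/(\mathfrak{r}_F^\times:\mathfrak{r}_{F^+}^\times)$ built into the definition \eqref{equation:katz2} of $L_{p,\Sigma_F}(\psi)$, after which the present statement is literally \eqref{eq:imc_CM_abelian}.

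For the divisibility $\mathfrak{X}\subseteq(L_{p,\Sigma_F}(\psi))$ I would invoke Eisenstein-congruence techniques on the unitary group $\mathrm{GU}(2,1)$ attached to $F/F^+$: following Hsieh \cite[Theorem 8.16]{Hsieh14}, one constructs congruences between a Klingen--Eisenstein $p$-adic family, whose constant term is a unit multiple of $L_{p,\Sigma_F}(\psi)$, and cuspidal families, and then converts these congruences by a Ribet--Mazur--Wiles lattice argument into nontrivial classes in $\mathrm{Sel}_{A_\psi,\Sigma_F}(F_{\max})$, forcing $L_{p,\Sigma_F}(\psi)\mid\mathfrak{X}$; this currently requires the Leopoldt conjecture for $F$ and $p$. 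When $\psi^{\mathrm{gal}}$ is anticyclotomic one may instead use the anticyclotomic main conjecture of Hida \cite{hida2006} and Hsieh and specialise, and when $F=KF^+$ with $K$ imaginary quadratic one may feed in Rubin's two-variable main conjecture \cite[Theorem 4.1 (i)]{Rubin91} for $K$ in place of the anticyclotomic one, as in Hsieh \cite[Theorem 8.18]{Hsieh14}. The reverse divisibility $(L_{p,\Sigma_F}(\psi))\subseteq\mathfrak{X}$ is the Euler-system side: in the imaginary quadratic case Rubin's elliptic-unit Euler system bounds the Selmer group from above, and the analytic class number formula together with a Coates--Wiles-type descent matches that bound with the special values interpolated by $L_{p,\Sigma_F}(\psi)$; for a general $p$-ordinary CM field one would need an analogous norm-compatible system of global cohomology classes.

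The main obstacle is precisely that last point: for a general CM field there is at present no Euler system — nor a proven equivariant Tamagawa number input — yielding $(L_{p,\Sigma_F}(\psi))\subseteq\mathfrak{X}$, and Hsieh's $\mathrm{GU}(2,1)$ argument supplies only the opposite inclusion, and only under Leopoldt; this is why the statement is posed as a conjecture rather than a theorem. Accordingly, in the remainder of the article it is \emph{assumed} — for every intermediate field $K$ of $F_\rho/F$ and every branch character of $G_K$ factoring through $\mathrm{Gal}(F_\rho/K)$ — and the cases established in the literature are then threaded through the Brauer induction construction of Section~\ref{section:matching} together with the inductive descent of Section~\ref{ssc:integrality}: the period relation (Lemma~\ref{lemma:deShalit}) is what makes the $p$-adic CM periods match along the tower of CM subfields, while Proposition~\ref{prop:Sel_cotorsion} guarantees throughout that every characteristic ideal in play is nonzero, so that the manipulation of principal ideals is legitimate.
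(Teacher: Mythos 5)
You correctly recognize that the statement under discussion is a \emph{conjecture}, not a theorem, and that the paper does not prove it: it is stated in Section~\ref{ssc:IMC_MAIN} and then assumed as a hypothesis in Theorem~\ref{theorem:integrality_of_CMp-adicL_Artin}. Your reduction to the formulation \eqref{eq:imc_CM_abelian} of the introduction via Shapiro's lemma and the $\psi$-isotypic quotient is precisely the one the paper alludes to (citing \cite[Proposition~3.16]{haraochiai} for the pseudo-isomorphism, with the linear disjointness of $F_\psi$ from $F_{\max}$ ensuring the isotypic decomposition is well-behaved), and your survey of what is known — Hsieh's Eisenstein-congruence divisibility on $\mathrm{GU}(2,1)$ under Leopoldt giving $\mathfrak{X}\subseteq(L_{p,\Sigma_F}(\psi))$, Hida's anticyclotomic main conjecture, Rubin's two-variable result when $F$ contains an imaginary quadratic field — matches the paper's own discussion in the introduction. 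Your identification of the missing ingredient (a norm-compatible Euler system or equivariant Tamagawa number input for general $p$-ordinary CM fields yielding $(L_{p,\Sigma_F}(\psi))\subseteq\mathfrak{X}$) is exactly the reason the paper poses this as a conjecture and threads it through as an assumption. One small addition worth noting, though it concerns the application rather than the conjecture itself: Remark~\ref{rem:failure_ab_IMC} observes that in the Brauer-induction setup of Theorem~\ref{theorem:integrality_of_CMp-adicL_Artin} the branch characters $\psi_j^{\mathrm{gal}}$ can \emph{never} be anticyclotomic (by Lemma~\ref{lem:CM_ext}, since $\psi_j^{\mathrm{gal}}(cgc^{-1})=\psi_j^{\mathrm{gal}}(g)$), so Hsieh's Theorem~8.17 and Hida's anticyclotomic result do not help fulfill the hypothesis of the main theorem; only the imaginary-quadratic input (Hsieh's Theorem~8.18) has a chance, and even that is very restrictive. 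Your proposal is otherwise consistent with the paper's treatment.
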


Note that the Pontrjagin dual of the Selmer group $\mathrm{Sel}_{A_\psi,\Sigma_F}(F_{\max})$ is pseudo-isomorphic to $\mathrm{Gal}(M_{\Sigma_F}F_\psi/F_{\max}F_\psi)_\psi$ introduced in Section~\ref{sc:Introduction} under the situation of Conjecture~\ref{conjecture:iwasawamainconjecture} (see \cite[Proposition 3.16]{haraochiai} for example), and thus Conjecture~\ref{conjecture:iwasawamainconjecture} is equivalent to Main conjecture proposed by Hida and Tilouine in \cite{HT-aIMC}.
The following conjecture is a direct generalisation of Conjecture~\ref{conjecture:iwasawamainconjecture} for  higher-dimensional Artin representations.

\begin{conj}[Iwasawa main conjecture for $\rho$]\label{conjecture:iwasawamainconjectureartin}
Let $F$ be a CM field and $\rho\colon G_F\rightarrow \mathrm{Aut}_{\mathcal{E}}(V_\rho)$ an Artin representation of $G_F$ such that the field $F_\rho$ corresponding to the kernel of $\rho$ is linearly disjoint from $F_{\max}$. Let $\mathcal{O}$ be the ring of integers of $\mathcal{E}$. Choose a $G_{F}$-stable $\mathcal{O}$-lattice $T$ of $V_\rho$ and set $A =T \otimes_{\mathbb{Z}_p} \mathbb{Q}_p /\mathbb{Z}_p$. Let $\mathrm{Sel}_{A,\Sigma_F}(F_{\max})$ be the Selmer group defined as above. 
Then the equality
\begin{equation}\label{equation:IMC_for_Artinrep}
(L_{p ,\Sigma_F} (M(\rho ))) = 
\mathrm{char}_{\widehat{\mathcal{O}}^\mathrm{ur}[[\Gamma_{F,\, \max}]]} \bigl(\mathrm{Sel}_{A,\Sigma_F} (F_{\max})^\vee 
\widehat{\otimes}_{\mathcal{O}} \widehat{\mathcal{O}}^{\mathrm{ur}} \bigr).
\end{equation}
of principal fractional ideals of $\widehat{\mathcal{O}}^\mathrm{ur}[[\Gamma_{F,\, \max}]]$ should hold. 
\end{conj}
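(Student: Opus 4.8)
The plan is to deduce the equality \eqref{equation:IMC_for_Artinrep} --- and with it the $p$-adic Artin conjecture, Conjecture~\ref{conjecture:integrality} --- from the standing hypothesis that the abelian main conjecture (Conjecture~\ref{conjecture:iwasawamainconjecture}) holds for every intermediate field $F_j$ of $F_\rho/F$ and every branch character of $G_{F_j}$ factoring through $\mathrm{Gal}(F_\rho/F_j)$, following the descent strategy sketched in the introduction. The key step is the algebraic decomposition \eqref{equation:descent_argument_algebraic_side_introduction}: once it is available, it matches term by term with the Brauer-induction formula \eqref{equation:katz3} for $L_{p,\Sigma_F}(M(\rho))$, and both assertions follow formally.

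First I would assemble the formal input. By Proposition~\ref{prop:Sel_cotorsion}, $\mathrm{Sel}_{A,\Sigma_F}(F_{\max})^\vee$ is a finitely generated torsion $\mathcal{O}[[\Gamma_{F,\, \max}]]$-module, so its characteristic ideal --- and the one obtained after $\widehat{\otimes}_{\mathcal{O}}\widehat{\mathcal{O}}^{\mathrm{ur}}$ --- is a genuine nonzero ideal of the regular ring $\widehat{\mathcal{O}}^{\mathrm{ur}}[[\Gamma_{F,\, \max}]]$. As part of Section~\ref{ssc:alg_pre} I would also check that this characteristic ideal is independent of the chosen $G_F$-stable lattice $T$, and that in our setting $\mathrm{Sel}_{A,\Sigma_F}(F_{\max})^\vee$ is pseudo-isomorphic both to $\mathrm{Sel}^{\mathrm{str}}_{A,\Sigma_F}(F_{\max})^\vee$ and to the $\rho$-isotypic Galois group $\mathrm{Gal}(M_{\Sigma_F}/F_\rho F_{\max})_\rho$. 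Because Selmer groups are additive in direct sums of the coefficient representation and characteristic ideals are multiplicative along short exact sequences, the virtual Brauer decomposition \eqref{equation:brauer_induction}, after transposing the components with $a_j<0$, upgrades to an honest isomorphism of $G_F$-representations (with compatibly chosen lattices); hence the characteristic ideal of $\mathrm{Sel}_{A,\Sigma_F}(F_{\max})^\vee$ equals, as a fractional ideal, the product over $j$ --- with exponents $a_j$ --- of the characteristic ideals of the Selmer groups attached to the individual induced representations $\mathrm{Ind}^{G_F}_{G_{F_j}}\psi_j^{\mathrm{gal}}$. So it suffices to treat $\rho=\mathrm{Ind}^{G_F}_{G_{F_j}}\psi_j^{\mathrm{gal}}$.

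In that case Shapiro's lemma identifies the Selmer group of $\mathrm{Ind}^{G_F}_{G_{F_j}}\psi_j^{\mathrm{gal}}$ over $F_{\max}$ with a $\Sigma_{F_j,p}$-ramified Selmer group for $\psi_j$ over $F_j$, but one whose Iwasawa coefficients see only the quotient $\mathrm{Gal}(F_jF_{\max}/F_j)$ of $\Gamma_{F_j,\, \max}$; the local conditions transport correctly since the places of $F_j$ above $\Sigma_{F_j,p}$ are exactly the pullbacks of the places of $F$ above $\Sigma_{F,p}$. Here $\psi_j^{\mathrm{gal}}$ is a genuine branch character of $G_{F_j}$: it is unramified above $p$ because $F_\rho/F$ is, and $F_{j,\psi_j}\subseteq F_\rho$ intersects $F_{j,\max}$ trivially over $F_j$ --- any everywhere-unramified subextension of $F_{j,\max}/F_j$ is at once a quotient of the finite group $\mathrm{Cl}(F_j)_p$ and of the torsion-free group $\Gamma_{F_j,\, \max}$, hence trivial --- so the assumed abelian main conjecture applies to $\psi_j$. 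The heart of the proof is then a descent statement: the characteristic ideal over $\widehat{\mathcal{O}}^{\mathrm{ur}}[[\mathrm{Gal}(F_jF_{\max}/F_j)]]$ of this ``partial'' Selmer group equals $\mathrm{pr}_j$ applied to the characteristic ideal over $\widehat{\mathcal{O}}^{\mathrm{ur}}[[\Gamma_{F_j,\, \max}]]$ of the ``full'' Selmer group $\mathrm{Sel}_{A_{\psi_j},\Sigma_{F_j}}(F_{j,\max})^\vee$; by the assumed abelian main conjecture the latter is $(L_{p,\Sigma_{F_j}}(\psi_j))$, so the whole thing is $\mathrm{pr}_j\bigl((L_{p,\Sigma_{F_j}}(\psi_j))\bigr)$. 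Since characteristic ideals do not in general commute with the specialisation $\mathrm{pr}_j$, this requires showing that the relevant Iwasawa modules carry no nonzero pseudo-null submodule and then running the inductive descent along the finite tower between $F_{j,\max}$ and $F_jF_{\max}$ developed in \cite{haraochiai} to control the discrepancy terms; this is where the absolute unramifiedness of $F$ at $p$ and the CM hypothesis on $F_\rho$ enter essentially, and it is the step I expect to be the main obstacle.

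Finally I would put the pieces together: the reduction to induced characters, Shapiro's identification, the descent statement, and the assumed abelian main conjecture combine to give
\[
\mathrm{char}_{\widehat{\mathcal{O}}^{\mathrm{ur}}[[\Gamma_{F,\, \max}]]}\!\bigl(\mathrm{Sel}_{A,\Sigma_F}(F_{\max})^\vee\,\widehat{\otimes}_{\mathcal{O}}\widehat{\mathcal{O}}^{\mathrm{ur}}\bigr)=\prod_{j=1}^{s}\mathrm{pr}_j\bigl((L_{p,\Sigma_{F_j}}(\psi_j))\bigr)^{a_j}=(L_{p,\Sigma_F}(M(\rho))),
\]
the last equality by the definition \eqref{equation:katz3}; this is precisely \eqref{equation:IMC_for_Artinrep}. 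Since the left-hand side is an honest (nonzero) ideal of $\widehat{\mathcal{O}}^{\mathrm{ur}}[[\Gamma_{F,\, \max}]]$ while the right-hand side is the principal fractional ideal generated by $L_{p,\Sigma_F}(M(\rho))$, the generator must itself lie in $\widehat{\mathcal{O}}^{\mathrm{ur}}[[\Gamma_{F,\, \max}]]$, which also establishes Conjecture~\ref{conjecture:integrality}.
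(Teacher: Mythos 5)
Your plan follows essentially the same route as the paper's proof of Theorem~\ref{theorem:integrality_of_CMp-adicL_Artin}: the algebraic counterpart of the Brauer decomposition is exactly Proposition~\ref{prop:decomp_char}, proved there by transposing the negatively weighted summands, invoking Shapiro's lemma, and removing the lattice ambiguity via the lattice invariance of the $\mu$-invariant (Lemma~\ref{lemma:mu_lattice}); the ``heart'' you isolate is precisely the descent equality \eqref{equation:selmer_str_descent} between the Selmer group over $F_jF_{\max}$ and the one over $F_{j,\,\max}$, which the paper establishes by an inductive specialisation argument (a control theorem plus Greenberg-type absence of nontrivial pseudo-null submodules, following \cite{haraochiai}); combining this with the assumed abelian main conjecture and the definition \eqref{equation:katz3} gives \eqref{equation:IMC_for_Artinrep}, and integrality of $L_{p,\Sigma_F}(M(\rho))$ follows since the algebraic side is an integral ideal. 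Two corrections to details of your write-up. First, your argument that $F_{j,\psi_j}$ is automatically linearly disjoint from $F_{j,\,\max}$ over $F_j$ is fallacious: a nontrivial finite group can be a quotient of both a finite group and a torsion-free group (e.g.\ $\mathbb{Z}/p\mathbb{Z}$ is a quotient of $\mathbb{Z}_p$), and the composite of all $\mathbb{Z}_p$-extensions may in fact contain nontrivial everywhere-unramified subextensions; the paper does not prove such a disjointness but simply \emph{assumes} the abelian main conjecture for every branch character of $G_{F_j}$ factoring through $\mathrm{Gal}(F_\rho/F_j)$, so the input you need is a standing hypothesis rather than something to be derived this way. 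Second, the extension $F_{j,\,\max}/F_jF_{\max}$ is not a finite tower: its Galois group is free of $\mathbb{Z}_p$-rank $r_j=d([F_j:F]-1)+\delta_{F_j}-\delta_F$, and the descent is an $r_j$-step induction along a regular sequence $x_\nu=\gamma_\nu-1$, each step requiring both the control statement (which uses that decomposition groups at places in $\Sigma_{F,p}^c$ have $\mathbb{Z}_p$-rank at least $2$) and the no-pseudo-null condition for a generically chosen basis $\gamma_1,\dots,\gamma_{r_j}$, as carried out in Section~\ref{ssc:integrality}; also, the unramifiedness and CM hypotheses enter mainly through the existence of Katz's $p$-adic $L$-functions for the intermediate CM fields $F_j$, rather than in the descent itself.
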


\begin{rem}
\begin{enumerate}
\item 
The left-hand side of \eqref{equation:IMC_for_Artinrep} is defined independently of the choice of a lattice $T$, but 
the right-hand side of \eqref{equation:IMC_for_Artinrep}  a priori depends on the choice of $T$. 
In fact, we can check that the right-hand side of \eqref{equation:IMC_for_Artinrep} is independent of the choice of a lattice $T$ by 
Lemma \ref{lemma:mu_lattice}. 
\item 
A variant of Conjecture \ref{conjecture:iwasawamainconjectureartin} formulated for $\mathrm{Sel}^\mathrm{str}_{A,\Sigma_F} (F_{\max})$ in place of $\mathrm{Sel}_{A,\Sigma_F} (F_{\max})$  is equivalent to Conjecture \ref{conjecture:iwasawamainconjectureartin} by Lemma \ref{lemma:Sel=selstr}.
\item 
Note that, in Theorem \ref{theorem:CMp-adicL_Artin}, the $p$-adic Artin $L$-function $L_{p,\Sigma_F} (M(\rho ))$ is constructed as an element of 
$\mathrm{Frac} (\widehat{\mathcal{O}}^{\mathrm{ur}}[[\Gamma_{F,\, \max}]])$.  
Since the right-hand side of \eqref{equation:IMC_for_Artinrep} is an integral ideal of 
$\widehat{\mathcal{O}}^\mathrm{ur}[[\Gamma_{F,\, \max}]]$, the validity of \eqref{equation:IMC_for_Artinrep} 
implies that $L_{p,\Sigma_F} (M(\rho ))$ is an integral element of 
$\widehat{\mathcal{O}}^{\mathrm{ur}}[[\Gamma_{F,\, \max}]]$. 
\end{enumerate}
\end{rem}

We are now ready to state the main result of the present article.

\begin{thm}[Main Theorem]\label{theorem:integrality_of_CMp-adicL_Artin}
Let $M(\rho)$ be the Artin motive corresponding to an Artin representation $\rho$ of $G_F$ which is unramified at any prime ideal lying above $(p)$, and suppose that the field $F_\rho$ corresponding to the kernel of $\rho$ is also a CM field. Assume further that the Iwasawa main conjecture $($Conjecture $\ref{conjecture:iwasawamainconjecture})$ is true for any intermediate field $K$ of $F_\rho /F$ and branch characters factoring through $\mathrm{Gal}(F_\rho/K)$.
Then Conjectures $\ref{conjecture:integrality}$ and $\ref{conjecture:iwasawamainconjectureartin}$ hold true for~$\rho$.
\end{thm}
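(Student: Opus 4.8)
The plan is to combine the construction of $L_{p,\Sigma_F}(M(\rho))$ carried out in Theorem~\ref{theorem:CMp-adicL_Artin} with a parallel Brauer-type decomposition of the characteristic ideal of the relevant Selmer group, and then to invoke the Iwasawa main conjectures for the abelian constituents assumed in the hypothesis. First I would fix a Brauer decomposition $\rho=\sum_{j=1}^{s}a_j\,\mathrm{Ind}^{G}_{G_j}\psi_j^{\mathrm{gal}}$ as in \eqref{equation:brauer_induction}, so that each $F_j$ is an intermediate CM field of $F_\rho/F$ (by Lemma~\ref{lem:CM_ext}) and each $\psi_j^{\mathrm{gal}}$ factors through $\mathrm{Gal}(F_\rho/F_j)$; after enlarging the coefficient ring $\mathcal{O}$ one checks that each $\psi_j^{\mathrm{gal}}$ is a branch character of $G_{F_j}$ --- it is unramified above $(p)$ because $\rho$ is, and the linear-disjointness requirement is inherited from that on $F_\rho$ --- so that the hypothesis of the theorem makes Conjecture~\ref{conjecture:iwasawamainconjecture} available for every $\psi_j$. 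By the defining formula \eqref{equation:katz3} one has $L_{p,\Sigma_F}(M(\rho))=\prod_{j=1}^{s}\mathrm{pr}_j(L_{p,\Sigma_{F_j}}(\psi_j))^{a_j}$ inside $\mathrm{Frac}(\widehat{\mathcal{O}}^{\mathrm{ur}}[[\Gamma_{F,\,\max}]])$, while the assumed main conjecture gives $(L_{p,\Sigma_{F_j}}(\psi_j))=\mathrm{char}_{\widehat{\mathcal{O}}^{\mathrm{ur}}[[\Gamma_{F_j,\,\max}]]}(\mathrm{Sel}_{A_{\psi_j},\Sigma_{F_j}}(F_{j,\,\max})^{\vee}\widehat{\otimes}_{\mathcal{O}}\widehat{\mathcal{O}}^{\mathrm{ur}})$ for each $j$.

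The heart of the argument is to prove the algebraic decomposition \eqref{equation:descent_argument_algebraic_side_introduction}, expressing $\mathrm{char}_{\widehat{\mathcal{O}}^{\mathrm{ur}}[[\Gamma_{F,\,\max}]]}(\mathrm{Sel}_{A,\Sigma_F}(F_{\max})^{\vee}\widehat{\otimes}_{\mathcal{O}}\widehat{\mathcal{O}}^{\mathrm{ur}})$ as the product over $j$, with exponents $a_j$, of $\mathrm{pr}_j$ applied to the characteristic ideal of the $\psi_j$-Selmer group over $F_j$. I would first replace $\mathrm{Sel}$ by the strict Selmer group $\mathrm{Sel}^{\mathrm{str}}$ everywhere, which is legitimate by Lemma~\ref{lemma:Sel=selstr} and gives the variant that behaves well under the descent to come. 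By Shapiro's lemma, together with a Mackey-type bookkeeping of the local conditions, the $\mathrm{Ind}^{G}_{G_j}\psi_j^{\mathrm{gal}}$-Selmer group over $F_{\max}$ is identified with the $\psi_j$-Selmer group over $F_jF_{\max}$; using in addition the multiplicativity of characteristic ideals in the exact sequences of actual representations underlying the virtual identity \eqref{equation:brauer_induction}, the whole claim reduces, for each $j$, to comparing the characteristic ideal of the $\psi_j$-Selmer group over $F_jF_{\max}$ with $\mathrm{pr}_j$ applied to its analogue over $F_{j,\,\max}$.

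This last comparison is where I expect the main difficulty to lie: passing from $\widehat{\mathcal{O}}^{\mathrm{ur}}[[\Gamma_{F_j,\,\max}]]$ to its quotient $\widehat{\mathcal{O}}^{\mathrm{ur}}[[\mathrm{Gal}(F_jF_{\max}/F_j)]]$ changes (typically lowers) the number of variables, and characteristic ideals are not compatible with such a specialisation in general. I would resolve it by adapting the inductive descent arguments of \cite{haraochiai}: one shows that the specialisation map on strict Selmer groups has pseudo-null kernel and cokernel, using the cotorsionness results in the spirit of Proposition~\ref{prop:Sel_cotorsion} over the intermediate fields and the absence of exceptional zeros forced by the running hypotheses (ordinarity (ord$_{F,p}$) and $p$-unramifiedness of $\rho$); this makes the characteristic ideal compatible with $\mathrm{pr}_j$ after all.

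Granting the algebraic decomposition the conclusion follows at once. Chaining the identities above gives $(L_{p,\Sigma_F}(M(\rho)))=\prod_{j=1}^{s}\mathrm{pr}_j(\,\cdots\,)^{a_j}=\mathrm{char}_{\widehat{\mathcal{O}}^{\mathrm{ur}}[[\Gamma_{F,\,\max}]]}(\mathrm{Sel}_{A,\Sigma_F}(F_{\max})^{\vee}\widehat{\otimes}_{\mathcal{O}}\widehat{\mathcal{O}}^{\mathrm{ur}})$, which is precisely the equality \eqref{equation:IMC_for_Artinrep} of Conjecture~\ref{conjecture:iwasawamainconjectureartin}; independence of the chosen lattice $T$ is then Lemma~\ref{lemma:mu_lattice}. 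Finally, since by Proposition~\ref{prop:Sel_cotorsion} the right-hand side is a nonzero \emph{integral} ideal of $\widehat{\mathcal{O}}^{\mathrm{ur}}[[\Gamma_{F,\,\max}]]$, the fractional principal ideal $(L_{p,\Sigma_F}(M(\rho)))$ must itself be integral, which is Conjecture~\ref{conjecture:integrality}. Everything beyond the change-of-variables descent is routine bookkeeping with Brauer induction, Shapiro's lemma, and the characterisation of the $p$-adic $L$-functions by their interpolation formulae.
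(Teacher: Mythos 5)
Your strategy matches the paper's: combine the product formula of Theorem~\ref{theorem:CMp-adicL_Artin} with the assumed abelian main conjectures, prove a parallel decomposition of the algebraic characteristic ideal, and carry out an inductive change-of-variables descent. Two steps, however, are missing or misstated, and both are exactly where the real work lies.

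When you pass from the virtual Brauer identity to ``actual representations,'' the two Selmer groups you end up comparing arise from the lattices $T\oplus\bigoplus_{a_j<0}\mathrm{Ind}^{G_F}_{G_{F_j}}T_{\psi_j}^{\oplus(-a_j)}$ and $\bigoplus_{a_j>0}\mathrm{Ind}^{G_F}_{G_{F_j}}T_{\psi_j}^{\oplus a_j}$ inside the \emph{same} representation $V_{\rho'}$. These lattices are in general not $G_F$-isomorphic, so direct-sum multiplicativity of characteristic ideals only gives the decomposition of Proposition~\ref{prop:decomp_char} up to $\mu$-invariants. The removal of that ambiguity is precisely the content of Lemma~\ref{lemma:mu_lattice}, which is itself a nontrivial global--local Euler--Poincar\'e computation; it is needed here, to prove Proposition~\ref{prop:decomp_char}, not merely to make Conjecture~\ref{conjecture:iwasawamainconjectureartin} well-posed as you suggest.

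In the descent from $\mathcal{O}[[\Gamma_{F_j,\,\max}]]$ to $\mathcal{O}[[\mathrm{Gal}(F_jF_{\max}/F_j)]]$, showing that the specialisation map on strict Selmer groups has pseudo-null kernel and cokernel is only a control theorem (condition [I] in the paper's proof); by itself this does not make characteristic ideals compatible with $\mathrm{pr}_j$. You additionally need that the dual strict Selmer group at each intermediate quotient $\mathcal{O}[[\Gamma_{F_j,\,\max}]]/\mathfrak{A}_\nu$ has no nontrivial pseudo-null submodule (condition [II]); the paper secures this by an inductive choice of the generators $\gamma_\nu$ of $\mathrm{Gal}(F_{j,\,\max}/F_jF_{\max})$ satisfying the conditions $(\Gamma 1)_\nu$ and $(\Gamma 2)_\nu$ of \cite{haraochiai}, invoking Greenberg's structure theorem. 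This, not ``absence of exceptional zeros'' (which is not an ingredient here), is what lets each one-variable specialisation preserve characteristic ideals.
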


\begin{rem}[On the assumption of Theorem~\ref{theorem:integrality_of_CMp-adicL_Artin}] \label{rem:failure_ab_IMC}
Concerning Conjecture~\ref{conjecture:iwasawamainconjecture} for an intermediate field $K$ of $F_\rho/F$ and a branch character $\psi^\mathrm{gal}$ factoring through $\mathrm{Gal}(F_\rho/K)$, the known results which we may apply are only \cite[Theorem 8.17]{Hsieh14} and \cite[Theorem 8.18]{Hsieh14} at the present. However, the former result \cite[Theorem 8.17]{Hsieh14} does not work at all in our situation since  the branch character $\psi^\mathrm{gal}$ will never become anticyclotomic; indeed $\psi^{\mathrm{gal}}(cgc^{-1})=\psi^{\mathrm{gal}}(g)$ holds for every $g\in \mathrm{Gal}(F_\rho/K)$ by Lemma~\ref{lem:CM_ext}. Meanwhile, the latter result \cite[Theorem 8.18]{Hsieh14} requires that $F$ is a composite field of a totally real field and an imaginary quadratic field $\mathsf{M}$, and the field $K_\psi$ corresponding to the kernel of $\psi^\mathrm{gal}$ should be abelian over $\mathsf{M}$. This too strict constraint makes it difficult to apply \cite[Theorem 8.18]{Hsieh14} to characters $\psi_j^\mathrm{gal}$ appearing in the Brauer decomposition \eqref{equation:brauer_induction}. After all it seems hard to give an explicit non-commutative Artin representation $\rho$ satisfying Conjectures $\ref{conjecture:integrality}$ and $\ref{conjecture:iwasawamainconjectureartin}$ with current knowledge of the (abelian) Iwasawa main conjecture of CM fields.
\end{rem}

\begin{rem}[On the case $p=2$]
Although we assume that the prime number $p$ is {\em odd} throughout the present article, we would like to briefly explain here the situation of the case $p=2$. 
Indeed, the only issue concerning the prime $2$ is that, due to Hasse's unit index theorem \cite[Satz~14]{Hasse}, both the numerator and the denominator of the modification factor $\frac{2^{d[F_j:F]}\sqrt{\lvert D_{F_j^+}\rvert}}{(\mathfrak{r}_{F_j}^\times :\, \mathfrak{r}_{F^+_j}^\times)}$ appearing in \eqref{equation:katz3} might be divisible by $2$ even if $F$ is assumed to be absolutely unramified at $(2)$. Therefore we can still verify by the same proof that the $2$-adic Artin $L$-function $L_{2,\Sigma_F}(M(\rho))$ 
is an element of $\widehat{\mathcal{O}}^\mathrm{ur}[[\Gamma_{F,\, \max}]]\otimes_{\mathbb{Z}}\mathbb{Q}$ and the main conjecture \eqref{equation:IMC_for_Artinrep} holds true up to $\mu$-invariants if Conjecture~\ref{conjecture:iwasawamainconjecture} holds true for $p=2$.
\end{rem}

\subsection{Algebraic preliminaries} \label{ssc:alg_pre}

Before proving Theorem~\ref{theorem:integrality_of_CMp-adicL_Artin}, we prepare several notation and lemmas. 

\begin{lem}\label{lemma:Sel=selstr}
 As before, choose a $G_{F}$-stable $\mathcal{O}$-lattice $T$ of the Artin representation $V_\rho$ and set $A =T \otimes_{\mathbb{Z}_p} \mathbb{Q}_p /\mathbb{Z}_p$.  
Then the natural surjection  
$$
\mathrm{Sel}_{A,\Sigma_F} (F_{\max})^\vee \twoheadrightarrow \mathrm{Sel}^{\mathrm{str}}_{A,\Sigma_F} (F_{\max})^\vee
$$
is a pseudo-isomorphism of $\mathcal{O}[[\Gamma_{F,\, \max}]]$-modules. 
Also for any finite extension $F'$ of $F$,  
the natural surjection
\begin{align*}
 \mathrm{Sel}_{A,\Sigma_F} (F'F_{\max})^\vee &\twoheadrightarrow \mathrm{Sel}^{\mathrm{str}}_{A,\Sigma_F} (F'F_{\max})^\vee \\ 
( \mathrm{resp}.\ \mathrm{Sel}_{A,\Sigma_F} (F'_{\max})^\vee &\twoheadrightarrow \mathrm{Sel}^{\mathrm{str}}_{A,\Sigma_F} (F'_{\max})^\vee  \quad 
)
\end{align*}
is a pseudo-isomorphism of $\mathcal{O}[[\mathrm{Gal}(F'F_{\max} /F')]]$-modules $($resp.\ a pseudo-isomorphism of $\mathcal{O}[[\Gamma_{F',\, \max}]]$-modules$)$. 
\end{lem}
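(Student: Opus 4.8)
The plan is to compare the two Selmer groups by their defining local conditions at the "bad" places $w \in S_{F'}\setminus\Sigma_{F',p}$ with $w\nmid\infty$, since the difference between $\mathrm{Sel}_{A,\Sigma_F}$ and $\mathrm{Sel}^{\mathrm{str}}_{A,\Sigma_F}$ is entirely controlled by the kernel of the inflation-restriction map $H^1(F'_w{}^{\mathrm{ur}}/F'_w, A^{I_w})\hookrightarrow H^1(F'_w, A)$, i.e.\ the unramified cohomology $H^1_{\mathrm{ur}}(F'_w,A)$. More precisely, the snake lemma applied to the two defining exact sequences \eqref{eq:str_Sel} and \eqref{equation:definitionSelmergp_rho} gives an exact sequence
\begin{equation*}
0 \rightarrow \mathrm{Sel}^{\mathrm{str}}_{A,\Sigma_F}(F') \rightarrow \mathrm{Sel}_{A,\Sigma_F}(F') \rightarrow \prod_{w} H^1_{\mathrm{ur}}(F'_w, A),
\end{equation*}
where the product runs over $w\in S_{F'}\setminus\Sigma_{F',p}$, $w\nmid\infty$. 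Passing to the inductive limit over the finite layers inside $F_{\max}$ (resp.\ $F'F_{\max}$, resp.\ $F'_{\max}$) and then taking Pontryagin duals turns the cokernel term into the kernel of the dual surjection, so it suffices to show that $\varinjlim_{F''} \prod_{w\in S_{F''}}H^1_{\mathrm{ur}}(F''_w,A)$ — the limit over the finite subextensions $F''$ of $F_{\max}/F$ — is a pseudo-null (indeed, finitely generated torsion with the right codimension, in fact $\mathcal{O}$-cofinitely generated) $\mathcal{O}[[\Gamma_{F,\,\max}]]$-module, and likewise for the other two towers.

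The key computation is the standard one: for a finite place $w$ not above $p$, $H^1_{\mathrm{ur}}(F''_w, A)\cong H^1(\widehat{\mathbb{Z}}, A^{I_w})\cong A^{I_w}/(\mathrm{Frob}_w - 1)A^{I_w}$, which is finite (as $A^{I_w}$ is cofinitely generated over $\mathcal{O}$ and cut out by a finite-order representation, $\mathrm{Frob}_w-1$ acts with finite cokernel unless $\mathrm{Frob}_w$ acts trivially on a submodule — but even then the module is $\mathcal{O}$-cofinite, not $\mathcal{O}[[\Gamma]]$-cofinite of positive codimension). I would argue that in the limit over the cyclotomic-type tower $F_{\max}/F$, each place $w$ of $F$ has only finitely many places above it in any finite layer and these stabilize or the local extensions become deeply ramified only at $p$; since $w\nmid p$, the decomposition group of $w$ in $\Gamma_{F,\,\max}$ is either trivial or procyclic, so $\varinjlim_{F''} H^1_{\mathrm{ur}}(F''_w,A)$ is a cofinitely generated $\mathcal{O}$-module (it is $H^1_{\mathrm{ur}}$ over the local field $F_{\max,w}$, whose residue extension is at worst a $\mathbb{Z}_p$-extension, forcing the relevant cohomology to be $\mathcal{O}$-cofinite), hence pseudo-null over $\mathcal{O}[[\Gamma_{F,\,\max}]]$ which has Krull dimension $\geq 2$. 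Summing over the finitely many $w\in S_F\setminus\Sigma_{F,p}$ preserves pseudo-nullity. The same argument, verbatim, applies with $F$ replaced by $F'$ and $\Gamma_{F,\,\max}$ replaced by $\mathrm{Gal}(F'F_{\max}/F')$ (a quotient of $\Gamma_{F,\,\max}$, still of rank $\geq d+1 \geq 2$) or by $\Gamma_{F',\,\max}$ (of rank $\geq [F'^+:\mathbb{Q}]+1\geq 2$), which handles the parenthetical statements.

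The main obstacle I anticipate is handling the places $w$ above $p$ that lie in $S_{F'}\setminus\Sigma_{F',p}$, namely those in $\Sigma^c_{F',p}$: at such places the local condition is also the unramified one (since $w\nmid\infty$ and $w\notin\Sigma_{F',p}$), but now the local extension $F'_w$ sits inside a tower with genuinely $p$-adic ramification coming from $F_{\max}$, so $A^{I_w}$ and the Frobenius action must be analyzed more carefully. Here I would use that $\rho$ (hence $A$) is \emph{unramified at every prime above $p$} — this is an explicit hypothesis available to us in the context where this lemma is applied — so $A^{I_w}=A$, and the relevant module $\varinjlim H^1_{\mathrm{ur}}(F''_w, A) = \varinjlim A/(\mathrm{Frob}_w^{f_w}-1)A$ where $f_w$ is the residue degree in the tower; since the residue field extension in $F_{\max}/F$ at $w\mid p$ is at most a $\mathbb{Z}_p$-extension, this limit is again $\mathcal{O}$-cofinite and pseudo-null, closing the argument. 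If one does not wish to invoke the unramifiedness hypothesis (to get the lemma in full generality as stated), one instead notes that $A$ is cut out by a finite-order character, so $I_w$ acts through a finite quotient and $A^{I_w}$ is still $\mathcal{O}$-cofinite, and the tame part of $\mathrm{Frob}_w$-action plus the procyclic (at worst $\mathbb{Z}_p$) unramified part again force cofiniteness over $\mathcal{O}$; I would double-check this edge case but expect it to go through without the extra hypothesis.
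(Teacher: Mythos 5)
Your overall strategy (comparing local conditions via the snake lemma) matches the paper's, and your treatment of the places $w \nmid p$ lands on the right conclusion — though the paper's version is sharper: it shows each $H^1_{\mathrm{ur}}(F_{\max,w},A)$ is literally zero, because $\mathrm{Gal}(F_v^{\mathrm{ur}}/F_{\max,w}) \cong \prod_{\ell\neq p}\mathbb{Z}_\ell$ is pro-prime-to-$p$ while $A^{I_w}$ is a discrete $p$-group, rather than merely $\mathcal{O}$-cofinite. That precision matters, because your own $\mathcal{O}$-cofiniteness reasoning doesn't quite close: in the inductive limit there are in general infinitely many places of $F_{\max}$ above a given $v$ (unless the decomposition group is all of $\Gamma_{F,\max}$), so a sum of nonzero $\mathcal{O}$-cofinite pieces indexed by those places is \emph{not} $\mathcal{O}$-cofinite. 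It is the literal vanishing of each piece that rescues the argument at $w\nmid p$.

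The genuine gap is at the places $w \in \Sigma_{F,p}^c$, which is exactly the part you flagged as the anticipated obstacle. There your claim that $\varinjlim H^1_{\mathrm{ur}}(F''_w,A)$ is $\mathcal{O}$-cofinite is simply false. Using the Shapiro's-lemma presentation, this local term is $H^1_{\mathrm{ur}}(F_v,\mathcal{A})$ with $\mathcal{A} = T\otimes \mathcal{O}[[\Gamma_{F,\max}]]^\vee$; its Pontryagin dual is a finitely generated module over the quotient $\mathcal{O}[[\Gamma_{F,\max}/D_v]]$, where $D_v$ is the decomposition group of $v$ in $\Gamma_{F,\max}$. This is $\mathcal{O}$-cofinite only if $D_v$ has finite index, which fails in general. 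The fact that the \emph{residue} extension is at most a $\mathbb{Z}_p$-extension is not what controls this: what is relevant is the full decomposition group, and pseudo-nullity requires $\mathrm{rk}_{\mathbb{Z}_p}D_v \geq 2$. That bound is the content of the cited result of Bleher--Chinburg--Greenberg--Kakde--Sharifi--Taylor (their Lemma 3.1(i)), which the paper invokes precisely here and which is the ingredient missing from your argument. Once one has $\mathrm{rk}_{\mathbb{Z}_p}D_v\geq 2$, the module admits two coprime annihilators in $\mathcal{O}[[\Gamma_{F,\max}]]$ (since $D_v$ acts trivially on the coinvariants), whence pseudo-nullity; without that input, your argument gives at best torsion, not pseudo-nullity. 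Relatedly, you do not need the hypothesis that $\rho$ is unramified above $p$ anywhere in the correct proof, so your hedging about that hypothesis is a red herring.
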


To prove Lemma~\ref{lemma:Sel=selstr}, we introduce notation 
\begin{align*}
\mathrm{Loc}^i_M (K) &= \underset{\substack{w \in S_K\setminus \Sigma_{K,p} \\ w\nmid p}}{\prod} 
H^i (K_w^{\mathrm{ur}} ,M), & \mathrm{Loc}^{\mathrm{str},i}_M (K) &= \underset{\substack{w \in S_K\setminus \Sigma_{K,p} \\ w\nmid p}}{\prod} 
 H^i (K_w ,M)
\end{align*}
on local conditions for $i=0,1$ and $2$, where $K$ is an intermediate field of $F_{\max}/F$ and $M$ is an arbitrary $\mathrm{Gal}(F_{S}/F)$-subquotient of $A$. 

\begin{proof}[Proof of Lemma~$\ref{lemma:Sel=selstr}$]
Consider the commutative diagram with exact rows: 
\begin{equation}
\begin{CD}
0 @>>> \mathrm{Sel}_{A,\Sigma_F} (F_{\max}) @>>> H^1 (F_S / F_{\max} ,A)  @>{\mathrm{loc}_A}>> \mathrm{Loc}^1_{A} (F_{\max})  \\ 
@. @A{f}AA @AA{\rotatebox{90}{$=$}}A @AA{h}A\\ 
0 @>>> \mathrm{Sel}^{\mathrm{str}}_{A,\Sigma_F} (F_{\max}) @>>> H^1 (F_S / F_{\max} ,A)  @>>{\mathrm{loc}^{\mathrm{str}}_{A}}> \mathrm{Loc}^{\mathrm{str},1}_{A} (F_{\max}). 
\\ 
\end{CD}
\end{equation}
The maps $\mathrm{loc}_A$ and $\mathrm{loc}_A^\mathrm{str}$ are the restriction morphisms. By construction, the left vertical map $f$ is injective. In order to verify the assertion for $\mathrm{Sel}_{A,\Sigma_F} (F_{\max})^\vee \twoheadrightarrow \mathrm{Sel}^{\mathrm{str}}_{A,\Sigma_F} (F_{\max})^\vee$, it suffices to show that $(\mathrm{Coker}\, f)^\vee$ is a pseudonull $\mathcal{O}[[\Gamma_{F,\, \max}]]$-module. 
The right vertical map $h$ is also induced by the restriction maps and thus has the kernel isomorphic to
\begin{align*}
 \mathrm{Ker}(h)
  & \cong \underset{\substack{ v \in S_F \\  v \nmid p\infty}}{\prod} \prod_{\substack{w\mid v \\ w: \text{ place of } F_{\max}}} H^1_{\mathrm{ur}}(F_{\max,w},A) \times  \underset{\substack{v \in \Sigma^c_{F,p}}}{\prod}  H^1_{\mathrm{ur}} (F_v ,\mathcal{A}).
\end{align*}
Here we rewrite the local condition at $v\in \Sigma_{F,p}^c$ as in \eqref{eq:Sel_def} and \eqref{eq:str_Sel_def}. Then the unramified cohomology $H^1_{\mathrm{ur}}(F_{\max,w}, A)=H_0(F_v^{\mathrm{ur}}/F_{\max,w},A^{I_w})$ is trivial for any $v \in S_F$ with $v\nmid p\infty$ because $\mathrm{Gal}(F_v^{\mathrm{ur}}/F_{\max,w})\cong \prod_{\ell \neq p}\mathbb{Z}_{\ell}$ is a pro-prime-to-$p$ group whereas $A^{I_w}$ is a discrete $p$-group. 
Now consider the case $v\in \Sigma_{F,p}^c$. Then the $\mathbb{Z}_p$-rank of the decomposition subgroup of $\Gamma_{F,\,\max}$ at $v$ is equal to or greater than $2$ by \cite[Lemma~3.1 (i)]{BCGKST}, which acts trivially on $H^1_{\mathrm{ur}}(F_v,\mathcal{A})$ by the very definition $H_0(F_v^{\mathrm{ur}}/F_{\max,w},A^{I_w})$ of the unramified cohomology group. Therefore $H^1_{\mathrm{ur}}(F_v,\mathcal{A})$  admits two coprime annihilators in $\mathcal{O}[[\Gamma_{F,\,\max}]]$, which implies that $(\mathrm{Ker}\, h)^\vee$ is $\mathcal{O}[[\Gamma_{F,\,\max}]]$-pseudonull. Then $(\mathrm{Coker}\, f)^\vee$ is also $\mathcal{O}[[\Gamma_{F,\,\max}]]$-pseudonull as required since it is a subquotient of $(\mathrm{Ker}\, h)^\vee$. One may similarly the other assertions, also using \cite[Lemma~3.1 (i)]{BCGKST}. 
\end{proof}

Recall that $F_S$ denotes the maximal Galois extension of $F$ unramified outside the places contained in $S_F$.

\begin{lem}[Surjectivity of the localisation map]\label{lemma:surjectivity_localization_map}
Let $F'$ be a finite extension of $F$ 
in $F_S$, and suppose that $F'$ is also a CM field. 
For a $p$-adic Artin representation $\tau \colon  G_{F'} \rightarrow \mathrm{Aut}_{\mathcal{E}}\, V_{\tau}$ of $G_{F'}$, let $\mathcal{O}$ denote the ring of integers of $\mathcal{E}$. 
Choose a $G_{F'}$-stable $\mathcal{O}$-lattice $T$ and set $A=V_\tau/T$. 
Then the localisation map $\mathrm{loc}_A^\mathrm{str}\colon H^1 (F_S / F'_{\max} ,A) \rightarrow \mathrm{Loc}^{\mathrm{str},1}_A (F'_{\max}) $ defining $\mathrm{Sel}_{A,\Sigma_{F'}}^{\mathrm{str}}(F'_{\max})$ is surjective. 
\end{lem}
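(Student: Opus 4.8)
The strategy is the standard Greenberg–Wiles / Poitou–Tate argument: surjectivity of the global-to-local restriction map reduces to the vanishing of the dual (strict) Selmer group over $F'_{\max}$. First I would pass to a finite level: write $F'_n$ for the layer of $F'_{\max}/F'$ fixed by an open subgroup, so that $H^1(F_S/F'_{\max},A) = \varinjlim_n H^1(F_S/F'_n,A)$ and likewise for the local terms, and reduce to showing that the cokernel of $\mathrm{loc}_A^{\mathrm{str}}$ at each finite level becomes $0$ in the limit. By the global Euler–Poincaré characteristic formula and Poitou–Tate duality for the finitely generated $\mathcal{O}[[\mathrm{Gal}(F'_n/F')]]$-modules involved, $\mathrm{Coker}(\mathrm{loc}_A^{\mathrm{str}})^\vee$ is controlled by $H^2(F_S/F'_{\max},A)$ together with a strict Selmer group attached to the Cartier dual $A^\ast = \mathrm{Hom}(T,\mu_{p^\infty})\otimes_{\mathcal{O}}\mathcal{E}/\mathcal{O}$ with the \emph{complementary} local conditions (unramified, or strict, outside $\Sigma_{F',p}$ — i.e. \emph{ramified allowed exactly along $\Sigma^c_{F',p}$}).

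The heart of the matter is to show this dual object vanishes. The key inputs are: (i) $A$ becomes trivial as a Galois module over $F'_{\max}F'_\tau$, which is a \emph{finite} extension of $F'_{\max}$, so all the cohomology groups in play are, up to finite-index/pseudo-null ambiguity, computed from the arithmetic of $F'_{\max}F'_\tau$; (ii) the Cartier dual of $A$ over $F'_{\max}F'_\tau$ is a finite product of copies of $\mu_{p^\infty}$, whose Iwasawa cohomology over the $\mathbb{Z}_p^{d+1}$-extension is governed by the structure of the relevant ideal class / maximal-abelian-pro-$p$-extension groups; (iii) crucially, the local conditions at the places in $\Sigma^c_{F',p}$ — the primes "above $p$ not in the chosen $p$-ordinary CM type" — are the ones we \emph{allow ramification at} on the dual side, and these decomposition groups have $\mathbb{Z}_p$-rank $\geq 2$ in $\Gamma_{F',\max}$ by \cite[Lemma~3.1(i)]{BCGKST}, which is exactly what kills the relevant local $H^1$'s (as already exploited in the proof of Lemma~\ref{lemma:Sel=selstr}). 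Combining these, one shows that the dual strict Selmer group of $A^\ast$ over $F'_{\max}$ is pseudo-null, and in fact $0$ after the limit, because over the full $\mathbb{Z}_p^{d+1}$-extension $F'_{\max}$ a pseudo-null submodule of a group that injects into an inverse limit of class groups with no finite $\Gamma$-invariants must vanish (here one invokes the $\mu=0$-type / torsion-freeness consequences of the assumptions (unr$_{F,p}$), (ord$_{F,p}$), together with \cite[Theorem 1.2.2]{HT-aIMC} as in Proposition~\ref{prop:Sel_cotorsion}, and the fact that there are \emph{no} finite $\Gamma_{F',\max}$-submodules in the relevant Galois groups).

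Concretely the steps are: (1) Poitou–Tate nine-term exact sequence at finite level, identifying $\mathrm{Coker}(\mathrm{loc}^{\mathrm{str}}_A)^\vee$ with a subquotient of $H^0$ of the dual Selmer complex for $A^\ast$; (2) take the limit over $F'_{\max}/F'$ and rewrite everything as $\mathcal{O}[[\Gamma_{F',\max}]]$-modules, reducing to pseudo-nullity; (3) trivialize the Galois action by base change to the finite extension $F'_{\max}F'_\tau$, turning the dual Selmer group into (a piece of) $\mathrm{Gal}$ of a maximal abelian pro-$p$ extension of $F'_{\max}F'_\tau$ unramified outside $\Sigma$-places and satisfying the strict condition at the non-$p$ places; (4) use the rank-$\geq 2$ property of the decomposition groups at $\Sigma^c_{F',p}$ from \cite[Lemma~3.1(i)]{BCGKST} to show the local terms obstructing vanishing are pseudo-null; (5) invoke \cite[Theorem 1.2.2 (iii)]{HT-aIMC} and the absence of finite $\Gamma_{F',\max}$-submodules to upgrade pseudo-nullity to genuine vanishing, and conclude surjectivity of $\mathrm{loc}^{\mathrm{str}}_A$. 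The main obstacle I anticipate is step (5): ruling out a nonzero pseudo-null dual Selmer group over the multivariable Iwasawa algebra — this is where the CM-specific structural results (the split ordinary hypothesis, unramifiedness at $p$, and the Hida–Tilouine torsion theorem) must be combined carefully, and it is genuinely more delicate than the one-variable totally-real analogue in Greenberg's work.
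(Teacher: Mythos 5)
Your plan correctly assembles the main ingredients (Poitou--Tate, the Kummer dual $A^\ast$, the Hida--Tilouine torsion theorem for the algebra $\mathcal{O}[[\Gamma_{F',\max}]]$), but there is a genuine gap in the identification of the obstruction term, and it is the gap you yourself flag as "the main obstacle" in step (5). You want to show that the (discrete) dual strict Selmer group of $A^\ast$ over $F'_{\max}$ is pseudo-null and ultimately \emph{vanishes}, and you propose to do this by invoking \cite[Lemma 3.1(i)]{BCGKST} at the $\Sigma^c_{F',p}$-places, $\mu=0$-type inputs, and the absence of finite $\Gamma_{F',\max}$-submodules. None of that is needed, and in fact there is no reason the discrete dual Selmer group should be zero; trying to prove it along those lines would fail.

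The point you are missing is the correct shape of the cokernel after passing to the limit. If you set up the Poitou--Tate sequence at each finite layer $K$ with Bloch--Kato-type local conditions (so that the global duality theorem applies cleanly) and take the inductive limit on the discrete side and the inverse limit with respect to $p^m$-torsion on the compact side, the obstruction to surjectivity of $\overline{\mathrm{loc}}^{\mathrm{str}}_A$ is
\begin{equation*}
\Bigl( \varprojlim_{K,m} \mathrm{Sel}^{\mathrm{BK}}_{A^\ast[p^m],\Sigma^c_K}(K)\Bigr)^\vee,
\end{equation*}
and by the compact-vs-discrete comparison of \cite[Lemma 4.11]{ochiai2006} this is isomorphic to
\begin{equation*}
\mathrm{Hom}_{\mathcal{O}[[\Gamma_{F',\max}]]}\Bigl( \bigl(\mathrm{Sel}^{\mathrm{BK}}_{A^\ast,\Sigma^c_F}(F'_{\max})^\vee\bigr)^\iota,\; \mathcal{O}[[\Gamma_{F',\max}]]\Bigr),
\end{equation*}
not to the dual Selmer group itself. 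Since $\mathcal{O}[[\Gamma_{F',\max}]]$ is a domain, this $\mathrm{Hom}$ is automatically zero as soon as $\mathrm{Sel}^{\mathrm{BK}}_{A^\ast,\Sigma^c_F}(F'_{\max})$ is \emph{cotorsion}. Cotorsionness is a vastly weaker property than pseudo-nullity or vanishing, and it follows directly from the argument of Proposition \ref{prop:Sel_cotorsion} (i.e.\ from \cite[Theorem 1.2.2(iii)]{HT-aIMC}, after base-changing to the finite extension $F'_{\max}F'_\tau(\mu_p)$ on which $G$ acts trivially on $A^\ast$). In particular, no appeal to $H^2(F_S/F'_{\max},A)$, to the rank-$\geq 2$ property of decomposition groups at $\Sigma^c_{F',p}$, or to the absence of finite submodules is needed for this lemma, and the delicate step you anticipate at (5) simply does not arise. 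Those inputs are used elsewhere in the paper (in Lemma \ref{lemma:Sel=selstr} and in the descent argument of Section \ref{ssc:integrality}), but not here.
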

\begin{proof}
For any finite extension $K$ of $F'$ contained in $F_S$, set  
\begin{align*}
 \overline{\mathrm{Loc}^{\mathrm{str},1}_A (K)} = \underset{\substack{\lambda \in S_K \\  \lambda \nmid p\infty}}{\prod} 
 \frac{H^1 (K_{\lambda} ,A)}{H^1_{\mathrm{ur}} (K_{\lambda} ,A)_{\mathrm{div}}}  \times \underset{\substack{\mathfrak{p}\in \Sigma^c_{p ,K}}}{\prod}  H^1 (K_{\mathfrak{p}} ,A)
\end{align*}
where $H^1_{\mathrm{ur}} (K_{\lambda} ,A)_{\mathrm{div}}$ denotes the maximal divisible subgroup of 
$H^1_{\mathrm{ur}} (K_{\lambda} ,A)$. We introduce a similar local condition 
\begin{align*}
 \overline{\mathrm{Loc}^{\mathrm{str},1}_{A^\ast} (K)}
 =  \underset{\substack{\lambda \in S_K \\  \lambda \nmid p\infty}}{\prod} 
 \frac{H^1 (K_{\lambda} ,A^\ast)}{H^1_{\mathrm{ur}} (K_{\lambda} ,A^\ast)_{\mathrm{div}}} \times \underset{\substack{\mathfrak{p}\in \Sigma_{p ,K}}}{\prod}  H^1 (K_{\mathfrak{p}} ,A^\ast)
\end{align*}
for the Kummer dual $A^\ast = \mathrm{Hom}_{\mathbb{Z}_p} (T ,\mu_{p^\infty})$ of $T$. We define a Bloch--Kato type Selmer group $\mathrm{Sel}^{\mathrm{BK}}_{A,\Sigma_K} (K)$ 
(resp. $\mathrm{Sel}^{\mathrm{BK}}_{A^\ast,\Sigma_K^c} (K)$) to be the kernel of the localisation map 
 \begin{align*}
  H^1 (K_S / K ,A) \xrightarrow{\;\overline{\mathrm{loc}}^\mathrm{str}_A\;} \overline{\mathrm{Loc}^{\mathrm{str},1}_A (K)} \quad 
 \text{(resp.\ } H^1 (K_S / K ,A^\ast) \xrightarrow{\;\overline{\mathrm{loc}}^\mathrm{str}_{A^\ast}\;} \overline{\mathrm{Loc}^{\mathrm{str},1}_{A^\ast} (K)}).
 \end{align*}
For a finite Galois module $A^\ast [p^m]$,  
we define $\overline{\mathrm{Loc}^{\mathrm{str},1}_{A^\ast [p^m]} (K)}$ as
\begin{align*}
\overline{\mathrm{Loc}^{\mathrm{str},1}_{A^\ast [p^m]} (K)}
 =  \underset{\substack{\lambda \in S_K \\  \lambda \nmid p\infty}}{\prod} 
 \frac{H^1 (K_{\lambda} ,A^\ast [p^m])}{\iota_m ^{-1} (H^1_{\mathrm{ur}} (K_{\lambda} ,A^\ast )_{\mathrm{div}})} \times 
 \underset{\substack{\mathfrak{p}\in \Sigma_{p ,K}}}{\prod}  H^1 (K_{\mathfrak{p}} ,A^\ast [p^m])
\end{align*}
using the 
map $\iota_m \colon H^1 (K_{\lambda} ,A^\ast [p^m]) 
\rightarrow H^1 (K_{\lambda} ,A^\ast )$ induced by the inclusion $A^*[p^m]\hookrightarrow A^*$. 
Then we define  $\mathrm{Sel}^{\mathrm{BK}}_{A^\ast [p^m],\Sigma_F^c} (K)$
to be the kernel of the localisation map 
 \begin{align*} 
  \mathrm{loc}^\mathrm{str}_{A^*[p^m]}\colon H^1 (K_S / K ,A^\ast [p^m]) \rightarrow \overline{\mathrm{Loc}^{\mathrm{str},1}_{A^\ast [p^m]} (K)}.
 \end{align*}
By the Poitou--Tate global duality theorem \cite[(8.6.10)]{Neukirch}, we have the following exact sequence: 
\begin{align} 
 \mathrm{Sel}^{\mathrm{BK}}_{A,\Sigma_F} (F'_{\max})  \longrightarrow 
H^1 (F_S /F'_{\max},A) &\xrightarrow{\;\overline{\mathrm{loc}}^\mathrm{str}_A\;}\nonumber  \\
 \varinjlim_{K}\,  & \overline{\mathrm{Loc}^{\mathrm{str},1}_A (K)}
 \longrightarrow  \left( \varprojlim_{K,m}  \mathrm{Sel}^{\mathrm{BK}}_{A^\ast [p^m],\Sigma_{K}^c} (K) \right)^\vee.\label{equation:surjectiveity_localization}
\end{align}
Here $K$ runs through all finite extensions of $F'$ contained in $F'_{\max}$. Note that there is a natural surjection $q\colon \varinjlim_{K} \overline{\mathrm{Loc}^{\mathrm{str},1}_A (K)} \twoheadrightarrow \mathrm{Loc}^{\mathrm{str},1}_A (F'_{\max})$ satisfying ${\mathrm{loc}^\mathrm{str}_A}=q\circ \overline{\mathrm{loc}}^\mathrm{str}_A$ by definition. Hence, for surjectivity of $\mathrm{loc}_A^\mathrm{str}$, it suffices to verify that $\overline{\mathrm{loc}}_A^\mathrm{str}$ is surjective, or in other words, that the last term of \eqref{equation:surjectiveity_localization} vanishes. Furthermore we have an isomorphism 
\begin{align}\label{equation:compactSel_discreteSel}
\left( \varprojlim_{K,m}  \mathrm{Sel}^{\mathrm{BK}}_{A^\ast [p^m],\Sigma_K^c} (K) \right)^\vee  
\cong \mathrm{Hom}_{\mathcal{O}[[\Gamma_{F',\, \max}]]} \big((\mathrm{Sel}^{\mathrm{BK}}_{A^\ast,\Sigma_F^c } (F'_{\max})^\vee )^\iota , \mathcal{O}[[\Gamma_{F',\, \max}]]\big)
\end{align}
concerning the last term of \eqref{equation:surjectiveity_localization}, where the symbol  $(\ \ )^\iota$ denotes the twisted $\mathcal{O}[[\Gamma_{F',\, \max}]]$-module on which each  $g\in \Gamma_{F',\, \max}$ acts via $g^{-1}$. 
The verification of \eqref{equation:compactSel_discreteSel} goes in the same way as \cite[Lemma 4.11]{ochiai2006} if one replaces the ring $\mathcal{R} /(p^n,H)$  in {\em loc.\ cit.}\ with $\mathcal{O}/(\varpi^n) [\Gamma_{F',\, \max}/U]$, where $\varpi$ is a uniformiser of $\mathcal{O}$ and $U$ is an open subgroup of $\Gamma_{F',\, \max}$. 
From \eqref{equation:compactSel_discreteSel}, we readily observe that the triviality of the last term of \eqref{equation:surjectiveity_localization} is deduced from the $\mathcal{O}[[\Gamma_{F',\, \max}]]$-cotorsionness of $\mathrm{Sel}^{\mathrm{BK}}_{A^*,\Sigma_{F}^c}(F'_{\max})$ because $\mathcal{O}[[\Gamma_{F',\, \max}]]$ has no torsion. Meanwhile $\mathrm{Sel}^{\mathrm{BK}}_{A^*,\Sigma_F^c}(F'_{\max})$ is a submodule of $\mathrm{Sel}_{A^*,\Sigma_F^c}(F'_{\max})$ by construction, and thus the cotorsionness of the former module follows from the cotorsionness of the latter module. Similarly to Proposition~\ref{prop:Sel_cotorsion}, we may deduce the cotorsionness of $\mathrm{Sel}_{A^*,\Sigma_F^c}(F'_{\max})$ from \cite[Theorem~1.2.2 (iii)]{HT-aIMC} noting that $F'_{\max}F'_\tau(\mu_p)/F'_{\max}$ is a finite extension and $G_{F'_{\max}F'_\tau(\mu_p)}$ acts trivially on $A^*$. We thus complete the proof.
\end{proof}

We define the {\em $\mu$-invariant} $\mu (X)$  of a finitely generated torsion $\mathcal{O}[[\Gamma_{F,\, \max}]]$-module $X$ to be the length of the $\mathcal{O}[[\Gamma_{F,\, \max}]]_{(\varpi)}$-module 
$X_{(\varpi)}$, where $\varpi$ is a uniformiser of $\mathcal{O}$ and $(\ \ )_{(\varpi)}$ denotes the localisation at the prime ideal 
of $\mathcal{O}[[\Gamma_{F,\, \max}]]$ generated by $\varpi$. The next lemma plays an important role in the algebraic part of our arguments, since most $G_F$-stable lattices $T$ of an Artin representation $V_\rho$ are not preserved under the virtual decomposition \eqref{eq:Brauer_rho}. 

\begin{lem}[lattice invariance of the $\mu$-invariant]\label{lemma:mu_lattice}
Let $\rho \colon G_{F} \rightarrow \mathrm{Aut}_{\mathcal{E}}\, V_{\rho}$ be a $p$-adic Artin representation of $G_{F}$ and $\mathcal{O}$ the ring of integers of $\mathcal{E}$. For two $G_F$-stable $\mathcal{O}$-lattices $T$ and $T'$ of $V_{\rho}$, consider $A=T\otimes_{\mathbb{Z}_p} \mathbb{Q}_p /\mathbb{Z}_p$ and  $A'=T' \otimes_{\mathbb{Z}_p} \mathbb{Q}_p /\mathbb{Z}_p$. 
Then the $\mu$-invariant of $\mathrm{Sel}_{A,\Sigma_F} (F_{\max}) $ coincides with that of $\mathrm{Sel}_{A',\Sigma_F} (F_{\max}) $. 
\end{lem}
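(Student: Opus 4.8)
The plan is to compare the two Selmer groups through a short exact sequence of coefficient modules and to reduce the resulting discrepancy to a computation of Euler--Poincar\'e characteristics. First, replacing $T'$ by $\varpi^k T'$ for a suitable $k\in\mathbb{Z}$ changes neither $A'$ as a $G_F$-module nor $\mathrm{Sel}_{A',\Sigma_F}(F_{\max})$, so we may assume $T'\subseteq T$; then $C:=T/T'$ is a \emph{finite} $G_F$-module, annihilated by some $\varpi^n$, on which the $G_F$-action factors through the finite quotient $\mathrm{Gal}(F_\rho/F)$, and from $A'=V_\rho/T'$, $A=V_\rho/T$ we obtain a short exact sequence of discrete $G_F$-modules $0\to C\to A'\to A\to 0$. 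By Lemma~\ref{lemma:Sel=selstr} and the fact that the $\mu$-invariant is unchanged under pseudo-isomorphism, it suffices to prove the equality for the strict Selmer groups $\mathrm{Sel}^{\mathrm{str}}_{A,\Sigma_F}(F_{\max})$ and $\mathrm{Sel}^{\mathrm{str}}_{A',\Sigma_F}(F_{\max})$, whose defining localisation maps are surjective by Lemma~\ref{lemma:surjectivity_localization_map} (applied to $A$, $A'$ and also to $C$).

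\textbf{The $\mu$-defect as an Euler characteristic of $C$.} Feeding the sequence above into the Poitou--Tate exact sequences computing $H^\ast(F_S/F_{\max},-)$ and the local conditions of \eqref{eq:str_Sel}, the surjectivity of the localisation maps makes the relevant sequences four-term exact; a standard bookkeeping, tracking both the generic $\mathrm{Frac}\,\widehat{\mathcal{O}}^{\mathrm{ur}}[[\Gamma_{F,\,\max}]]$-ranks and the $\mu$-invariants and localising at $(\varpi)$, then yields
\[
\mu\bigl(\mathrm{Sel}^{\mathrm{str}}_{A',\Sigma_F}(F_{\max})^\vee\bigr)-\mu\bigl(\mathrm{Sel}^{\mathrm{str}}_{A,\Sigma_F}(F_{\max})^\vee\bigr)=\chi^{\mathrm{glob}}(C)-\chi^{\mathrm{loc}}(C),
\]
where $\chi^{\mathrm{glob}}(C)=\sum_i(-1)^i\mu\bigl(H^i(F_S/F_{\max},C)^\vee\bigr)$ and $\chi^{\mathrm{loc}}(C)$ is the analogous alternating sum over the local cohomology groups $H^i(F_{\max,w},C)$ at the finite places $w$ of $F_{\max}$ lying above $S_F\setminus\Sigma_{F,p}$. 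Here one uses that the $H^0$- and $H^2$-contributions of $A$ and $A'$, both globally and locally, have vanishing $\mu$-invariant: they are finite, or $\mathcal{O}$-cofinitely generated with $G_{F_\rho F_{\max}}$ acting trivially, hence pseudonull as $\mathcal{O}[[\Gamma_{F,\,\max}]]$-modules once one invokes \cite[Lemma~3.1]{BCGKST} for the $\mathbb{Z}_p$-ranks of the relevant decomposition groups; cotorsionness throughout is guaranteed by Proposition~\ref{prop:Sel_cotorsion} and \cite[Theorem~1.2.2 (iii)]{HT-aIMC}.

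\textbf{Vanishing of the defect, and the main obstacle.} This is the crux. One evaluates $\chi^{\mathrm{glob}}(C)$ and $\chi^{\mathrm{loc}}(C)$ by descending to the finite layers $K$ of $F_{\max}/F$ and applying Tate's global and local Euler--Poincar\'e formulae \cite[Chapter~VIII]{NSW}. Since $F$, hence every such $K$, is a CM field, all of its archimedean places are complex with trivial decomposition group, so the global formula degenerates to the single factor $\#C^{-r_2(K)}$ with $r_2(K)=[K:\mathbb{Q}]/2=d\,[K:F]$; on the local side, the places of $K$ away from $p$ contribute trivially, their residue characteristic being prime to $p$ while $C$ is a $p$-group, so that only the places above $\Sigma^c_{F,p}$ intervene, contributing the factor $\#C^{-[K:F]\sum_{v\in\Sigma^c_{F,p}}[F_v:\mathbb{Q}_p]}$. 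Because $(p)$ splits completely in $F/F^+$, one has $\sum_{v\in\Sigma^c_{F,p}}[F_v:\mathbb{Q}_p]=[F^+:\mathbb{Q}]=d=r_2(F)$, so the global and local contributions agree layer by layer, forcing $\chi^{\mathrm{glob}}(C)=\chi^{\mathrm{loc}}(C)$; the lemma follows. The genuinely delicate point is the rigorous passage from these numerical identities at the finite layers to the equality of $\mu$-invariants over the \emph{multivariable} Iwasawa algebra $\mathcal{O}[[\Gamma_{F,\,\max}]]$: one must control the co-descent error terms, and this is precisely where the hypothesis $\mathrm{rank}_{\mathbb{Z}_p}\Gamma_{F,\,\max}\ge d+1\ge 2$ enters, exactly as in the control arguments underlying Proposition~\ref{prop:Sel_cotorsion} and Lemma~\ref{lemma:surjectivity_localization_map}. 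Alternatively, one may carry out the whole computation $\Lambda$-adically through the Poitou--Tate sequence over $F_{\max}$ already used in the proof of Lemma~\ref{lemma:surjectivity_localization_map}, reading off the identity directly at the level of $\mathcal{O}[[\Gamma_{F,\,\max}]]$-modules.
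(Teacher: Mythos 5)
Your overall architecture is the same as the paper's: reduce to the strict Selmer groups via Lemma~\ref{lemma:Sel=selstr}, use the surjectivity of the localisation maps (Lemma~\ref{lemma:surjectivity_localization_map}) to compare $A$ and $A'$ through the finite module $C$, express the $\mu$-defect in terms of global and local cohomology of $C$, and kill it with Tate's global and local Euler--Poincar\'e formulae over the finite layers $K\subset F_{\max}$; your exponent count $\#C^{[K:\mathbb{Q}]/2}$ on both sides (all archimedean places complex; only the places above $\Sigma^c_{F,p}$ contribute, and $\sum_{v\in\Sigma^c_{F,p}}[F_v:\mathbb{Q}_p]=d$) is exactly the computation in the paper. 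However, there is a genuine gap in your bookkeeping step. With your sequence $0\to C\to A'\to A\to 0$, the cokernel of $H^1(F_S/F_{\max},A')\to H^1(F_S/F_{\max},A)$ is only $\mathrm{Ker}\bigl(H^2(F_S/F_{\max},C)\to H^2(F_S/F_{\max},A')\bigr)$, so to replace it by $H^2(F_S/F_{\max},C)$ in the $\mu$-count you must show that the image of $H^2(F_S/F_{\max},C)$ in $H^2(F_S/F_{\max},A')$ has vanishing $\mu$-invariant. Your stated justification --- that the $H^2$-contributions of $A,A'$ are ``finite, or $\mathcal{O}$-cofinitely generated with $G_{F_\rho F_{\max}}$ acting trivially, hence pseudonull via \cite[Lemma~3.1]{BCGKST}'' --- does not apply: $H^2(F_S/F_{\max},A')$ is neither finite nor $\mathcal{O}$-cofinitely generated in any evident way, and \cite{BCGKST} concerns decomposition subgroups of $\Gamma_{F,\,\max}$ at places above $p$, which is irrelevant here. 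What is actually needed, and what the paper proves by a substantive argument, is that $H^2(F_S/F_{\max},A')$ vanishes: its Pontrjagin dual is torsion-free (Shapiro's lemma plus $\mathrm{cd}_p\,\mathrm{Gal}(F_S/F)\le 2$ and divisibility of $\mathcal{T}^\vee$) and torsion (the local $H^2$'s over $F_{\max,w}$ vanish because those Galois groups have $p$-cohomological dimension $1$, so $H^2$ equals a Tate--Shafarevich-type group, which is cotorsion by Greenberg's duality \cite[Proposition~4.4]{gr-coh} and cotorsionness of the Selmer group of the Kummer dual $A^*$, resting on \cite[Theorem~1.2.2~(iii)]{HT-aIMC}). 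Without this input the identity $\mu(\mathrm{Coker})=\mu(H^2(F_S/F_{\max},C))$ is unproved. Likewise, the local $H^2$-terms for $A,A'$ vanish for the $\mathrm{cd}_p=1$ reason, not the one you give.

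A second, smaller issue is the final passage from the finite-layer identities to the equality of $\mu$-invariants over $\mathcal{O}[[\Gamma_{F,\,\max}]]$, which you flag as delicate and defer to ``control of co-descent error terms'' and the hypothesis $\mathrm{rank}_{\mathbb{Z}_p}\Gamma_{F,\,\max}\ge 2$. That is not the mechanism used (nor needed): after the snake-lemma reduction every module in sight is annihilated by a fixed power of $\varpi$, and the paper's point is that it suffices for the ratio of $\#H^1(F_S/K,C)/\#H^2(F_S/K,C)$ to $\#\mathrm{Loc}^{\mathrm{str},1}_C(K)/\#\mathrm{Loc}^{\mathrm{str},2}_C(K)$ to stay \emph{bounded} in $K$; the Euler--Poincar\'e formulae show this ratio equals $\#\mathrm{Loc}^{\mathrm{str},0}_C(K)/\#H^0(F_S/K,C)$, trapped between $1/\#C$ and $\#C^{\#S_{\max}}$. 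Your claim that the global and local contributions ``agree layer by layer'' is literally true only for the full Euler characteristics including the $H^0$-terms, and converting that into the equality of $\mu$-invariants is precisely the step you have not carried out.
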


Before the proof of Lemma~\ref{lemma:mu_lattice}, we define
\begin{align*}
 \mathcyr{Sh}^i (K,S,M) = \mathrm{Ker} \left[ H^i (K_S /K ,M) \longrightarrow \bigoplus_{w \in S} H^i (K_w ,M)\right] \quad (i=1,2)
\end{align*} 
for a number field $K$, a finite set $S$ of places of $K$, and a discrete $G_K$-module $M$. By taking the inductive limit, we extend the definition of $\mathcyr{Sh}^i (K,S,M)$ to an algebraic extention $K$ of $\mathbb{Q}$ of infinite degree, as before.

\begin{proof}
By Lemma \ref{lemma:Sel=selstr}, it suffices to prove the anologous assertion for 
the $\mu$-invariants of $\mathrm{Sel}^{\mathrm{str}}_{A,\Sigma_F} (F_{\max}) $ and 
$\mathrm{Sel}^{\mathrm{str}}_{A',\Sigma_F} (F_{\max}) $. 
Replacing $T$ with an appropriate homothetic lattice, we may assume
 without loss of generality that there is a $G_F$-equivariant surjection $A
 \twoheadrightarrow A'$. Then we obtain the following fundamental commutative diagram with exact rows:
\begin{equation}\label{equation:fundamentaldiagram_mu}
\begin{CD}
0 @>>> \mathrm{Sel}^{\mathrm{str}}_{A,\Sigma_F} (F_{\max}) @>>> H^1 (F_S / F_{\max} ,A)  @>{\mathrm{loc}^\mathrm{str}_A}>> \mathrm{Loc}^{\mathrm{str},1}_A (F_{\max})  \\ 
@. @V{f_A\vert_{\mathrm{Sel}}}VV @VV{f_A}V @VV{g_A}V \\ 
0 @>>> \mathrm{Sel}^{\mathrm{str}}_{A',\Sigma_F} (F_{\max}) @>>> H^1 (F_S / F_{\max} ,A')  @>>{\mathrm{loc}^\mathrm{str}_{A'}}> \mathrm{Loc}^{\mathrm{str},1}_{A'} (F_{\max}). 
\\ 
\end{CD}
\end{equation}
Here the vertical maps $f_A$ and $g_A$ are induced from the surjection
 $A\twoheadrightarrow A'$. Applying Lemma \ref{lemma:surjectivity_localization_map} to the case where $F'=F$ and $\tau =\rho$, we observe that both the rightmost horizontal arrows $\mathrm{loc}_{A}^{\mathrm{str}}$ and $\mathrm{loc}_{A'}^{\mathrm{str}}$ in \eqref{equation:f_A} are surjective. Note that, since both $\mathrm{Sel}_{A,\Sigma_F}^{\mathrm{str}}(F_{\max})$ and $\mathrm{Sel}_{A',\Sigma_F}^{\mathrm{str}}(F_{\max})$ are $\mathcal{O}[[\Gamma_{F,\, \max}]]$-cotorsion by Proposition~\ref{prop:Sel_cotorsion}, the kernel and the cokernel of $f_A\vert_{\mathrm{Sel}}$ are also $\mathcal{O}[[\Gamma_{F,\, \max}]]$-cotorsion. Furthermore, since both $f_A\otimes \mathcal{E}$ and $g_A\otimes \mathcal{E}$ induces isomorphisms, all of $\mathrm{Ker}\, f_A$, $\mathrm{Coker}\, f_A$, $\mathrm{Ker}\, g_A$ and $\mathrm{Coker}\, g_A$ are $p$-cotorsion. Hence we obtain an equality among $\mu$-invariants 
\begin{align}
\mu (\mathrm{Sel}^{\mathrm{str}}_{A,\Sigma_F}& (F_{\max})^\vee)-\mu (\mathrm{Sel}^{\mathrm{str}}_{A',\Sigma_F} (F_{\max})^\vee)  = \mu((\mathrm{Ker}\, f_A\vert_{\mathrm{Sel}})^\vee) -\mu ((\mathrm{Coker}\, f_A\vert_{\mathrm{Sel}})^\vee) \label{equation:snake_mu-invariants} \\
&= \mu ((\mathrm{Ker}\, f_A)^\vee) - \mu ((\mathrm{Coker}\, f_A)^\vee)  - \mu ((\mathrm{Ker}\, g_A)^\vee) + \mu ((\mathrm{Coker}\, g_A)^\vee) \nonumber
\end{align}
by applying the snake lemma to the fundamental diagram \eqref{equation:fundamentaldiagram_mu}, taking the Pontrjagin dual and localisation at $(\varpi)$, and taking the $\mu$-invariants. Now let us study the differences 
 $\mu((\mathrm{Coker}\, f_A)^\vee)-\mu((\mathrm{Ker}\, f_A)^\vee)$ and $\mu((\mathrm{Coker}\, g_A)^\vee)-\mu((\mathrm{Ker}\, g_A)^\vee)$. 

\begin{component}[$\blacktriangleright$ The kernel and cokernel of $f_A$]
 The vertical map $f_A$ in \eqref{equation:fundamentaldiagram_mu} fits into the long exact sequence
 \begin{align}  
  \begin{tikzcd}[ampersand replacement=\&]
  \& H^0(F_S / F_{\max} ,A) \ar{r}{f_A^0} \ar[draw=none]{d}[name=X,anchor=center]{} \& H^0 (F_S /
  F_{\max} ,A') \ar[rounded corners,
  to path={
  -- ([xshift=2ex]\tikztostart.east)
  |- (X.center) \tikztonodes
  -| ([xshift=-2ex]\tikztotarget.west)
  -- (\tikztotarget)}]{dll}{} \\
 H^1 (F_S / F_{\max} ,C) \rar \& H^1 (F_S /
  F_{\max} ,A) \ar{r}{f_A} \ar[draw=none]{d}[name=Y,anchor=center]{} \& H^1 (F_S / F_{\max} ,A') \ar[rounded corners,
  to path={
  -- ([xshift=2ex]\tikztostart.east)
  |- (Y.center) \tikztonodes
  -| ([xshift=-2ex]\tikztotarget.west)
  -- (\tikztotarget)}]{dll}{}\\
H^2 (F_S / F_{\max} ,C) \rar \& H^2 (F_S /
  F_{\max} ,A) \&
 \end{tikzcd} \label{equation:f_A}
  \end{align}
where $C$ denotes the kernel of $A \twoheadrightarrow A'$; note that it is a finite $G_F$-module. 
Since both $ H^0 (F_S / F_{\max} ,A)$ and  $ H^0 (F_S / F_{\max} ,A')$ are cofinitely generated $\mathcal{O}$-modules whose corank are equal to $\mathrm{dim}_{\mathcal{E}}H^0 (F_S / F_{\max} ,V_{\rho})$, 
the cokernel of the top horizontal map $f_A^0$ in \eqref{equation:f_A} is $\mathcal{O}$-cofinitely generated and $\mathcal{O}$-cotorsion. In particular it is finite $\mathcal{O}[[\Gamma_{F,\, \max}]]$-module, and thus its localisation at $(\varpi)$ is trivial. Combining this observation with \eqref{equation:f_A}, we have 
\begin{equation}\label{equation:ker_fa}
(\mathrm{Ker}\, f_A)^\vee_{(\varpi)}=H^1(F_S/F_{\max},C)_{(\varpi)}^\vee.
\end{equation}
Next we shall prove that $H^2(F_S/F_{\max},A)$ is a trivial module to deduce  
\begin{equation}\label{equation:coker_fa}
(\mathrm{Coker}\, f_A)^\vee=H^2(F_S/F_{\max},C)^\vee.
 \end{equation}
To achieve this purpose, we will observe that $H^2(F_S/F_{\max}, A)$ is both cotorsion free and cotorsion over $\mathcal{O}[[\Gamma_{F,\,\max}]]$. For cotorsion freeness, recall that Shapiro's lemma \cite[Proposition (1.6.4)]{NSW} implies an isomorphism
\begin{equation}\label{equation:shapiro_h^2_A}
H^2 (F_S / F_{\max} ,A) \cong H^2 (F_S / F , \mathcal{T}^\vee),
 \end{equation}
where $\mathcal{T}^\vee$ is the Pontrjagin dual of a free $\mathcal{O}[[\Gamma_{F,\, \max}]]$-module $\mathcal{T}:=T\otimes_{\mathcal{O}}\mathcal{O}[[\Gamma_{F,\, \max}]]^\sharp$.  Using the fact that the $p$-cohomological dimension of $\mathrm{Gal}(F_S/F)$ is less than or equal to $2$ \cite[Proposition (8.3.18)]{NSW}, we readily observe that the multiplication-by-$x$ map on the divisible module $\mathcal{T}^\vee$ induces an surjective endomorphism of $H^2(F_S/F,\mathcal{T}^\vee)$ for any non-zero  $x\in \mathcal{O}[[\Gamma_{F,\, \max}]]$. Taking the Pontrjagin dual and using \eqref{equation:shapiro_h^2_A}, we conclude that $H^2(F_S/F_{\max}, A)^\vee$ is $\mathcal{O}[[\Gamma_{F,\,\max}]]$-torsion free, as desired.

For cotorsionness of $H^2(F_S/F_{\max},A)$, note that the cohomology group 
$H^2(F_S/F_{\max},A)$ is isomorphic to $\mathcyr{Sh}^2 (F_{\max} ,S_{\max},A)$ for a finite set $S_{\max}$ of all the places of $F_{\max}$ lying above those contained in $S_F$. Indeed, we have an exact sequence 
\begin{equation}\label{equation:h^2_torsion}
0 \longrightarrow \mathcyr{Sh}^2 (F_{\max} ,S_{\max},A) \longrightarrow H^2 (F_S / F_{\max} ,A) \longrightarrow  \prod_{w \in S_{\max}} \!\!
H^2 ( F_{\max,w} ,A)
\end{equation}
by the very definition of $\mathcyr{Sh}^2 (F_{\max} ,S_{\max},A)$, but the last term of \eqref{equation:h^2_torsion} vanishes since the $p$-cohomological dimension of the absolute Galois group of  $F_{\max,w}$ equals 
 $1$ for every $w \in S_{\max}$. We may deduce this from \cite[Chapitre II, Proposition~12]{serre97} noting that the absolute Galois group of the residue field of $F_{\max,w}$ is isomorphic to a pro-prime-to-$p$ group $\prod_{\ell\neq p}\mathbb{Z}_\ell$ for finite every $w\in S_{\max}$. 
Therefore it suffices to verify that $\mathcyr{Sh}^2 (F_{\max} ,S_{\max},A)$ is cotorsion, which follows from cotorsionness of $\mathcyr{Sh}^1 (F_{\max} ,S_{\max},A^*)$ for the Kummer dual $A^*:=\mathrm{Hom}_{\mathrm{cts}}(T,\mu_{p^\infty})$ due to \cite[Proposition 4.4]{gr-coh}. 
We may indeed deduce $\mathcal{O}[[\Gamma_{F,\,\max}]]$-cotorsionness of $\mathcyr{Sh}^1 (F_{\max} ,S_{\max},A^*)$ from the fact that $\mathrm{Sel}_{A^*,\Sigma_F}(F_{\max})$ is cotorsion (Proposition~\ref{prop:Sel_cotorsion}), based upon an argument similar to the proof of Lemma~\ref{lemma:surjectivity_localization_map}.

\end{component}
 
\begin{component}[$\blacktriangleright$ The kernel and cokernel of $g_A$]

By a similar argument using the long exact sequence corresponding to \eqref{equation:f_A}, we readily obtain isomorphisms
\begin{align} \label{eq:ker_coker_ga} 
(\mathrm{Ker}\, g_A)_{(\varpi)}^\vee &\cong \mathrm{Loc}_C^{\mathrm{str},1}(F_{\max})^\vee_{(\varpi)}, & (\mathrm{Coker}\, g_A)^\vee &\cong \mathrm{Loc}_C^{\mathrm{str},2}(F_{\max})^\vee.
\end{align}
Indeed, we may deduce the triviality of $\mathrm{Loc}_A^{\mathrm{str},2}(F_{\max})$ directly from the fact that the $p$-cohomological dimension of $G_{F_{\max, w}}$ is equal to $1$ for every finite $w\in S_{\max}$, as we have seen.

\end{component}

\begin{component}[$\blacktriangleright$ Analysis of the difference of $\mu$-invariants]

Combining \eqref{equation:snake_mu-invariants} with \eqref{equation:ker_fa}, \eqref{equation:coker_fa} and \eqref{eq:ker_coker_ga}, we obtain
\begin{align}
\mu (\mathrm{Sel}^{\mathrm{str}}_A (F_{\max}))-\mu (\mathrm{Sel}^{\mathrm{str}}_{A'} (F_{\max})) &= 
\mu (H^1 (F_S / F_{\max} ,C)) - \mu (H^2 (F_S / F_{\max} ,C))  \nonumber\\
&\qquad \qquad  - \bigl(\mu (\mathrm{Loc}^{\mathrm{str},1}_C (F_{\max})) - \mu (\mathrm{Loc}^{\mathrm{str},2}_C (F_{\max}))\bigr). \label{equation:comparison_of_mu-invariants}
\end{align}
In order to verify that the right-hand side of \eqref{equation:comparison_of_mu-invariants} is equal to $0$, it suffices to check that the ratio of $\frac{\# H^1 (F_S / K ,C)}{ \# H^2 (F_S / K ,C)}$ to $\frac{\# \mathrm{Loc}^{\mathrm{str},1}_{C} (K)}{ \# \mathrm{Loc}^{\mathrm{str},2}_{C} (K)}$ is bounded when $K$ runs through  
the set of finite  extensions of $F$ contained in $F_{\max}$. Take such a finite extension $K/F$. Then the global Euler--Poincar\'{e} characteristic formula of the Galois cohomology \cite[(8.7.4)]{NSW} implies   
\begin{equation}\label{equation:local_behaviour1}
\frac{\# H^1 (F_S / K ,C)}{\# H^0 (F_S / K ,C) \# H^2 (F_S / K ,C)} = 
\underset{\substack{v:\text{finite place of $K$} \\  v \vert \infty}}{\prod}  \# H^0 (K_v , C) = \# C ^{\frac{[K: \mathbb{Q}]}{2}}. 
\end{equation}
Meanwhile the local Euler--Poincar\'{e} characteristic formula of the Galois cohomology \cite[Theorem (7.3.1)]{NSW} implies
\begin{align}
\frac{\# \mathrm{Loc}^{\mathrm{str},1}_{C} (K)}{\# \mathrm{Loc}^{\mathrm{str},0}_{C} (K) \# \mathrm{Loc}^{\mathrm{str},2}_{C} (K)} &= 
\underset{\substack{v\in \Sigma^c_{K,p}}}{\prod}   \frac{\#  H^1 (K_{v},C)}
{\#  H^0 (K_v ,C) \#  H^2 (K_v,C)}  \nonumber \\
&= \underset{\substack{v\in \Sigma^c_{K,p}}}{\prod} \# C^{[K_{v}:\mathbb{Q}_p]} 
= \# C ^{\frac{[K: \mathbb{Q}]}{2}}. \label{equation:local_behaviour2a} 
\end{align}

Compairing \eqref{equation:local_behaviour1} with \eqref{equation:local_behaviour2a}, we have
\begin{align*}
\frac{1}{\# C}\leq \frac{\# H^1 (F_S / K ,C)}{ \# H^2 (F_S / K ,C)} \left(\frac{\# \mathrm{Loc}^{\mathrm{str},1}_{C} (K)}{ \# \mathrm{Loc}^{\mathrm{str},2}_{C} (K)}\right)^{-1} \!\!  =\dfrac{\# \mathrm{Loc}^{\mathrm{str},0}_C(K)}{\# H^0(F_S/K,C)} \leq \# C^{\# S_{\max}},
\end{align*}
which implies that the ratio under consideration is bounded with respect to $K$, as desired. \qedhere

\end{component}
\end{proof}

  \subsection{Integrality of the $p$-adic Artin $L$-function} \label{ssc:integrality}

Recall that we have defined $L_{p,\Sigma_F}(M(\rho ))$ by using a Brauer decomposition \eqref{equation:brauer_induction} as in \eqref{equation:katz3}. 

\begin{pro} \label{prop:decomp_char}
The characteristic ideal of the Pontrjagin dual of the Selmer group $\mathrm{Sel}_{A,\Sigma_F}(F_{\max})$ admits a decomposition
\begin{align}\label{equation:Selmer_by_Brauer_induction}
\mathrm{char}_{\mathcal{O}[[\Gamma_{F,\,\max}]]}\mathrm{Sel}_{A,\Sigma_F}(F_{\max})^\vee  
=
\prod^s_{j=1}  \left( 
\mathrm{char}_{\mathcal{O}[[\mathrm{Gal} (F_j F_{\max} /F_j)]]}\mathrm{Sel}_{A_{\psi_j},\Sigma_{F_j}}(F_j F_{\max})^\vee 
\right)^{a_j} 
\end{align}
corresponding to \eqref{equation:katz3}. Here the product of the characteristic ideals appearing in the right-hand side of \eqref{equation:Selmer_by_Brauer_induction} is taken in the field of fractions of $\mathcal{O}[[\Gamma_{F,\,\max}]]$ via natural inclusions $\mathcal{O}[[\mathrm{Gal}(F_jF_{\max}/F_j)]]\hookrightarrow \mathcal{O}[[\Gamma_{F,\,\max}]]$. 
\end{pro}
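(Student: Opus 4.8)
The plan is to derive \eqref{equation:Selmer_by_Brauer_induction} from three ingredients: Shapiro's lemma for the $\Sigma_{F,p}$-ramified Selmer groups (to handle induced representations), additivity of characteristic ideals along direct sums of discrete Galois modules, and the lattice-independence of the characteristic ideal of the Selmer group, which is what lets one upgrade the \emph{virtual} Brauer identity \eqref{equation:brauer_induction} to a genuine isomorphism of $G_F$-representations. Throughout I would use Proposition~\ref{prop:Sel_cotorsion} to know that every Pontrjagin dual in sight is finitely generated torsion over the pertinent Iwasawa algebra (so all the characteristic ideals are defined), and Lemma~\ref{lemma:Sel=selstr} to pass between $\mathrm{Sel}$ and $\mathrm{Sel}^{\mathrm{str}}$ whenever that is convenient.

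First I would dispose of a single induced module. For each $j$ put $M_j=\mathrm{Ind}^{G_F}_{G_{F_j}}\psi_j^{\mathrm{gal}}$, take the $G_F$-stable lattice in its underlying space induced from the standard lattice $T_{\psi_j}$ of $V_{\psi_j}$, and let $A_{M_j}$ be the associated discrete module. Since $F_j\subseteq F_\rho$ is unramified above $p$ while $F_{\max}/F$ is unramified outside $p$, one has the translation isomorphism $\mathrm{Gal}(F_jF_{\max}/F_j)\cong\mathrm{Gal}(F_{\max}/F_{\max}\cap F_j)$, an open subgroup of $\Gamma_{F,\,\max}$ of finite index, and Shapiro's lemma \cite[Proposition (1.6.4)]{NSW} applied over $F_{\max}$ identifies $H^i(F_S/F_{\max},A_{M_j})$ with $H^i(F_S/F_jF_{\max},A_{\psi_j})$, compatibly with the local conditions. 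The point to verify is that this carries the $\Sigma_{F,p}$-ramified condition defining $\mathrm{Sel}_{A_{M_j},\Sigma_F}(F_{\max})$ onto the $\Sigma_{F_j,p}$-ramified condition defining $\mathrm{Sel}_{A_{\psi_j},\Sigma_{F_j}}(F_jF_{\max})$: at the places above $p$ this rests on the compatibility of the $p$-ordinary CM type $\Sigma_F$ with the induced $p$-ordinary CM type $\Sigma_{F_j}$, exactly as exploited in the proof of Theorem~\ref{theorem:CMp-adicL_Artin}, and at the remaining places the unramified conditions correspond under the local form of Shapiro's lemma. Granting this, $\mathrm{Sel}_{A_{M_j},\Sigma_F}(F_{\max})^\vee$ is the $\mathcal{O}[[\Gamma_{F,\,\max}]]$-module induced from $\mathrm{Sel}_{A_{\psi_j},\Sigma_{F_j}}(F_jF_{\max})^\vee$ along $\mathcal{O}[[\mathrm{Gal}(F_jF_{\max}/F_j)]]\hookrightarrow\mathcal{O}[[\Gamma_{F,\,\max}]]$, and hence
\[
\mathrm{char}_{\mathcal{O}[[\Gamma_{F,\,\max}]]}\mathrm{Sel}_{A_{M_j},\Sigma_F}(F_{\max})^\vee=\mathrm{char}_{\mathcal{O}[[\mathrm{Gal}(F_jF_{\max}/F_j)]]}\mathrm{Sel}_{A_{\psi_j},\Sigma_{F_j}}(F_jF_{\max})^\vee\cdot\mathcal{O}[[\Gamma_{F,\,\max}]],
\]
by the standard compatibility of characteristic ideals with finite ring extensions.

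Next I would upgrade the virtual decomposition \eqref{equation:brauer_induction}. Transferring the negative terms to the other side gives a genuine isomorphism of $G_F$-representations $V_\rho\oplus\bigoplus_{j:\,a_j<0}M_j^{\oplus|a_j|}\cong\bigoplus_{j:\,a_j>0}M_j^{\oplus a_j}$. I would pick a $G_F$-stable lattice on each side adapted to the respective direct sum, so that the Selmer group of that side splits as the corresponding direct sum; the two lattices need not correspond under the isomorphism, but this does not matter because the characteristic ideal of $\mathrm{Sel}_{\bullet,\Sigma_F}(F_{\max})^\vee$ is independent of the chosen $G_F$-stable lattice. Indeed, its $\varpi$-part is lattice-independent by Lemma~\ref{lemma:mu_lattice}, and its prime-to-$\varpi$ part is lattice-independent because two $G_F$-stable lattices differ by a finite module, so the two associated Selmer groups differ only by subquotients annihilated by a power of $\varpi$, which vanish after localization at any height-one prime other than $(\varpi)$. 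Now expanding both computations of this common characteristic ideal by additivity along direct sums and by the identity of the previous paragraph, and then moving the $a_j<0$ factors back across the equality inside $\mathrm{Frac}(\mathcal{O}[[\Gamma_{F,\,\max}]])$, I would recover precisely \eqref{equation:Selmer_by_Brauer_induction}.

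I expect the main obstacle to be the local bookkeeping in the induction step, namely showing that under the Shapiro isomorphism over $F_{\max}$ the $\Sigma_{F,p}$-ramified condition matches the $\Sigma_{F_j,p}$-ramified one \emph{on the nose}, rather than merely up to a pseudo-null discrepancy; this calls for a careful Mackey-type analysis of the places of $F_j$ lying above each place of $F$ together with the induced CM type, in the same spirit as the local $L$- and $\epsilon$-factor computations in Theorem~\ref{theorem:CMp-adicL_Artin} and the inductive descent arguments of \cite{haraochiai}. A secondary technical point, already isolated above, is that the lattice-independence of the \emph{full} characteristic ideal (not just its $\mu$-part supplied by Lemma~\ref{lemma:mu_lattice}) must be argued via the finiteness of the comparison module between two lattices, so as to control the prime-to-$\varpi$ part as well.
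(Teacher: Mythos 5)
Your proposal is correct and follows essentially the same route as the paper: apply Shapiro's lemma to convert each $\mathrm{Sel}_{A_{\psi_j},\Sigma_{F_j}}(F_jF_{\max})$ into a Selmer group over $F_{\max}$ for the induced module, move the negative $a_j$ terms across the equality to obtain two genuine direct sums of the same underlying $\mathcal{E}$-representation equipped with two different $G_F$-stable lattices, and then invoke lattice-independence of the characteristic ideal, with the prime-to-$\varpi$ part handled by the finiteness of the comparison module and the $\mu$-part handled by Lemma~\ref{lemma:mu_lattice}. Your explicit justification of the prime-to-$\varpi$ lattice-independence fills in a step the paper leaves implicit when it asserts that the equality "holds true up to $\mu$-invariants"; otherwise the two arguments coincide.
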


\begin{proof}
 Taking Shapiro's lemma \cite[Proposition (1.6.4)]{NSW} into accounts, we may rewrite \eqref{equation:Selmer_by_Brauer_induction}  as
\begin{align}\label{equation:Selmer_by_Brauer_induction2}
\mathrm{char}_{\mathcal{O}[[\Gamma_{F,\,\max}]]}\mathrm{Sel}_{A,\Sigma_F}(F_{\max})^\vee 
=  
\prod^s_{j=1} \left( 
\mathrm{char}_{\mathcal{O}[[\Gamma_{F,\,\max}]]}\mathrm{Sel}_{\mathrm{Ind}^{G_F} _{G_{F_j}}A_{\psi_j},\Sigma_F}( F_{\max})^\vee 
\right)^{a_j}. 
\end{align}
Without loss of generality, we may rearrange the subindices $\{j \mid 1\leq j\leq s\}$ so that $a_1 ,\ldots , a_{s'}$ ($s'\leq s$) are positive and $a_{s'+1} ,\ldots , a_{s}$ are negative. Then the equality \eqref{equation:Selmer_by_Brauer_induction2} is rewritten as 
\begin{multline}\label{equation:Selmer_by_Brauer_induction3}
\mathrm{char}_{\mathcal{O}[[\Gamma_{F,\,\max}]]}\mathrm{Sel}_{A \oplus \bigoplus^{s}_{j=s'+1} \mathrm{Ind}^{G_F} _{G_{F_j}}A_{\psi_j}^{\oplus (-a_j)},\Sigma_F}(F_{\max})^\vee
\\ =  
\mathrm{char}_{\mathcal{O}[[\Gamma_{F,\,\max}]]}\mathrm{Sel}_{ \bigoplus^{s'}_{j=1} \mathrm{Ind}^{G_F}_{G_{F_j}}A_{\psi_j}^{\oplus a_j},\Sigma_F}( F_{\max})^\vee. 
\end{multline}
Here recall that the discrete representation $A$ appearing in the left-hand side of \eqref{equation:Selmer_by_Brauer_induction3} is defined for a chosen $G_F$-stable $\mathcal{O}$-lattice $T$ of $V_\rho$, whereas each $\mathrm{Ind}^{G_F}_{G_{F_j}}A_{\psi_j}$ appearing in the both sides of \eqref{equation:Selmer_by_Brauer_induction3} is defined for the $G_F$-stable $\mathcal{O}$-lattice $\mathrm{Ind}^{G_F}_{G_{F_j}}T_{\psi_j}$ of $\mathrm{Ind}^{G_F}_{G_{F_j}}V_{\psi_j}$ induced by a standard lattice  $T_{\psi_j}=\mathcal{O}e_{\psi_j}$ of the $1$-dimensional representation $V_{\psi_j}$. 
Then we firstly observe that \eqref{equation:Selmer_by_Brauer_induction3} holds true up to $\mu$-invariants since the both sides of \eqref{equation:Selmer_by_Brauer_induction3} are characteristic ideals corresponding to the Selmer groups of the {\em same} representation $V_{\rho'}:=\bigoplus_{j=1}^{s'}\mathrm{Ind}^{G_F}_{G_{F_j}}V_{\psi_j}^{\oplus a_j}$ with respect to the {\em different} $G_F$-stable $\mathcal{O}$-lattices 
$
T \oplus \bigoplus^{s}_{j=s'+1} \mathrm{Ind}^{G_F}_{G_{F_j}}T_{\psi_j}^{\oplus (-a_j)}
$
and $
\bigoplus^{s'}_{j=1} \mathrm{Ind}^{G_F}_{G_{F_j}}T_{\psi_j}^{\oplus a_j}
$. 
But Lemma~\ref{lemma:mu_lattice} guarantees that the equality \eqref{equation:Selmer_by_Brauer_induction3} holds true even without the ambiguity of $\mu$-ivvariants. 
We thus obtain \eqref{equation:Selmer_by_Brauer_induction}, as desired.
\end{proof}

\begin{proof}[Proof of Theorem $\ref{theorem:integrality_of_CMp-adicL_Artin}$]
Recall that we assume the validity of Conjecture \ref{conjecture:iwasawamainconjecture} 
\begin{align} \label{eq:IMC_each_j}
 (L_{p,\Sigma_{F_j}}(\psi_j)) = \mathrm{char}_{\widehat{\mathcal{O}}^{\mathrm{ur}} [[\Gamma_{F_j,\, \max}]]} \left(
\mathrm{Sel}_{A_{\psi_j,\Sigma_{F_j}}}(F_{j ,\, \max})^\vee \widehat{\otimes}_{\mathcal{O}} \widehat{\mathcal{O}}^{\mathrm{ur}}\right) 
\end{align}
 for each $j$.
Combining \eqref{eq:IMC_each_j} with the defining equality \eqref{equation:katz3} of $L_{p,\Sigma_F}(M(\rho))$, we have 
\begin{align}\label{equation:equality using L_p(M(rho))}
(L_{p,\Sigma_F}(M(\rho ))) = \prod^s_{j=1} \mathrm{pr}_j 
\left(
\mathrm{char}_{\widehat{\mathcal{O}}^{\mathrm{ur}} [[\Gamma_{F_j,\, \max}]]} 
\left(\mathrm{Sel}_{A_{\psi_j},\Sigma_{F_j}}(F_{j ,\,\max})^\vee \widehat{\otimes}_{\mathcal{O}} \widehat{\mathcal{O}}^{\mathrm{ur}} \right)
\right)^{a_j}.
\end{align}   
Thus, to prove the assertion, it suffices to verify the {\em descent equality}
\begin{multline}\label{equation:selmer_descent}
\mathrm{char}_{\mathcal{O}[[\mathrm{Gal} (F_j F_{\max} /F_j)]]}\mathrm{Sel}_{A_{\psi_j},\Sigma_{F_j}}(F_j F_{\max})^\vee  \\
= 
 \mathrm{char}_{\mathcal{O}[[\mathrm{Gal} (F_{j,\,\max} /F_j)]]} \mathrm{Sel}_{A_{\psi_j},\Sigma_{F_j}}(F_{j ,\,\max})^\vee  \mod\, \mathfrak{A}_{r_j}
\end{multline}
of the characteristic ideal for each $j$, where $\mathfrak{A}_{r_j}$ denotes the kernel of $\mathrm{pr}_j$. 
Indeed we can deduce the main equality \eqref{equation:IMC_for_Artinrep} of Conjecture \ref{conjecture:iwasawamainconjectureartin} combining \eqref{equation:selmer_descent} with Proposition~\ref{prop:decomp_char} and \eqref{equation:equality using L_p(M(rho))}. Furthermore \eqref{equation:selmer_descent} is equivalent to
\begin{multline}\label{equation:selmer_str_descent}
\mathrm{char}_{\mathcal{O}[[\mathrm{Gal} (F_j F_{\max} /F_j)]]}\mathrm{Sel}^{\mathrm{str}}_{A_{\psi_j},\Sigma_{F_j}}(F_j F_{\max})^\vee \\
= 
\mathrm{char}_{\mathcal{O}[[\mathrm{Gal} (F_{j,\,\max} /F_j)]]} 
\mathrm{Sel}^{\mathrm{str}}_{A_{\psi_j},\Sigma_{F_j}}(F_{j ,\,\max})^\vee \mod\, \mathfrak{A}_{r_j}
\end{multline}
by Lemma \ref{lemma:Sel=selstr}, and thus we shall verify \eqref{equation:selmer_str_descent} in the rest of the proof.

In order to study specialisation of strict Selmer groups effectively, we 
use the presentation $\mathrm{Sel}^{\mathrm{str}}_{\mathcal{A}_j,\Sigma_{F}}(F_j)$ of the strict Selmer group introduced in \eqref{eq:str_Sel_def} instead of $\mathrm{Sel}^{\mathrm{str}}_{A_{\psi_j},\Sigma_{F_j}}(F_{j,\,\max})$; recall that $\mathcal{A}_j$ is defined as $\mathcal{T}_j\otimes_{\mathcal{O}[[\Gamma_{F_j,\,\max}]]}\mathcal{O}[[\Gamma_{F_j,\,\max}]]^\vee$ for $\mathcal{T}_j:=T_{\psi_j}\otimes_{\mathcal{O}}\mathcal{O}[[\Gamma_{F_j,\,\max}]]^\sharp$.
By construction, the Galois group $\mathrm{Gal}(F_{j ,\,\max} /F_j F_{\max})$ is a free $\mathbb{Z}_p$-module of rank $r_j$, where $r_j$ is defined as $d( [F_j:F] -1) + \delta_{F_j} -\delta_F$; 
here $\delta_F $ and $\delta_{F_j}$ respectively denote the Leopoldt $p$-defects of $F$ and $F_j$. Note that $\delta_{F_j}\geq \delta_F$ holds because $F_j$ is an extension of $F$. 
Let us take a $\mathbb{Z}_p$-basis $\gamma_1 , \dotsc , \gamma_{r_j}$ of $\mathrm{Gal}(F_{j ,\,\max} /F_j F_{\max})$, and set $x_\nu=\gamma_\nu-1$ for $\nu=1,\dotsc,r_j$. Then $x_1,x_2,\dotsc,x_{r_j}$ forms a regular sequence of $\mathcal{O}[[\Gamma_{F_j,\,\max}]]$ generating the kernel of $\mathrm{pr}_j$, or equivalently, the kernel of the quotient map $\mathcal{O}[[\Gamma_{F_j,\,\max} ]]\twoheadrightarrow \mathcal{O}[[\mathrm{Gal} (F_j F_{\max} /F_j )]]$. For each $\nu=1,\dotsc,r_j$, let $\mathfrak{A}_\nu$ denote the ideal of $\mathcal{O}[[\Gamma_{F_j,\, \max}]]$ generated by $x_1 ,\dotsc, x_\nu$, and write $\mathcal{A}_j[\mathfrak{A}_\nu]$ for the maximal $\mathfrak{A}_\nu$-torsion submodule of $\mathcal{A}_j$.
We can define the strict Selmer group $\mathrm{Sel}^{\mathrm{str}}_{\mathcal{A}_j[\mathfrak{A}_\nu],\Sigma_{F_j}}(F_j)$ for the discrete representation $\mathcal{A}_j[\mathfrak{A}_\nu]$ of $G_{F_j}$. 
In order to verify \eqref{equation:selmer_str_descent}, it suffices to show that there exists a $\mathbb{Z}_p$-basis $\gamma_1,\ldots,\gamma_{r_j}$ of $\mathrm{Gal}(F_{j,\, \max}/F_jF_{\max})$ satisfying the following two conditions for each $\nu=1,2,\ldots,r_j$ (put $\mathfrak{A}_0=(0)$ as convention);
\begin{enumerate}[label={[\Roman*]}]
\item (control theorem) 
both 
the kernel and the cokernel of the map 
\begin{align*}
 s_\nu\colon \mathrm{Sel}^{\mathrm{str}}_{\mathcal{A}_j [\mathfrak{A}_\nu ],\Sigma_{F_j}} (F_j ) \longrightarrow \mathrm{Sel}^{\mathrm{str}}_{\mathcal{A}_j [\mathfrak{A}_{\nu-1} ],\Sigma_{F_j}} (F_j )
[x_\nu]
\end{align*}
induced by the inclusion $\mathcal{A}_j[\mathfrak{A}_{\nu-1}]\hookrightarrow \mathcal{A}_j[\mathfrak{A}_\nu]$ are 
copseudonull (as $\mathcal{O}[[ \Gamma_{F_j,\, \max}]]/\mathfrak{A}_\nu$-modules);
\item 
the Pontrjagin dual 
$\mathrm{Sel}^{\mathrm{str}}_{\mathcal{A}_j [\mathfrak{A}_\nu ],\Sigma_{F_j}} (F_j )^\vee$ of the strict Selmer group has no nontrivial pseudonull $\mathcal{O}[[\Gamma_{F_j,\max} ]]/\mathfrak{A}_\nu$-submodules. 
\end{enumerate}
Indeed, after verifying existence of such tuples $\gamma_1,\ldots,\gamma_{r_j}$, we may deduce 
\begin{multline*}
\mathrm{char}_{\mathcal{O}[[\Gamma_{F_j,\, \max}]]/\mathfrak{A}_{\nu-1}} 
\mathrm{Sel}^{\mathrm{str}}_{\mathcal{A}_j[\mathfrak{A}_{\nu-1}],\Sigma_{F_j}}(F_j)^\vee  \mod\, (x_\nu, x_{\nu+1},\ldots,x_{r_j}) \\
\stackrel{\text{[II]}}{=}\mathrm{char}_{\mathcal{O}[[\Gamma_{F_j,\, \max}]]/\mathfrak{A}_{\nu}} \mathrm{Sel}^{\mathrm{str}}_{\mathcal{A}_j[\mathfrak{A}_{\nu-1}],\Sigma_{F_j}}(F_j)[x_\nu]^\vee \mod\, (x_{\nu+1},\ldots, x_{r_j}) \\
\stackrel{\text{[I]}}{=}\mathrm{char}_{\mathcal{O}[[\Gamma_{F_j,\, \max}]]/\mathfrak{A}_{\nu}} \mathrm{Sel}^{\mathrm{str}}_{\mathcal{A}_j[\mathfrak{A}_\nu]}(F_j)^\vee \mod\, (x_{\nu+1},\ldots, x_{r_j})
\end{multline*}
for each $\nu=1,2,\ldots, r_j$  by inductively applying \cite[Lemma~3.1]{ochiai2005}; see also the explanation given in \cite[Section 3.4.5]{haraochiai}. The desired equality \eqref{equation:selmer_str_descent} is just a combination of all these equalities; recall that $\mathrm{Sel}^{\mathrm{str}}_{\mathcal{A}_j[\mathfrak{A}_{r_j}],\Sigma_{F_j}}(F_j)$ is isomorphic to $\mathrm{Sel}^{\mathrm{str}}_{A_{\psi_j}, \Sigma_{F_j} }(F_j F_{\max})$. 

\begin{component}[$\blacktriangleright$ {\rm [I]} for every regular sequence]
 Here we shall show that the assertion [I] holds true for  {\em every} regular sequence $x_1,\dotsc,x_{r_j}\in \mathcal{O}[[\Gamma_{F_j,\max} ]]$ of the form $x_\nu =\gamma_\nu -1$ ($\nu=1,2,\ldots,r_j$) generating the kernel $\mathfrak{A}_{r_j}$ of $\mathrm{pr}_j$. Let us consider the commutative diagram
\begin{align*}
\xymatrix{H^1(F_{j,\, S_{F_j}},\mathcal{A}_j[\mathfrak{A}_\nu]) \ar[rrr]^-{\mathrm{loc}_{\mathcal{A}_j[\mathfrak{A}_\nu]}^{\mathrm{str}}} \ar@{->>}[d]_-{t_\nu} &&& \mathrm{Loc}_{\mathcal{A}_j[\mathfrak{A}_\nu],\Sigma_{F_j}}^{\mathrm{str},1}(F_j) \ar@{->>}[d]^-{u_\nu} \\ 
H^1 (F_{j, S_{F_j}}/F_j , \mathcal{A}_j[\mathfrak{A}_{\nu-1}] )[x_\nu] \ar[rrr]^-{\mathrm{loc}_{\mathcal{A}_j[\mathfrak{A}_{\nu-1}]}^{\mathrm{str}}} &&& \mathrm{Loc}_{\mathcal{A}_j[\mathfrak{A}_{\nu-1}],\Sigma_{F_j}}^{\mathrm{str},1}(F_j)[x_\nu]} 
\end{align*}
where $\mathrm{Loc}^{\mathrm{str},1}_{\mathcal{A}_j [\mathfrak{A}_\nu ]} (F_j )$ denotes $\prod_{w\in S_{F_j}\setminus \Sigma_{F_j,p},\; w\nmid \infty} H^1(F_{j,\, w}, \mathcal{A}_j[\mathfrak{A}_\nu])$ as in Section~\ref{ssc:alg_pre}. Recall from \eqref{eq:str_Sel} that $\mathrm{Sel}_{\mathcal{A}_j[\mathfrak{A}_\nu],\Sigma_{F_j}}^{\mathrm{str}}(F_j)$ is defined as the kernel of $\mathrm{loc}_{\mathcal{A}_j[\mathfrak{A}_\nu]}^{\mathrm{str}}$. Both $t_\nu$ and $u_\nu$ are induced by the the short exact sequence 
\begin{align} \label{eq:xnu_ses}
 \xymatrix{0\ar[r] & \mathcal{A}_j[\mathfrak{A}_\nu] \ar[r] & \mathcal{A}_j[\mathfrak{A}_{\nu-1}]\ar[r]^-{\times x_\nu} & \mathcal{A}_j[\mathfrak{A}_{\nu-1}] \ar[r] & 0},
\end{align} 
and $s_\nu$ is just a morphism induced by $t_\nu$. In order to verify the control theorem [I], it suffices to show that both $\mathrm{Ker}\, t_\nu$ and $\mathrm{Ker}\, u_\nu$ are copseudonull $\mathcal{O}[[\Gamma_{F_j,\max}]]/\mathfrak{A}_{\nu}$-modules because the snake lemma implies that $\mathrm{Ker}\, s_\nu$ is a submodule of $\mathrm{Ker}\, t_\nu$ and $\mathrm{Coker}\, s_\nu$ is a subquotient of $\mathrm{Ker}\, u_\nu$. 

Firstly, the Pontrjagin dual $(\mathrm{Ker}\, t_\nu)^\vee$ of the kernel of $t_\nu$ is described as 
\begin{align*}
  \left(H^0( F_{j,S_{F_j}}/F_j,\, \mathcal{A}_j[\mathfrak{A}_{\nu-1}])/x_{\nu}\right)^\vee \cong H_0(F_{j,S_{F_j}}/F_j,\, \mathcal{T}_j/\mathfrak{A}_{\nu-1}\mathcal{T}_j)[x_\nu]
\end{align*}
by the long exact sequence associated with \eqref{eq:xnu_ses} and the Pontrjagin duality. Now take a sufficiently small open subgroup $U$ of $\mathrm{Gal} (F_{j, S_{F_j}} /F_j) $ so that it acts trivially on 
$T_{\psi_j}$, and let $F_U$ be an intermediate field of $F_{j,\max}/F_j$ corresponding to $U/\bigl(\mathrm{Gal}(F_{j,S_{F_j}}/F_{j,\, \max})\cap U\bigr)$. Then $H_0(F_{j,S_{F_j}}/F_j,\mathcal{T}_j/\mathfrak{A}_{\nu-1}\mathcal{T}_j)[x_\nu]$ is, by definition, realised as a subquotient of a finitely generated 
$\mathcal{O}$-module $T_{\psi_j} \otimes_{\mathcal{O}} \mathcal{O}[\mathrm{Gal} (F_U /F_j) ]^\sharp $. This observation implies that $(\mathrm{Ker}\, t_\nu)^\vee$ is $\mathcal{O}[[\Gamma_{F_j,\, \max}]]/\mathfrak{A}_{\nu}$-pseudonull since the Krull dimension of $\mathcal{O}[[\Gamma_{F_j,\,\max} ]]/\mathfrak{A}_{\nu}$ 
is greater than or equal to $2$. 
 
Next, the Pontrjagin dual $(\mathrm{Ker}\, u_\nu)^\vee$ of the kernel of $u_\nu$ is described as 
\begin{align}\label{equation:cokernel:controltheorem}
\prod_{\substack{w\in S_{F_j}\setminus \Sigma_{F_j,p} \\ w\nmid \infty}} \left( H^0(F_{j,w}, \mathcal{A}_j[\mathfrak{A}_{\nu-1}])/x_\nu\right)^\vee \cong \prod_{\substack{w\in S_{F_j}\setminus \Sigma_{F_j,p} \\ w\nmid \infty}} H_0(F_{j,w}, \mathcal{T}_j/\mathfrak{A}_{\nu-1}\mathcal{T}_j)[x_\nu]
\end{align}
in the same way as $(\mathrm{Ker}\, t_\nu)^\vee$. We claim that the $w$-component of \eqref{equation:cokernel:controltheorem} is trivial if $w$ is not contained in $\Sigma_{F_j,p}^c$. Note that $\mathrm{Frob}_w$ forms part of a $\mathbb{Z}_p$-basis of $\mathrm{Gal}(F_jF_{\max}/F_j)$ since $F_jF_{\max}/F_j$ is unramified at $w$. In particular $x_1,x_2,\ldots,x_{\nu}, x_w^F$ forms a regular sequence of $\mathcal{O}[[\Gamma_{F_j.\,\max}]]$ for every $\nu=1,2,\ldots, r_j$ if we set $x_w^F:=\mathrm{Frob}_w-1$. Now let $x_w^I\mathcal{O}$ denote the ideal of $\mathcal{O}$ generated by $\psi_j^{\mathrm{gal}}(g)-1$ for all $g\in I_w$. If $x_w^I$ is a unit, the $I_w$-coinvariant of $\mathcal{T}_j/\mathfrak{A}_{\nu-1}\mathcal{T}_j$ vanishes and so does $H_0(F_{j,w},\mathcal{T}_j/\mathfrak{A}_{\nu-1}\mathcal{T}_j)[x_\nu]$. Otherwise we readily observe that $x_1,x_2,\ldots, x_\nu, x_w^F, x_w^I$ forms a regular sequence of $\mathcal{O}[[\Gamma_{F_j,\, \max}]]$ contained in its maximal ideal, and thus $x_1, \ldots,x_{\nu-1}, x_w^I, x_w^F,x_\nu$ is also a regular sequence. Then, since $H_0(F_{j,w},\mathcal{T}_j/\mathfrak{A}_{\nu}\mathcal{T}_{j})$ is isomorphic to $\mathcal{O}[[\Gamma_{F_j,\max}]]/(x_1,\ldots,x_{\nu-1},x_w^I,x_w^F)$ as an $\mathcal{O}[[\Gamma_{F_j,\, \max}]]$-module, we conclude that its $x_\nu$-torsion part should vanish, due to regularity of the sequence $x_1,\ldots,x_{\nu-1}, x_w^I,x_w^F, x_\nu$. 
 
To verify 
the pseudonullity of $(\mathrm{Ker}\, u_\nu)^\vee$, we finally prove that $H_0(F_{j,w},\mathcal{T}_j/\mathfrak{A}_{\nu-1}\mathcal{T}_j)[x_\nu]$ is $\mathcal{O}[[\Gamma_{F_j,\, \max}]]/\mathfrak{A}_\nu$-pseudonull for each $w\in \Sigma_{F_j,p}^c$. Let $F_{j,\max}^{(\nu)}$ denotes the subfield of $F_{j,\max}$ corresponding to $\langle \gamma_1,\gamma_2,\ldots, \gamma_\nu\rangle$. Recall that, for the unique place $v$ of $F$ lying below $w$, the decomposition subgroup of $\Gamma_{F,\, \max}$ at $v$ has $\mathbb{Z}_p$-rank equal to or greater than $2$ by \cite[Lemma~3.1 (i)]{BCGKST}. This readily implies that the $\mathbb{Z}_p$-rank of the decomposition subgroup of $\mathrm{Gal}(F_{j,\max}^{(\nu)}/F_j)$ at $w$ is also equal to or greater than $2$, which acts trivially on the $D_w$-coinvariant $H_0(F_{j,w},\mathcal{T}_j/\mathfrak{A}_{\nu}\mathcal{T}_j)$ of $\mathcal{T}_j/\mathfrak{A}_\nu\mathcal{T}_j$. Therefore $H_0(F_{j,w}, \mathcal{T}_j/\mathfrak{A}_\nu \mathcal{T}_j)$ is $\mathcal{O}[[\mathrm{Gal}(F_{j,\max}/F_j)]]/\mathfrak{A}_\nu$-pseudonull for each $\nu=0,1,\ldots,r_j$. Then, by \cite[Lemma 3.1]{ochiai2005}, we have 
\begin{multline*}
 \mathrm{char}_{\mathcal{O}[[\Gamma_{F_j,\, \max}]]/\mathfrak{A}_\nu} \bigl(H_0(F_{j,w}, \mathcal{T}_j/\mathfrak{A}_{\nu-1}\mathcal{T}_j)[x_\nu]\bigr) \\
=\mathrm{char}_{\mathcal{O}[[\Gamma_{F_j,\, \max}]]/\mathfrak{A}_\nu} \bigl(H_0(F_{j,w}, \mathcal{T}_j/\mathfrak{A}_{\nu-1}\mathcal{T}_j)/x_\nu H_0(F_{j,w}, \mathcal{T}_j/\mathfrak{A}_{\nu-1}\mathcal{T}_j)\bigr) \\
=\mathrm{char}_{\mathcal{O}[[\Gamma_{F_j,\, \max}]]/\mathfrak{A}_\nu} \bigl(H_0(F_{j,w}, \mathcal{T}_j/\mathfrak{A}_\nu\mathcal{T}_j)\bigr)=0.
\end{multline*}
Thus $H_0(F_{j,w},\mathcal{T}_j/\mathfrak{A}_{\nu-1}\mathcal{T}_j)[x_\nu]$ is $\mathcal{O}[[\Gamma_{F_j,\, \max}]]/\mathfrak{A}_\nu$-pseudonull, as desired. 
\end{component}

\begin{component}[$\blacktriangleright$ Inductive choice of $\gamma_\nu$ satisfying {\em [II]}] 
Let us discuss how to choose a $\mathbb{Z}_p$-basis $\gamma_1,\gamma_2,\ldots,\gamma_{r_j}$ of $\mathrm{Gal}(F_{j,w}/F_jF_{\max})$ so that the resulting sequence $x_1,x_2,\ldots, x_{r_j}$ satisfies [II] (and also [I]). We here follow the argument of \cite[Section~3.4.5]{haraochiai}. 
It suffices to choose $\gamma_1,\gamma_2,\ldots,\gamma_{r_j}$  so that both the  conditions $(\Gamma 1)_\nu$ and $(\Gamma 2)_\nu$ introduced in \cite[p.68]{haraochiai} are fulfilled for each $\nu=1,2,\ldots, r_j$; indeed we have verified in \cite[Proposition 3.35]{haraochiai} that the conditions $(\Gamma 1)_\nu$ and $(\Gamma 2)_\nu$ for each $\nu$ imply the assertion [II], the proof of which is heavily based upon \cite[Theorem~4.1.1]{Greenberg16}. 
But if a part of $\mathbb{Z}_p$-basis  $\gamma_1 , \gamma_2,\ldots, \gamma_{\nu-1}$ of $\mathrm{Gal}(F_{j,\max}/F_jF_{\max})$ satisfies the conditions $(\Gamma 1)_1$ to $(\Gamma 1)_{\nu-1}$ 
(resp.\ the conditions $(\Gamma 2)_1$ to $(\Gamma 2)_{\nu-1}$), any $\gamma_1,\gamma_2,\ldots,\gamma_\nu$  forming a part of $\mathbb{Z}_p$-basis of $\mathrm{Gal}(F_{j,\max}/F_jF_{\max})$ satisfies the condition $(\Gamma 1)_{\nu}$ (resp.\ $(\Gamma 2)_\nu$) except for finitely many choices of $\gamma_{\nu}$ by \cite[Lemma 4.1.1 and Remark 2.1.3]{gr-coh} (resp.\ \cite[Proposition 3.37]{haraochiai}). Since there are  infinitely many choices for $\mathbb{Z}_p$-bases of $\mathrm{Gal}(F_{j ,\,\max} /F_j F_{\max})$, we can find 
$\gamma_1,\gamma_2,\ldots,\gamma_{r_j}$ so that $x_1,x_2,\ldots,x_{r_j}$ forms a regular sequence of $\mathcal{O}[[\mathrm{Gal} (F_{j ,\,\max} /F_j) ]]$ satisfying both the conditions [I] and [II]. 
This completes the proof of Theorem \ref{theorem:integrality_of_CMp-adicL_Artin}. \qedhere

\end{component}
\end{proof}

\begin{rem}
 In the final step of the specialisation procedures, we have used the assumption on the nontriviality of the cyclotomic $p$-adic $L$-function $\mathcal{L}^\mathrm{cyc}_p(f_\eta)$ in \cite[Section 3.4.6]{haraochiai}. However, the nontriviality of the corresponding $p$-adic $L$-function $L_{p,\Sigma_{F_j}}(\psi_j) \mod\mathfrak{A}_{r_j}$ follows from the interpolation formula \eqref{eq:p-adic_Hecke}, and thus we do not need any assumption like (NV${}_{\mathcal{L}_p^\mathrm{cyc}(f_\eta)}$).
\end{rem}

\appendix
\section{Davenport--Hasse relation over rings of truncated Witt vectors} \label{app:DH}
In the appendix, we extend so-called Davenport--Hasse relation of Gauss sums over finite fields \cite[(0.8)]{DH} to rings of truncated Witt vectors. For a finite field $\mathbb{F}$ and a positive integer $n$, let $W_n(\mathbb{F})$ denote the ring of $n$-truncated Witt vectors of $\mathbb{F}$. Then, for a finite extension $\mathbb{E}/\mathbb{F}$, the norm map $\mathrm{Nr}_{\mathbb{E}/\mathbb{F}}\colon \mathbb{E}^\times \rightarrow \mathbb{F}^\times$ and  the trace map $\mathrm{Tr}_{\mathbb{E}/\mathbb{F}}\colon \mathbb{E} \rightarrow \mathbb{F}$ respectively induce $\mathrm{Nr}_{\mathbb{E}/\mathbb{F}}\colon W_n(\mathbb{E})^\times \rightarrow W_n(\mathbb{F})^\times$ and  $\mathrm{Tr}_{\mathbb{E}/\mathbb{F}}\colon W_n(\mathbb{E}) \rightarrow W_n(\mathbb{F})$ in a functorial way. Hereafter we always identify $W_n(\mathbb{F}_p)$ with $\mathbb{Z}/p^n\mathbb{Z}$ via the canonical isomorphism between them.

 We  
fix a finite field $\mathbb{F}_q$ of characteristic $p$ and consider  its extension $\mathbb{F}_{q^s}$ of degree $s$ 
for a natural number $s$. Let $\chi \colon W_n (\mathbb{F}_q )^\times \rightarrow \mathbb{C}^\times$ be 
a multiplicative character. 
For each $n\in \mathbb{N}$, we fix a standard additive character $\psi_n^\circ \colon W_n ( \mathbb{F}_p ) \longrightarrow \mathbb{C}^\times$ sending $x \in W_n ( \mathbb{F}_p )= \mathbb{Z}/p^n\mathbb{Z}$ to $\zeta^x_{p^n}=\exp(2\pi i/p^n)$, a primitive $p^n$-th root of unity. For each  $\nu\in \mathbb{N}$, we define a multiplicative character  $\chi_{\mathbb{F}_{q^\nu}}\colon W_n ( \mathbb{F}_{q^\nu} )^\times \rightarrow \mathbb{C}^\times $ 
to be $\chi \circ \mathrm{Nr}_{\mathbb{F}_{q^\nu}/\mathbb{F}_q}$. 

Now we define the {\em Gauss sum} $\tau (\chi) $ over $W_n(\mathbb{F}_q)$ to be 
\begin{equation}\label{equation:definition-of-gausssum}
\tau (\chi) = \sum_{x \in W_n (\mathbb{F}_q )} \chi (x) \psi_{n,\mathbb{F}_q} (x), 
\end{equation}
where we extend $\chi$ to whole $W_n (\mathbb{F}_q )$ by setting $\chi (x) =0$ if $x\in W_n (\mathbb{F}_q )$ is not invertible. We also define the Gauss sum $\tau(\chi_{\mathbb{F}_{q^s}})$ over $W_n(\mathbb{F}_{q^s})$ in a similar manner replacing $\chi$ and $\psi_n$ with $\chi_{\mathbb{F}_{q^s}}$ and $\psi_{n,\mathbb{F}_{q^s}}$, respectively. The claim which we shall verify in the appendix is the following.

\begin{thm}\label{theorem:D-H_relation_overW_n}
Let the notation be as above. Then we have the following equality of Gauss sums$:$ 
\begin{align} \label{eq:DH_p^n}
\tau (\chi_{\mathbb{F}_{q^s}})  = (-1)^{n(s-1)} \tau (\chi )^s. 
\end{align}
\end{thm}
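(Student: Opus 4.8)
The plan is to mimic the classical proof of the Davenport--Hasse relation via Hasse--Davenport's ``lifted sum'' or, more conveniently, via an argument with $L$-functions of characters over the ring of truncated Witt vectors, but since we only want an identity of complex numbers the cleanest route is the one due to Hasse and Davenport themselves, rephrased in terms of the Weil group / Galois side so that all the structure is transparent. Concretely, I would first reduce to the case $s$ prime: if $s=s_1 s_2$, then writing $\mathbb{F}_{q^s}=(\mathbb{F}_{q^{s_1}})^{s_2}$ and using $\chi_{\mathbb{F}_{q^s}}=(\chi_{\mathbb{F}_{q^{s_1}}})_{\mathbb{F}_{q^s}}$ together with the transitivity $\psi_{n,\mathbb{F}_{q^s}}=\psi_{n,\mathbb{F}_{q^{s_1}}}\circ\mathrm{Tr}$ (which holds because $\kappa\in\mathbb{F}_q^\times$), the general case follows from the prime case by a straightforward induction once one checks that the sign $(-1)^{n(s-1)}$ is multiplicative in the appropriate sense modulo $2$; here one uses that $(s-1)\equiv (s_1-1)s_2+(s_2-1)\pmod 2$. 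So the heart of the matter is $s=\ell$ prime.

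For $s=\ell$ prime I would interpret both sides as local $\varepsilon$-factors (up to the unramified normalising constants) and invoke inductivity of the $\varepsilon$-factor in degree $0$: the character $\chi\colon W_n(\mathbb{F}_q)^\times\to\mathbb{C}^\times$ corresponds, via local class field theory for the local field with residue field $\mathbb{F}_q$ whose ring of integers mod its $n$-th power is $W_n(\mathbb{F}_q)$ (i.e.\ $\mathbb{Q}_q$ when $n\le$ the absolute ramification, but here everything is absolutely unramified so this is literally $W_n(\mathbb{F}_q)$ being $\mathfrak{r}/\mathfrak{p}^n$), to a character $\widetilde\chi$ of the Weil group with conductor exponent $n$, and $\tau(\chi)$ is up to an explicit power of $q$ and a power of $\widetilde\chi(\varpi)$ exactly Deligne's $\varepsilon(\widetilde\chi,\psi,dx)$. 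Under $\mathbb{F}_q\subset\mathbb{F}_{q^\ell}$ the character $\chi_{\mathbb{F}_{q^\ell}}$ corresponds to the restriction $\widetilde\chi|_{W_{E}}$ where $E/F$ is the unramified extension of degree $\ell$, and the induction formula $\varepsilon(\mathrm{Ind}_{W_E}^{W_F}\widetilde\chi|_{W_E},\psi,dx)=\varepsilon(\widetilde\chi\otimes\mathrm{Ind}_{W_E}^{W_F}\mathbf{1},\psi,dx)=\varepsilon(\widetilde\chi,\psi,dx)\cdot\prod_{j}\varepsilon(\widetilde\chi\cdot\mu^j,\psi,dx)$ over the characters $\mu^j$ of $\mathrm{Gal}(E/F)$ lets one relate $\tau(\chi_{\mathbb{F}_{q^\ell}})$ to $\tau(\chi)^\ell$ — the point being that $\widetilde\chi\cdot\mu^j$ and $\widetilde\chi$ have the same Gauss sum because $\mu^j$ is unramified and, crucially, has trivial value on the uniformiser only after the compensating shift, which is exactly where the sign $(-1)^{n(\ell-1)}$ comes from (it is $\mu^j(\varpi)$ raised to the conductor $n$, summed/multiplied over $j=1,\dots,\ell-1$, and $\prod_j\mu^j(\varpi)=(-1)^{\ell-1}$ when $\ell$ is... no: $\mu(\varpi)$ is a primitive $\ell$-th root of unity, $\prod_{j=1}^{\ell-1}\mu(\varpi)^{jn}=\mu(\varpi)^{n\ell(\ell-1)/2}$, which equals $(-1)^{n(\ell-1)}$ since $\mu(\varpi)^\ell=1$ forces $\mu(\varpi)^{\ell(\ell-1)/2}=(-1)^{\ell-1}$ for $\ell$ odd and one checks $\ell=2$ separately). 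I would package this as a direct elementary computation rather than quoting $\varepsilon$-factor machinery, to keep the appendix self-contained.

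Thus the concrete elementary skeleton I would actually write is: (1) expand $\tau(\chi_{\mathbb{F}_{q^s}})=\sum_{x\in W_n(\mathbb{F}_{q^s})}\chi(\mathrm{Nr}(x))\,\psi_n^\circ(\mathrm{Tr}(\kappa x))$; (2) stratify the sum by the ``level'' of $x$, i.e.\ by the largest $W_n(\mathbb{F}_{q^s})$-ideal $\mathfrak{p}^k$ containing $x$, equivalently by Teichmüller-type coordinates, reducing to sums over $W_m(\mathbb{F}_{q^s})^\times$ for $m\le n$; (3) on the top stratum use the Hasse--Davenport product-formula trick: relate $\sum_{x\in W_n(\mathbb{F}_{q^s})^\times}$ to a product over the $s$ ``conjugates'' by expressing the norm and trace in terms of a normal basis / the Frobenius action, which turns the single Gauss sum over the extension into (up to sign) the $s$-th power of the Gauss sum over $\mathbb{F}_q$; (4) bookkeep the sign: each passage from ``sum over extension'' to ``$s$-th power over the base'' at Witt-length $n$ contributes a factor $(-1)^{s-1}$ per... actually the total is $(-1)^{n(s-1)}$, and I would verify this by first doing $n=1$ (the classical Davenport--Hasse relation, which is \cite[(0.8)]{DH} and gives the factor $(-1)^{s-1}$), then doing the inductive step in $n$ where the stratum of length $<n$ reproduces the identity for $W_{n-1}$ with sign $(-1)^{(n-1)(s-1)}$ and the top stratum contributes the extra $(-1)^{s-1}$. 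The main obstacle is step (3)/(4): making the ``conjugate decomposition'' of the Gauss sum over $W_n$ rigorous, because unlike the field case there is no clean multiplicativity of $W_n(\mathbb{F}_{q^s})$ as a product of copies of $W_n(\mathbb{F}_q)$ (it is not a product ring), so one must argue instead with the filtration by powers of $p$ (or of a uniformiser) and handle the mixed terms; I expect the cleanest fix is to run the induction on $n$ and reduce everything to the $n=1$ field case plus a careful analysis of $\sum_{x\in p^{n-1}W_n/p^nW_n}$, which is an additive (not multiplicative) Gauss-sum computation over $\mathbb{F}_{q^s}$ and is handled by orthogonality of additive characters. I would therefore structure the appendix as: Lemma (stratification of $\tau$ over $W_n$ in terms of $\tau$ over $W_{n-1}$ and an additive sum), then the induction on $n$ with base case $n=1$ citing \cite{DH}, then the reduction of general $s$ to prime $s$, then the sign bookkeeping.
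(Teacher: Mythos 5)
Your first sketch --- interpreting $\tau(\chi)$ as a local constant and using $\mathrm{Ind}_{W_E}^{W_F}(\chi\circ\mathrm{Nr})\cong\bigoplus_{j}\chi\mu^j$, inductivity of $\epsilon$-factors and the unramified-twist formula, with the sign arising as $\prod_{j=0}^{s-1}\mu(\varpi)^{jn}=(-1)^{n(s-1)}$ --- is a legitimate alternative route, genuinely different from the paper's (which evaluates both Gauss sums explicitly \`a la Lamprecht--Mauclaire and compares them). But you then discard it in favour of an ``elementary skeleton'', and that skeleton has a real gap. Your plan is to induct on the Witt length $n$, peeling off one level at a time, with ``the stratum of length $<n$ reproducing the identity for $W_{n-1}$'' and the top stratum ``handled by orthogonality of additive characters'' and ``contributing the extra $(-1)^{s-1}$''. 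Neither half of this works as stated. First, summing over the top layer $1+p^{n-1}W_n$ does not reduce $\tau$ over $W_n$ to $\tau$ over $W_{n-1}$: writing $x=\widetilde{y}(1+p^{n-1}\widetilde{t})$, the inner sum over $t$ is $\sum_t\psi_{1}((\overline{\kappa y}-c_\chi)t)$, which by orthogonality kills all $y$ outside a prescribed congruence class mod $p$; what remains is a sum over a \emph{coset}, not the Gauss sum of a character of $W_{n-1}(\mathbb{F})^\times$, so the induction hypothesis does not apply. Second, orthogonality of additive characters only ever produces $0$ or a power of $q$ --- it cannot produce the sign $(-1)^{s-1}$, so your sign bookkeeping has no mechanism behind it.

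The missing idea is to split the unit group at the \emph{half} level rather than one level at a time: for $n=2r$ write $W_n(\mathbb{F})^\times$ as $\{\widetilde{z}(1+p^r\widetilde{w})\}$, observe that $x\mapsto\chi(1+p^r\widetilde{x})$ is an additive character $\psi_{r,\mathbb{F}_q}(-\varepsilon_\chi x)$ by perfectness of the trace pairing, and let orthogonality in $w$ localise the whole sum at the single point $z=\varepsilon_\chi$; this evaluates $\tau(\chi)$ in closed form and makes $\tau(\chi_{\mathbb{F}_{q^s}})=\tau(\chi)^s$ immediate (consistent with the sign being $+1$ for $n$ even). For $n=2r+1$ the same procedure leaves a residual sum $\sum_{\delta\in\mathbb{F}_{q^s}}\chi(1+p^r[\delta])\psi_{r+1}(\widetilde{\varepsilon}_\chi[\delta])$ over the residue field, which is \emph{not} an orthogonality sum but a quadratic Gauss sum (completing the square via $[\delta_1]+[\delta_2]-[\delta_1+\delta_2]=p\epsilon_{\delta_1,\delta_2}$); the sign $(-1)^{s-1}$, hence $(-1)^{n(s-1)}$, comes from applying the classical $n=1$ Davenport--Hasse relation to the associated quadratic character, with a separate explicit evaluation of Lamprecht's partial sums when $p=2$. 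Your reduction of general $s$ to prime $s$ via $s_1s_2-1=(s_1-1)s_2+(s_2-1)$ is fine but unnecessary once the computation is organised this way.
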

Theorem \ref{theorem:D-H_relation_overW_n} is an extension of the classical theorem due to Davenport and Hasse \cite[(0.8)]{DH} in the sense that the special case $n=1$ of Theorem \ref{theorem:D-H_relation_overW_n} recovers their original statement. The first-named author leaned from Mahiro Atsuta that Daichi Takeuchi has also verified Theorem~\ref{theorem:D-H_relation_overW_n} independently when the base field is a prime field (that is, the case where $q$ equals $p$); see \cite[Appendix B]{ADK}.

\medskip
We here summarise basic ingredients which we use throughout the appendix.

\begin{itemize}[leftmargin=2em]
 \item[--] Let $\mathbb{F}$ be a finite extension of $\mathbb{F}_p$. For each $r=1,\ldots, n-1$, we obtain a short exact sequence
\begin{align}\label{eq:decomp_Wn_ses}
 \begin{tikzcd}[ampersand replacement=\&, contains/.style = {draw=none,"\in" description,sloped}]
1 \ar[r] \& 1+ p^{n-r}W_n(\mathbb{F}) \ar[r] \& W_n(\mathbb{F})^\times  \ar[r, "\; \mathrm{pr}^n_{n-r}\; "] \& W_{n-r}(\mathbb{F})^{\times}  \ar[r] \& 1 
\end{tikzcd}
\end{align} 
induced from the projection $\mathrm{pr}^n_{n-r}\colon W_n(\mathbb{F})\twoheadrightarrow W_{n-r} (\mathbb{F})$. Moreover there is a bijection $W_r(\mathbb{F})\rightarrow 1+p^{n-r}W_n(\mathbb{F})\,;\, w\mapsto 1+p^{n-r}\widetilde{w}$ for a lift  $\widetilde{w}$ of $w$ with respect to the narural surjection 
 $\mathrm{pr}^n_{r}\colon W_n(\mathbb{F})\twoheadrightarrow W_{r} (\mathbb{F})$. As is well known, the value $1+p^{n-r}\widetilde{w}$ does not depend on the choice of lifts $\widetilde{w}$; indeed, we have 
\begin{align*}
 1+p^{n-r}\widetilde{w}=1+(\underbrace{0,0,\ldots,0}_{n-r},w_0^{p^{n-r}},w_1^{p^{n-r}},\ldots,w_{r-1}^{p^{n-r}})
\end{align*} 
for $w=(w_0,w_1,\ldots,w_{r-1})\in W_r(\mathbb{F})$. Now let us fix a lift $\widetilde{z}$ of each $z \in W_{n-r}(\mathbb{F})$ to $W_n(\mathbb{F})$. This enables us to describe  $W_n(\mathbb{F}_{q^s})^\times$ as 
\begin{align} \label{eq:decomp_Wn}
W_n(\mathbb{F}_{q^s})^\times=\{\widetilde{z}(1+p^{n-r}\widetilde{w}) \mid w\in W_r(\mathbb{F}),\, z\in W_{n-r}(\mathbb{F})^\times\}.
\end{align}
For later convenience, we choose a lift $\widetilde{z}$ of $z$ so that it is contained in $W_n(\mathbb{F}')$ if $z$ is an element of $W_{n-r}(\mathbb{F}')$ for a subfield $\mathbb{F}'$ of $\mathbb{F}$.

\item[--] For each $r=1,2,\ldots,n-1$, let $\psi_r\colon W_r(\mathbb{F}_p)\cong \mathbb{Z}/p^r\mathbb{Z}\rightarrow \mathbb{C}^\times$ denote an additive character defined as $w\mapsto \zeta_{p^r}^w$ for $\zeta_{p^r}:=\exp(2\pi i/p^r)$. Set $\psi_{r,\mathbb{F}}:=\psi_r\circ \mathrm{Tr}_{\mathbb{F}/\mathbb{F}_p}$ for a finite extension $\mathbb{F}$ of $\mathbb{F}_p$. 
Then any additive character of $W_r(\mathbb{F})$ is described as $x\mapsto \psi_{r,\mathbb{F}_q}(\alpha x)$ for a unique element $\alpha \in W_r(\mathbb{F})$. This is a consequence of perfectness of the trace pairing; see the following Lemma~\ref{lem:trace_pairing}. By definition we have an equality $\psi_{n,\mathbb{F}}(p^{n-r}\widetilde{w})=\psi_{r,\mathbb{F}}(w)$ for any $r=1,2,\ldots,n-1$ and any $w\in W_r(\mathbb{F})$.
\end{itemize}

\begin{lem} \label{lem:trace_pairing}
 Let $\mathbb{F}$ be a finite extension of $\mathbb{F}_p$ and $r$ a natural number. Then the trace pairing
\begin{align*} 
\langle \; , \; \rangle_r \colon  W_r(\mathbb{F}) \times W_r(\mathbb{F}) \longrightarrow W_r(\mathbb{F}_p) \, ;\; (x,y) \mapsto \mathrm{Tr}_{\mathbb{F}/\mathbb{F}_p}(xy)
\end{align*}
is a perfect pairing of $W_r(\mathbb{F}_p)$-modules.  
\end{lem}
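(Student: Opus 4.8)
The plan is to reduce everything modulo $p$ and invoke the classical nondegeneracy of the trace form of a separable field extension. Put $s=[\mathbb{F}:\mathbb{F}_p]$, so that $W_r(\mathbb{F})$ is a free $W_r(\mathbb{F}_p)=\mathbb{Z}/p^r\mathbb{Z}$-module of rank $s$, and let
\[
\Phi\colon W_r(\mathbb{F})\longrightarrow \mathrm{Hom}_{\mathbb{Z}/p^r\mathbb{Z}}\bigl(W_r(\mathbb{F}),\,\mathbb{Z}/p^r\mathbb{Z}\bigr),\qquad \Phi(x)=\langle x,-\rangle_r
\]
be the adjoint homomorphism; perfectness of $\langle\,,\,\rangle_r$ is by definition the bijectivity of $\Phi$. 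Both the source and the target are free $\mathbb{Z}/p^r\mathbb{Z}$-modules of rank $s$, hence finite sets of the same cardinality, so it suffices to prove that $\Phi$ is surjective; by Nakayama's lemma this follows once the reduction $\overline{\Phi}=\Phi\otimes_{\mathbb{Z}/p^r\mathbb{Z}}\mathbb{F}_p$ is surjective — equivalently bijective, as $\overline{\Phi}$ is a map between $\mathbb{F}_p$-vector spaces of the same dimension $s$.

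It remains to identify $\overline{\Phi}$ with the adjoint of the classical trace pairing. Since $\mathbb{F}$ is perfect, $p=FV=VF$ on Witt vectors, so $p\,W_r(\mathbb{F})$ equals the kernel of the truncation $\mathrm{tr}\colon W_r(\mathbb{F})\twoheadrightarrow W_1(\mathbb{F})=\mathbb{F}$ onto the first Witt component; thus $\mathrm{tr}$ induces a ring isomorphism $W_r(\mathbb{F})/p\,W_r(\mathbb{F})\xrightarrow{\ \sim\ }\mathbb{F}$, and likewise $\mathrm{tr}$ identifies $W_r(\mathbb{F}_p)/p\,W_r(\mathbb{F}_p)=(\mathbb{Z}/p^r\mathbb{Z})/p(\mathbb{Z}/p^r\mathbb{Z})$ with $\mathbb{F}_p$. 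Consequently, by flat base change of $\mathrm{Hom}$ (valid since $W_r(\mathbb{F})$ is free), the target of $\overline{\Phi}$ is identified with $\mathrm{Hom}_{\mathbb{F}_p}(\mathbb{F},\mathbb{F}_p)$. Next, as $\mathbb{F}/\mathbb{F}_p$ is Galois and $W_r$ is a functor in the coefficient ring, the functorial trace is $\mathrm{Tr}_{\mathbb{F}/\mathbb{F}_p}=\sum_{i=0}^{s-1}W_r(\phi^i)$, where $\phi$ is the $p$-power Frobenius of $\mathbb{F}$; since truncation is a natural transformation it intertwines $W_r(\phi^i)$ with $\phi^i$ and $\mathrm{tr}$ is a ring homomorphism, so for $x,y\in W_r(\mathbb{F})$
\[
\mathrm{tr}\bigl(\mathrm{Tr}_{\mathbb{F}/\mathbb{F}_p}(xy)\bigr)=\sum_{i=0}^{s-1}\phi^i\bigl(\mathrm{tr}(x)\,\mathrm{tr}(y)\bigr)=\mathrm{Tr}^{\mathrm{cl}}_{\mathbb{F}/\mathbb{F}_p}\bigl(\mathrm{tr}(x)\,\mathrm{tr}(y)\bigr),
\]
where $\mathrm{Tr}^{\mathrm{cl}}_{\mathbb{F}/\mathbb{F}_p}$ denotes the ordinary field trace. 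Under the above identifications this says precisely that $\overline{\Phi}$ is the adjoint map $\mathbb{F}\to\mathrm{Hom}_{\mathbb{F}_p}(\mathbb{F},\mathbb{F}_p)$ of the trace form of $\mathbb{F}/\mathbb{F}_p$.

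Since $\mathbb{F}/\mathbb{F}_p$ is a finite separable (indeed Galois) extension, its trace form is nondegenerate, so $\overline{\Phi}$ is an isomorphism; by the reduction steps above, $\Phi$ is therefore bijective, which is exactly the assertion of the lemma. The only point that requires a little care is the compatibility used in the middle paragraph — that the Witt-vector trace commutes with truncation and is the sum of the iterates of Witt-vector Frobenius — but this is immediate from the functoriality of the Witt-vector construction in its coefficient ring (the same functoriality already invoked at the start of the appendix to define $\mathrm{Nr}$ and $\mathrm{Tr}$ on truncated Witt vectors), so no serious obstacle remains.
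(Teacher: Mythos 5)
Your proof is correct, but it takes a genuinely different route from the paper's. The paper proves the lemma by induction on $r$: it sandwiches the pairing $\langle\,,\,\rangle_r$ between the pairings $\langle\,,\,\rangle_1$ and $\langle\,,\,\rangle_{r-1}$ using the exact sequences $0\to p^{r-1}W_r(\mathbb{F})\to W_r(\mathbb{F})\to W_{r-1}(\mathbb{F})\to 0$ and the dual filtration by $pW_r(\mathbb{F})$, assembles the adjoint maps $\Phi_1,\Phi_r,\Phi_{r-1}$ into a commutative diagram with exact rows, and concludes by the five lemma, the base case $r=1$ being classical separability. You instead observe that $W_r(\mathbb{F})$ is finite free over the local ring $W_r(\mathbb{F}_p)=\mathbb{Z}/p^r\mathbb{Z}$, so that perfectness of the pairing can be checked after reduction modulo the maximal ideal (cardinality count plus Nakayama), and you identify the reduced pairing with the classical trace form of $\mathbb{F}/\mathbb{F}_p$ via the first-component truncation and the description $\mathrm{Tr}_{\mathbb{F}/\mathbb{F}_p}=\sum_{i}W_r(\phi^i)$. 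All the ingredients you use are sound: $pW_r(\mathbb{F})$ is indeed the kernel of truncation because $\mathbb{F}$ is perfect, $\mathrm{Hom}$ commutes with base change for the free module $W_r(\mathbb{F})$, and the compatibility of the functorial Witt trace with truncation is exactly the Galois-sum description the paper itself uses later in the appendix. Your argument is shorter and avoids the induction and the five lemma entirely; the paper's version, on the other hand, stays closer to the explicit Witt-component filtration $1+p^{n-r}W_n$ that the rest of the appendix manipulates, and does not need to invoke freeness of $W_r(\mathbb{F})$ over $W_r(\mathbb{F}_p)$ or Nakayama. Both approaches bottom out in the same fact, the nondegeneracy of the trace form of the separable extension $\mathbb{F}/\mathbb{F}_p$.
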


\begin{proof}
 When $r$ is equal to $1$, perfectness of $\langle \;,\;\rangle_1$ is a consequence of separability of $\mathbb{F}/\mathbb{F}_p$ (see, for example,  the proof of \cite[Proposition~2.8]{Neukirch}). For $r>1$, consider a commutative diagram
\begin{align*}
\xymatrix@C-2pc{
p^{r-1}W_r(\mathbb{F}) \ar@{^(->}[d]  & \times & W_r(\mathbb{F})/pW_r(\mathbb{F}) \ar[rrrrr]  &&&&& p^{r-1}W_r(\mathbb{F}_p)  \ar@{^(->}[d]  \\
W_r(\mathbb{F}) \ar@{->>}[d] & \times & W_r(\mathbb{F}) \ar@{->>}[u] \ar[rrrrr]^-{\langle \; ,\;\rangle_r} &&&&& W_r(\mathbb{F}_p) \\
W_r(\mathbb{F})/p^{r-1}W(\mathbb{F}) & \times & pW_r(\mathbb{F}) \ar@{^(->}[u] \ar[rrrrr] &&&&& pW_{r}(\mathbb{F}_p). \ar@{^(->}[u]
}
\end{align*}
Here the left and middle colums are exact, and the top and bottom horizontal arrows denote the pairings induced by $\langle \;,\;\rangle_r$.
But for any natural number $j$ satisfying $1\leq j \leq r$, we have isomorphisms $W_r(\mathbb{F})/p^j W_r(\mathbb{F})\cong W_{j}(\mathbb{F})$ and $W_j(\mathbb{F})\xrightarrow{\,\sim\,} p^{r-j}W_r(\mathbb{F})\,;\, w \mapsto p^{r-j}\widetilde{w}$, where $\widetilde{w}$ is an arbitrary lift of $w$ to $W_r(\mathbb{F})$. Via these isomorphisms, the top and bottom pairings in the diagram are identified with $\langle \; ,\;\rangle_1$ and $\langle \;, \;\rangle_{r-1}$, respectively. 

Now let $\Phi_j \colon W_j(\mathbb{F})\rightarrow W_j(\mathbb{F})^*:=\mathrm{Hom}_{W_j(\mathbb{F}_p)}(W_j(\mathbb{F}),W_j(\mathbb{F}_p))$ be a $W_j(\mathbb{F}_p)$-homomorphism induced by the pairing $\langle \; , \;\rangle_j$. By the  discussion above, we have a commutative diagram of abelian groups with exact rows
\begin{align*}
\xymatrix{0\ar[r] & \mathbb{F} \ar[r] \ar[d]_{\Phi_1} & W_r(\mathbb{F}) \ar[r] \ar[d]^{\Phi_r} & W_{r-1}(\mathbb{F}) \ar[r] \ar[d]^{\Phi_{r-1}} & 0 \\
0\ar[r] & \mathbb{F}^* \ar[r]  & W_r(\mathbb{F})^* \ar[r]  & W_{r-1}(\mathbb{F})^* \ar[r]  & 0.}
\end{align*}
By induction hypothesis, the left and right vertical maps are isomorphisms, and thus the five lemma implies that  $\Phi_r$ is also an isomorphism.
\end{proof}

Let us prove Theorem~\ref{theorem:D-H_relation_overW_n}. Since the case where $n=1$ is nothing but the original Davenport--Hasse relation \cite[(0.8)]{DH}, it suffices to prove the statement assuming $n>1$. Contrary to the original proof of Davenport and Hasse for the case $n=1$ (using a product decomposition of a certain generating function), we explicitly evaluate the Gauss sums $\tau(\chi_{\mathbb{F}_{q^s}})$ and $\tau(\chi)$ utilising Lamprecht's techniques \cite[Sektionen 2.3--2.5]{Lam}, and directly compare them. A similar calculation is also developed by Mauclaire \cite{Mau1, Mau2} for Gauss sums over $W_n(\mathbb{F}_p)=\mathbb{Z}/p^n\mathbb{Z}$ (see also an exposition of \cite[Section~1.6]{BEW}, especially Theorem 1.6.4).

\begin{proof}[Proof of Theorem~{\protect\rm \ref{theorem:D-H_relation_overW_n}}]
The proof goes in a different way depending on the {\em parity} of $n$. 

\smallskip
\noindent {\bf \underline{\bf Case 1}.\ $n$: even} \;  Firstly, we assume that $n$ is even and set $n=2r$ with a positive integer $r$. Using \eqref{eq:decomp_Wn}, we have 
\begin{align*}
\tau (\chi_{\mathbb{F}_{q^s}})  &= \sum_{z \in W_r(\mathbb{F}_{q^s})^{\times}} \sum_{w \in W_{r}(\mathbb{F}_{q^s})} 
 \chi_{\mathbb{F}_{q^s}} (\widetilde{z} (1+p^r\widetilde{w})) \psi_{n,\mathbb{F}_{q^s}} (\widetilde{z} (1+p^r \widetilde{w})) \\ 
 & = \sum_{z \in W_r(\mathbb{F}_{q^s})^{\times}} \chi_{\mathbb{F}_{q^s}}(\widetilde{z}) \psi_{n,\mathbb{F}_{q^s}}(\widetilde{z})\sum_{w \in W_{r}(\mathbb{F}_{q^s})} 
\chi\circ \mathrm{Nr}_{\mathbb{F}_{q^s}/\mathbb{F}_q}(1+p^r\widetilde{w})\psi_{n,\mathbb{F}_{q^s}}(p^r\widetilde{z}\widetilde{w}) \\ 
 & = \sum_{z \in W_r(\mathbb{F}_{q^s})} \chi_{\mathbb{F}_{q^s}}(\widetilde{z}) \psi_{n,\mathbb{F}_{q^s}}(\widetilde{z}) \sum_{w \in W_{r}(\mathbb{F}_{q^s})} 
\chi(1+p^r\mathrm{Tr}_{\mathbb{F}_{q^s}/\mathbb{F}_q}(\widetilde{w}))\psi_{r,\mathbb{F}_{q^s}}(zw).
\end{align*}
Here the first equality is just the definition of $\tau(\chi_{\mathbb{F}_{q^s}})$, and the middle equality follows from  multiplicativity of $\chi_{\mathbb{F}_{q^s}}$ and  additivity of $\psi_{n,\mathbb{F}_{q^s}}$.
At the last equality, we use the fact that $\chi_{\mathbb{F}_{q^s}}(\widetilde{z})=0$ if $z$ is not a unit of $W_r(\mathbb{F}_{q^s})$, and the equality
\begin{align*}
 \mathrm{Nr}_{\mathbb{F}_{q^s}/\mathbb{F}_q}(1+p^r\widetilde{w})&=\prod_{\sigma\in \mathrm{Gal}(\mathbb{F}_{q^s}/\mathbb{F}_q)} (1+p^r\widetilde{w}^\sigma) \\
&=1+p^r\sum_{\sigma\in \mathrm{Gal}(\mathbb{F}_{q^s}/\mathbb{F}_q)}\widetilde{w}^\sigma=1+p^r\mathrm{Tr}_{\mathbb{F}_{q^s}/\mathbb{F}_q} (\widetilde{w})
\end{align*}
which holds because $p^{2r}=p^n=0$ in $W_n(\mathbb{F}_{q^s})$. By the same reason, we readily observe that $x\mapsto \chi(1+p^r \widetilde{x})$ is an additive character of $W_r(\mathbb{F}_{q})$. Thus, by Lemma~\ref{lem:trace_pairing}, there exists a unique element $-\varepsilon_\chi\in W_r(\mathbb{F}_q)$ satisfying
\begin{align*}
 \tau(\chi_{\mathbb{F}_{q^s}}) & = \sum_{z \in W_r(\mathbb{F}_{q^s})} \chi_{\mathbb{F}_{q^s}}(\widetilde{z}) \psi_{n,\mathbb{F}_{q^s}}(\widetilde{z}) \sum_{w \in W_{r}(\mathbb{F}_{q^s})} 
\psi_{r,\mathbb{F}_q}(-\varepsilon_\chi\mathrm{Tr}_{\mathbb{F}_{q^s}/\mathbb{F}_q}(w))\psi_{r,\mathbb{F}_{q^s}}(zw) \\
& = \sum_{z \in W_r(\mathbb{F}_{q^s})
} \chi_{\mathbb{F}_{q^s}}(\widetilde{z}) \psi_{n,\mathbb{F}_{q^s}}(\widetilde{z}) \sum_{w \in W_{r}(\mathbb{F}_{q^s})} 
\psi_{r,\mathbb{F}_{q^s}}((z-\varepsilon_\chi)w).
\end{align*}
Note that the character sum with respect to $w$ equals  $\# W_r(\mathbb{F}_{q^s})=q^{rs}$ when $z$ coincides with $\varepsilon_\chi$, and vanishes otherwise. We thus have
\begin{align}
  \tau(\chi_{\mathbb{F}_{q^s}})&=q^{rs}\chi_{\mathbb{F}_{q^s}}(\widetilde{\varepsilon}_\chi)\psi_{n,\mathbb{F}_{q^s}}(\widetilde{\varepsilon}_\chi)
=\bigl(q^r\chi(\widetilde{\varepsilon}_\chi)\psi_{n,\mathbb{F}_q}(\widetilde{\varepsilon}_\chi)\bigr)^s. \label{eq:even_qs}
 \end{align}
Here recall that the lift $\widetilde{\varepsilon}_\chi$ of $\varepsilon_\chi\in W_r(\mathbb{F}_q)$ is chosen so that $\widetilde{\varepsilon}_\chi\in W_n(\mathbb{F}_q)$. By a very similar calculation, we also have
\begin{align} 
 \tau(\chi) & = \sum_{z \in W_r(\mathbb{F}_{q})} \chi(\widetilde{z}) \psi_{n,\mathbb{F}_{q}}(\widetilde{z}) \sum_{w \in W_{r}(\mathbb{F}_{q})} 
\psi_{r,\mathbb{F}_q}(-\varepsilon_\chi w)\psi_{r,\mathbb{F}_{q}}(zw) \label{eq:even_q} \\ 
& = \sum_{z \in W_r(\mathbb{F}_{q})
} \chi(\widetilde{z}) \psi_{n,\mathbb{F}_{q}}(\widetilde{z}) \sum_{w \in W_{r}(\mathbb{F}_{q})} 
\psi_{r,\mathbb{F}_{q}}((z-\varepsilon_\chi)w)=q^r\chi(\widetilde{\varepsilon}_\chi)\psi_{n,\mathbb{F}_q}(\widetilde{\varepsilon}_\chi). \nonumber
 \end{align}
Comparing \eqref{eq:even_qs} with \eqref{eq:even_q}, we obtain the desired equality \eqref{eq:DH_p^n}; note that 
$(-1)^{n(s-1)}=1$ holds  
in Case 1 because $n$ is assumed to be even. 

\bigskip
\noindent {\bf \underline{\bf Case 2}.\ $n$: odd} \;  We next assume that $n$ is odd and set $n=2r+1$ with a positive integer $r$. Let $\nu$ be any natural number satisfying $1\leq \nu \leq s$. 
Similarly to Case 1, we have 
\begin{align*}
\tau (\chi_{\mathbb{F}_{q^\nu}}) & = \sum_{z \in W_{r+1}(\mathbb{F}_{q^\nu})^{\times}} \sum_{w \in W_{r}(\mathbb{F}_{q^\nu})} 
 \chi_{\mathbb{F}_{q^\nu}} (\widetilde{z} (1+p^{r+1}\widetilde{w})) \psi_{n,\mathbb{F}_{q^\nu}} (\widetilde{z} (1+p^{r+1} \widetilde{w})) \quad  (\text{by \eqref{eq:decomp_Wn}})\\ 
 & = \sum_{z \in W_{r+1}(\mathbb{F}_{q^\nu})^{\times}} \chi_{\mathbb{F}_{q^\nu}}(\widetilde{z}) \psi_{n,\mathbb{F}_{q^\nu}}(\widetilde{z})\\
&\hspace*{7em} \cdot  \sum_{w \in W_{r}(\mathbb{F}_{q^\nu})} 
\chi\circ \mathrm{Nr}_{\mathbb{F}_{q^\nu}/\mathbb{F}_q}(1+p^{r+1}\widetilde{w})\psi_{n,\mathbb{F}_{q^\nu}}(p^{r+1}\widetilde{z}\widetilde{w}) \\ 
 & = \sum_{z \in W_{r+1}(\mathbb{F}_{q^\nu})} \chi_{\mathbb{F}_{q^s}}(\widetilde{z}) \psi_{n,\mathbb{F}_{q^\nu}}(\widetilde{z}) \\
&\hspace*{5em} \cdot  \sum_{w \in W_{r}(\mathbb{F}_{q^\nu})} 
\chi(1+p^{r+1}\mathrm{Tr}_{\mathbb{F}_{q^\nu}/\mathbb{F}_q}(\widetilde{w}))\psi_{r,\mathbb{F}_{q^\nu}}(\mathrm{pr}^{r+1}_r(z)w).
\end{align*}
Here $\mathrm{pr}^{r+1}_r\colon W_{r+1}(\mathbb{F}_{q^\nu})\twoheadrightarrow W_r(\mathbb{F}_{q^\nu})$ is the natural projection. As in Case 1, a map sending $x\mapsto \chi(1+p^{r+1}\widetilde{x})$ defines an additive character of $W_r(\mathbb{F}_{q})$. There thus exists a unique element $-\varepsilon_\chi\in W_r(\mathbb{F}_q)$ by Lemma~\ref{lem:trace_pairing}, and we have
\begin{align*}
 \tau(\chi_{\mathbb{F}_{q^\nu}}) & = \sum_{z \in W_{r+1}(\mathbb{F}_{q^\nu})} \chi_{\mathbb{F}_{q^\nu}}(\widetilde{z}) \psi_{n,\mathbb{F}_{q^s}}(\widetilde{z}) \\
&\hspace*{5em} \cdot  \sum_{w \in W_{r}(\mathbb{F}_{q^\nu})} 
\psi_{r,\mathbb{F}_q}(-\varepsilon_\chi\mathrm{Tr}_{\mathbb{F}_{q^\nu}/\mathbb{F}_q}(w))\psi_{r,\mathbb{F}_{q^\nu}}(\mathrm{pr}^{r+1}_r(z)w) \\
& = \sum_{z \in W_{r+1}(\mathbb{F}_{q^\nu})
} \chi_{\mathbb{F}_{q^\nu}}(\widetilde{z}) \psi_{n,\mathbb{F}_{q^\nu}}(\widetilde{z}) \sum_{w \in W_{r}(\mathbb{F}_{q^\nu})} 
\psi_{r,\mathbb{F}_{q^\nu}}((\mathrm{pr}^{r+1}_r(z)-\varepsilon_\chi)w) \\
&=q^{r\nu}\sum_{\substack{z \in W_{r+1}(\mathbb{F}_{q^\nu}) \\ \mathrm{pr}^{r+1}_r(z)=\varepsilon_\chi}} \chi_{\mathbb{F}_{q^\nu}}(\widetilde{z}) \psi_{n,\mathbb{F}_{q^\nu}}(\widetilde{z}).
\end{align*}
Note that, if $\varepsilon_\chi$ is not a unit of $W_n(\mathbb{F}_q)$, we have $\chi_{\mathbb{F}_{q^\nu}}(\widetilde{z})=0$ for any $z\in W_{r+1}(\mathbb{F}_{q^\nu})$ satisfying $\mathrm{pr}^{r+1}_r(z)=\varepsilon_\chi$. In the case, the desired equality \eqref{eq:DH_p^n} trivially holds because the both sides of \eqref{eq:DH_p^n} are reduced to $0$. We thus assume in the following that $\varepsilon_\chi$ is a unit. Due to \eqref{eq:decomp_Wn_ses} with $n$ and $r$ replaced by $r+1$ and $1$ respectively, we obtain an equality of sets
\begin{align*}
 \{ \widetilde{z}\in W_n(\mathbb{F}_{q^\nu}) \mid z\in W_{r+1}(\mathbb{F}_{q^\nu}),\, \mathrm{pr}^{r+1}_r(z)=\varepsilon_\chi \}=\{\widetilde{\varepsilon}_\chi(1+p^r[\delta]) \mid \delta\in \mathbb{F}_{q^\nu}\},
\end{align*}
where $[\, \cdot\,] \colon \mathbb{F}_q^\times \rightarrow W_n(\mathbb{F}_q)^\times$ denotes the Teichm\"uller lift. Therefore we have
\begin{align}
  \tau(\chi_{\mathbb{F}_{q^\nu}})&=q^{r\nu}\sum_{\delta\in \mathbb{F}_{q^\nu}} \chi_{\mathbb{F}_{q^\nu}}(\widetilde{\varepsilon}_\chi (1+p^r[\delta]))\psi_{n,\mathbb{F}_{q^\nu}}(\widetilde{\varepsilon}_\chi(1+p^r[\delta])) \label{eq:odd_qs} \\
&=q^{r\nu}\chi_{\mathbb{F}_{q^\nu}}(\widetilde{\varepsilon}_\chi)\psi_{n,\mathbb{F}_{q^\nu}}(\widetilde{\varepsilon}_\chi) \sum_{\delta\in \mathbb{F}_{q^\nu}} \chi_{\mathbb{F}_{q^\nu}}(1+p^r[\delta])\psi_{n,\mathbb{F}_{q^\nu}}(p^r\widetilde{\varepsilon}_\chi[\delta])\nonumber  \\
&=\bigl(q^r\chi(\widetilde{\varepsilon}_\chi)\psi_{n,\mathbb{F}_{q}}(\widetilde{\varepsilon}_\chi)\bigr)^\nu \sum_{\delta\in \mathbb{F}_{q^\nu}} \chi_{\mathbb{F}_{q^\nu}}(1+p^r[\delta])\psi_{r+1,\mathbb{F}_{q^\nu}}(\widetilde{\varepsilon}_\chi[\delta]). \nonumber
 \end{align}
Here we use the same symbol $[\, \cdot\,]$ for the Teichm\"uller lift with values in $W_{r+1}(\mathbb{F}_{q^\nu})$ by abuse of notation. 
To verify the desired equality \eqref{eq:DH_p^n}, it suffices to prove the following claim.

\begin{claim}
Let the notation be as above. Then the following equality holds$:$
 \begin{multline} \label{eq:use_DH}
 \sum_{\delta\in \mathbb{F}_{q^s}} \chi_{\mathbb{F}_{q^s}}(1+p^r[\delta])\psi_{r+1,\mathbb{F}_{q^s}}(\widetilde{\varepsilon}_\chi[\delta]) =(-1)^{s-1}\left(\sum_{\delta\in \mathbb{F}_q} \chi(1+p^r[\delta])\psi_{r+1,\mathbb{F}_{q}}(\widetilde{\varepsilon}_\chi[\delta]) \right)^s.
\end{multline}
\end{claim}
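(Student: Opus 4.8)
The plan is to reduce the claimed identity \eqref{eq:use_DH} to the classical Davenport--Hasse relation \cite[(0.8)]{DH} applied to an \emph{auxiliary additive-type Gauss sum over the finite field} $\mathbb{F}_{q^s}$. First I would observe that the left-hand sum in \eqref{eq:use_DH} is, up to the Teichm\"uller parametrisation, a sum over $\delta\in\mathbb{F}_{q^s}$ of $\chi_{\mathbb{F}_{q^s}}(1+p^r[\delta])$ against an additive character. Since $p^{2r+1}=p^n=0$ in $W_n(\mathbb{F}_{q^s})$ but $p^{r+1}$ need not vanish, one has $\chi_{\mathbb{F}_{q^s}}(1+p^r[\delta])$ expressible through a \emph{quadratic} (Gaussian) expression in $\delta$: writing $\chi_{\mathbb{F}_{q^s}}(1+p^r x)$ as an additive character of $W_{r+1}(\mathbb{F}_{q^s})$ and then restricting to Teichm\"uller representatives, the multiplicativity $[\delta][\delta']=[\delta\delta']$ together with the Witt-vector addition $(1+p^r[\delta])(1+p^r[\delta'])=1+p^r[\delta]+p^r[\delta']+p^{2r}[\delta\delta']$ forces the "extra'' character $\delta\mapsto \chi_{\mathbb{F}_{q^s}}(1+p^r[\delta])$ to be, after the reduction $n=2r+1$, a character of the form $\delta\mapsto \psi^{\circ}_{\bullet}\big(\mathrm{Tr}_{\mathbb{F}_{q^s}/\mathbb{F}_p}(c\,\delta)\big)$ \emph{times} a genuinely quadratic factor coming from the $p^{2r}$-term; this quadratic factor is what produces a Gauss sum rather than a mere character sum. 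So the inner sum in \eqref{eq:odd_qs} is literally a one-variable Gauss sum attached to the (at most) quadratic form $\delta\mapsto p^r[\delta]+\tfrac12 p^{2r}[\delta]^2$ together with the linear character determined by $\widetilde{\varepsilon}_\chi$.

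The key steps, in order, would be: (1) isolate the "residual Gauss sum'' $G_s:=\sum_{\delta\in\mathbb{F}_{q^s}}\chi_{\mathbb{F}_{q^s}}(1+p^r[\delta])\psi_{r+1,\mathbb{F}_{q^s}}(\widetilde\varepsilon_\chi[\delta])$ and, via $\chi_{\mathbb{F}_{q^s}}=\chi\circ\mathrm{Nr}_{\mathbb{F}_{q^s}/\mathbb{F}_q}$ and $\mathrm{Nr}_{\mathbb{F}_{q^s}/\mathbb{F}_q}(1+p^r[\delta])=1+p^r\mathrm{Tr}_{\mathbb{F}_{q^s}/\mathbb{F}_q}[\delta]+p^{2r}(\text{quadratic in the }[\delta^{q^i}])$, rewrite $\chi_{\mathbb{F}_{q^s}}(1+p^r[\delta])$ purely in terms of $\mathbb{F}_{q^s}$-data: it becomes $\psi_{\mathbb{F}_{q^s}}(\alpha\,\delta)\cdot\lambda(\delta^2)$ for suitable characters, where $\lambda$ is the additive character of $\mathbb{F}_{q^s}$ corresponding to the $p^{2r}$-coefficient of $\chi$; (2) recognise $G_s$ as (a constant times) a classical Gauss sum over $\mathbb{F}_{q^s}$ for the quadratic character twist, i.e.\ $G_s = (\text{elementary factor})\cdot\tau_{\mathbb{F}_{q^s}}(\text{some character})$, and likewise $G_1$ over $\mathbb{F}_q$; (3) apply the classical Davenport--Hasse relation $\tau_{\mathbb{F}_{q^s}}=(-1)^{s-1}\tau_{\mathbb{F}_q}^{\,s}$ (the $n=1$ case already granted) to this finite-field Gauss sum, tracking that the elementary prefactors are themselves norm-compatible so they contribute an $s$-th power with no extra sign; (4) assemble to get $G_s=(-1)^{s-1}G_1^{\,s}$, which is exactly \eqref{eq:use_DH}; then substitute back into \eqref{eq:odd_qs} taken at $\nu=s$ and at $\nu=1$ to obtain $\tau(\chi_{\mathbb{F}_{q^s}})=(-1)^{n(s-1)}\tau(\chi)^s$ (using $(-1)^{s-1}=(-1)^{(2r+1)(s-1)}=(-1)^{n(s-1)}$ when $n=2r+1$ is odd), completing Case~2.

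The main obstacle I expect is step (2): correctly identifying the degenerate/quadratic structure of $\delta\mapsto\chi_{\mathbb{F}_{q^s}}(1+p^r[\delta])$ and isolating the \emph{exact} constant (both the archimedean-looking normalising power of $q$ and, crucially, the \emph{sign}) that relates $G_s$ to a textbook finite-field Gauss sum to which the $n=1$ Davenport--Hasse identity applies. One must be careful that the "quadratic form" $p^r[\delta]+\tfrac12 p^{2r}[\delta]^2$ is genuinely non-degenerate (when $p$ is odd, which is in force) so that the associated Gauss sum is non-zero of the expected absolute value, and that the leading-coefficient character transforms correctly under $\mathrm{Nr}_{\mathbb{F}_{q^s}/\mathbb{F}_q}$ — this is where Lamprecht's technique \cite[Sektionen 2.3--2.5]{Lam} and the computations of Mauclaire \cite{Mau1,Mau2} over $\mathbb{Z}/p^n\mathbb{Z}$ are the right model, and where one simply has to perform the bookkeeping carefully. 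Everything else is routine: the Case~1 argument ($n$ even) is already self-contained in the excerpt, reducing both Gauss sums to the single evaluation $q^r\chi(\widetilde\varepsilon_\chi)\psi_{n,\mathbb{F}_q}(\widetilde\varepsilon_\chi)$ via the perfectness of the trace pairing (Lemma~\ref{lem:trace_pairing}), and the Case~2 reduction above leans on the same pairing lemma plus the classical relation.
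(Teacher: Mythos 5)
Your plan is essentially the paper's own argument for odd $p$: combine $\chi_{\mathbb{F}_{q^\nu}}(1+p^r[\delta])$ with the twist $\psi_{r+1,\mathbb{F}_{q^\nu}}(\widetilde\varepsilon_\chi[\delta])$, observe that the resulting function of $\delta$ is a quadratic Gaussian (an additive character times $\psi_{1,\mathbb{F}_{q^\nu}}(2^{-1}w_\chi\delta^2)$), complete the square to reduce to a classical quadratic Gauss sum over $\mathbb{F}_{q^\nu}$, and invoke the $n=1$ Davenport--Hasse relation. Two remarks. First, the step you correctly flag as the obstacle --- that the ``elementary prefactors are norm-compatible'' --- is exactly where the paper does real work: one must show that the linear coefficient $b_{(\nu)}$ produced by Lemma~\ref{lem:trace_pairing} satisfies $b_{(\nu)}=b_{(1)}\in\mathbb{F}_q$, which the paper proves by a Frobenius-invariance argument followed by a trace argument; without this the completed-square constant $\psi_{1,\mathbb{F}_{q^\nu}}(-2^{-1}w_\chi^{-1}b_{(\nu)}^2)$ need not be the $\nu$-th power of the corresponding constant over $\mathbb{F}_q$. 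Also be careful in your step (1): $\chi_{\mathbb{F}_{q^s}}(1+p^r[\delta])$ \emph{alone} is not of the form $\psi(\alpha\delta)\lambda(\delta^2)$, because the Teichm\"uller lift is not additive and the carry term $[\delta_1]+[\delta_2]-[\delta_1+\delta_2]$ contributes a non-quadratic defect; it is precisely the extra factor $\psi_{r+1,\mathbb{F}_{q^\nu}}(\widetilde\varepsilon_\chi[\delta])$ in \eqref{eq:use_DH} that cancels this carry (this is the content of \eqref{eq:chi}), so the quadratic structure holds only for the full summand. Second, your argument is confined to odd $p$ (you divide by $2$), whereas the paper's Claim and Theorem~\ref{theorem:D-H_relation_overW_n} are proved for $p=2$ as well, by lifting the quadratic form to $W_2(\mathbb{F}_{q^\nu})$ via Teichm\"uller representatives and explicitly evaluating the resulting partial quadratic Gauss sum instead of appealing to Davenport--Hasse; if you only need the application in the body of the paper this restriction is harmless, but as a proof of the stated Claim it leaves a case open.
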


Indeed, if we admit the equality \eqref{eq:use_DH}, we can deduce \eqref{eq:DH_p^n} as
\begin{align*}
 \tau(\chi_{\mathbb{F}_{q^s}}) &=(q^r \chi(\widetilde{\varepsilon}_\chi)\psi_{n,\mathbb{F}_q}(\widetilde{\varepsilon}_\chi))^s \sum_{\delta\in {\mathbb{F}_{q^s}}}\chi_{\mathbb{F}_{q^s}}(1+p^r[\delta])\psi_{r+1,\mathbb{F}_{q^s}}(\widetilde{\varepsilon}_\chi [\delta]) \\
&=(-1)^{s-1}(q^r \chi(\widetilde{\varepsilon}_\chi)\psi_{n,\mathbb{F}_q}(\widetilde{\varepsilon}_\chi))^s \left(\sum_{\delta\in{\mathbb{F}_q}}\chi(1+p^r[\delta])\psi_{r+1,\mathbb{F}_{q}}(\widetilde{\varepsilon}_\chi [\delta])\right)^s  \\ 
&=(-1)^{s-1}\tau(\chi).
\end{align*}
Here the first (resp.\ the third) equality is nothing but \eqref{eq:odd_qs} for  $\nu=s$ (resp.\ $\nu =1$) and the second equality follows from \eqref{eq:use_DH}. Note that we have $(-1)^{n(s-1)}=(-1)^{s-1}$ in Case 2 because $n$ is assumed to be odd. 

To prove the claim, we shall observe that the both sides of \eqref{eq:use_DH} are described in terms of {\em quadratic Gauss sums} over finite fields (or their variants when $p$ equals $2$), and then apply the classical Davenport--Hasse relation \cite[(0.8)]{DH} (or explicitly evaluate the partial sums when $p$ equals $2$) to obtain the equality \eqref{eq:use_DH}. 

\begin{proof}[Proof of {\protect\underline{\protect\bf Claim}}]
 As before, let $\nu$ be any integer satisfying $1\leq \nu\leq s$. First note that, for any $\delta_1, \delta_2\in \mathbb{F}_{q^\nu}$, we have 
\begin{align*} 
(1+p^r[\delta_1])&(1+p^r[\delta_2]) \\
&=1+p^r[\delta_1+\delta_2]+p^r([\delta_1]+[\delta_2]-[\delta_1+\delta_2])+p^{2r}[\delta_1\delta_2] \\
&=\bigl(1+p^r[\delta_1+\delta_2]\bigr)(1+p^{r+1}\epsilon_{\delta_1,\delta_2}+p^{2r}[\delta_1\delta_2]) \qquad \text{in }W_n(\mathbb{F}_{q^\nu})
\end{align*}
where $\epsilon_{\delta_1,\delta_2}$ is an element of $W_n(\mathbb{F}_{q^\nu})$ satisfying $[\delta_1]+[\delta_2]-[\delta_1+\delta_2]=p\epsilon_{\delta_1,\delta_2}$.
Using this equality, we can calculate as
\begin{align} 
 \chi_{\mathbb{F}_{q^\nu}}&(1+p^r[\delta_1])\psi_{r+1,\mathbb{F}_{q^\nu}}(\widetilde{\varepsilon}_\chi[\delta_1])\chi_{\mathbb{F}_{q^\nu}}(1+p^r[\delta_2])\psi_{r+1,\mathbb{F}_{q^\nu}}(\widetilde{\varepsilon}_\chi[\delta_2]) \label{eq:chi}  \\
&=\chi_{\mathbb{F}_{q^\nu}}\bigl(1+p^r[\delta_1+\delta_2]\bigr)\psi_{r+1,\mathbb{F}_{q^\nu}}\bigl(\widetilde{\varepsilon}_\chi[\delta_1+\delta_2]\bigr) \nonumber \\
&\hspace*{5em} \cdot \chi_{\mathbb{F}_{q^\nu}}\bigl(1+p^{r+1}(\epsilon_{\delta_1,\delta_2}+p^{r-1}[\delta_1\delta_2])\bigr)\psi_{r+1,\mathbb{F}_{q^\nu}}(p\widetilde{\varepsilon}_\chi \epsilon_{\delta_1,\delta_2}) \nonumber \\
&=\chi_{\mathbb{F}_{q^\nu}}\bigl(1+p^r[\delta_1+\delta_2]\bigr)\psi_{r+1,\mathbb{F}_{q^\nu}}\bigl(\widetilde{\varepsilon}_\chi[\delta_1+\delta_2]\bigr) \nonumber \\
&\hspace*{5em} \cdot \psi_{r,\mathbb{F}_{q^\nu}}\bigl(-\varepsilon_\chi (\epsilon_{\delta_1,\delta_2}+p^{r-1}[\delta_1\delta_2])\bigr)\psi_{r,\mathbb{F}_{q^\nu}}(\varepsilon_\chi \epsilon_{\delta_1,\delta_2}) \nonumber \\
&=\chi_{\mathbb{F}_{q^\nu}}\bigl(1+p^r[\delta_1+\delta_2]\bigr)\psi_{r+1,\mathbb{F}_{q^\nu}}\bigl(\widetilde{\varepsilon}_\chi[\delta_1+\delta_2]\bigr)\psi_{1,\mathbb{F}_{q^\nu}}(-\mathrm{pr}^r_1(\varepsilon_\chi) \delta_1\delta_2). \nonumber
 \end{align}
Here the first equality follows from the calculation above and the second equality follows from the definition of $\varepsilon_\chi$; namely $\chi(1+p^{r+1}x)=\psi_{r,\mathbb{F}_q}(-\varepsilon_\chi x)$ holds for any $x\in W_r(\mathbb{F}_q)$. Set $w_\chi:=\mathrm{pr}^r_1(\varepsilon_\chi)\in \mathbb{F}_q^\times$ for brevity.

\medskip
\noindent {\bf\underline{Case 2-a}. $p$: odd} \; For any $\delta_1,\delta_2\in \mathbb{F}_{q^\nu}$, we have 
\begin{align}\label{eq:p_odd_quad_exp}
 \psi_{1,\mathbb{F}_{q^\nu}}&(2^{-1}w_\chi\delta_1^2)
 \psi_{1,\mathbb{F}_{q^\nu}}(2^{-1}w_\chi\delta_2^2) =\psi_{1,\mathbb{F}_{q^\nu}}\bigl(2^{-1}w_\chi(\delta_1+\delta_2)^2\bigr)\psi_{1,\mathbb{F}_{q^\nu}}\bigl(-w_\chi\delta_1\delta_2\bigr)
 \end{align}
because we have $\delta_1^2 + \delta_2^2 = (\delta_1 + \delta_2 )^2 -2 \delta_1 \delta_2$ and $\psi_{1,\mathbb{F}_{q^\nu}}$ is an additive character. 
Comparing \eqref{eq:chi} with \eqref{eq:p_odd_quad_exp}, one observes that 
\begin{align*}
 \delta \mapsto \chi_{\mathbb{F}_{q^\nu}}(1+p^r[\delta])\psi_{r+1,\mathbb{F}_{q^\nu}}(\widetilde{\varepsilon}_\chi[\delta])\psi_{1,\mathbb{F}_{q^\nu}}(2^{-1}w_\chi\delta^2)^{-1}
\end{align*}
defines an additive character of $\mathbb{F}_{q^\nu}$. Hence, due to Lemma~\ref{lem:trace_pairing}, there exists a unique element $b_{(\nu)} \in \mathbb{F}_{q^\nu}$ satisfying
\begin{align} \label{eq:additive}
 \chi_{\mathbb{F}_{q^\nu}}(1+p^r[\delta])\psi_{r+1,\mathbb{F}_{q^\nu}}(\widetilde{\varepsilon}_\chi[\delta])\psi_{1,\mathbb{F}_{q^\nu}}(2^{-1}w_\chi\delta^2)^{-1}=\psi_{1,\mathbb{F}_{q^\nu}}(b_{(\nu)}\delta).
\end{align}
We now verify that $b_{(\nu)}$ coincides with $b_{(1)}$ and thus is an element of $\mathbb{F}_q^\times$. Let $\sigma$ denote the $q$-th power Frobenius automorphism. Then we have 
\begin{align} \label{eq:additive_Frob}
 \chi_{\mathbb{F}_{q^\nu}}(1+p^r[\delta^\sigma])\psi_{r+1,\mathbb{F}_{q^\nu}}(\widetilde{\varepsilon}_\chi[\delta^\sigma])\psi_{1,\mathbb{F}_{q^\nu}}(2^{-1}w_\chi (\delta^\sigma)^{2})^{-1}
=\psi_{1,\mathbb{F}_{q^\nu}}(b_{(\nu)}\delta^\sigma)= \psi_{1,\mathbb{F}_{q^\nu}}(b_{(\nu)}^{\sigma^{-1}}\delta)
\end{align}
for each $\delta\in \mathbb{F}_{q^\nu}$ by replacing $\delta$ with $\delta^\sigma$ in \eqref{eq:additive}. Furthermore the left-hand side of \eqref{eq:additive_Frob} coincides with that of \eqref{eq:additive}; indeed we have
\begin{align*}
 \chi_{\mathbb{F}_{q^\nu}}(1+p^r[\delta^\sigma])&\psi_{r+1,\mathbb{F}_{q^\nu}}(\widetilde{\varepsilon}_\chi[\delta^\sigma])\psi_{1,\mathbb{F}_{q^\nu}}(2^{-1}w_\chi (\delta^\sigma)^2)^{-1} \\
&=\chi_{\mathbb{F}_{q^\nu}}\bigl((1+p^r[\delta])^\sigma\bigr)\psi_{r+1,\mathbb{F}_{q^\nu}}(\widetilde{\varepsilon}_\chi[\delta]^\sigma)\psi_{1,\mathbb{F}_{q^\nu}}(2^{-1}w_\chi (\delta^2)^\sigma)^{-1} \\
&=\chi_{\mathbb{F}_{q^\nu}}(1+p^r[\delta]) \psi_{r+1,\mathbb{F}_{q^\nu}}(\widetilde{\varepsilon}_\chi^{\sigma^{-1}}[\delta])\psi_{1,\mathbb{F}_{q^\nu}}((2^{-1}w_\chi)^{\sigma^{-1}} \delta^2)^{-1}  \\
&=\chi_{\mathbb{F}_{q^\nu}}(1+p^r[\delta]) \psi_{r+1,\mathbb{F}_{q^\nu}}(\widetilde{\varepsilon}_\chi[\delta])\psi_{1,\mathbb{F}_{q^\nu}}((2^{-1}w_\chi) \delta^2)^{-1}. 
\end{align*}
Hence 
we have $\psi_{1,\mathbb{F}_{q^\nu}}(b_{(\nu)}^{\sigma^{-1}}\delta)=\psi_{1,\mathbb{F}_{q^\nu}}(b_{(\nu)}\delta)$, or equivalently $\psi_{1,\mathbb{F}_{q^\nu}} \bigl((b_{(\nu)}-b_{(\nu)}^{\sigma^{-1}})\delta\bigr)=1$, for each $\delta \in \mathbb{F}_{q^\nu}$ by \eqref{eq:additive} and \eqref{eq:additive_Frob}. 
This implies the equality $b_{(\nu)}^{\sigma}=b_{(\nu)}$, and thus $b_{(\nu)}$ is in particular an element of $\mathbb{F}_q$. Next note that 
\begin{multline*}
 \chi_{\mathbb{F}_{q^\nu}}(1+p^r[\delta])\psi_{r+1,\mathbb{F}_{q^\nu}}(\widetilde{\varepsilon}_\chi[\delta])\psi_{1,\mathbb{F}_{q^\nu}}(-2^{-1}w_\chi\delta^2)^{-1} \\
=\bigl(\chi(1+p^r[\delta]) \psi_{r+1,\mathbb{F}_{q}}(\widetilde{\varepsilon}_\chi[\delta])\psi_{1,\mathbb{F}_{q}}(-2^{-1}w_\chi\delta^2)^{-1}\bigr)^\nu
\end{multline*}
holds for each $\delta\in \mathbb{F}_{q}$. Thus, substituting \eqref{eq:additive} into the both sides of the equality above, we obtain $\psi_{1,\mathbb{F}_{q^\nu}}(b_{(\nu)}\delta)=\psi_{1,\mathbb{F}_q}(b_{(1)}\delta)^\nu$, or equlvalently $\psi_{1,\mathbb{F}_q}\bigl( (\mathrm{Tr}_{\mathbb{F}_{q^\nu}/\mathbb{F}_q}(b_{(\nu)})-sb_{(1)})\delta \bigr)$ is equal to $1$ for each $\delta \in \mathbb{F}_q$. We thus have  $\mathrm{Tr}_{\mathbb{F}_{q^\nu}/\mathbb{F}_q}(b_{(\nu)}-b_{(1)})=0$ and, since $b_{(\nu)}-b_{(1)}$ is indeed an element of $\mathbb{F}_q$ as we have seen, we finally obtain the equality $b_{(\nu)}=b_{(1)}$. Hereafter we set $b=b_{(\nu)}=b_{(1)}$ for brevity.

\smallskip
Now let us return to the proof of \eqref{eq:use_DH}. We can calculate as
\begin{align*}
 &\!\!\!\!\! \sum_{\delta\in \mathbb{F}_{q^\nu}} \chi_{\mathbb{F}_{q^\nu}}(1+p^r[\delta])\psi_{r+1,\mathbb{F}_{q^\nu}}(\widetilde{\varepsilon}_\chi[\delta]) \\
 & = \sum_{\delta\in \mathbb{F}_{q^\nu}}\psi_{1,\mathbb{F}_{q^\nu}}(2^{-1}w_\chi\delta^2)\psi_{1,\mathbb{F}_{q^\nu}}(b \delta)  =\sum_{\delta\in \mathbb{F}_{q^\nu}} \psi_{1,\mathbb{F}_{q^\nu}}(2^{-1}w_\chi\delta^2+b\delta)  \\
&=\psi_{1,\mathbb{F}_{q^\nu}}(-2^{-1}w_\chi^{-1}b^2) \sum_{\delta\in \mathbb{F}_{q^\nu}} \psi_{1,\mathbb{F}_{q^\nu}}\left(2^{-1}w_\chi \delta^2 \right) \qquad (\text{replace $\delta$ by $\delta-w_\chi^{-1}b$}).
\end{align*}
Here the first equality follows from \eqref{eq:additive}, whereas the second and the third equalities follow rather straightforward calculation using additivity of the character $\psi_{r+1,\mathbb{F}_{q^\nu}}$. 
The sum $\sum_{\delta\in \mathbb{F}_{q^\nu}} \psi_{1,\mathbb{F}_{q^\nu}}\left(2^{-1}w_\chi \delta^2 \right)$ appearing above is known as the {\em quadratic Gauss sum} over the finite field $\mathbb{F}_{q^\nu}$. As is well known, the quadratic Gauss sum is presented as a usual Gauss sum for an appropriate quadratic character. In fact, it is rewritten as
\begin{align*}
 &\psi_{1,\mathbb{F}_{q^\nu}}(-2^{-1}w_\chi^{-1}b^2) \sum_{\delta\in \mathbb{F}_{q^\nu}} \psi_{1,\mathbb{F}_{q^\nu}}\left(2^{-1}w_\chi \delta^2 \right) \\
&=\psi_{1,\mathbb{F}_{q^\nu}}(-2^{-1}w_\chi^{-1}b^2) \left\{ \psi_{1,\mathbb{F}_{q^\nu}}(2^{-1}w_\chi \cdot 0)+2 \sum_{x\in \bigl(\mathbb{F}_{q^\nu}^\times \bigr)^2} \psi_{1,\mathbb{F}_{q^\nu}}(2^{-1}w_\chi x) \right\}\\
&=\psi_{1,\mathbb{F}_{q^\nu}}(-2^{-1}w_\chi^{-1}b^2) \left\{1+ \sum_{x\in \mathbb{F}_{q^\nu}^\times}\left(1+\left(\dfrac{\mathrm{Nr}_{\mathbb{F}_{q^\nu}/\mathbb{F}_p}(x)}{p}\right)\right) \psi_{1,\mathbb{F}_{q^\nu}}(2^{-1}w_\chi x) \right\}\\
&\stackrel{(*)}{=}\psi_{1,\mathbb{F}_{q^\nu}}(-2^{-1}w_\chi^{-1}b^2) \sum_{x\in \mathbb{F}_{q^\nu}^\times} \left(\dfrac{\mathrm{Nr}_{\mathbb{F}_{q^\nu}/\mathbb{F}_p}(x)}{p}\right) \psi_{1,\mathbb{F}_{q^\nu}}(2^{-1}w_\chi x) \\
&=\psi_{1,\mathbb{F}_{q^\nu}}(-2^{-1}w_\chi^{-1}b^2)\sum_{x\in \mathbb{F}_{q^\nu}^\times} \left(\dfrac{\mathrm{Nr}_{\mathbb{F}_{q^\nu}/\mathbb{F}_p} (2w_\chi^{-1}x)}{p}\right) \psi_{1,\mathbb{F}_{q^\nu}}(x) \qquad  (\text{replace $x$ by $2w_\chi^{-1}x$}) \\
&=\psi_{1,\mathbb{F}_q}(-2^{-1}w_\chi^{-1}b^2)^\nu \left(\dfrac{ \mathrm{Nr}_{\mathbb{F}_{q}/\mathbb{F}_p}(2w_\chi^{-1})}{p}\right)^\nu \tau\left( \left(\frac{\mathrm{Nr}_{\mathbb{F}_q/\mathbb{F}_p}(-)}{p}\right)_{\mathbb{F}_{q^\nu}}\right).
\end{align*}
Here $\left(\frac{-}{p}\right)$ denotes the Legendre symbol modulo $p$. At the third equality $(*)$, we subtract a trivial exponential sum  $\sum_{x\in \mathbb{F}_{q^\nu}}\psi_{1,\mathbb{F}_{q^\nu}}(2^{-1}w_\chi x)=0$ from the summation.
Since $\tau\left(\left(\frac{\mathrm{Nr}_{\mathbb{F}_q/\mathbb{F}_p}(-)}{p}\right)_{\mathbb{F}_{q^\nu}}\right)$ is a Gauss sum over the finite field $\mathbb{F}_{q^\nu}$, we can apply the original result of Davenport and Hasse \cite[(0.8)]{DH} to it and obtain an equality 
\begin{align*} 
\tau\left(\left(\frac{\mathrm{Nr}_{\mathbb{F}_q/\mathbb{F}_p}(-)}{p}\right)_{\mathbb{F}_{q^s}}\right)=(-1)^{s-1}\tau\left(\left(\frac{\mathrm{Nr}_{\mathbb{F}_q/\mathbb{F}_p}(-)}{p}\right)\right)^s, 
\end{align*}
which deduces validity of the desired equality \eqref{eq:use_DH}.

\medskip
\noindent {\bf\underline{Case 2-b}. $\boldsymbol{p=2}$} \; 
Consider $\psi_{2,\mathbb{F}_{q^\nu}} (-[w_\chi\delta^2]_2)$ for  $\delta\in \mathbb{F}_{q^\nu}$, where $[\,\cdot\,]_2 \colon \mathbb{F}_{q^\nu}\rightarrow W_2(\mathbb{F}_{q^\nu})$ denotes the Teichm\"uller lift. By definition,  we have
\begin{multline*}
 \psi_{2,\mathbb{F}_{q^\nu}} (-[w_\chi\delta_1^2]_2)\psi_{2,\mathbb{F}_{q^\nu}} (-[w_\chi\delta_2^2]_2) =\psi_{2,\mathbb{F}_{q^\nu}}(-[w_\chi \delta_1^2]_2-[w_\chi \delta_2^2]_2) \\
=\psi_{2,\mathbb{F}_{q^\nu}}\bigl(-[w_\chi (\delta_1+\delta_2)^2]_2\bigr) \psi_{2,\mathbb{F}_{q^\nu}}\bigl([w_\chi]([\delta_1+\delta_2]_2^2-[\delta_1^2]_2-[\delta_2^2]_2)\bigr). 
\end{multline*}
 Using the law of addition $(x_0,x_1)+(y_0,y_1)=(x_0+y_0,x_1+y_1-x_0y_0)$ in $W_2(\mathbb{F}_{q^\nu})$ and multiplicativity of $[\,\cdot\,]_2$, we readily obtain an equality $[\delta_1+\delta_2]^2_2-[\delta_1^2]_2-[\delta_2^2]_2=-2[\delta_1\delta_2]_2$  as 
\begin{align*}
[\delta_1+\delta_2]^2_2+2[\delta_1\delta_2]_2
&=(\delta_1^2+\delta_2^2,0)+(0,-\delta_1^2\delta_2^2)=(\delta_1^2+\delta_2^2,-\delta_1^2\delta_2^2)\\
&= (\delta_1^2,0)+(\delta_2^2,0)=[\delta_1^2]_2+[\delta_2^2]_2.
\end{align*}
We thus have
\begin{align}\label{eq:even_quad_exp}
  \psi_{2,\mathbb{F}_{q^\nu}} (-[w_\chi\delta_1^2]_2)\psi_{2,\mathbb{F}_{q^\nu}} (-[w_\chi\delta_2^2]_2) =\psi_{2,\mathbb{F}_{q^\nu}}\bigl(-[w_\chi(\delta_1+\delta_2)^2]\bigr)\psi_{1,\mathbb{F}_{q^\nu}}(-w_\chi \delta_1\delta_2).
\end{align}
Therefore, by \eqref{eq:chi} and \eqref{eq:even_quad_exp}, we see that there exists a unique element $b\in \mathbb{F}_q$ satisfying
\begin{align*}
  \chi_{\mathbb{F}_{q^\nu}}(1+p^r[\delta])\psi_{r+1,\mathbb{F}_{q^\nu}}(\widetilde{\varepsilon}_\chi[\delta])\psi_{2,\mathbb{F}_{q^\nu}}(-[w_\chi\delta^2])^{-1}=\psi_{2,\mathbb{F}_{q^\nu}}(2[b\delta]_2)
\end{align*}
similarly to the case where $p$ is odd. 
Now we define
\begin{align*}
 \sigma^{(2)}_{\nu}(-[w_\chi]_2):=\sum_{\delta\in \mathbb{F}_{q^\nu}}\psi_{2,\mathbb{F}_{q^\nu}}(-[w_\chi\delta^2]_2),
\end{align*}  
a partial sum of the quadratic Gauss sum $\sum_{\delta\in W_2(\mathbb{F}_{q^\nu})}\psi_{2,\mathbb{F}_{q^\nu}}(-[w_\chi\delta^2]_2)$. Note that $\sigma_\nu^{(2)}(-[w_\chi]_2)$ does not change even if we replace $[\delta^2]_2$ with an arbitrary lift of $\delta^2$ to $W_2(\mathbb{F}_{q^\nu})$ in each summand. Taking this fact into accounts, we can calculate as
\begin{align} 
 \sum_{\delta\in \mathbb{F}_{q^\nu}} \chi_{\mathbb{F}_{q^\nu}}&(1+2^r[\delta])\psi_{r+1,\mathbb{F}_{q^\nu}}(\widetilde{\varepsilon}_\chi[\delta]) =\sum_{\delta\in \mathbb{F}_{q^\nu}} \psi_{2,\mathbb{F}_{q^\nu}}(-[w_\chi \delta^2]_2+2[b\delta]_2) \label{eq:sum_p_2} \\
&=\sum_{\delta\in \mathbb{F}_{q^\nu}}\psi_{2,\mathbb{F}_{q^\nu}}\left(-[w_\chi]_2([\delta]_2-[bw_\chi^{-1}]_2)^2+[b^2w_\chi^{-1}]_2\right) \nonumber \\
&=\psi_{2,\mathbb{F}_{q^\nu}}([b^2w_\chi^{-1}]_2)  \sum_{\delta\in \mathbb{F}_{q^\nu}} \psi_{2,\mathbb{F}_{q^\nu}}\left(-[w_\chi \delta^2]_2 \right)\quad  \; (\text{replace $[\delta]_2$ by $[\delta]_2+[bw_\chi^{-1}]_2$}) \nonumber \\
&=\psi_{2,\mathbb{F}_{q^\nu}}([b^2w_\chi^{-1}]_2) \sigma^{(2)}_{\nu}(-[w_\chi]_2)=\psi_{2,\mathbb{F}_q}([b^2w_\chi^{-1}]_2)^\nu \sigma_\nu^{(2)}(-[w_\chi]_2).  \nonumber
\end{align}
At the third equality, the summation does not change after we replace $[\delta]_2$ by $[\delta]_2+[bw_\chi^{-1}]_2$ because $\{[\delta]_2+[bw_\chi^{-1}]_2\mid \delta\in \mathbb{F}_{q^\nu}\}$ gives a complete set of representatives of $W_2(\mathbb{F}_{q^\nu})/2W_2(\mathbb{F}_{q^\nu})$. The partial sum $\sigma_\nu^{(2)}(-[w_\chi]_2)$ has been calculated as $-\{-(1+i)\}^{\nu\; \mathrm{ord}_p(q)}$ (see \cite[(6.30)]{Lam} for example), and we thus obtain an equality $(-\sigma_1^{(2)}(-[w_\chi]))^s=-\sigma_s^{(2)}(-[w_\chi]_2)$, or equivalently 
\begin{align} \label{eq:eval_partial_sum}
 \sigma_s^{(2)}(-[w_\chi]_2)=(-1)^{s-1}\sigma_1^{(2)}(-[w_\chi]_2)^s.
\end{align}
The desired equality \eqref{eq:use_DH} easily follows from \eqref{eq:sum_p_2} and \eqref{eq:eval_partial_sum}.  
\end{proof}
Now we have verified \eqref{eq:DH_p^n} in all cases, which completes the proof of Theorem~\ref{theorem:D-H_relation_overW_n}.
\end{proof}

\subsubsection*{Acknowledgements} 
The first-named author would like to thank Mahiro Atsuta for valuable discussion especially on calculation of local epsilon factors of unramified Artin representations, which helps the authors to complete the interpolation formula of $L_{p,\Sigma_F}(M(\rho))$. The authors are also obliged to the anonymous referee for his or her suggestive comments, which enable the authors to improve the interpolation properties of the $p$-adic $L$-functions $L_{p,\Sigma_F}(\psi)$ and $L_{p,\Sigma_F}(M(\rho))$. The present work is supported by JSPS KAKENHI (Grant-in-Aid for Scientific Research (B): Grant Number 
JP23K23763, and Grant-in-Aid for Scientific Research (C): Grant Number JP22K03237).

\end{document}